\newcommand{\treed}{\mathcal{T}_d}
\newcommand{\arbmoins}{\mathcal{T}_g^-}
\newcommand{\arbplus}{\mathcal{T}_g^+}
\newcommand{\verti}{\mathsf{V}(\mathcal{T}_d)}
\newcommand{\edg}{\mathsf{E}(\mathcal{T}_d)}
\newcommand{\elle}{\mathsf{L}}
\newcommand{\Sy}{\mathrm{Sym}(\Omega)}
\newcommand{\Sx}{\mathrm{Sym}(X)}
\newcommand{\autd}{\mathrm{Aut}(\treed)}
\newcommand{\Ne}{\mathcal{N}}
\newcommand{\sg}{\sigma(g,v)}
\newcommand{\Sg}{S(g)}
\title{Groups acting on trees with almost prescribed local action}
\author{Adrien Le Boudec}
\address{Laboratoire de Mathématiques, Université Paris-Sud 11, 91405 Orsay, France}
\address{UCLouvain, IRMP, Chemin du Cyclotron 2, 1348 Louvain-la-Neuve, Belgium}
\email{adrien.leboudec@uclouvain.be}
\theoremstyle{plain}
\newtheorem{thm}{Theorem}[section]
\newtheorem{prop}[thm]{Proposition}
\newtheorem{cor}[thm]{Corollary}
\newtheorem{lem}[thm]{Lemma}
\newtheorem*{thm-intro}{Theorem}
\theoremstyle{definition}
\newtheorem{defi}[thm]{Definition}
\newtheorem{ex}[thm]{Example}
\newtheorem{rmq}[thm]{Remark}
\begin{document}

\maketitle

\begin{abstract}
We investigate a family of groups acting on a regular tree, defined by prescribing the local action almost everywhere. We study lattices in these groups and give examples of compactly generated simple groups of finite asymptotic dimension (actually one) not containing lattices. We also obtain examples of simple groups with simple lattices, and we prove the existence of (infinitely many) finitely generated simple groups of asymptotic dimension one. We also prove various properties of these groups, including the existence of a proper action on a CAT(0) cube complex. 
\end{abstract}




\section{Introduction} \label{sec-intro}

\subsection{Local action prescribed almost everywhere}


Let $\Omega$ be a set of cardinality $d \geq 3$ and $\treed$ a regular tree of degree $d$. 
Recall that the group $\autd$ of automorphisms of $\treed$, endowed with the permutation topology coming from the action on the set of vertices, is a totally disconnected locally compact group.

Given a permutation group $F \leq \Sy$, the Burger-Mozes' group $U(F)$ is the group of automorphisms of $\treed$ whose local action around every vertex is prescribed by $F$ \cite{BM-IHES}. 
The definition of the groups investigated in this paper can be seen as a relaxation of the definition of the groups $U(F)$, in the sense that the local action is prescribed almost everywhere only. More precisely, we let $G(F)$ be the subgroup of $\autd$ consisting of automorphisms whose local action is prescribed by $F$ for \textit{all but finitely many} vertices. The group $G(F)$ was first considered by the authors of \cite{BCGM} in the particular case when $F = \mathrm{Alt}(\Omega)$.

The group $U(F)$ is always closed in $\autd$, while $G(F)$ turns out to be dense in $\autd$ as soon as $F$ acts transitively on $\Omega$. Given a second permutation group $F' \leq \Sy$ containing $F$, we consider the group $G(F,F') = G(F) \cap U(F')$ consisting of automorphisms whose local action belongs to $F'$ for all vertices and to $F$ for all but finitely many of them. The group $G(F,F')$ always admits a natural group topology, which is defined by requiring that the inclusion $U(F) \hookrightarrow G(F,F')$ is continuous and open, where $U(F)$ is endowed with the induced topology from $\autd$. This topology turns $G(F,F')$ into a compactly generated totally disconnected locally compact group. The action of $G(F,F')$ on $\treed$ is continuous, but not proper in general. The motivation for considering these groups is precisely to eliminate the properness of the action on the tree, in order to build groups \textit{locally} isomorphic to $U(F)$ but with a significantly different structure. 


\subsection{Simplicity}

Recently the class $\mathcal{S}$ of compactly generated locally compact groups that are totally disconnected, topologically simple and non-discrete, has received much attention \cite{CaMo,CRW-1,CRW-2}. We refer the reader to the introduction of \cite{CRW-2} for the motivation and the most recent developments in the study of these groups. We prove in Section \ref{sec-simple} that the family of groups $G(F,F')$ contains many examples of groups that virtually belong to the class $\mathcal{S}$. Note that, although the class of groups of tree automorphisms was known to be a source of examples of groups in the class $\mathcal{S}$, see \cite[Subsection 1.1]{CRW-2}, all the examples mentioned therein are closed subgroups of the automorphism group of the tree, which is definitely not the case of $G(F,F')$.


\begin{thm} \label{thm-intro-ind2}
Let $F \leq F' \leq \Sy$ be permutation groups such that $F$ is transitive, and $F'$ is generated by the derived subgroups of its point stabilizers together with point stabilizers of $F$. Then $G(F,F')$ has a subgroup of index two that is simple.
\end{thm}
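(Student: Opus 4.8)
Throughout I fix a legal coloring $c\colon\edg\to\Omega$, write $\sigma(g,v)$ for the associated local permutations and $S(g)$ for the (finite) set of singularities of $g\in G(F,F')$. Set $G=G(F,F')$ and let $G^{+}$ be the subgroup generated by the pointwise edge stabilizers $\mathrm{Fix}_{G}(e)$. Since conjugates of edge stabilizers are edge stabilizers, $G^{+}\trianglelefteq G$; and since $G\supseteq U(F)$ is vertex-transitive (as $F$ is transitive) while $G^{+}$ preserves the bipartition of $\treed$, the type homomorphism $\tau\colon G\to\mathbb{Z}/2\mathbb{Z}$ is surjective with $G^{+}\le\ker\tau$, so $[G:G^{+}]\ge2$. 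The plan is to prove (i) $G^{+}$ is abstractly simple, and (ii) $\ker\tau\le G^{+}$; together these show $G^{+}=\ker\tau$ has index two and is simple. The whole argument takes place at the level of abstract groups.

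For (i) I would invoke Tits' simplicity theorem for groups acting on trees: it suffices to check that $G$ stabilizes no proper nonempty subtree and no end of $\treed$, that $G$ has Tits' independence property (P), and that $G^{+}\ne1$. Minimality is immediate from vertex-transitivity, and $G$ fixes no end since $U(F)\le G$ does not (vertex stabilizers in $U(F)$ act transitively on the neighbours, incompatibly with fixing an end). For property (P) I would check, given an edge $e=\{v,w\}$ with half-trees $\treed=T_{v}\sqcup T_{w}$ and $g\in\mathrm{Fix}_{G}(e)$, that the automorphisms $g_{v}$ ($=g$ on $T_{v}$, trivial on $T_{w}$) and $g_{w}$ (symmetric) satisfy $g=g_{v}g_{w}$ and again lie in $G$ — the point being that $S(g_{v}),S(g_{w})\subseteq S(g)$ are finite and that $g_{v},g_{w}\in U(F')$; hence $\mathrm{Fix}_{G}(e)=\mathrm{Fix}_{G}(T_{v})\times\mathrm{Fix}_{G}(T_{w))}$, and $G^{+}$ is generated by the half-tree stabilizers. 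Finally $G^{+}\ne1$: the hypothesis implies $F'=\langle F'_{\omega}:\omega\in\Omega\rangle$, so some $F'_{\omega}\ne1$ and a suitable half-tree stabilizer of $G$ is nontrivial. Tits' theorem then gives that any subgroup of $G$ normalized by $G^{+}$ is trivial or contains $G^{+}$; applied to a normal subgroup of $G^{+}$ this yields simplicity.

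For (ii) I would first note $\ker\tau=\langle G_{v}:v\in\verti\rangle$, where $G_{v}=\mathrm{Fix}_{G}(v)$: vertex stabilizers are type-preserving, and conversely, for $g\in\ker\tau$, taking $v_{1}$ the midpoint of $[v_{0},gv_{0}]$ and using that $U(F)_{v_{1}}$ is transitive on the sphere of radius $d(v_{1},v_{0})$ about $v_{1}$ gives $h\in G_{v_{1}}$ with $hgv_{0}=v_{0}$, so $g\in G_{v_{1}}G_{v_{0}}$. Thus it suffices to show $G_{v}\le G^{+}$ for each $v$. Here I would use the exact sequence $1\to\mathrm{Fix}_{G}(B(v,1))\to G_{v}\xrightarrow{\sigma(\cdot,v)}F'\to1$ (surjectivity: any $\pi\in F'$ is realized by an automorphism fixing $v$, permuting the branches at $v$ by $\pi$, built outward with local actions chosen in $F$ — possible since $F$ is transitive — so that $v$ is its only singularity), together with two facts. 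First, $\mathrm{Fix}_{G}(B(v,1))\le G^{+}$: an element fixing $B(v,1)$ pointwise is the product of its restrictions to the branches at the neighbours of $v$, and each such restriction is a half-tree stabilizer, hence in $G^{+}$. Second, $\sigma(\cdot,v)$ maps $G^{+}\cap G_{v}$ onto $F'$: for $\alpha\in\Omega$ and $\rho\in F_{\alpha}$, the automorphism fixing the half-tree at $u_{\alpha}$ pointwise and equal to $\rho$ at $v$, built outward with local actions in $F$, lies in the $U(F)$-stabilizer of that half-tree, hence in $U(F)^{+}\le G^{+}$, so $F_{\alpha}\subseteq\sigma(G^{+}\cap G_{v},v)$; and for $\omega\in\Omega$ and $\pi\in[F'_{\omega},F'_{\omega}]$, the automorphism fixing the half-tree at $u_{\omega}$ pointwise and equal to $\pi$ at $v$, built outward with local actions in $F$, lies in $\mathrm{Fix}_{G}(\{v,u_{\omega}\})\le G^{+}$ with $v$ as its only singularity, so $[F'_{\omega},F'_{\omega}]\subseteq\sigma(G^{+}\cap G_{v},v)$. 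Since by hypothesis $F'=\langle F_{\alpha},[F'_{\omega},F'_{\omega}]\rangle$, we get $\sigma(G^{+}\cap G_{v},v)=F'$. Now given $g\in G_{v}$, pick $h\in G^{+}\cap G_{v}$ with $\sigma(h,v)=\sigma(g,v)$; then $h^{-1}g\in\mathrm{Fix}_{G}(B(v,1))\le G^{+}$, so $g\in G^{+}$, whence $\ker\tau\le G^{+}$.

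The main obstacle is exactly the surjectivity $\sigma(G^{+}\cap G_{v},v)=F'$: one must realize a generating set of $F'$ by elements of $G^{+}$ that fix $v$, and the delicate point is arranging that such realizations have $v$ as their only singularity, which forces the local actions propagated outward along $\treed$ to be admissible for $F$ — this is where transitivity of $F$ and the precise form of the hypothesis on $F'$ are used. The remaining ingredients — property (P), the branch decomposition of ball stabilizers, the reduction $\ker\tau=\langle G_{v}\rangle$, and the appeal to Tits' theorem — are routine adaptations of the classical treatment of $U(F)$.
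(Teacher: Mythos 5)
Your step (ii) (that the type-preserving subgroup coincides with $G^{+}$, the subgroup generated by edge fixators) is essentially sound, and in fact needs less than you use: fixing the edge $\{v,u_\omega\}$ already places your realizations of elements of $F'_\omega$ in $G^{+}$, so full point stabilizers of $F'$ work there and the derived-subgroup part of the hypothesis plays no role (this is the paper's Proposition \ref{prop-g(f)simple}). The genuine gap is in step (i). You invoke Tits' simplicity theorem, which requires the independence property (P), i.e.\ that the pointwise stabilizer of \emph{every} path, including bi-infinite ones, splits as the unrestricted product of the actions induced on the branches hanging off that path. What you actually verify is only independence over a single edge (the decomposition $g=g_v g_w$ of an edge fixator). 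These are not equivalent here: $G(F,F')$ is not closed in $\autd$, and property (P) genuinely fails as soon as $F\lneq F'$. Indeed, under your hypotheses there is $\sigma\in F'_a\setminus F$ for some $a$, so one can build, in each branch along an infinite geodesic $L$, an element of $G(F,F')$ fixing $L$ and having a singularity in that branch; the corresponding element of the unrestricted product over the vertices of $L$ would have infinitely many singularities and hence does not lie in $G(F,F')$. So the map from the stabilizer of $L$ to the product of branch actions is not onto, and Tits' theorem cannot be applied to conclude that $G^{+}$ is simple.

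With only the edge-independence property, the correct conclusion (this is the paper's Corollary \ref{cor-with-ind}, proved by a double-commutator argument) is weaker: any non-trivial subgroup normalized by $G^{+}$ contains $[G_e,G_e]$ for every edge $e$, i.e.\ it contains the subgroup $N(F,F')$ generated by derived subgroups of edge fixators. Since edge fixators of $G(F,F')$ are far from perfect in general, simplicity of $G^{+}$ does not follow directly; the substantial remaining work in the paper is to prove $N(F,F')=G(F,F')^{\star}$, by first realizing every element of $\left\langle [F'_a,F'_a]\right\rangle$ at a prescribed vertex by an element of $N(F,F')$ whose other local permutations lie in $F$ (Lemma \ref{lem-gamma-uf}), deducing $U(F)^{\star}\leq N(F,F')$ (Proposition \ref{prop-gamma-u(f)}), and then removing singularities one at a time by induction (Theorem \ref{thm-simple-ind2}). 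That is precisely where the hypothesis that $F'$ is generated by the derived subgroups of its point stabilizers together with point stabilizers of $F$ is needed — it compensates for the failure of property (P) — whereas in your argument it only appears in step (ii), where it is superfluous. As written, your proof does not establish simplicity of the index-two subgroup.
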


We point out that permutation groups satisfying these assumptions are abundant, see Section \ref{sec-simple} for examples.

\bigskip

For the sake of simplicity, the following result is not stated here in its more general form, and we refer to Theorem \ref{thm-simple-ind8} for a more comprehensive statement. We point out that Theorem \ref{thm-intro-ind8} had been previously obtained by Bader-Caprace-Gelander-Mozes in the case $F= \mathrm{Alt}(\Omega)$ (unpublished). We thank them for communicating their result. 

\begin{thm} \label{thm-intro-ind8}
Let $F \leq F' \leq \Sy$ be permutation groups such that:
\begin{enumerate} [label=(\alph*)]
	\item $F$ has index two in $F'$;
	\item $F$ is transitive and generated by its points stabilizers.
\end{enumerate}
Then $G(F,F')$ has a subgroup of index eight that is simple.
\end{thm}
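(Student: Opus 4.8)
The plan is to exhibit the index-eight subgroup explicitly as the common kernel of three homomorphisms onto $\mathbb{Z}/2\mathbb{Z}$, and then to establish simplicity by a Tits-style commutator argument. Write $G=G(F,F')$. Since $[F':F]=2$, the quotient map $\varepsilon\colon F'\to F'/F\cong\mathbb{Z}/2\mathbb{Z}$ is a homomorphism with $\ker\varepsilon=F$. For $g\in G$ put $S(g)=\{v\in\verti:\sigma(g,v)\notin F\}$, which is finite by definition of $G$. Combining the cocycle identity $\sigma(gh,v)=\sigma(g,hv)\sigma(h,v)$ with the multiplicativity of $\varepsilon$ gives $S(gh)=h^{-1}S(g)\,\triangle\,S(h)$, whence $g\mapsto|S(g)|\bmod 2$ is a homomorphism $G\to\mathbb{Z}/2\mathbb{Z}$. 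Let $\tau\colon G\to\mathbb{Z}/2\mathbb{Z}$ be the type homomorphism relative to the bipartition $\verti=\mathsf{V}_0\sqcup\mathsf{V}_1$ of $\treed$, and set $G^{\tau}=\ker\tau$. On $G^{\tau}$ the refined maps $\pi_i(g)=|S(g)\cap\mathsf{V}_i|\bmod 2$ ($i=0,1$) are again homomorphisms, because an element of $G^{\tau}$ preserves each part $\mathsf{V}_i$. I claim that
\[
N:=G^{\tau}\cap\ker\pi_0\cap\ker\pi_1
\]
is a simple subgroup of index $8$.

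\textbf{Step 1 (normality and index $\le 8$).} Conjugation by an element of $G^{\tau}$ preserves $\pi_0$ and $\pi_1$, while conjugation by a type-reversing element interchanges them; hence $N\trianglelefteq G$ and $G/N$ is a quotient of the order-$8$ dihedral group $(\mathbb{Z}/2\mathbb{Z})^2\rtimes\mathbb{Z}/2\mathbb{Z}$, so $[G:N]\le 8$. \textbf{Step 2 (the index equals $8$).} It suffices to produce a type-reversing element of $G$ and, for each $(i,j)\in(\mathbb{Z}/2\mathbb{Z})^2$, an element of $G^{\tau}$ with $(\pi_0,\pi_1)=(i,j)$. Since $F$ is transitive, $U(F)\le G$ contains inversions, which are type-reversing. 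For the second point, fix $v\in\mathsf{V}_0$, choose $\rho\in F'\setminus F$ (nonempty by (a)), and build $h\in\mathrm{Aut}(\treed)$ fixing $v$ with $\sigma(h,v)=\rho$ and with every other local action of $h$ chosen inside $F$; this is possible because $F$ is transitive, so at each vertex below $v$ one single prescribed value can be realized by an element of $F$. Then $h\in G^{\tau}$, $S(h)=\{v\}$, so $(\pi_0,\pi_1)(h)=(1,0)$; the symmetric construction with $v\in\mathsf{V}_1$ gives $(0,1)$, and the product of these two elements gives $(1,1)$ since $\pi_0,\pi_1$ are homomorphisms on $G^{\tau}$. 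Thus $[G:N]=8$.

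\textbf{Step 3 (simplicity of $N$).} First, $U(F)^{+}\le N$: it is type-preserving and elements of $U(F)$ have empty singular set; moreover $U(F)^{+}$ is simple and acts minimally, of general type, and non-discretely on $\treed$ (Burger--Mozes, using that $F$ is transitive and generated by its point stabilizers), so the same holds for $N$. The obstacle is that $N$ does \emph{not} satisfy Tits' property $(P)$ — the parity conditions $\pi_0=\pi_1=0$ prevent the fixator in $N$ of an edge from splitting as a direct product over the two half-trees — so one cannot simply invoke Tits' simplicity theorem. Instead I would run Tits' contraction argument inside $N$ by hand: given $1\neq M\trianglelefteq N$, pick $1\neq m\in M$, treat separately the elliptic and hyperbolic cases, and conjugate $m$ by elements of $\mathrm{Fix}_{U(F)}(T)\le N$ supported deep inside a suitable half-tree $T$ (available in abundance because $U(F)$ acts transitively enough) to extract a nontrivial element of $M$ fixing a half-tree. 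One then checks that the normal closure in $N$ of any nontrivial element fixing a half-tree is all of $N$: using the minimal, general-type action together with the transitivity inherited from $U(F)$, this normal closure contains the fixator in $N$ of \emph{every} half-tree — including the ``singular'' ones, obtained from the $U(F)$-fixators by conjugating by elements as in Step 2 and multiplying — and a short computation (pairing up the singular vertices in each part) shows that these fixators generate $N$.

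\textbf{Expected main difficulty.} Step 3, and precisely the point that the clean black box (Tits' theorem for groups with property $(P)$) is unavailable for $N$. The options are either to carry the contraction argument out directly in $N$, tracking the parities $\pi_0,\pi_1$ at each commutator step, or to work in the ambient group $G$, which \emph{does} satisfy property $(P)$, and to transfer simplicity down across the explicit finite quotient $G/N$. Steps 1--2 are routine; the only care needed there is the local-action bookkeeping in Step 2 guaranteeing that a prescribed parity pattern is witnessed by an element singular at a single vertex, which is where hypothesis (a) is used, while hypothesis (b) enters through the Burger--Mozes structure theory of $U(F)^{+}$ in Steps 2 and 3.
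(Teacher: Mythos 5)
Your choice of subgroup is the right one, and Steps 1--2 are correct and essentially identical to what the paper does: your symmetric-difference identity for singular sets is Lemma \ref{lem-sing-ind2}, your parity maps $\pi_0,\pi_1$ define the subgroups $G_i(F,F')$, and your $N$ is exactly the group $G_0(F,F')\cap G_1(F,F')$ of Corollary \ref{cor-simpl-ind8}; the index count and the surjectivity of the parity pattern (via single-singularity elements from Lemma \ref{lem-extend-G(F)}) are also as in the paper.

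The genuine gap is Step 3, and it is not just a matter of filling in routine details. First, your claim that the normal closure in $N$ of a nontrivial half-tree-fixing element contains the \emph{full} fixator in $N$ of every half-tree is stronger than what the contraction/double-commutator technique yields: that technique (the paper's Lemma \ref{lem-game-commutator} and Theorem \ref{thm-without-ind}) only produces the \emph{derived} subgroup of a half-tree fixator, and the paper never upgrades to full fixators; instead it works throughout with the subgroup $N(F,F')$ generated by derived subgroups of edge fixators, proves via the edge-independence property and the non-solvability of half-tree fixators (Lemma \ref{lem-Gamma-triv}) that $N(F,F')$ is simple and is the smallest nontrivial normal subgroup of $G(F,F')^+$ (Corollary \ref{cor-monolithe}), and then shows $N(F,F')=G_0\cap G_1$. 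Second, your ``short computation (pairing up the singular vertices in each part)'' is precisely the heart of that last equality, and it does not go through as you describe: the obvious element removing a pair of same-type singularities is a product of two half-tree-fixing elements each with a \emph{single} singularity, hence each of odd parity and not in $N$, so this does not exhibit generation of $N$ by its half-tree fixators. The paper circumvents this by first proving $U(F)^{\star}=U(F)^+\leq N(F,F')$ (using simplicity of $U(F)^{\star}$, i.e.\ hypothesis (b) via Tits, together with Corollary \ref{cor-monolithe}(c)), and then producing the needed two-singularity element \emph{inside the normal subgroup} as a commutator $[g_2,g_1]$ with $g_2\in U(F)^{\star}$ and $g_1$ a single-singularity element (Lemma \ref{lem-deux-sing}); the induction then strips singularities two at a time, the parity conditions guaranteeing that two singularities in the same part can always be found. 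So your outline would have to be replaced by (or rebuilt into) this monolith-plus-induction argument; as written, the simplicity of $N$ is asserted rather than proved, and one of the two claims it rests on is false as a general statement about what the commutator trick gives.
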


\subsection{Small finitely generated simple groups}

The question whether the class of finitely generated simple groups contains examples of groups having a \enquote{small} geometry has recently received much attention. For example it has been proved that the derived subgroup of the topological full group associated to a minimal subshift, which is a finitely generated simple group \cite{Mat06}, is amenable \cite{JM} (see also \cite{Matte-Bon}). 

Here we prove that the family of groups $G(F,F')$ provides examples of finitely generated simple groups which are small from the point of view of asymptotic dimension. 

Recall that infinite finitely generated simple groups of finite asymptotic dimension are known to exist, as for instance it follows from the main result of \cite{asdim-free} that the finitely generated simple groups constructed in \cite{Camm} have finite asymptotic dimension. Finitely presented groups with these properties have moreover been constructed in \cite{BM-IHES-2} and \cite{Cap-Rem}. The following result provides the first examples of finitely generated simple groups of asymptotic dimension \textit{one}. 

\begin{thm} \label{thm-intro-fg-asdim}
Let $F \leq F' \leq \Sy$ be permutation groups such that:
\begin{enumerate} [label=(\alph*)]
	\item $F$ is simply transitive;
	\item $F'$ is generated by the derived subgroups of its point stabilizers.
\end{enumerate}
Then $G(F,F')$ has a subgroup of index two that is simple, finitely generated and of asymptotic dimension one.

Moreover there exist infinitely many isomorphism classes of groups generated by four elements and having these properties. 
\end{thm}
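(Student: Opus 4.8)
The plan is to combine three ingredients: simplicity of an index-two subgroup, finite generation, and control of asymptotic dimension. For simplicity, I would first observe that the hypotheses here are a special case of those of Theorem \ref{thm-intro-ind2}: if $F$ is simply transitive then in particular $F$ is transitive, and the point stabilizers of $F$ are trivial, so the condition that $F'$ be generated by the derived subgroups of its point stabilizers together with the point stabilizers of $F$ reduces exactly to hypothesis (b) here. Hence Theorem \ref{thm-intro-ind2} already gives a simple subgroup $G(F,F')^+ \leq G(F,F')$ of index two. It remains to show this subgroup is finitely generated and has asymptotic dimension one, and then to exhibit infinitely many non-isomorphic examples realizable with four generators.

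For finite generation I would argue as follows. The group $G(F,F')$ is compactly generated by construction (this is stated in the introduction), and $U(F)$ sits inside it as a compact open subgroup. When $F$ is \emph{simply} transitive, the Burger--Mozes group $U(F)$ is vertex-transitive with trivial vertex stabilizers, so $U(F)$ acts freely and transitively on the vertex set of $\treed$; in particular $U(F)$ is discrete, and $G(F,F')$ is a discrete (hence countable, in fact finitely generated since compactly generated discrete) group. So under hypothesis (a) the topology on $G(F,F')$ is discrete and "compactly generated" becomes "finitely generated". The index-two subgroup $G(F,F')^+$ is then also finitely generated. I would then need the sharper statement that four generators suffice; this I expect to extract from an explicit generating set: $U(F)$ is itself generated by $d$ translations (one along each edge direction at a base vertex, which under simple transitivity correspond to the $d$ elements of $F$ needed), and one adjoins finitely many elements of $G(F,F')$ realizing non-$F$ local behaviour at finitely many vertices; a careful count, together with the Nielsen--Schreier passage to the index-two subgroup, should be arrangeable to land at four. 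The flexibility in the choice of $(F,F')$ — there are infinitely many simply transitive $F$ on larger and larger $\Omega$, or with varying $F'$ — gives infinitely many isomorphism classes, which one distinguishes by an invariant such as the degree $d$ of the tree (recoverable from the group, e.g.\ via the structure of its compact open subgroups, or here its maximal "local" data) or by cardinality/algebraic invariants of $F'/F$.

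For the asymptotic dimension, the key point is that $G(F,F')$ acts on the tree $\treed$ continuously but \emph{not} properly; however, in the discrete case (a) the action is by isometries on a tree, and the obstruction to using the standard "asdim of a group acting on a tree with controlled stabilizers" bound (asdim $\leq 1 +$ asdim of vertex stabilizers, when the action is cocompact) is exactly the non-compactness of vertex stabilizers. Here the vertex stabilizer in $G(F,F')$ of a vertex $v$ consists of automorphisms fixing $v$ whose local action is in $F'$ everywhere and in $F$ all but finitely often; this stabilizer is locally finite (it is a directed union of finite groups — finitely supported modifications of elements of the profinite group $U(F)_v$, which is trivial here since $F$ is simply transitive, so actually the stabilizer is a locally finite group built from the finite groups $F'$ acting at finitely many vertices). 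A locally finite group has asymptotic dimension zero. So I would invoke the Bell--Dranishnikov-type union/extension theorem for asymptotic dimension of groups acting on trees (or equivalently: $G(F,F')$ is an extension-like amalgam over the tree with asdim-$0$ vertex stabilizers and cocompact action) to conclude $\operatorname{asdim} G(F,F') \leq 1$, hence $\operatorname{asdim} G(F,F')^+ \leq 1$; and it is exactly $1$, not $0$, because an asdim-$0$ group is locally finite while $G(F,F')^+$ is infinite and simple, hence not locally finite (or: it contains a nonabelian free subgroup, or an infinite cyclic one acting hyperbolically on the tree).

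The main obstacle, I expect, is the asymptotic dimension bound: one must make precise the sense in which $G(F,F')$ "acts cocompactly on the tree with locally-finite stabilizers" even though the action is not proper, and then cite or adapt the correct permanence result (the relevant tool is that asdim is a quasi-isometry invariant and the statement, due to Bell--Dranishnikov, that a countable group acting on a tree has asymptotic dimension at most one plus the supremum of the asymptotic dimensions of the vertex stabilizers). Verifying that the vertex stabilizers are genuinely locally finite (and not merely "locally compact and totally disconnected") under hypothesis (a) is where hypothesis (a), as opposed to mere transitivity, is essential, and this is the delicate point to get right. The four-generator count is a secondary technical nuisance rather than a conceptual difficulty.
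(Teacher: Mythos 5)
For the main assertion (an index-two subgroup that is simple, finitely generated, of asymptotic dimension one) your route is essentially the paper's: simplicity comes from Theorem \ref{thm-intro-ind2} (equivalently Corollary \ref{cor-g(f,f')-simple}), finite generation from discreteness of $G(F,F')$ when $F$ acts freely together with compact generation (Corollary \ref{cor-g-ce}), and the asymptotic dimension bound from the tree action with locally elliptic (here locally finite) vertex stabilizers and the Bell--Dranishnikov theorem, exactly as in Section \ref{subsec-asdim} and Corollary \ref{cor-asdimone}. Two small inaccuracies do not affect this part: $U(F)$ is open but not compact in $G(F,F')$, and when $F$ is simply transitive $U(F)$ does not act freely on the vertices --- its vertex stabilizers are finite of order $d$ (this is exactly why $G(F,F')$ is discrete, which is all you need).

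The genuine gap is in the \emph{moreover} clause. Your route to four generators --- take the $d$ translations generating $U(F)$, adjoin finitely many singular elements, then pass to the index-two subgroup ``by Nielsen--Schreier'' --- cannot land at four: that generating set grows with $d$, and passing to a finite-index subgroup (Reidemeister--Schreier) increases rather than decreases the number of generators. The paper's mechanism is different and essential here: by Corollary \ref{cor-gen-trans}, $G(F,F')^{\star}$ is generated by the two local subgroups $K_{0,F'}(v_0)$ and $K_{0,F'}(v_1)$, and when $F$ is simply transitive each of these is isomorphic to $F'$ (the map $g\mapsto\sigma(g,v)$ is an isomorphism onto $F'$); choosing $F'=\mathrm{Alt}(d)$ as in Example \ref{ex-alt}, which is $2$-generated, yields four generators. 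Likewise, your claim that the degree $d$ is ``recoverable from the group, e.g.\ via its compact open subgroups'' is unsubstantiated for these \emph{discrete} simple groups, where compact open subgroups are just finite subgroups; the paper instead distinguishes the examples via Proposition \ref{prop-morph-trivial}: since any action of $\mathrm{Alt}(d)$ on fewer than $d$ points is trivial, every homomorphism $G(F,F')^{\star}\to\mathrm{Aut}(\mathcal{T}_k)$ is trivial for $k<d$, while the degree-$k$ examples do act nontrivially on $\mathcal{T}_k$, so the groups for different $d$ are pairwise non-isomorphic. Without these two ingredients the final sentence of the theorem is not proved.
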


Theorem \ref{thm-intro-fg-asdim} is obtained by combining Theorem \ref{thm-intro-ind2} together with Corollary \ref{cor-gen-trans}, Corollary \ref{cor-asdimone} and Proposition \ref{prop-morph-trivial} (see Example \ref{ex-alt} for a family of examples generated by four elements).

\subsection{Lattices in simple groups}

The study of lattices in locally compact groups is of central interest, and experienced recent developments beyond the classical theory of Lie and algebraic groups. Of particular interest is the case of simple groups, and we refer to the introduction of \cite{BCGM} for the motivation. Up to now the only compactly generated simple group without lattices that is known is the group $\mathrm{AAut}(\treed)$ of almost automorphisms of a regular tree \cite{BCGM}. This group has a very rich geometry, and is also very large in the sense that it contains discrete $\mathbb{Z}^n$-subgroups for all $n$. In particular $\mathrm{AAut}(\treed)$ has infinite asymptotic dimension.

In Section \ref{sec-lattices} we study the existence of lattices in the groups $G(F,F')$. Recall that it follows from Bass-Kulkarni's theorem \cite{Bass-Kulk} that every \textit{closed} compactly generated unimodular $G \leq \mathrm{Aut}(\treed)$ admits cocompact lattices. By investigating certain locally elliptic groups which appear as union of infinitely iterated wreath products, we prove that some of the groups $G(F,F')$ contain no lattice (see Corollary \ref{cor-without-lattice}).

\begin{thm} \label{thm-simple-g(f,f')-no-lattice}
There exist permutation groups $F \leq F' \leq \Sy$ such that $G(F,F')$ does not contain lattices.
\end{thm}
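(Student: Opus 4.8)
The plan is to exhibit explicit permutation groups $F \leq F' \leq \Sy$ for which $G(F,F')$ admits no lattice, by combining two ingredients: a simplicity (or almost-simplicity) statement guaranteeing that $G(F,F')$ has no proper finite-index subgroup on which the obstruction could be evaded, and a structural analysis of the locally elliptic subgroups of $G(F,F')$ showing that these cannot be cocompact while being discrete, and dually cannot be discrete while being cocompact. Concretely, I would first invoke Theorem \ref{thm-intro-ind2} (with a suitably chosen transitive $F$ and an $F'$ generated by derived subgroups of point stabilizers together with point stabilizers of $F$) so that $G(F,F')$ has a simple subgroup of index two. This reduces the problem to showing that this simple group $G$ has no lattice, since a lattice in $G(F,F')$ would intersect $G$ in a lattice of $G$.

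Next I would set up the dichotomy that any lattice $\Gamma \leq G$ must be either cocompact or non-cocompact, and rule out each case. For the cocompact case, the idea is that $G(F,F')$ contains, by construction, "large" compact open subgroups together with elements moving far along the tree, and a cocompact lattice would force a bounded covolume; here one uses that $G(F,F')$ is unimodular (which should follow from the index-two-simple structure, as simple groups are unimodular, being non-compact with no proper cocompact normal subgroup, hence the modular function is trivial) and then Bass--Kulkarni-type counting. The obstruction to cocompact lattices will come from the fact that the action of $G(F,F')$ on $\treed$ is \emph{not proper}: the vertex stabilizers are non-compact (they contain the full group of automorphisms supported far from the vertex that are locally in $F$), so the usual argument producing cocompact lattices in closed automorphism groups breaks down, and in fact one shows a cocompact subgroup that is discrete must be finite, which is impossible since $G$ is infinite. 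For the non-cocompact case I would use the locally elliptic subgroups appearing as unions of infinitely iterated wreath products alluded to just before the statement: a non-cocompact lattice would give rise, via the structure of the horospherical-type or boundary-fixing subgroups, to a discrete subgroup inside such an infinitely iterated wreath product, and one shows (this is the content of the referenced analysis culminating in Corollary \ref{cor-without-lattice}) that these locally elliptic groups contain no infinite discrete subgroup of the relevant type, contradicting discreteness of $\Gamma$.

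The main obstacle, I expect, is the careful bookkeeping in the non-cocompact case: one must understand precisely which subgroups of $G(F,F')$ are locally elliptic, identify them with the explicit iterated wreath products $W = \varinjlim (F \wr F \wr \cdots)$ (built along a ray in the tree using $F$ at each level), and then prove a rigidity statement to the effect that such a $W$ contains no lattice-compatible discrete subgroup — equivalently, that every closed cocompact subgroup of $W$ has nontrivial (indeed infinite) compact part, so that a discrete one would be trivial. Choosing $F$ so that this iterated wreath product has no finite-index subgroups and no discrete cocompact subgroups — for instance taking $F$ simply transitive or $F = \mathrm{Alt}(\Omega)$ as in \cite{BCGM} — is where the hypothesis on the permutation groups gets used, and verifying the wreath-product rigidity is the technical heart. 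Once both cases are excluded, $G$ has no lattice, hence neither does $G(F,F')$, and choosing $F \leq F'$ explicitly (e.g. from the abundant families discussed in Section \ref{sec-simple}) finishes the proof.
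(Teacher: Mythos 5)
Your proposal circles the right neighbourhood (the locally elliptic open subgroups of $G(F,F')$ that are unions of infinitely iterated wreath products), but it misses the actual mechanism and, more seriously, proposes examples for which the statement fails. The paper's proof does not use simplicity at all and does not split into cocompact/non-cocompact cases: it observes that the edge stabilizer $G(F,F')_e$ is an \emph{open} subgroup isomorphic to $L(F_a,F_a')^2$, where $L(D,D')$ is the ascending union of the iterated wreath products $\ldots\wr D\wr\cdots\wr D\wr D'\wr\cdots\wr D'$ built from the \emph{point stabilizers} $F_a\leq F_a'$ (not from $F$ "along a ray"), so any lattice in $G(F,F')$ would meet it in a lattice of $L(F_a,F_a')^2$. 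The obstruction is then Lemma \ref{lem-general-no-lattice}: one exhibits subgroups $K_n$ containing the compact open subgroups $U_n$ as \emph{essential} subgroups with $\mu(K_n)\to\infty$; a lattice must then meet every $K_n$ (else $K_n$ would inject into the quotient, bounding $\mu(K_n)$ by the covolume), hence meet every $U_n$, contradicting discreteness. The hypotheses that make this work are on point stabilizers: $F_a$ essential in some $F_a\leq D'\leq F_a'$ and $|D'|<(D':F_a)^{d-1}$ (Theorem \ref{thm-L-no-lattice} and Corollary \ref{cor-without-lattice}). You defer exactly this step as "the technical heart" without supplying the idea, and the surrogate statement you propose to prove instead (that every closed cocompact subgroup of the wreath-product group has infinite compact part) is neither the needed statement nor one that handles non-cocompact lattices inside that subgroup.

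Concretely wrong points: (1) your suggested choices of $F$ are untenable — if $F$ is simply transitive then $G(F,F')$ is discrete and is a lattice in itself, and for $F=\mathrm{Alt}(\Omega)$, $F'=\mathrm{Sym}(\Omega)$ the paper proves the \emph{opposite} (this $G(F,F')$ contains cocompact lattices, see the proof of Theorem \ref{thm-simple-coc-no-lattice} and Corollary \ref{cor-coco-lattice}); the correct examples are of the type $F=\mathrm{PSL}(2,q)\leq F'=\mathrm{PGL}(2,q)$ with $q\equiv 1 \bmod 4$, where $q\equiv 1\bmod 4$ is exactly what makes $F_\infty$ essential in $F'_\infty$ via the non-split extension by $C_2$. (2) Your cocompact branch is unsound: the assertion that a discrete cocompact subgroup "must be finite" is unjustified (discrete cocompact subgroups of compactly generated groups are finitely generated, not finite), and no argument of Bass--Kulkarni counting type can rule out cocompact lattices for arbitrary $F\lneq F'$, since some groups $G(F,F')$ genuinely have them. (3) The reduction via Theorem \ref{thm-intro-ind2} is harmless but unnecessary; the measure-theoretic argument applies directly to any lattice in $G(F,F')$.
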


There are natural permutation groups satisfying Theorem \ref{thm-simple-g(f,f')-no-lattice}, for example $F = \mathrm{PSL}(2,q)$ and $F' = \mathrm{PGL}(2,q)$ acting on the projective line $\mathbb{P}^1(\mathbb{F}_q)$, where $q=1 \! \! \mod 4$. We refer to the end of Section \ref{sec-lattices} for more examples. Theorem \ref{thm-simple-g(f,f')-no-lattice} shows that Bass-Kulkarni's theorem cannot be extended to a compactly generated unimodular group $G$ equipped with a continuous inclusion in $\mathrm{Aut}(\treed)$. Combined with other results of the paper, Theorem \ref{thm-simple-g(f,f')-no-lattice} also implies:

\begin{cor} \label{cor-simple-no-lattice}
Simple groups without lattices exist among compactly generated groups of asymptotic dimension one. 
\end{cor}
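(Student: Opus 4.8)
The plan is to exhibit a single pair of permutation groups $F \leq F' \leq \Sy$ for which $G(F,F')$ both has no lattices and admits a finite-index subgroup to which one of the simplicity results above applies, and then to transfer the two relevant properties --- absence of lattices and asymptotic dimension one --- from $G(F,F')$ to that subgroup.

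First I would take $\Omega = \mathbb{P}^1(\mathbb{F}_q)$ with $q \equiv 1 \bmod 4$ and $F = \mathrm{PSL}(2,q) \leq F' = \mathrm{PGL}(2,q)$ with the natural action, which is exactly the pair highlighted after Theorem \ref{thm-simple-g(f,f')-no-lattice}; for it, $G(F,F')$ has no lattices. I would then observe that this pair also satisfies the hypotheses of Theorem \ref{thm-intro-ind8}: $F$ has index two in $F'$, while $\mathrm{PSL}(2,q)$ is $2$-transitive on $\mathbb{P}^1(\mathbb{F}_q)$ and is generated by its point stabilizers --- each point stabilizer is the intersection with $\mathrm{PSL}(2,q)$ of a Borel subgroup of $\mathrm{PGL}(2,q)$, hence contains a full unipotent subgroup, and the unipotent subgroups generate $\mathrm{PSL}(2,q)$. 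Hence $G(F,F')$ contains a subgroup $H$ of index eight that is simple. Since $U(F) \hookrightarrow G(F,F')$ is open, $H$ is an open subgroup of finite index, so it is closed, cocompact, compactly generated, and non-discrete (an open subgroup of a non-discrete group is non-discrete).

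Next I would argue that $H$ inherits the absence of lattices: a lattice $\Gamma \leq H$ is automatically a lattice in $G(F,F')$ because $[G(F,F'):H]$ is finite --- concretely, a Haar measure of $G(F,F')$ restricts to a Haar measure of $H$, and $\mathrm{vol}(G(F,F')/\Gamma) = 8\,\mathrm{vol}(H/\Gamma) < \infty$ --- which would contradict Theorem \ref{thm-simple-g(f,f')-no-lattice}. Finally, $H$ is quasi-isometric to $G(F,F')$, being a cocompact (open, finite-index) subgroup, so $\operatorname{asdim} H = \operatorname{asdim} G(F,F')$; this equals one by Corollary \ref{cor-asdimone}, the value not being zero since $G(F,F')$ is non-compact. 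Assembling these facts gives a non-discrete, compactly generated, simple locally compact group of asymptotic dimension one without lattices, which is what the corollary asserts.

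I expect essentially all of the difficulty to be outsourced to earlier results, namely Theorem \ref{thm-simple-g(f,f')-no-lattice} --- whose proof has to control the locally elliptic subgroups arising as unions of infinitely iterated wreath products --- and the asymptotic dimension computation behind Corollary \ref{cor-asdimone}. Within the corollary itself, the only genuine point is that this one explicit pair $(F,F')$ lies simultaneously in the hypotheses of Theorem \ref{thm-simple-g(f,f')-no-lattice} and of Theorem \ref{thm-intro-ind8}; the remaining ingredients --- compact generation, non-discreteness, and the heredity of ``having no lattices'' and of asymptotic dimension under an open finite-index inclusion --- are routine.
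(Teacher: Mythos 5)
Your proof is correct and follows essentially the same route as the paper: the paper likewise proves this corollary by exhibiting a pair $(F,F')$ satisfying simultaneously the hypotheses of Corollary \ref{cor-simpl-ind8} and of Corollary \ref{cor-without-lattice}, and then transferring the absence of lattices and asymptotic dimension one (Corollary \ref{cor-asdimone}) to the index-eight simple subgroup. The only differences are that you use the witness $F=\mathrm{PSL}(2,q)\leq F'=\mathrm{PGL}(2,q)$ (already verified after Corollary \ref{cor-without-lattice}), whereas the paper uses the affine pair $\mathbb{F}_q^n\rtimes\mathrm{GL}^2(n,q)\leq \mathbb{F}_q^n\rtimes\mathrm{GL}(n,q)$ because that example simultaneously proves Theorem \ref{thm-simple-coc-no-lattice}, and that openness (hence closedness and cocompactness) of the index-eight subgroup is better justified by Corollary \ref{cor-monolithe}, which gives $U(F)^+\leq N(F,F')$, than by finiteness of the index together with openness of $U(F)$ alone.
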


Nevertheless some of the groups $G(F,F')$ do have lattices. We actually prove the following result, which shows that among compactly generated \textit{simple} groups, having lattices is not invariant by passing to a closed cocompact subgroup (so in particular not invariant by quasi-isometry).

\begin{thm} \label{thm-simple-coc-no-lattice}
There exist totally disconnected locally compact compactly generated groups $H \leq G$ such that:
\begin{enumerate} [label=(\alph*)]
	\item $H$ is cocompact in $G$;
	\item $H$ and $G$ are abstractly simple;
	\item $G$ contains lattices but $H$ does not contain lattices.
\end{enumerate}
\end{thm}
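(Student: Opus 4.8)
The plan is to take $H$ to be an abstractly simple subgroup of finite index in a group $G(F,F')$ having no lattice, and $G$ an abstractly simple subgroup of finite index in a compactly generated totally disconnected locally compact group that contains $G(F,F')$ as a closed cocompact subgroup and \emph{does} admit a lattice. For the first half, fix $F = \mathrm{PSL}(2,q) \leq F' = \mathrm{PGL}(2,q)$ acting on $\mathbb{P}^1(\mathbb{F}_q)$ with $q \equiv 1 \bmod 4$. Then $F$ has index two in $F'$, is transitive, and is generated by its point stabilizers (the Borel subgroups); so by Theorem \ref{thm-simple-g(f,f')-no-lattice} the group $G(F,F')$ has no lattice, and by Theorem \ref{thm-intro-ind8} it has a subgroup $H$ of index eight which is abstractly simple. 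As a finite-index open subgroup, $H$ is closed, compactly generated and totally disconnected locally compact, and it contains no lattice: a lattice in $H$ would still be discrete and of finite covolume in $G(F,F')$, since $[G(F,F'):H]<\infty$.

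It remains to produce $G$. The strategy is to realize $G(F,F')$, \emph{with its own group topology}, as a closed cocompact subgroup of a group of the same type, say $G(E,E')$ for suitable permutation groups $E \leq E'$ on a larger alphabet, in which the obstruction underlying Theorem \ref{thm-simple-g(f,f')-no-lattice} is no longer present. One natural way is to fix a finite rooted ``decoration'' tree and to attach a copy of it at every vertex of $\treed$; the resulting tree $\tree'$ is regular of larger degree, and $\treed$ embeds $\aut$-equivariantly as the subtree spanned by the roots. The local action of $F$, resp.\ $F'$, on $\Omega$ induces a permutation group $E$, resp.\ $E'$, on the link of a vertex of $\tree'$, and one checks that $G(F,F')$ is precisely the stabiliser of the decoration inside $G(E,E')$, the identification being a homeomorphism onto a closed subgroup; since the decoration is finite, this subgroup is cocompact. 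One then verifies that the pair $(E,E')$ may be arranged to satisfy the hypotheses of Theorem \ref{thm-intro-ind2} or Theorem \ref{thm-intro-ind8}, so that $G(E,E')$ has a finite-index subgroup $G$ that is abstractly simple and normal in $G(E,E')$; and, crucially, that $G(E,E')$ admits a cocompact lattice. For the latter the decoration is chosen so that the $2$-torsion phenomenon in $\mathrm{PGL}(2,q)$ responsible for the absence of lattices in $G(F,F')$ is ``absorbed'' by the larger local group $E'$, after which a discrete cocompact subgroup is built by hand, as the fundamental group of a finite graph of finite groups acting on $\tree'$ with the prescribed local action (a Bass--Serre amalgamation adapted to the non-proper action, in the spirit of, but not a formal consequence of, Bass--Kulkarni's theorem).

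Granting this, the theorem follows. We have $H \leq G(F,F') \leq G(E,E')$ and $G \trianglelefteq G(E,E')$ of finite index, so $H \cap G$ is a normal subgroup of finite index in the simple group $H$, whence $H \cap G = H$, i.e.\ $H \leq G$. As $H$ has finite index in $G(F,F')$, which is closed and cocompact in $G(E,E')$, and $G$ is closed of finite index in $G(E,E')$, the subgroup $H$ is closed and cocompact in $G$. Both $H$ and $G$ are abstractly simple; $G$ admits a lattice while $H$ does not; and both are compactly generated totally disconnected locally compact. This is the desired pair.

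The main obstacle is the construction of the overgroup. One must exhibit a decoration for which the embedding $G(F,F') \hookrightarrow G(E,E')$ is simultaneously closed and cocompact, for which the pair $(E,E')$ still falls within the scope of Theorems \ref{thm-intro-ind2}/\ref{thm-intro-ind8}, and — the genuinely new point — for which $G(E,E')$ possesses a cocompact lattice even though, like $H$, it cannot act properly on any tree (an abstractly simple compactly generated totally disconnected locally compact group is unimodular, so a proper cocompact action on a tree would force the existence of a lattice by Bass--Kulkarni).
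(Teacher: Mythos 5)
The first half of your proposal (taking $F=\mathrm{PSL}(2,q)\leq F'=\mathrm{PGL}(2,q)$, $q\equiv 1 \bmod 4$, and letting $H$ be the index-eight simple subgroup of $G(F,F')$, which inherits the absence of lattices from $G(F,F')$) is correct and matches the paper's toolkit. The gap is the second half: the ambient group $G$ is never constructed, and the route you sketch cannot work as stated. First, attaching a finite rooted decoration tree at every vertex of $\treed$ does not produce a regular tree (the decoration has leaves, and its internal vertices have degrees different from the decorated vertices of $\treed$), so there is no group $G(E,E')$ in the paper's framework attached to it; the claimed closed cocompact identification of $G(F,F')$ with a ``decoration stabilizer'' is not even well posed. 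Second, and more fundamentally, the lattice you propose to build in the overgroup, namely the fundamental group of a finite graph of \emph{finite} groups acting on the ambient tree, cannot be a lattice in any group of type $G(E,E')$ with $E\lneq E'$: a lattice $\Gamma$ must intersect the open subgroup $G(E,E')_v$ in a lattice of $G(E,E')_v$, which is non-compact (Corollary \ref{cor-u=g}) and locally elliptic, so $\Gamma_v$ must be an infinite locally finite group, not a finite vertex group (this is exactly the observation following Proposition \ref{prop-notcp}). And one cannot escape to the case $E=E'$ either, since $G(F,F')$ with $F\lneq F'$ admits no closed embedding into a group acting properly on a tree (Lemma \ref{lem-group-tree-compact}). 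So the ``genuinely new point'' you defer is precisely the content of the theorem, and the direction you indicate for it is blocked.

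For comparison, the paper stays on the same tree $\treed$ and takes the ambient pair to be $H=\mathrm{Alt}(\Omega)$, $H'=\mathrm{Sym}(\Omega)$: it chooses $F\leq F'$ with $F=F'\cap\mathrm{Alt}(\Omega)$, $\mathrm{Sym}(\Omega)=\mathrm{Alt}(\Omega)F'$ (concretely the affine groups $\mathbb{F}_q^n\rtimes\mathrm{GL}^2(n,q)\leq\mathbb{F}_q^n\rtimes\mathrm{GL}(n,q)$ with $q\equiv 1\bmod 4$), so that Corollary \ref{cor-coco-lattice} makes $G(F,F')$ a closed cocompact subgroup of $G(H,H')$, while Corollary \ref{cor-without-lattice} rules out lattices in $G(F,F')$ and Corollary \ref{cor-simpl-ind8} provides the index-eight simple subgroups $N(F,F')\leq N(H,H')$ on both sides. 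Crucially, the lattice in $G(H,H')$ is \emph{not} obtained from a graph of finite groups but as another group of the same family: one exhibits permutation groups $K\leq K'$ with $K$ acting freely on $\Omega$, $K\leq\mathrm{Alt}(\Omega)$, $K'$ generated by $K$ and an odd involution, and applies Corollary \ref{cor-coco-lattice} again; the resulting $G(K,K')$ is discrete with \emph{infinite, locally finite} vertex stabilizers, which is exactly what allows it to have finite covolume despite the non-properness of the action on $\treed$. If you want to salvage your plan, replace the decoration construction by this permutation-group-theoretic embedding on a fixed $\Omega$ and build the lattice as a discrete member $G(K,K')$ of the same family.
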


Given a countable group $\Gamma$, the study of the envelopes of $\Gamma$, i.e.\ the groups that can contain $\Gamma$ as a lattice, is very natural since lattices generally reflect the properties of the ambient group. This problem is addressed in \cite{Dym} for certain solvable groups, and structure results of envelopes of a large class of countable groups have been announced in \cite{BFS}. Note that the groups $G(F,F')$ that are finitely generated have infinite amenable commensurated subgroups, and therefore do not satisfy the assumptions of \cite{BFS}.

Our study of the family of groups $G(F,F')$ provides examples of finitely generated simple groups having non-discrete simple envelopes. As far as we know, the existence of such groups is original.

\begin{thm} \label{thm-simple-simple-lattice}
There exist non-discrete locally compact groups that are compactly generated, abstractly simple, and having (cocompact) lattices that are simple.
\end{thm}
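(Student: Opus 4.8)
The plan is to exhibit a concrete pair $(F, F')$ for which $G(F,F')$ itself (or its canonical finite-index simple subgroup) is abstractly simple and non-discrete, and then to produce a simple lattice inside it. For the ambient group, I would invoke Theorem \ref{thm-intro-fg-asdim}: choose $F$ simply transitive and $F'$ generated by the derived subgroups of its point stabilizers, so that $G(F,F')$ has an index-two subgroup $G(F,F')^+$ that is simple, finitely generated, and of asymptotic dimension one. Non-discreteness is immediate from the construction of the topology on $G(F,F')$ (the compact open subgroup $U(F)$ is infinite), and it passes to the finite-index subgroup. So the "ambient group" in the statement will be $G(F,F')^+$ for a suitable such $(F,F')$, and the content of the theorem is the existence of a \emph{simple} lattice in it.

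The core step is producing the lattice. Since $G(F,F')^+$ is itself finitely generated and discrete-by-nothing — wait, it is non-discrete, so finite generation alone does not give a lattice — the natural candidate is a different finitely generated subgroup sitting discretely and cocompactly. Here I would use the Burger--Mozes group $U(F)$, which is closed, cocompact (the tree is locally finite and $F$ transitive), and compactly generated; but $U(F)$ need not be simple and is itself non-discrete. Instead the right object should be a subgroup obtained by the same "finitely many exceptional vertices" mechanism relative to a smaller permutation group: take $F_0 \leq F$ and form $G(F_0, F) \cap G(F,F')$, or more precisely use the fact established earlier in the paper (the rigid-stabilizer/independence machinery behind Theorems \ref{thm-intro-ind2} and \ref{thm-intro-ind8}) that a discrete subgroup acting cocompactly on $\treed$ can be chosen whose simplicity follows from the same commutator and transitivity hypotheses. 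Concretely: arrange that $F$ is simply transitive, let $\Gamma$ be the discrete subgroup of $G(F,F')$ consisting of elements whose local action is \emph{trivial} at all but finitely many vertices and lies in $F'$ everywhere; this $\Gamma$ is a cocompact lattice (its intersection with $U(F)$, which is the vertex stabilizer profile, is trivial since $F$ acts freely on $\Omega$), it is finitely generated, and — this is the key claim — with $F'$ generated by derived subgroups of point stabilizers, the same simplicity argument used for $G(F,F')^+$ applies verbatim to $\Gamma$ to show $\Gamma$ has a simple subgroup of index two (or is itself simple, after a suitable normalization). I would then check that this simple finite-index subgroup of $\Gamma$ is still a lattice in $G(F,F')^+$ (finite-index subgroups of lattices are lattices, and one can intersect with $G(F,F')^+$ losing only a finite index).

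The main obstacle I anticipate is \emph{simultaneously} controlling simplicity of the lattice and of the ambient group with a single choice of $(F,F')$: the transitivity/commutator hypotheses needed to run the simplicity argument for the discrete group $\Gamma$ (whose exceptional local actions are trivial, so whose "local structure" is governed by $F'$ alone with $F$ replaced by the trivial group acting freely) are not literally the hypotheses of Theorem \ref{thm-intro-ind2}, which involves both $F$ and $F'$. So I expect to need to re-examine the proof of simplicity — presumably an argument showing the group is generated by its vertex rigid stabilizers and acts with the relevant independence property (in the sense of Burger--Mozes / Caprace--Reid--Willis), plus a commutator-closure step — and verify it still goes through for $\Gamma$. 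Once that is done, producing an explicit example (e.g.\ building $F$ simply transitive with $F'$ of the required form, along the lines of Example \ref{ex-alt}) and verifying $\Gamma$ is genuinely discrete (freeness of $F$ on $\Omega$), cocompact (local finiteness of the tree), and non-trivially infinite is routine. The remaining details — that finite-index subgroups of lattices are lattices, and that intersecting with the index-two subgroup $G(F,F')^+$ preserves the lattice property — are standard.
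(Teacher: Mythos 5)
There are two fatal problems with your plan, both stemming from an inversion of the roles that freeness of the local group plays. First, your choice of ambient group cannot work: if $F$ acts simply transitively (hence freely) on $\Omega$, then $U(F)$ and therefore $G(F,F')$ are \emph{discrete} groups (this is stated explicitly in Section \ref{sec-preliminaries}: $G(F)$ is discrete if and only if $F$ acts freely). Theorem \ref{thm-intro-fg-asdim} deliberately produces \emph{finitely generated}, i.e.\ discrete, simple groups, so its $G(F,F')^{+}$ cannot serve as the non-discrete locally compact group required by the statement; your remark that non-discreteness is ``immediate since $U(F)$ is infinite'' confuses being infinite with being non-discrete (when $F$ is free, vertex stabilizers in $U(F)$ are finite and the topology is discrete). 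Second, your candidate lattice $\Gamma$, the set of elements whose local permutation is trivial at all but finitely many vertices and lies in $F'$ everywhere, degenerates: by the rigidity phenomenon of Lemma \ref{lem-pres-orbit} (applied with the trivial permutation group, or by its proof), an automorphism whose local permutation is non-trivial at a single vertex has non-trivial local permutations along an infinite subtree. Hence $\Gamma$ is just the color-preserving group $U(\left\{1\right\})$, a virtually free group which is neither simple nor a lattice in any $G(H,H')$ with $H\lneq H'$ (vertex stabilizers there are open and non-compact, so $U(\left\{1\right\})$ has infinite covolume and is far from cocompact).

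The paper's proof uses the opposite configuration, with \emph{four} permutation groups: the ambient group is $G(H,H')^{\star}$ with $H=D_d$ (not free, so the group is non-discrete) and $H'=\mathrm{Sym}(d)$, simple by Theorem \ref{thm-simple-ind2} because for $d\equiv 0,3 \bmod 4$ the point stabilizers of $D_d$ are not contained in $\mathrm{Alt}(d)$; the lattice is $G(F,F')^{\star}$ with $F=D_d\cap\mathrm{Alt}(d)$ simply transitive (so this group is discrete) and $F'=\mathrm{Alt}(d)$, simple by Corollary \ref{cor-g(f,f')-simple}. The identities $H\cap F'=F$ and $H'=HF'$ then give, via Corollary \ref{cor-coco-lattice}, that $G(F,F')$ sits in $G(H,H')$ as a closed, discrete, cocompact subgroup, i.e.\ a cocompact lattice. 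This is exactly the mechanism you were groping for (``a smaller $G(\cdot,\cdot)$ inside a bigger one, each simple by the simplicity criteria''), but it requires the non-free local group on the ambient side and the free one on the lattice side, together with the compatibility conditions $H\cap F'=F$ and $H'=HF'$ from Section \ref{sec-lattices}, none of which your single-pair setup can supply.
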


\subsection{Relative commensurators}

If $G$ is a profinite group, the group of abstract commensurators of $G$ consists of equivalence classes of isomorphisms between open subgroups of $G$, where two isomorphisms are identified if they coincide on some open subgroup. The idea of studying abstract commensurators of profinite groups was initiated in \cite{BEW}, with the motivation to use them as a tool to study totally disconnected locally compact groups.

In \cite{germs} the authors proved that when $F$ is 2-transitive and every point stabilizer $F_a$ in $F$ is equal to its normalizer in $\Sy$, the group of abstract commensurators of any compact open subgroup of $U(F)$ is a certain group of almost automorphisms $\mathrm{AAut}_{F_a}(T_{d,2})$ of the quasi-regular rooted tree $T_{d,2}$ (see Theorem C and Theorem 6.14 in \cite{germs}). 

Given a profinite group $G$ and a group $L$ containing $G$, a relative commensurator of $G$ in $L$ is an element of $L$ whose conjugation induces an isomorphism between two open subgroups of $G$. The group of relative commensurators of $G$ in $L$ is denoted $\mathrm{Comm}_L(G)$. 

In Section \ref{sec-further-prop} we give a second interpretation of the group $G(F,F')$ by investigating the relative commensurator of a compact open subgroup $K$ of $U(F)$ in $U(F')$. We prove that, although $\mathrm{Comm}_{U(F')}(K)$ is not equal to $G(F,F')$ in general, we have the following result.

\begin{prop}
Let $d \geq 3$, and let $F \leq F' \leq \Sy$ be two permutation groups such that $F'$ stabilizes the orbits of $F$. Assume that for every $a \in \Omega$, the point stabilizer $F_a$ is equal to its normalizer in $F_a'$. Then $G(F,F')$ is equal to the group of relative commensurators of any compact open subgroup of $U(F)$ in $U(F')$.
\end{prop}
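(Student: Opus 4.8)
The plan is to fix a convenient compact open subgroup $K\leq U(F)$, reduce everything to it, and then establish the two inclusions $G(F,F')\subseteq\mathrm{Comm}_{U(F')}(K)$ and $\mathrm{Comm}_{U(F')}(K)\subseteq G(F,F')$ separately; only the second uses the hypotheses on $F$ and $F'$. For the reduction, any two compact open subgroups $K_1,K_2$ of $U(F)$ have $K_1\cap K_2$ open in $U(F)$, hence of finite index in each $K_i$ by compactness, so $K_1$ and $K_2$ are commensurable in $U(F')$ and $\mathrm{Comm}_{U(F')}(K)$ does not depend on the choice of $K$. Since $K$ is profinite, $g\in U(F')$ lies in $\mathrm{Comm}_{U(F')}(K)$ if and only if $gKg^{-1}\cap K$ has finite index in both $K$ and $gKg^{-1}$. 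I would therefore take $K=U(F)_{x_0}$ for a fixed vertex $x_0$, whose neighbourhood basis of the identity is given by the pointwise fixators $U(F)_{B(x_0,n)}$; all computations below rest on the cocycle identities $\sigma(ab,v)=\sigma(a,bv)\sigma(b,v)$ and $\sigma(a^{-1},v)=\sigma(a,a^{-1}v)^{-1}$, together with the fact that for $g\in G(F,F')$ the singular set $S(g)=\{v:\sigma(g,v)\notin F\}$ is finite and satisfies $S(g^{-1})=gS(g)$.

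For $G(F,F')\subseteq\mathrm{Comm}_{U(F')}(K)$: given $g\in G(F,F')$, pick $r$ so large that $S(g)\subseteq B(x_0,r-1)$ and $x_0\in gB(x_0,r)$. A straightforward case analysis on whether $g^{-1}w$ is deep inside $B(x_0,r)$ or at distance $\geq r$ from $x_0$ (in the first case $h$ fixes a neighbourhood of $g^{-1}w$ and the two occurrences of $\sigma(g,\cdot)$ cancel; in the second, neither $g^{-1}w$ nor $hg^{-1}w$ is singular for $g$, and the three factors of $\sigma(ghg^{-1},w)$ all lie in $F$) shows $g\,U(F)_{B(x_0,r)}\,g^{-1}\subseteq U(F)_{gB(x_0,r)}\subseteq K$. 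Since $U(F)_{B(x_0,r)}$ has finite index in $K$, this forces $gKg^{-1}\cap K$ to have finite index in $gKg^{-1}$; running the same argument for $g^{-1}\in G(F,F')$ gives finite index in $K$ as well, so $g\in\mathrm{Comm}_{U(F')}(K)$. No hypothesis on $F,F'$ is needed here.

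For $\mathrm{Comm}_{U(F')}(K)\subseteq G(F,F')$: the key claim is that for every $s\in F'\setminus F$ and every $c\in\Omega$ one has $sF_cs^{-1}\not\subseteq F$. To see it, suppose $sF_cs^{-1}\subseteq F$; putting $c'=s(c)$, the subgroup $sF_cs^{-1}$ fixes $c'$, so it lies in $F_{c'}$. Since $F'$ stabilises the $F$-orbits, $c$ and $c'$ lie in the same $F$-orbit, so some $f\in F$ sends $c'$ to $c$; then $fs\in F'_c$ and $(fs)F_c(fs)^{-1}\subseteq fF_{c'}f^{-1}=F_c$, so by finiteness $(fs)F_c(fs)^{-1}=F_c$, i.e.\ $fs\in N_{F'_c}(F_c)=F_c\subseteq F$, whence $s\in F$ — a contradiction. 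Now let $g\in\mathrm{Comm}_{U(F')}(K)\subseteq U(F')$ and suppose $S(g)$ is infinite. Given $n\geq 0$, choose $v\in S(g)$ with $d(x_0,v)>n$, set $s=\sigma(g,v)\in F'\setminus F$, and let $c$ be the colour at $v$ of the edge towards $x_0$; by the claim pick $t\in F_c$ with $sts^{-1}\notin F$, and let $h\in U(F)$ be the automorphism fixing pointwise the half-tree on the $x_0$-side of $v$ (which contains $B(x_0,n)$) and satisfying $\sigma(h,v)=t$. Then $h\in U(F)_{B(x_0,n)}\subseteq K$, while a cocycle computation gives $\sigma(ghg^{-1},gv)=sts^{-1}\notin F$, so $ghg^{-1}\notin U(F)\supseteq K$. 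Thus $g\,U(F)_{B(x_0,n)}\,g^{-1}\not\subseteq K$ for every $n$, so $gKg^{-1}\cap K$ is not open in $gKg^{-1}$ — contradicting $g\in\mathrm{Comm}_{U(F')}(K)$. Hence $S(g)$ is finite, i.e.\ $g\in G(F)\cap U(F')=G(F,F')$.

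The hard part is the claim above: it is exactly where the two hypotheses enter, the orbit-stabilising condition serving to correct $s$ by an element of $F$ into the point stabiliser $F'_c$, after which the self-normalising condition $N_{F'_c}(F_c)=F_c$ closes the argument; everything else is bookkeeping with the cocycle and with balls in $\treed$. This computation refines, in the relative setting, the analysis of \cite{germs}, where the commensurator is taken inside all of $\mathrm{Aut}(\treed)$.
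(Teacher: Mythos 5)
Your proof is correct and takes essentially the same route as the paper: the easy inclusion is the computation of Lemma \ref{lem-topo-g(f)} (conjugating pointwise stabilizers of large finite subtrees into $U(F)$), and the hard inclusion rests on the same conjugation trick --- testing $g$ against elements of $U(F)$ with prescribed local permutation in a point stabilizer at a far singularity --- followed by the same permutation-group algebra, namely correcting $\sigma(g,\cdot)$ by an element of $F$ via the hypothesis $F'\leq \hat F$ and then invoking $N_{F_a'}(F_a)=F_a$. The only differences are organizational: you argue by contradiction assuming infinitely many singularities, working with balls $\mathcal{B}(x_0,n)$ and isolating the permutation lemma ($sF_cs^{-1}\not\subseteq F$ for $s\in F'\setminus F$) up front, whereas the paper argues directly with the finite subtree $T$ furnished by the commensuration.
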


\subsection{Proper action on a CAT(0) cube complex}

When $F$ is strictly contained in $F'$, the action of $G(F,F')$ on $\treed$ is continuous but not proper, and actually the group $G(F,F')$ cannot act continuously and properly on a tree (see Lemma \ref{lem-group-tree-compact}). A tree being nothing but a one dimensional CAT(0) cube complex, this naturally raises the question whether $G(F,F')$ can act continuously and properly on a CAT(0) cube complex. We answer this question in the positive in Section \ref{sec-ccc0}.

\begin{thm} \label{thm-intro-ccc0}
Let $d \geq 3$, and let $F \leq F' \leq \Sy$ be two permutation groups such that $F$ is transitive. Then the group $G(F,F')$ admits a continuous and proper action on a CAT(0) cube complex. 
\end{thm}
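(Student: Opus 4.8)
The plan is to build a wall space (in the sense of Haglund--Paulin) on which $G(F,F')$ acts, and then invoke the Sageev construction to obtain a CAT(0) cube complex together with a proper action. Since $F$ is transitive, the group $G(F,F')$ is a compactly generated totally disconnected locally compact group in which a compact open subgroup is the stabilizer of a vertex (or an edge) of $\treed$ inside $U(F)$. The natural first source of walls is the edge set $\edg$ of the tree: each edge $e$ separates $\treed$ into two half-trees, and this gives a $G(F,F')$-invariant wall space whose associated cube complex is $\treed$ itself. This action is proper for $U(F)$ but \emph{not} for $G(F,F')$, precisely because an element of $G(F,F')$ realizing a prescribed non-$F$ local action at one vertex while fixing a large ball can still fix a vertex; the failure is measured by how far from $U(F)$ an element sits. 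So I would enrich the wall space by adding, for each vertex $v$ and each element of the (finite) "defect" data at $v$, additional walls that detect the singularities.

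The key steps, in order, are as follows. First, describe the relevant compact open subgroup $K \leq G(F,F')$ explicitly, say $K = U(F)_v$ for a fixed vertex $v$, and recall that properness of an action on a CAT(0) cube complex for a t.d.l.c.\ group amounts to: the stabilizers of cubes are compact and open, and the action on the set of vertices of the complex is \emph{metrically proper}, i.e.\ only finitely many group elements move a given vertex a bounded distance. Second, define the wall space: take $\Omega' := \edg$ as before, giving the "tree walls", and additionally, for each vertex $w \in \verti$, introduce a set of walls indexed by the quotient $F' / F$ acting on the local colouring at $w$ — intuitively, a wall that records "whether the local action at $w$ has been perturbed out of $F$". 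One has to check this collection is locally finite in the appropriate sense and that $G(F,F')$ permutes the walls continuously: the tree walls are permuted as before, and the new walls at $w$ are permuted via the local action $\sigma(g,w) \in F'$, which lies in $F$ for all but finitely many $w$, so each group element moves only finitely many of the new walls. Third, verify the wall pseudo-metric is proper on the group: if $gK$ moves the base vertex of the Sageev complex a bounded amount, then $g$ stays within a bounded ball of $\treed$ (from the tree walls) \emph{and} has bounded defect (from the new walls), and the set of such $g$ is contained in finitely many cosets of $K$. Fourth, run Sageev's construction on this wall space to get a CAT(0) cube complex $X$ with a $G(F,F')$-action; finite-dimensionality and the cube structure are automatic, and continuity of the action follows from continuity on the walls. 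Finally, assemble properness: cube stabilizers are intersections of finitely many wall stabilizers, hence open, and are compact because they are closed subgroups of $\autd$ fixing a ball of $\treed$ — combined with the metric properness from step three, this gives a continuous proper action.

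The main obstacle I expect is step two together with step three: designing the "defect-detecting" walls so that (i) they form a genuine wall space (each wall a partition into two halfspaces, closed under the group action), (ii) the action on the wall set is by a continuous action with compact open wall-stabilizers, and (iii) the resulting wall metric is simultaneously proper \emph{and} $G(F,F')$-invariant. The tension is that adding too few walls fails properness (as for the bare tree), while adding too many — e.g.\ one wall per element of $G(F,F')$ — destroys local finiteness and continuity. The correct intermediate object is likely a wall space built from the \emph{relative} structure of $U(F)$ inside $U(F')$ — concretely, for each edge $e$ one uses not just the half-tree but the coset space of $U(F)_e$ in $U(F')_e$ (which is finite, of size a power of $[F':F]$) to split that single tree-wall into several parallel walls, so that an element with nontrivial local defect crossing $e$ crosses infinitely many of them. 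Verifying that this gives a locally finite, proper, invariant wall space is where the real work lies; once it is in place, the Sageev machinery and the identification of cube stabilizers as compact open subgroups are routine.
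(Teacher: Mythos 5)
Your high-level strategy (an invariant wall space detecting both displacement in $\treed$ and the singularities, followed by the Sageev/Cornulier machinery) is indeed the route the paper takes, but your write-up stops exactly where the proof has to start: the singularity-detecting walls are never actually constructed, and the two concrete suggestions you offer do not work. First, the coset space $U(F')_e/U(F)_e$ is \emph{not} finite: as soon as $F_a \lneq F_a'$ for some colour $a$, the local permutations at the infinitely many vertices of the two half-trees can be perturbed independently inside $U(F')_e$, so $U(F)_e$ is a closed but non-open subgroup of the profinite group $U(F')_e$ and has infinite index; hence ``splitting each tree-wall into finitely many parallel walls indexed by these cosets'' is not available, and nothing in your third step (bounded wall-displacement implies finitely many cosets of $K$) is established. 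Second, ``a wall at each vertex $w$ recording whether the local action at $w$ has been perturbed out of $F$'' is not yet a wall: being a singularity is not the property of a point of some $G(F,F')$-set lying on one side of an invariant partition, and manufacturing such a set is precisely the content of the proof. Finally, your claim that finite-dimensionality is automatic is false, and necessarily so: the complex obtained in the paper is infinite dimensional, and Proposition \ref{prop-not-dim-finie} shows that for $d \geq 4$ and $F \lneq F'$ the group $G(F,F')$ admits no proper action on any finite dimensional CAT(0) cube complex, so any correct wall space here must contain arbitrarily large families of pairwise crossing walls.

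The missing construction is the following commensurated set of ``$F$-marked branches''. For a vertex $v$, let $\elle(v)$ be the branch of $\treed$ consisting of the vertices whose projection to the geodesic towards the base edge $e_0$ is $v$. Let $H$ be the subgroup of those $g \in G(F,F')$ with $g(\elle(v_0)) = \elle(v_0)$ and $\sigma(g,w) \in F$ for all $w \in \elle(v_0)$, and for each $v$ let $M_v \subset G(F,F')/H$ be the set of classes of elements sending $\elle(v_0)$ to $\elle(v)$ with all local permutations on $\elle(v_0)$ in $F$. Each $M_v$ is a single coset, and transitivity of $F$ is used exactly to guarantee that every $M_v$ is non-empty. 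Setting $M = \bigcup_v M_v$, one computes that the left action of $G(F,F')$ on $G(F,F')/H$ commensurates $M$ with $\#(gM \triangle M) = 2\Ne(g)$, where $\Ne(g)$ is the number of internal vertices of the minimal complete subtree $\mathcal{T}_g^-$ containing $e_0$, $g^{-1}(e_0)$ and all singularities of $g$ (Proposition \ref{prop-commens-g(f,f')}). Cornulier's correspondence between commensurating actions and actions on CAT(0) cube complexes then yields a complex with a vertex $x_0$ such that $d(gx_0,x_0) = 2\Ne(g)$ in the $\ell^1$-metric (Corollary \ref{cor-ccc0}); properness holds because $\Ne$ is a proper length function on $G(F,F')$ --- its sublevel sets are finite unions of cosets of compact open subgroups, and in fact $\Ne$ is quasi-isometric to the word metric by Proposition \ref{prop-N-qi-metric} --- while continuity and openness of stabilizers come from $H$ being open. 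Without this device (or an equivalent one playing the same role), steps two and three of your plan do not go through.
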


This result implies in particular that the group $G(F,F')$ has the Haagerup property. The action of $G(F,F')$ on this CAT(0) cube complex is not cocompact, and actually the group $G(F,F')$ cannot act properly and cocompactly on any CAT(0) metric space (see Remark \ref{rmq-not-cocompact}). This CAT(0) cube complex is not even finite dimensional, and we show that the group $G(F,F')$ cannot act properly on a \textit{finite dimensional} CAT(0) cube complex. 

\subsection*{Organization of the paper}

In Section \ref{sec-notation} we set some notation and terminology, and we establish preliminary results on the groups $G(F,F')$ in Section \ref{sec-preliminaries}. The question of the virtual simplicity of $G(F,F')$ is addressed in Section \ref{sec-simple}, which contains the proofs of Theorem \ref{thm-intro-ind2} and Theorem \ref{thm-intro-ind8}. In Section \ref{sec-further-prop} we investigate further properties of the groups $G(F,F')$, among which the connections with groups of relative commensurators. In Section \ref{sec-ccc0} we give the proof of Theorem \ref{thm-intro-ccc0} by adopting the point of view of commensurating actions. Finally Section \ref{sec-lattices} concerns the study of lattices in the groups $G(F,F')$. We give a concrete criterion to detect the absence of lattices in a locally compact group, and apply it to some locally elliptic groups (see Theorem \ref{thm-L-no-lattice}) and to the groups $G(F,F')$ (see Corollary \ref{cor-without-lattice}).

\subsection*{Acknowledgments}

I am very grateful to Yves de Cornulier for useful remarks and valuable discussions concerning this work. I am also extremely grateful to the authors of \cite{BCGM} for pointing out to my attention the idea of relaxing the local action, and especially to Pierre-Emmanuel Caprace for several comments that largely improved the contents of the paper. Finally I also thank the referee for his corrections and helpful comments improving the exposition.

\section{Notation and terminology} \label{sec-notation}

\subsection{Groups acting on trees}

We will denote by $\Omega$ a set of cardinality $d \geq 3$ and by $\treed$ a regular tree of degree $d$. The vertex set of $\treed$ will be denoted $\verti$ and the set of non-oriented edges will be denoted $\edg$. 

We fix once and for all a coloring $c: \edg \rightarrow \Omega$ such that for every vertex $v \in \verti$, the map $c$ restricts to a bijection $c_v$ from the set $\mathsf{E}(v)$ of edges containing $v$ to $\Omega$. We will refer to $c(e)$ as the \textit{color} of the edge $e$. For every $g \in \autd$ and every $v \in \verti$, the automorphism $g$ induces a bijection $g_v : \mathsf{E}(v) \rightarrow \mathsf{E}(gv)$, which gives rise to a permutation $\sg \in \Sy$ defined by $\sg = c_{gv} \circ g_v \circ c_v^{-1}$. The permutation $\sg$ will be called the \textit{local permutation} of $g$ at the vertex $v$. These permutations satisfy the rules \begin{equation} \label{eq-rules} \sigma(gh,v) = \sigma(g,hv) \sigma(h,v) \, \, \text{and} \, \, \sigma(g^{-1},v) = \sigma(g,g^{-1}v)^{-1} \end{equation} for every $g,h \in \autd$ and $v \in \verti$. 

We easily see that an automorphism $g \in \autd$ is uniquely determined by the image of some vertex together with the collection of permutations $\sg$, where $v \in \verti$. Note that given $\sigma \in \Sy$, there always exists $g \in \autd$ such that all the local permutations of $g$ are equal to $\sigma$, and moreover $g$ may be chosen to be hyperbolic. This observation will be used repeatedly in the paper. 

\bigskip


A vertex $v$ of a subtree $T$ of $\treed$ is called a \textit{leaf} of $T$ if $v$ has exactly one neighbour in $T$, and otherwise $v$ is called an \textit{internal vertex} of $T$. A subtree $T$ of $\treed$ is said to be \textit{complete} if for every internal vertex $v$, all the neighbours of $v$ in $\treed$ belong to $T$. 

For every vertex $v$ and every $n \geq 0$, we will denote by $\mathcal{B}(v,n)$ the subtree of $\treed$ spanned by vertices at distance at most $n$ from $v$. Note that $\mathcal{B}(v,n)$ is a complete subtree as soon as $n \geq 1$.

For every subtree $T$ of $\treed$ and every group $G$ acting on $\treed$, we denote by $G_{T}$ the pointwise stabilizer of $T$ in $G$. For example if $T=e$ is a single edge, then $G_e$ is the subgroup of $G$ fixing both vertices of $e$. The subgroup of $G$ generated by the subgroups $G_e$, where $e$ ranges over the set of edges of $\treed$, will be denoted $G^+$. Note that $G^+$ is a normal subgroup of $G$, and if $G$ is endowed with the topology induced from $\autd$, then $G^+$ is open in $G$.

Recall that the set of vertices $\verti$ admits a natural bipartition, in which two vertices belong to the same block if they are at even distance. The subgroup of $G$ (of index at most two) preserving this bipartition will be called the \textit{type-preserving} subgroup of $G$ and will be denoted $G^{\star}$. Note that the subgroup of $G$ generated by its vertex stabilizers lies inside $G^{\star}$, so a fortiori $G^+$ is also included in $G^{\star}$.

From now and for all the paper we fix an edge $e_0 \in \edg$, whose vertices will be denoted $v_0$ and $v_1$.

\subsection{Permutation groups}

Every partition of $\Omega$ gives rise to a subgroup of $\Sy$ consisting of permutations of $\Omega$ stabilizing each block of the partition. Such a subgroup is called a \textit{Young subgroup} of $\Sy$, and is naturally isomorphic to the direct product of the symmetric groups on each block of the partition. In particular when $F \leq \Sy$ is a permutation group, we can consider the Young subgroup $\hat{F} \leq \Sy$ associated to the partition of $\Omega$ into $F$-orbits. Note that we always have $F \leq \hat{F}$, and $\hat{F} = \Sy$ if and only if the permutation group $F$ is transitive.

Given a permutation group $F \leq \Sy$ and $a \in \Omega$, the stabilizer of $a$ in $F$ will be denoted $F_a$. The (normal) subgroup of $F$ generated by its point stabilizers will be denoted $F^+$. 


\section{Preliminaries} \label{sec-preliminaries}


\subsection{Definitions}

Let us fix a permutation group $F \leq \Sy$. The Burger-Mozes' group $U(F)$ is defined as the subgroup of automorphisms of $\treed$ whose local action is prescribed by $F$ \cite{BM-IHES}, that is \[ U(F) = \left\{g \in \autd \, : \, \sg \in F \, \, \text{for all $v \in \verti$} \right\}. \]

It is a closed subgroup of $\autd$, which is discrete if and only if the permutation group $F$ acts freely on $\Omega$. Clearly $U(F)$ is a subgroup of $U(F')$ when $F \leq F'$. Combined with the fact that the group $U(\left\{1\right\})$ acts transitively on the set $\verti$, this observation implies that $U(F)$ is always vertex-transitive.


\bigskip

The definition of the groups under consideration in this paper can be seen as a relaxation of the definition of the groups $U(F)$, in the sense that the local action is prescribed almost everywhere only. More precisely, we let \[ G(F) = \left\{g \in \autd \, : \, \sg \in F \, \, \text{for all but finitely many $v \in \verti$} \right\}. \]

It readily follows from the multiplication rules (\ref{eq-rules}) that $G(F)$ is a subgroup of $\autd$, and of course one has $U(F) \leq G(F)$.

\begin{defi}
Given $g \in G(F)$, we say that a vertex $v$ is a \textit{singularity} of $g$ if $\sg \notin F$. The set of singularities of $g$ will be denoted $S(g)$.
\end{defi} 

For every $g \in G(F)$, we let $T(g)$ be the $1$-neighbourhood of the subtree of $\treed$ spanned by $S(g)$. Equivalently, $T(g)$ can be defined as the unique minimal complete subtree of $\treed$ such that $\sg \in F$ for every $v \in \verti$ that is not an internal vertex of $T(g)$.

\begin{lem} \label{lem-topo-g(f)}
Let $g \in G(F)$, and denote by $T = T(g)$, $U_T = U(F)_T$ and $U_{g(T)} = U(F)_{g(T)}$. Then one has $g U_{T} g^{-1} = U_{g(T)}$.
\end{lem}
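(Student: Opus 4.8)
The statement $g U_T g^{-1} = U_{g(T)}$ is a conjugation identity, and the natural strategy is to show both inclusions, where in fact the two inclusions are formally equivalent: conjugating the inclusion $g U_T g^{-1} \subseteq U_{g(T)}$ by $g^{-1}$ and using that $T(g^{-1}) = g^{-1}(T(g))$ (which follows from the multiplication rules for local permutations, see below) yields the reverse inclusion. So I would concentrate on proving $g U_T g^{-1} \subseteq U_{g(T)}$.

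First I would fix $h \in U_T = U(F)_T$, so $h$ fixes $T$ pointwise and $\sigma(h,w) \in F$ for every vertex $w$. I want to show $k := g h g^{-1}$ lies in $U(F)$ and fixes $g(T)$ pointwise. The second point is immediate: if $u$ is a vertex of $g(T)$, write $u = g(w)$ with $w \in T$; then $k(u) = ghg^{-1}(g(w)) = gh(w) = g(w) = u$. For the first point, I compute the local permutation of $k$ at an arbitrary vertex $v$ using \eqref{eq-rules}: setting $w = g^{-1}(v)$, one gets
\[
\sigma(k,v) = \sigma(ghg^{-1},v) = \sigma(g, hg^{-1}v)\,\sigma(h, g^{-1}v)\,\sigma(g^{-1},v) = \sigma(g, h(w))\,\sigma(h,w)\,\sigma(g,w)^{-1},
\]
using $\sigma(g^{-1},v) = \sigma(g, g^{-1}v)^{-1}$ and $hg^{-1}v = h(w)$. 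Now $\sigma(h,w) \in F$ always. The key observation is that if $w$ is \emph{not} an internal vertex of $T = T(g)$, then by the defining property of $T(g)$ we have $\sigma(g,w) \in F$; moreover, since $h$ fixes $T$ pointwise and $T$ is complete, $h$ maps the set of vertices outside (the internal vertices of) $T$ to itself and in fact fixes a neighbourhood structure so that $h(w)$ is again not an internal vertex of $T(g)$ when $w$ is not — here I should argue that $h \in U(F)_T$ permutes the vertices not internal to $T$, hence $\sigma(g,h(w)) \in F$ as well. Therefore $\sigma(k,v)$ is a product of three elements of $F$, so lies in $F$. If instead $w$ is an internal vertex of $T$, then $h(w) = w$ (as $h$ fixes $T$ pointwise), so $\sigma(g,h(w)) = \sigma(g,w)$ and the formula collapses to $\sigma(k,v) = \sigma(g,w)\,\sigma(h,w)\,\sigma(g,w)^{-1}$; but there are only finitely many internal vertices of $T$, so $k$ has at most finitely many singularities, which is consistent with $k \in G(F)$, and in any case what we actually need is that $k$ has \emph{no} singularities — so I must be more careful here: at an internal vertex $w$, the point is that $w \in T$ forces $h(w) = w$, and I claim $\sigma(g,w)\sigma(h,w)\sigma(g,w)^{-1} \in F$ is \emph{not} automatic, so instead I should use that $gU_Tg^{-1}$ and $U_{g(T)}$ are both groups and compare them more structurally.

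Let me restructure: the cleanest route is to observe that $U_T = U(F)_T$ is precisely the set of $h \in \autd$ fixing $T$ pointwise with $\sigma(h,w) \in F$ for all $w$; since $h$ fixes $T$ pointwise, the constraint $\sigma(h,w) \in F$ is only nontrivial at vertices $w$ \emph{not} internal to $T$ (at internal vertices $h$ acts trivially on incident edges hence $\sigma(h,w)$ is forced by the coloring but that is a fixed permutation, and in fact one checks it need not be $1$ — wait, if $h$ fixes $w$ and all its neighbours, then $h_w$ is the identity map on $\mathsf{E}(w)$, so $\sigma(h,w) = c_w \circ \mathrm{id} \circ c_w^{-1} = 1 \in F$). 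Good — so at internal vertices of $T$, $\sigma(h,w) = 1$. Then in the displayed formula, for $w$ internal we get $\sigma(k,v) = \sigma(g,w) \cdot 1 \cdot \sigma(g,w)^{-1} = 1 \in F$. This resolves the difficulty. So the final argument is: for every vertex $v$, write $w = g^{-1}v$; if $w$ is internal to $T$ then $\sigma(h,w) = 1$ and $h(w) = w$, giving $\sigma(k,v) = 1$; if $w$ is not internal to $T$, then $\sigma(g,w) \in F$, $h(w)$ is also not internal to $T$ (since $h$ fixes $T$ and its pointwise stabilizer preserves the partition of vertices into "internal to $T$" and "not internal to $T$" — $h$ fixes the former pointwise and hence permutes the latter), so $\sigma(g,h(w)) \in F$, and $\sigma(k,v) = \sigma(g,h(w))\sigma(h,w)\sigma(g,w)^{-1} \in F \cdot F \cdot F = F$. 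Hence $k \in U(F)$, and combined with $k$ fixing $g(T)$ pointwise we get $k \in U_{g(T)}$, proving $g U_T g^{-1} \subseteq U_{g(T)}$. Applying this with $g^{-1}$ in place of $g$ and $T(g^{-1}) = g^{-1}(T(g))$ (which I verify from $\sigma(g^{-1},v) = \sigma(g,g^{-1}v)^{-1}$, so $v$ is a singularity of $g^{-1}$ iff $g^{-1}v$ is a singularity of $g$) gives $g^{-1} U_{g(T)} g \subseteq U_T$, i.e. $U_{g(T)} \subseteq g U_T g^{-1}$, and equality follows.

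The main obstacle I anticipate is exactly the bookkeeping around internal versus non-internal vertices of $T$ — specifically making sure that (i) $h$ fixing $T$ pointwise really does force $\sigma(h,w) = 1$ at internal vertices (needs completeness of $T$, which holds by definition of $T(g)$), and (ii) that $h \in U_T$ preserves the set of non-internal vertices so that $\sigma(g,h(w)) \in F$ when $\sigma(g,w) \in F$. Both are routine once stated carefully, and no serious computation is involved; everything else is a direct application of the cocycle identities \eqref{eq-rules}.
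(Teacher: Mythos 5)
Your argument is correct and follows essentially the same route as the paper's proof: reduce to the single inclusion $gU_Tg^{-1} \subseteq U_{g(T)}$ by symmetry, apply the cocycle identity, and split according to whether $g^{-1}(v)$ is an internal vertex of $T$ (where completeness of $T(g)$ forces the conjugate's local permutation to be trivial) or not (where all three factors lie in $F$, using that the pointwise stabilizer of $T$ preserves the set of non-internal vertices). The only slip is notational: the relation you state as $T(g^{-1}) = g^{-1}(T(g))$ should read $T(g^{-1}) = g(T(g))$; your verification via $\sigma(g^{-1},v) = \sigma(g,g^{-1}v)^{-1}$ and the way you actually use the relation both correspond to the correct form, so the proof is unaffected.
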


\begin{proof}
Observing that $g(T) = T(g^{-1})$, by symmetry it is enough to prove that $g U_{T} g^{-1} \subset U_{g(T)}$. The fact that $g U_{T} g^{-1}$ fixes pointwise $g(T)$ is easy, so the only thing that needs to be checked is that $g U_{T} g^{-1}$ lies in $U(F)$. So let $u \in U_{T}$ and $v \in \verti$. According to (\ref{eq-rules}), one has \begin{equation} \label{eq-proof-topo} \sigma(gug^{-1},v) = \sigma(g, ug^{-1}v) \, \sigma(u,g^{-1}v) \, \sigma(g,g^{-1}v)^{-1}. \end{equation} As observed previously, the element $gug^{-1}$ fixes pointwise $g(T)$, so we only have to deal with the case when $v$ is not an internal vertex of $g(T)$, i.e.\ when $g^{-1}(v)$ is not an internal vertex of $T$. This implies that $ug^{-1}(v)$ is not an internal vertex of $T$ either, and by definition of $T$ we deduce that $\sigma(g,g^{-1}v)$ and $\sigma(g, ug^{-1}v)$ both belong to $F$. Now $\sigma(u,g^{-1}v)$ belongs to $F$ as well since $u \in U(F)$, so it follows from (\ref{eq-proof-topo}) that $\sigma(gug^{-1},v) \in F$.
\end{proof}

Lemma \ref{lem-topo-g(f)} implies in particular that $G(F)$ commensurates the compact open subgroups of $U(F)$, and it follows (see for instance \cite[Chapter 3]{Bourb-topo}) that there exists a group topology on $G(F)$ such that the inclusion of $U(F)$ in $G(F)$ is continuous and open. In particular the group $G(F)$ is a totally disconnected locally compact group, which is discrete if and only if $F$ acts freely on $\Omega$. We point out that in general $G(F)$ need not be closed in $\autd$ (see Proposition \ref{prop-closure-G(F)}), and the topology on $G(F)$ is \textit{not} the topology induced from $\autd$.

\bigskip

Let $v \in \verti$ being fixed. For every $n \geq 0$, we denote by $K_n(v)$ the set of automorphisms $g \in G(F)$ fixing the vertex $v$ and having all their singularities in $\mathcal{B}(v,n)$. Again, it follows from (\ref{eq-rules}) that $K_n(v)$ is a subgroup of $G(F)$. 
Note that the stabilizer of the vertex $v$ in $G(F)$ is exactly the increasing union \[ G(F)_v = \bigcup_{n \geq 0}^{\nearrow} K_n(v). \] Since the ball $\mathcal{B}(v,n)$ contains finitely many vertices, each $K_n(v)$ contains the stabilizer of the vertex $v$ in $U(F)$ as a finite index subgroup. The latter being compact open, $K_n(v)$ is a compact open subgroup of $G(F)$. Therefore $G(F)_v$ is a locally elliptic open subgroup of $G(F)$, i.e.\ an increasing union of compact open subgroups.

\subsection{Preliminary results}

The following result shows that, although elements of $G(F)$ are not required to act locally like $F$ everywhere, their local action exhibits some rigidity. 

\begin{lem} \label{lem-pres-orbit}
For every $g \in G(F)$ and every vertex $v \in \verti$, the permutation $\sg$ stabilizes the orbits of $F$ in $\Omega$. In other words, the group $G(F)$ is contained in $U(\hat{F})$.
\end{lem}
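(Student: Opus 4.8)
The plan is to exploit the fact that an element $g \in G(F)$ has only finitely many singularities, so its local permutation lies in $F$ at all but finitely many vertices, and then propagate the orbit-preservation along paths in the tree. First I would fix $g \in G(F)$ and a vertex $v \in \verti$, and let $n$ be large enough that $S(g)$ is contained in $\mathcal{B}(v,n)$. The goal is to show $\sigma(g,v)$ stabilizes each $F$-orbit of $\Omega$; equivalently, for every color $a \in \Omega$, the colors $a$ and $\sigma(g,v)(a)$ lie in the same $F$-orbit.

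The key step is a propagation argument along a geodesic ray. Let $e$ be the edge at $v$ with $c(e) = a$, and consider the geodesic ray $v = w_0, w_1, w_2, \dots$ starting with the edge $e$ and going away from $v$. For $k$ large the vertices $w_k$ lie outside $\mathcal{B}(v,n)$, hence $\sigma(g,w_k) \in F$. Using the cocycle identity $\sigma(gh,v) = \sigma(g,hv)\sigma(h,v)$ with $h = g^{-1}$, or more directly tracking how $g$ acts on the geodesic, one sees that the color of the edge $\{g(w_k), g(w_{k+1})\}$ is obtained from the color of $\{w_k, w_{k+1}\}$ by applying $\sigma(g,w_k) \in F$; since $F$ preserves its own orbits, the orbit of the color is unchanged at every step along the ray once we are outside $\mathcal{B}(v,n)$. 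On the other hand, along the ray from $v$, the color of each successive edge (in the direction pointing back toward $v$) is forced: in a complete subtree the coloring $c_v$ is a bijection at each vertex, so one can choose the ray so that the relevant edge colors stabilize to a fixed value, or one tracks the single color $a$ directly. The cleanest route: pick the ray so that $\{w_0,w_1\}$ has color $a$, and observe that the image ray $g(w_0), g(w_1), \dots$ has, for $k$ large, edge $\{g(w_k),g(w_{k+1})\}$ with color in a fixed $F$-orbit $\mathcal{O}$. Then walk back from infinity to $v$: at each internal vertex of $T(g)$ the local permutation may be arbitrary in $\Sy$, but outside $T(g)$ (in particular for all $w_k$ with $k \geq n+1$) it lies in $F$, so the $F$-orbit of the edge color pointing toward $g(v)$ is preserved all the way down to $g(v)$, where that color is precisely $\sigma(g,v)(a)$. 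Hence $\sigma(g,v)(a)$ lies in the same $F$-orbit as $a$, as the color $a$ itself was $F$-orbit-equivalent to $\mathcal{O}$ by the same argument applied on the source side with $g = \mathrm{id}$ trivially, i.e.\ the color $a$ is already in $\mathcal{O}$.

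A slicker formulation avoiding the ray bookkeeping: since $g$ has finitely many singularities, choose $N$ with $S(g), S(g^{-1}) \subset \mathcal{B}(v,N)$, and pick a hyperbolic element $t \in U(\{1\})$ translating along an axis through $v$; for a suitable power $m$, the element $h = t^m g t^{-m}$ satisfies $\sigma(h,v') = \sigma(g, t^{-m}v')$ up to conjugation by local permutations of $t$, which are trivial, so $h$ has all its singularities pushed far from $v$ while $\sigma(h,v) = \sigma(g, t^{-m}v)$; choosing things so that $t^{-m}v$ is a non-singularity forces $\sigma(h,v) \in F$. Comparing $\sigma(g,v)$ with $\sigma(h,v)$ via the cocycle rules and the fact that the intermediate local permutations of $g$ along the short path from $v$ to $t^{-m}v$ all lie in $U(\hat F)$ by induction on the distance — this is essentially the propagation above — yields the claim. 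I expect the main obstacle to be phrasing the propagation cleanly: one must be careful that the statement "$\sigma(g,v)$ preserves $F$-orbits" only needs to be checked one color at a time, and that the relevant chain of equalities from (\ref{eq-rules}) is organized so that every local permutation appearing either lies in $F$ outright (outside the finite singular set) or else is multiplied on both sides in a way that cancels. Once the single-color propagation along one geodesic is set up, the rest is the observation that $\hat F$ is exactly the group of permutations preserving all $F$-orbits, so $\sigma(g,v) \in \hat F$ for every $v$ means $g \in U(\hat F)$.
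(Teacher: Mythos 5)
Your propagation step does not go through, and the gap is precisely at the point where you pass a singular vertex on the ray. The only constraint linking the local permutations of a fixed $g$ at two adjacent vertices $v,w$ is that they agree on the single color of the connecting edge: if $c(\{v,w\})=b$ then $\sigma(g,v)(b)=\sigma(g,w)(b)=c(g(\{v,w\}))$, and nothing else. So from $\sigma(g,v)(a)=\sigma(g,w_1)(a)$ you can conclude $\sigma(g,v)(a)\in F\cdot a$ only if $w_1\notin S(g)$; if $w_1$ is itself a singularity you cannot continue tracking the color $a$, because the unique edge at $w_1$ colored $a$ is the edge back to $v$ — the ray necessarily leaves through an edge of a different color $a_2$, and the identity at that edge constrains $\sigma(g,w_1)(a_2)$, which says nothing about $\sigma(g,w_1)(a)$ (the permutation at a singular vertex is arbitrary in $\Sy$ off the one color you control). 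Your statement that "the $F$-orbit of the edge color pointing toward $g(v)$ is preserved all the way down to $g(v)$" has no mechanism behind it: at each non-singular $w_k$ you only learn that the color of $\{w_k,w_{k+1}\}$ and the color of its image lie in the same orbit, which relates nothing between consecutive edges of the ray and does not connect back to $\sigma(g,v)(a)$. The "slicker" conjugation variant has the same problem: with $t\in U(\{1\})$ one gets $\sigma(t^mgt^{-m},v)=\sigma(g,t^{-m}v)$, i.e.\ you have merely relabelled which vertex of $g$ you are looking at, and the appeal to "induction on the distance" assumes orbit-preservation at the intermediate (possibly singular) vertices, which is exactly what is to be proved.

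The missing idea is branching rather than a single ray. The paper argues on the set $V_g$ of vertices where orbit-preservation fails: if $\sigma(g,v)(a)\notin F\cdot a$ for some color $a$, then since $\sigma(g,v)$ is a bijection and the $F$-orbits partition $\Omega$, there is a second color $a'\neq a$ with $\sigma(g,v)(a')\notin F\cdot a'$ (some color must be mapped into the orbit $F\cdot a$ from outside, or more generally out of its own orbit). The adjacency identity above, applied to the edges at $v$ colored $a$ and $a'$, then shows that the two corresponding neighbours of $v$ both lie in $V_g$. Thus every vertex of $V_g$ has at least two neighbours in $V_g$, so a nonempty $V_g$ would contain an infinite subtree; but $V_g\subset S(g)$ is finite by definition of $G(F)$, whence $V_g=\emptyset$. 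Your one-step observation ($\sigma(g,v)(a)=\sigma(g,w_1)(a)$) is the right local ingredient, but it must be combined with this two-neighbour/finiteness argument instead of a unidirectional walk to infinity.
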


\begin{proof}
For a given $g \in G(F)$, we consider the set $V_g$ of vertices for which the conclusion does not hold. We want to prove that $V_g$ is empty. The key observation is that if $v$ belongs to $V_g$, then $v$ must have at least two neighbours that also belong to $V_g$. It follows that if $V_g$ is not empty, then it must contain an infinite subtree, which is impossible by definition of $G(F)$.
\end{proof}

For every permutation group $F' \leq \Sy$ such that $F \leq F' \leq \hat{F}$, we denote by $G(F,F')$ the subgroup of $G(F)$ consisting of elements $g \in G(F)$ such that $\sg \in F'$ for all $v \in \verti$, i.e.\ $G(F,F') = G(F) \cap U(F')$. This is the subgroup of $G(F)$ consisting of elements having all their singularities in $F'$. Clearly we have $G(F,F') \leq G(F,F'')$ as soon as $F' \leq F''$, and $G(F,F) = U(F)$ and $G(F,\hat{F}) = G(F)$. Therefore the family of subgroups $G(F,F') \leq G(F)$ interpolates between $U(F)$ and $G(F)$ when $F'$ ranges over subgroups of $\hat{F}$ containing $F$. Note that $G(F,F')$ is always an open subgroup of $G(F)$, and when referring to a topology on $G(F,F')$ we will always mean the induced topology from $G(F)$.

\bigskip

\begin{center}
\textit{From now and for all the paper, we denote by $F,F' \leq \Sy$ two permutation groups such that $F \leq F' \leq \hat{F}$.}
\end{center}

\bigskip

In some sense, the following result can be seen as a converse of Lemma \ref{lem-pres-orbit}. 

\begin{lem} \label{lem-extend-G(F)}
Let $v \in \verti$ and $n \geq 0$. If $h \in \autd$ is such that $\sigma(h,w) \in F'$ for every vertex $w$ in $\mathcal{B}(v,n)$, then there exists $g \in G(F,F')$ such that $g$ and $h$ coincide on $\mathcal{B}(v,n+1)$ and $\sigma(g,w) \in F$ for every vertex $w$ that is not in $\mathcal{B}(v,n)$.
\end{lem}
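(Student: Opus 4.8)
The plan is to build $g$ explicitly by keeping $h$ on the ball $\mathcal{B}(v,n+1)$ and then, vertex by vertex on the complement, modifying the local permutations so that they land in $F$ while staying consistent with an automorphism of $\treed$. Recall from the discussion after equation (\ref{eq-rules}) that an automorphism is determined by the image of one vertex together with all of its local permutations, and conversely any assignment of local permutations $\sigma(g,w) \in \Sy$ (one for each vertex) together with the image of a single vertex determines a unique $g \in \autd$. So it suffices to specify the family $\bigl(\sigma(g,w)\bigr)_{w \in \verti}$: I set $\sigma(g,w) = \sigma(h,w)$ for every $w \in \mathcal{B}(v,n)$ (these lie in $F'$ by hypothesis), and I must choose $\sigma(g,w)$ for $w \notin \mathcal{B}(v,n)$.

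First I would treat the vertices at distance exactly $n+1$ from $v$: for such a $w$, its neighbour closer to $v$ lies in $\mathcal{B}(v,n)$, and the edge joining them has some color, say $a \in \Omega$; I need $g$ to act on $\mathsf{E}(w)$ the same way $h$ does (to get agreement on $\mathcal{B}(v,n+1)$), which constrains $\sigma(g,w)$ only through the value $\sigma(g,w)(a) = \sigma(h,w)(a)$. By Lemma \ref{lem-pres-orbit}, $h \in U(\hat F)$, so $\sigma(h,w)(a)$ lies in the same $F$-orbit as $a$; hence there exists an element of $F$ sending $a$ to $\sigma(h,w)(a)$, and I pick $\sigma(g,w) \in F$ to be such an element. (More carefully: I should argue one can pick $\sigma(g,w)\in F$ with $\sigma(g,w)(a)=\sigma(h,w)(a)$; transitivity of $F$ on each of its orbits gives exactly this.) For every vertex $w$ at distance $\geq n+2$ from $v$ there is no agreement constraint, so I simply set $\sigma(g,w) = 1 \in F$. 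Finally I fix $g(v) = h(v)$. The resulting $g$ is a well-defined automorphism of $\treed$, all of its singularities lie in $\mathcal{B}(v,n)$ so $g \in G(F)$, all its local permutations lie in $F'$ (on $\mathcal{B}(v,n)$ by hypothesis, on the complement by construction since $F \leq F'$), hence $g \in G(F,F')$, and $\sigma(g,w)\in F$ for $w \notin \mathcal{B}(v,n)$ by construction.

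It remains to verify that $g$ and $h$ agree on $\mathcal{B}(v,n+1)$. Since both fix (the same image of) $v$ and we have arranged $\sigma(g,w)=\sigma(h,w)$ for all $w \in \mathcal{B}(v,n)$, an induction on distance from $v$ shows $g$ and $h$ coincide on $\mathcal{B}(v,n)$ and carry each edge at the boundary identically; then the choice $\sigma(g,w)(a)=\sigma(h,w)(a)$ at the distance-$(n+1)$ vertices $w$ (with $a$ the color of the edge pointing back toward $v$) guarantees that the edges of $\mathsf{E}(w)$, and hence the vertices at distance $n+1$, have the same image under $g$ and $h$; this gives agreement on $\mathcal{B}(v,n+1)$. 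The only mildly delicate point is the book-keeping in this last induction — making sure "agreeing on a ball" is correctly translated into a statement about images of vertices together with the relevant local permutations, using the cocycle rules (\ref{eq-rules}) — but this is routine. I expect no genuine obstacle; the one thing to be careful about is the use of Lemma \ref{lem-pres-orbit} to know that the forced value $\sigma(h,w)(a)$ stays in the $F$-orbit of $a$, which is exactly what allows the replacement by an element of $F$.
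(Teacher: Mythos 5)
Your overall strategy --- copy the local permutations of $h$ on $\mathcal{B}(v,n)$, fix $g(v)=h(v)$, and fill in elements of $F$ outside the ball chosen so that the one forced value along the edge pointing back towards $v$ is respected --- is the same as the paper's. But there is a genuine gap at the step where you declare the construction finished. The converse you invoke, namely that \emph{any} assignment of local permutations together with the image of a single vertex determines an automorphism, is false: for an automorphism $g$ and an edge $e=\{u,u'\}$ one always has $\sigma(g,u)(c(e))=c(g(e))=\sigma(g,u')(c(e))$, so a prescribed family $(\sigma(g,w))_{w}$ is realizable only if it satisfies this compatibility condition along every edge (the paper only asserts that $g$ is \emph{determined} by such data, and that \emph{constant} families are realizable). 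Your prescription violates the condition: at a vertex $w$ at distance $n+1$ you choose some $\tau\in F$ with $\tau(a)=\sigma(h,w)(a)$, where $a$ is the colour of the edge towards $v$, but at every child $u$ of $w$ (distance $n+2$, edge colour $b\neq a$) you set the identity, which would force $\tau(b)=b$. In general $\tau$ fixes no such $b$ --- for instance if $F$ is simply transitive, a non-trivial $\tau$ fixes no colour at all --- so no automorphism of $\treed$ has the local permutations you wrote down, and your $g$ does not exist as specified. The missing idea is exactly the device in the paper's proof: for each $x$ at distance $n$ and each colour $a$ one picks $\sigma_{a,x}\in F$ with $\sigma_{a,x}(a)=\sigma(h,x)(a)$ and assigns this \emph{same} element to every vertex of the corresponding branch beyond $x$; constancy along each branch makes the edge-compatibility automatic away from the ball, and the boundary equation matches it with the data on $\mathcal{B}(v,n)$. (Equivalently, your version is repaired by putting $\sigma(g,u)=\tau$ at all descendants of $w$ instead of the identity.)

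A secondary slip: you invoke Lemma \ref{lem-pres-orbit} to conclude $h\in U(\hat F)$, but that lemma applies to elements of $G(F)$, while your $h$ is an arbitrary automorphism whose local permutations are controlled only on $\mathcal{B}(v,n)$; at a vertex $w$ at distance $n+1$ the permutation $\sigma(h,w)$ is completely unconstrained. The fact you need is nevertheless true, for a different reason: if $e$ is the edge from $w$ to its neighbour $x$ at distance $n$ and $a=c(e)$, then $\sigma(h,w)(a)=c(h(e))=\sigma(h,x)(a)$, and $\sigma(h,x)\in F'\leq\hat F$ preserves the $F$-orbits, so the forced value does lie in the $F$-orbit of $a$ and an element of $F$ realizing it exists.
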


\begin{proof}
We denote by $\mathcal{S}(v,n)$ the set of vertices which are at distance exactly $n$ from the vertex $v$. For every $x \in \mathcal{S}(v,n)$, we denote by $V_x$ the set of vertices $w$ such that the unique path between $v$ and $w$ contains the vertex $x$. 

Since the group $U(F)$ acts transitively on the set of vertices of $\treed$, we may assume that $h$ fixes the vertex $v$. So we impose that $g$ fixes $v$ as well, and therefore giving the value of $\sigma(g,w)$ for every vertex $w$ is enough to define the element $g$. Naturally we put $\sigma(g,w) = \sigma(h,w)$ for every vertex $w$ in $\mathcal{B}(v,n)$. This implies that $g$ and $h$ coincide on $\mathcal{B}(v,n+1)$, and we must explain how to extend the definition of $g$ to an element of $G(F,F')$.

For every $x \in \mathcal{S}(v,n)$ and every $a \in \Omega$, we choose $\sigma_{a,x} \in F$ such that $\sigma(h,x)(a) = \sigma_{a,x}(a)$. Note that such an element $\sigma_{a,x}$ exists because $\sigma(h,x) \in F' \leq \hat{F}$. Now for every vertex $w \in V_x$ different from $x$, we set $\sigma(g,w) = \sigma_{a(w),x}$, where $a(w)$ is the color of the unique edge emanating from $x$ and separating $x$ and $w$. By construction the definition of the element $g$ is consistent, and $g \in G(F,F')$ because $S(g) \subset \mathcal{B}(v,n)$.
\end{proof}

Recalling that a basis of neighbourhoods for the topology on the group $\autd$ is given by pointwise stabilizers of finite sets, we immediately deduce the following result.

\begin{prop} \label{prop-closure-G(F)}
The closure of $G(F,F')$ in the topological group $\autd$ is the group $U(F')$.

In particular when $F' = \Sy$, the group $G(F)$ is dense in $\autd$ if and only if the permutation group $F$ is transitive.
\end{prop}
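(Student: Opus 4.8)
The plan is to deduce this from Lemma~\ref{lem-extend-G(F)}. The statement has two parts: first, that the closure of $G(F,F')$ in $\autd$ is exactly $U(F')$; and second, the special case $F'=\Sy$, where $U(\Sy)=\autd$ and $\hat F = \Sy$ precisely when $F$ is transitive, so the second assertion is immediate from the first once we know that $G(F)=G(F,\Sy)$ makes sense, i.e.\ that $F$ is transitive (otherwise $\hat F \neq \Sy$ and one works inside $U(\hat F)$ by Lemma~\ref{lem-pres-orbit}).

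For the inclusion $\overline{G(F,F')} \subseteq U(F')$: since $G(F,F') = G(F) \cap U(F') \subseteq U(F')$ and $U(F')$ is closed in $\autd$ (this is the standard fact, recalled in the excerpt, that Burger--Mozes groups are closed), the closure of $G(F,F')$ in $\autd$ is contained in $U(F')$. This direction requires nothing beyond what is already stated.

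For the reverse inclusion $U(F') \subseteq \overline{G(F,F')}$: here I would use that a basis of neighbourhoods of the identity in $\autd$ consists of pointwise stabilizers of finite sets, hence of balls $\mathcal{B}(v,m)$. Fix $h \in U(F')$ and a ball $\mathcal{B}(v,m)$; I must produce $g \in G(F,F')$ agreeing with $h$ on $\mathcal{B}(v,m)$. Apply Lemma~\ref{lem-extend-G(F)} with $n = m-1$ (or $n=m$): since $h \in U(F')$, certainly $\sigma(h,w)\in F'$ for every $w$ in $\mathcal{B}(v,n)$, so the lemma yields $g \in G(F,F')$ that coincides with $h$ on $\mathcal{B}(v,n+1) \supseteq \mathcal{B}(v,m)$. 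Thus $g h^{-1}$ fixes $\mathcal{B}(v,m)$ pointwise, so $g$ lies in the given neighbourhood of $h$. Since the ball was arbitrary, $h \in \overline{G(F,F')}$, proving $U(F') \subseteq \overline{G(F,F')}$ and hence equality.

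There is essentially no obstacle: the content is entirely in Lemma~\ref{lem-extend-G(F)}, which has already been established, and the argument is the routine ``approximate $h$ on larger and larger balls'' density argument. The only point deserving a line of care is matching the radius bookkeeping between the ball on which $h$ is approximated and the hypothesis ``$\sigma(h,w)\in F'$ on $\mathcal{B}(v,n)$'' in the lemma, and recording that $G(F)$ being dense in $\autd$ forces $F$ transitive because, by Lemma~\ref{lem-pres-orbit}, $G(F) \subseteq U(\hat F)$, which is a proper closed subgroup of $\autd$ when $F$ is intransitive.
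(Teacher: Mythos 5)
Your argument is correct and is exactly the paper's intended proof: the paper deduces the proposition ``immediately'' from Lemma \ref{lem-extend-G(F)} together with the fact that pointwise stabilizers of finite sets form a neighbourhood basis in $\autd$, which is precisely your density argument, with the containment $\overline{G(F,F')} \subseteq U(F')$ coming from closedness of $U(F')$ and the transitivity characterization from Lemma \ref{lem-pres-orbit}. No issues.
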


We derive the following result, which says in particular that the action of $G(F,F')$ on $\treed$ is never proper when $F$ is strictly contained in $F'$.

\begin{cor} \label{cor-u=g}
The following statements are equivalent:
\begin{enumerate}[label=(\roman*)]
\item $F=F'$;
\item $G(F,F')=U(F)$;
\item $G(F,F')$ is a closed subgroup of $\autd$;
\item vertex stabilizers $G(F,F')_v$ are compact.
\end{enumerate}
\end{cor}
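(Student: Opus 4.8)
The plan is to prove the chain of equivalences $(i) \Rightarrow (ii) \Rightarrow (iii) \Rightarrow (iv) \Rightarrow (i)$. The first implication is immediate from the definitions, since $G(F,F) = U(F)$ was already observed right before the statement of the corollary. The implication $(ii) \Rightarrow (iii)$ is also essentially free: $U(F) = U(F')$ is closed in $\autd$ because any Burger-Mozes group $U(H)$ is closed (being defined by the closed condition that every local permutation lies in $H$), so if $G(F,F')$ coincides with $U(F)$ it is closed as well.

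For $(iii) \Rightarrow (iv)$, I would invoke Proposition \ref{prop-closure-G(F)}: the closure of $G(F,F')$ in $\autd$ is $U(F')$. If $G(F,F')$ is already closed in $\autd$, this forces $G(F,F') = U(F')$, and then the vertex stabilizers of $G(F,F')$ agree with those of $U(F')$, which are compact (they are closed subgroups of the compact group $\mathrm{Aut}(\treed)_v$). Alternatively one can argue directly: the induced topology from $\autd$ and the intrinsic topology of $G(F,F')$ coincide exactly when $G(F,F')$ is closed, and in the intrinsic topology vertex stabilizers are open but, as noted in the preliminaries, are increasing unions $\bigcup_n K_n(v)$ of compact open subgroups, hence compact only if this union stabilizes.

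The crux of the proof, and the step I expect to be the real obstacle, is $(iv) \Rightarrow (i)$ — equivalently, showing that if $F \subsetneq F'$ then some vertex stabilizer $G(F,F')_v$ fails to be compact. The idea is to exhibit inside $G(F,F')_v$ an infinite strictly increasing chain of compact open subgroups, so that the union cannot be compact. Concretely, pick $\tau \in F' \setminus F$. Using the remark in Section \ref{sec-notation} that any permutation of $\Omega$ is realized as the common local permutation of some automorphism, together with Lemma \ref{lem-extend-G(F)}, one can build, for each $n$, an element $g_n \in G(F,F')$ fixing $v$ whose set of singularities contains a vertex at distance exactly $n$ from $v$ with local permutation $\tau \notin F$. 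Then $g_n \in K_n(v) \setminus K_{n-1}(v)$ (after possibly arranging that no singularity sits closer to $v$), which shows $K_{n-1}(v) \subsetneq K_n(v)$ for all $n$; since $G(F,F')_v \supseteq \bigcup_n K_n(v)$ and each inclusion is proper, the stabilizer is a properly increasing union of compact open subgroups and hence is non-compact. The delicate point is ensuring the singularity genuinely lies at distance $n$ and is not absorbed at a smaller radius — this is where Lemma \ref{lem-extend-G(F)} is used carefully, extending a prescribed local configuration near a sphere of radius $n$ by $F$-permutations everywhere further out. This completes the cycle and hence the equivalence of all four statements.
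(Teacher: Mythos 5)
Your proposal is correct, but it closes the cycle of implications differently from the paper, and the difference is precisely at the step you identified as the crux. The paper declares $(i)\Rightarrow(ii)\Rightarrow(iii)\Rightarrow(iv)$ trivial and then returns via $(iv)\Rightarrow(iii)$ and $(iii)\Rightarrow(i)$: for $(iv)\Rightarrow(iii)$ it invokes the general principle that a locally compact group acting continuously and properly on $\treed$ (properness being exactly what compact vertex stabilizers give) must be closed in $\autd$, and for $(iii)\Rightarrow(i)$ it uses Proposition \ref{prop-closure-G(F)} exactly as you do, since a closed $G(F,F')$ must contain its closure $U(F')$, which forces $F=F'$. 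You instead prove $(iv)\Rightarrow(i)$ directly, in contrapositive form, by exhibiting for $F\lneq F'$ a non-stabilizing chain of compact open subgroups inside a vertex stabilizer; this is more hands-on and avoids the properness-implies-closedness argument, at the cost of an explicit construction. Your construction does go through, but two remarks on the details: first, the point you flag as delicate (a singularity being ``absorbed at a smaller radius'') is actually a non-issue, because Lemma \ref{lem-extend-G(F)} applied with $n=0$ at a vertex $w$ at distance exactly $n$ from $v$ (to an automorphism fixing $w$ whose local permutations are all equal to $\tau\in F'\setminus F$) already yields $g\in G(F,F')$ with $S(g)=\{w\}$; second, the detail you should make explicit is that this $g$ fixes $w$ but not necessarily $v$, so one must compose with $\gamma\in U(F)$ sending $g(v)$ to $v$ (using vertex-transitivity of $U(F)$), which leaves the singular set unchanged since local permutations of elements of $U(F)$ lie in $F$. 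In fact even the weaker statement that for every $N$ some element of $G(F,F')_v$ has a singularity at distance greater than $N$ suffices for non-compactness, since the $K_{n,F'}(v)$ form an increasing open cover of $G(F,F')_v$. Finally, in your $(iii)\Rightarrow(iv)$ the compactness asserted in $(iv)$ refers to the topology of $G(F,F')$, not the induced one; the clean way to finish that step is to note that $G(F,F')=U(F')$ already forces $F=F'$ (any $\sigma\in F'$ is the constant local permutation of some element of $U(F')$, which must then have finitely many singularities), after which the two topologies coincide -- your parenthetical claim that the topologies agree ``exactly when'' $G(F,F')$ is closed is not needed and would require justification.
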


\begin{proof}
The implications $(i) \Rightarrow (ii) \Rightarrow (iii) \Rightarrow (iv)$ are trivial.

$(iv) \Rightarrow (iii)$ follows from a general argument: since $G(F,F')$ is locally compact and its action on $\treed$ is continuous and proper, the subgroup $G(F,F')$ must be closed in $\autd$.

$(iii) \Rightarrow (i)$. Since $G(F,F')$ is closed, according to Proposition \ref{prop-closure-G(F)} the group $G(F,F')$ must contain $U(F')$, and this easily implies that $F = F'$. 
\end{proof}

\subsection{Generators}

For every $n \geq 0$ and every vertex $v \in \verti$, we denote by $K_{n,F'}(v)$ the intersection between $K_n(v)$ and $G(F,F')$. This is the open subgroup of $G(F,F')$ consisting of elements fixing $v$ and having all their singularities in the ball or radius $n$ around $v$.

\begin{prop} \label{prop-gen-KU}
Let $k \geq 0$ and $g \in G(F,F')$ with at most $k$ singularities. Then there exist vertices $v_1, \ldots, v_k \in \verti$ and elements $\gamma \in U(F)$ and $g_i \in K_{0,F'}(v_i)$ such that $g = \gamma g_1 \cdots g_k$.

In particular the group $G(F,F')$ is generated by $U(F)$ together with $K_{0,F'}(v_0)$.
\end{prop}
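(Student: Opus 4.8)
The plan is to induct on the number $k$ of singularities of $g$, peeling off one singularity at a time and absorbing it into a factor of the form $g_i \in K_{0,F'}(v_i)$, while what remains has one fewer singularity; the base case $k=0$ is exactly the statement $g \in U(F)$. So suppose $g \in G(F,F')$ has singularities $S(g) = \{w_1, \dots, w_k\}$ with $k \geq 1$. The idea is to find an element $g_k \in K_{0,F'}(w_k)$ whose single singularity at $w_k$ matches that of $g$, i.e.\ $\sigma(g_k, w_k) = \sigma(g, w_k)$, so that $g g_k^{-1}$ (or $g_k^{-1} g$, after arranging vertices) has its singularity at $w_k$ removed and no new singularities created.

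First I would use Lemma \ref{lem-extend-G(F)} (or the elementary observation, recorded early in Section \ref{sec-notation}, that an automorphism is determined by the image of a vertex together with its collection of local permutations) to construct $g_k$: fix the vertex $w_k$, declare $g_k$ to fix $w_k$, set $\sigma(g_k, w_k) = \sigma(g, w_k) \in F'$, and set $\sigma(g_k, w) \in F$ for all $w \neq w_k$ in a way that is consistent — concretely, for each neighbour direction one picks, as in the proof of Lemma \ref{lem-extend-G(F)}, an element $\sigma_{a} \in F$ realizing the same action on the relevant colour as $\sigma(g,w_k)$ (possible since $F' \leq \hat F$ and $F$ stabilizes its orbits), and propagates $\sigma_a$ outward along that branch. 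Then $g_k \in G(F,F')$ with $S(g_k) = \{w_k\}$, so $g_k \in K_{0,F'}(w_k)$; take $v_k := w_k$. Now consider $h := g g_k^{-1}$. Using the multiplication rules (\ref{eq-rules}), $\sigma(h, v) = \sigma(g, g_k^{-1} v)\,\sigma(g_k^{-1}, v)$; since $g_k$ fixes $w_k$ and acts locally like $F$ away from $w_k$, one checks $\sigma(g_k^{-1}, v) \in F$ for $v \neq w_k$ and $\sigma(g_k^{-1}, w_k) = \sigma(g_k, w_k)^{-1}$. The computation at $v = w_k$ gives $\sigma(h, w_k) = \sigma(g, w_k)\,\sigma(g, w_k)^{-1} = 1 \in F$ (using that $g_k$ fixes $w_k$, so $g_k^{-1} w_k = w_k$), while at other vertices the two factors lie in $F \cup F'$ appropriately, and a short case analysis shows $S(h) \subseteq \{w_1, \dots, w_{k-1}\}$. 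By induction $h = \gamma g_1 \cdots g_{k-1}$ with $\gamma \in U(F)$ and $g_i \in K_{0,F'}(v_i)$, whence $g = \gamma g_1 \cdots g_{k-1} g_k$, completing the induction.

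For the "in particular" clause: an arbitrary element of $G(F,F')$ has finitely many singularities by definition, so the first part writes it as a product of an element of $U(F)$ and finitely many elements from the various $K_{0,F'}(v_i)$; since $U(F)$ is vertex-transitive, for each $i$ pick $\gamma_i \in U(F)$ with $\gamma_i v_0 = v_i$, and then $K_{0,F'}(v_i) = \gamma_i K_{0,F'}(v_0) \gamma_i^{-1}$ (conjugation by an element of $U(F)$ moves the allowed singularity location and preserves membership in $G(F,F')$, since $U(F) \subseteq G(F,F')$ is normalized appropriately — this is immediate from (\ref{eq-rules})). Hence every $g_i$ lies in the subgroup generated by $U(F)$ and $K_{0,F'}(v_0)$, and therefore so does $g$.

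The main obstacle I anticipate is bookkeeping the singularity sets under multiplication: one must verify carefully, via (\ref{eq-rules}), that forming $h = g g_k^{-1}$ does not create a new singularity at any vertex — in particular at vertices near $w_k$ where $g_k$ has local permutations in $F$ that were chosen to match $\sigma(g,w_k)$ only on one colour, and at the neighbours of $w_k$ themselves. The clean way to handle this is to note $\sigma(h,v) = \sigma(g, g_k^{-1}v)\sigma(g_k^{-1},v)$: if $v \notin \{w_1,\dots,w_k\}$ and $g_k^{-1} v \notin \{w_1,\dots,w_k\}$ then both factors are in $F$; the only vertex with $g_k^{-1} v \in \{w_k\}$ is $v = w_k$ itself (as $g_k$ fixes $w_k$), and there the product telescopes to $1$. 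The slight subtlety is that a priori $\sigma(g_k^{-1}, v)$ could fail to be in $F$ at a vertex other than $w_k$ if the propagation in the construction of $g_k$ were careless, so I would be explicit that $g_k$'s only singularity is $w_k$, which is exactly what the Lemma \ref{lem-extend-G(F)}-style construction guarantees.
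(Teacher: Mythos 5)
Your proof is correct and follows essentially the same route as the paper: induction on the number of singularities, with Lemma \ref{lem-extend-G(F)} (applied with $n=0$) producing an element of $K_{0,F'}(w_k)$ that cancels one singularity, and vertex-transitivity of $U(F)$ giving the ``in particular'' clause (the paper differs only in first translating $g$ by an element of $U(F)$ so that it fixes the chosen singular vertex before applying the lemma). One small slip: $S(gg_k^{-1})$ is contained in $g_k(\{w_1,\dots,w_{k-1}\})$ rather than in $\{w_1,\dots,w_{k-1}\}$ itself, but since only the cardinality bound $|S(gg_k^{-1})| \le k-1$ feeds the induction, this does not affect the argument.
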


\begin{proof}
We argue by induction on the number $k$. The result is clear when $k=0$ by definition. Now let $g \in G(F,F')$ having at most $k+1$ singularities, and let $v \in S(g)$. Since the group $U(F)$ is transitive on the set of vertices, there exists $\gamma_1 \in U(F)$ such that $g' = \gamma_1 g$ fixes $v$. Note that since $\gamma_1 \in U(F)$, for every vertex $w$ we have $\sigma(g,w) \notin F$ if and only if $\sigma(g',w) \notin F$. According to Lemma \ref{lem-extend-G(F)} applied with $n=0$, there exists $g_v \in K_{0,F'}(v)$ acting like $g'$ on the star around the vertex $v$. Let $g'' = g' g_v^{-1} = \gamma_1 g g_v^{-1}$. By construction $g''$ fixes the star around $v$, and the singularities of $g''$ are exactly the vertices $g_v(w)$ where $w$ a singularity of $g'$ different from $v$. Therefore $g''$ has at most $k$ singularities, so by the induction hypothesis there exist $v_1, \ldots, v_k \in \verti$ and $\gamma_2 \in U(F)$, $g_{v_i} \in K_{0,F'}(v_i)$, such that $g'' = \gamma_2 g_{v_1} \cdots g_{v_k}$, which can be rewritten $g = (\gamma_1^{-1} \gamma_2) g_{v_1} \cdots g_{v_k} g_{v}$.

Since the group $U(F)$ is vertex-transitive, the subgroup of $G(F,F')$ generated by $U(F)$ and $K_{0,F'}(v_0)$ contains all the subgroups $K_{0,F'}(v)$, for $v\in \verti$. Since all these subgroups together with $U(F)$ generate the group $G(F,F')$ according to the previous paragraph, this proves the second statement.
\end{proof}

Since the group $U(F)$ is always compactly generated and $K_{0,F'}(v_0)$ is compact, we deduce the following. 

\begin{cor} \label{cor-g-ce}
The group $G(F,F')$ is compactly generated. In particular when $F$ acts freely on $\Omega$, the group $G(F,F')$ is finitely generated.
\end{cor}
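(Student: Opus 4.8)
The plan is to deduce Corollary~\ref{cor-g-ce} directly from Proposition~\ref{prop-gen-KU} together with the classical fact that the Burger--Mozes group $U(F)$ is compactly generated. First I would recall \emph{why} $U(F)$ is compactly generated: the vertex stabilizer $U(F)_{v_0}$ is a compact open subgroup, and since $U(F)$ acts vertex-transitively on $\treed$ with the tree being locally finite and connected, a standard Bass--Serre / Milnor--\v{S}varc type argument (or simply the fact that a compactly generated group acting cocompactly with compact open stabilizers on a connected graph is compactly generated) shows that $U(F)$ is generated by $U(F)_{v_0}$ together with finitely many elements carrying $v_0$ to its neighbours; concretely one may take the element $\sigma\mapsto$ a hyperbolic automorphism with prescribed constant local permutation provided by the observation in Section~\ref{sec-notation}.

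Next, by Proposition~\ref{prop-gen-KU} the group $G(F,F')$ is generated by the subgroup $U(F)$ together with the single subgroup $K_{0,F'}(v_0)$. The latter contains $U(F)_{v_0}$ with finite index (as noted when $K_n(v)$ was introduced, and since $\mathcal{B}(v_0,0)$ reduces to the star at $v_0$, the index is bounded by the number of ways to choose singular local permutations in $F'$ at the one vertex $v_0$, which is finite), hence $K_{0,F'}(v_0)$ is itself a compact open subgroup of $G(F,F')$. Therefore $G(F,F')$ is generated by the compact set $S \cup K_{0,F'}(v_0)$, where $S$ is the finite generating set of $U(F)$ modulo $U(F)_{v_0}$; since a finite union of compact sets is compact, $G(F,F')$ is compactly generated.

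For the second assertion, if $F$ acts freely on $\Omega$ then by Corollary~\ref{cor-u=g} (or directly, since $U(F)$ is discrete exactly when $F$ is free) the compact open subgroup $K_{0,F'}(v_0)$ is both compact and discrete, hence finite; and $U(F)$ is finitely generated because it is compactly generated and discrete. Combining, $G(F,F')$ is generated by a finite set, i.e.\ finitely generated.

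I do not expect any genuine obstacle here, since Proposition~\ref{prop-gen-KU} does all the real work: the only point requiring a sentence of care is the compact generation of $U(F)$ itself, which I would either cite from \cite{BM-IHES} or justify by the cocompact action on $\treed$ with compact open vertex stabilizers; and the finiteness of the index $[K_{0,F'}(v_0):U(F)_{v_0}]$, which is immediate because it is controlled by the finite group $F'$ acting at the single vertex $v_0$.
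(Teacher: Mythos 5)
Your argument is correct and follows the paper's own route: the paper likewise deduces the corollary immediately from Proposition~\ref{prop-gen-KU}, using that $U(F)$ is compactly generated and $K_{0,F'}(v_0)$ is compact (and, in the free case, that $G(F,F')$ is discrete so these become finite generation and finiteness). Your extra justifications (cocompact vertex-transitive action of $U(F)$ with compact open stabilizers, finiteness of the index $[K_{0,F'}(v_0):U(F)_{v_0}]$) are exactly the facts the paper takes as already established.
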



\begin{rmq}
Since the group $U(F)$ is unimodular and open in $G(F,F')$, the modular function $\Delta$ of $G(F,F')$ must vanish on $U(F)$. Moreover by continuity $\Delta$ vanishes on any compact subgroup as well. Since $G(F,F')$ is generated by $U(F)$ and some compact open subgroup, the group $G(F,F')$ is always unimodular.
\end{rmq}

We end this paragraph by giving a particular compact generating subset when the permutation group $F$ is assumed to be transitive.

\begin{cor} \label{cor-gen-trans}
Assume that $F$ is transitive. Then $G(F,F')^{\star}$ is generated by $K_{0,F'}(v_0)$ and $K_{0,F'}(v_1)$.
\end{cor}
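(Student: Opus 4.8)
The plan is to bootstrap from Proposition~\ref{prop-gen-KU}, which already tells us that $G(F,F')$ is generated by $U(F)$ together with $K_{0,F'}(v_0)$. Intersecting with the type-preserving subgroup, I would first argue that $G(F,F')^{\star}$ is generated by $U(F)^{\star}$ (equivalently $U(F)_{v_0}$ and $U(F)_{v_1}$, which generate $U(F)^{\star}$ by a standard Bass--Serre / Burger--Mozes fact when $F$ is transitive) together with $K_{0,F'}(v_0)$ and $K_{0,F'}(v_1)$; indeed $K_{0,F'}(v)$ is type-preserving since it fixes $v$, and the proof of Proposition~\ref{prop-gen-KU} writes any element of $G(F,F')$ as a product of an element of $U(F)$ and finitely many conjugates of elements of the $K_{0,F'}(v)$'s, so the type-preserving elements are exactly those for which the $U(F)$-part lies in $U(F)^{\star}$. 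So the task reduces to showing that, modulo $K_{0,F'}(v_0)$ and $K_{0,F'}(v_1)$, one can generate all of $U(F)^{\star}$ and also reach the subgroups $K_{0,F'}(v)$ at every other vertex $v$.

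The key mechanism is that $K_{0,F'}(v_0)$ contains enough elements to move vertices around. For $\sigma \in F'$, pick an element of $K_{0,F'}(v_0)$ whose local permutation at $v_0$ is $\sigma$ and whose other local permutations lie in $F$; when $\sigma \in F$ such an element can moreover be taken inside $U(F)_{v_0}$, but crucially, using elements of $K_{0,F'}(v_0)$ together with conjugation, one can realize translations along edges. Concretely, I would show: (i) the subgroup generated by $K_{0,F'}(v_0)$ and $K_{0,F'}(v_1)$ contains an element $t$ mapping $v_0$ to $v_1$ and $v_1$ to some neighbour of $v_1$ — for instance a hyperbolic element of $U(F)$ translating along the geodesic through $e_0$ of length $2$ can be written using the vertex stabilizers $U(F)_{v_0}$ and $U(F)_{v_1}$, which are themselves contained in $K_{0,F'}(v_0)$ and $K_{0,F'}(v_1)$ respectively (by transitivity of $F$, $U(F)_{v_i}$ acts transitively on the neighbours of $v_i$, which is exactly what one needs to build such translations from vertex stabilizers); (ii) conjugating $K_{0,F'}(v_0)$ and $K_{0,F'}(v_1)$ by powers of $t$ and by elements of these stabilizers reaches $K_{0,F'}(v)$ for \emph{every} vertex $v$, by an inductive walk along the tree using transitivity of $F$ at each step; and (iii) the subgroups $U(F)_{v}$ over all $v$ generate $U(F)^{\star}$, hence everything in Proposition~\ref{prop-gen-KU}'s generating set that is type-preserving is accounted for.

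For step (i) the relevant input is the standard fact that when $F$ is transitive, $U(F)$ is generated by the two stabilizers $U(F)_{v_0}$ and $U(F)_{v_1}$ up to type — more precisely $U(F)^{\star} = \langle U(F)_{v_0}, U(F)_{v_1}\rangle$ — which follows from the tree being the Bass--Serre tree of the amalgam $U(F)_{v_0} *_{U(F)_{e_0}} U(F)_{v_1}$; since $U(F)_{e_0} \leq U(F)_{v_i} \leq K_{0,F'}(v_i)$, this immediately gives $U(F)^{\star} \leq \langle K_{0,F'}(v_0), K_{0,F'}(v_1)\rangle$. Then for step (ii), given a vertex $v$ at distance $n$ from $\{v_0,v_1\}$, pick the penultimate vertex $w$ on the geodesic (at distance $n-1$), assume inductively $K_{0,F'}(w) \leq \langle K_{0,F'}(v_0), K_{0,F'}(v_1)\rangle$, choose by transitivity of $F$ some $\gamma \in U(F)_{w}$ with $\gamma$ moving the relevant neighbour of $w$ appropriately — but more simply, choose $\gamma \in U(F)^{\star}$ with $\gamma(v) = v'$ where $v'$ is a previously-handled vertex, and note $\gamma K_{0,F'}(v') \gamma^{-1} = K_{0,F'}(v)$ because conjugation by an element of $U(F)$ sends singularities to singularities bijectively and preserves membership in $F'$. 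Finally, Proposition~\ref{prop-gen-KU} combined with the observation that every element of $G(F,F')^{\star}$ equals $\gamma g_1 \cdots g_k$ with all $g_i$ in type-preserving $K_{0,F'}(v_i)$'s and $\gamma \in U(F)$, forces $\gamma \in U(F)^{\star}$, completing the argument.

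The main obstacle I anticipate is step (i): verifying cleanly that translations along the tree (needed to migrate the compact subgroups $K_{0,F'}(v)$ from $v_0$ and $v_1$ out to all vertices) genuinely lie in $\langle K_{0,F'}(v_0), K_{0,F'}(v_1)\rangle$ and not merely in the larger group generated together with an independent hyperbolic element. The resolution is the amalgam description of $U(F)$, which makes $U(F)^{\star}$ — and in particular a translation of length $2$ through $e_0$ — a product of elements of $U(F)_{v_0}$ and $U(F)_{v_1}$; transitivity of $F$ is exactly the hypothesis that makes the edge-stabilizer index computations work out so that these vertex stabilizers suffice.
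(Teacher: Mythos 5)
Your argument is correct and follows essentially the same route as the paper: reduce via Proposition~\ref{prop-gen-KU}, use that transitivity of $F$ gives $U(F)^{\star}=\langle U(F)_{v_0},U(F)_{v_1}\rangle\leq\langle K_{0,F'}(v_0),K_{0,F'}(v_1)\rangle$, conjugate to reach $K_{0,F'}(v)$ for every vertex $v$, and observe that the $U(F)$-factor in the decomposition of a type-preserving element must lie in $U(F)^{\star}$. Your extra detail (the amalgam justification and the explicit type-preservation of the $K_{0,F'}(v)$'s) only fills in steps the paper leaves implicit; apart from a harmless slip in the direction of the conjugation $\gamma K_{0,F'}(v)\gamma^{-1}=K_{0,F'}(\gamma(v))$, nothing needs changing.
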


\begin{proof}
Write $S = K_{0,F'}(v_0) \cup K_{0,F'}(v_1)$. Since $F$ is transitive, the group $U(F)^{\star}$ is generated by $U(F)_{v_0} \cup U(F)_{v_1}$, and therefore $U(F)^{\star}$ lies inside the subgroup generated by $S$. So by conjugating the two subgroups $K_{0,F'}(v_0)$ and $K_{0,F'}(v_1)$ one may obtain $K_{0,F'}(v)$ in $\left\langle S\right\rangle$ for every $v \in \verti$, and we conclude the corollary thanks to Proposition \ref{prop-gen-KU}.
\end{proof}

We derive from Corollary \ref{cor-gen-trans} the following result. 

\begin{prop} \label{prop-morph-trivial}
Assume that $F$ acts simply transitively on $\Omega$. Assume also that $k < d$ is such that any action of $F'$ on a set of cardinality $k$ is trivial. Then any morphism $\varphi: G(F,F')^{\star} \rightarrow \mathrm{Aut}(\mathcal{T}_{k})$ is trivial.  
\end{prop}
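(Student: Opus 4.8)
The plan is to exploit Corollary~\ref{cor-gen-trans}, which tells us that $G(F,F')^{\star}$ is generated by the two compact subgroups $K_{0,F'}(v_0)$ and $K_{0,F'}(v_1)$. Since a morphism is determined by its restriction to a generating set, it suffices to show that $\varphi$ kills each $K_{0,F'}(v_i)$, or more precisely that $\varphi(K_{0,F'}(v_i))$ is trivial. The first observation I would record is that $K_{0,F'}(v_i)$ contains $U(F)_{v_i}$ as a subgroup; in fact, because $F$ acts simply transitively on $\Omega$, the group $U(F)$ is discrete and $U(F)_{v_i}$ is itself trivial, so this by itself is unhelpful. Instead the right object to look at is the structure of $K_{0,F'}(v_i)$ directly: an element of $K_{0,F'}(v_i)$ fixes $v_i$, has its unique possible singularity at $v_i$, and its local permutation at $v_i$ lies in $F'$ while all other local permutations lie in $F$. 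Thus $K_{0,F'}(v_i)$ maps onto $F'$ (via $g \mapsto \sigma(g,v_i)$), with kernel consisting of elements that are locally $F$ everywhere and fix $v_i$ — and by simple transitivity of $F$ this kernel is trivial. Hence $K_{0,F'}(v_i) \cong F'$.

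So the task reduces to showing: if $\psi : F' \to \mathrm{Aut}(\mathcal{T}_k)$ is the restriction of $\varphi$ to a copy of $F'$ sitting inside $G(F,F')^{\star}$ as $K_{0,F'}(v_i)$, then $\psi$ is trivial. Here I would use that $K_{0,F'}(v_i) \cong F'$ is a \emph{finite} group, hence its image is a finite subgroup of $\mathrm{Aut}(\mathcal{T}_k)$. A finite group acting on a tree fixes a vertex or an edge (this is a standard consequence of the fact that a finite group of isometries of a tree has a bounded orbit, hence a fixed point in the barycentric-subdivided tree); in other words, the finite group $\psi(F')$ stabilizes a vertex or an edge midpoint of $\mathcal{T}_k$. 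A point stabilizer in $\mathrm{Aut}(\mathcal{T}_k)$ is a profinite group built from iterated wreath products of $\mathrm{Sym}(\Omega_k)$ with $|\Omega_k| = k$, and a finite subgroup of it projects, level by level, into such wreath products. The key input is the hypothesis: any action of $F'$ on a set of cardinality $k$ is trivial. I would argue inductively on the levels of the tree around the fixed vertex (or edge): the action of $\psi(F')$ on the $k$ (or $k-1$, near an edge — but $k<d$ ensures this is still $\leq k$) neighbours of the fixed vertex gives an action of $F'$ on a set of size $\le k$, which is trivial by hypothesis, so $\psi(F')$ fixes each neighbour; then the action on the further neighbours of each of those is again an action on a set of size $\le k$ (actually $k-1$, the remaining branches), again trivial; iterating, $\psi(F')$ fixes every vertex of $\mathcal{T}_k$, so $\psi$ is trivial.

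The main obstacle, and the step needing the most care, is the inductive descent through the levels of $\mathcal{T}_k$. One has to be careful that "an action of $F'$ on a set of cardinality $k$" is genuinely what appears at each stage: near a fixed vertex the relevant set has size $k$; at the next level, around each already-fixed neighbour, the set of onward branches has size $k-1 < k$, and an action of $F'$ on a set of size $<k$ is a fortiori an action on a set of size $k$ (extend by fixed points), hence trivial; the same applies if the initial fixed object is an edge midpoint, where the degree-$k$ condition $k < d$ guarantees all the branch-sets encountered have size $\le k$. A second subtlety is seeing that triviality of the action on each level-by-level branch set really forces $\psi(F')$ to fix the whole tree pointwise: this is because an automorphism of $\mathcal{T}_k$ fixing a vertex and fixing the onward branch at every vertex it fixes must fix every vertex, by induction on distance from the base vertex. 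Once these points are in place, triviality of $\varphi$ on $K_{0,F'}(v_0)$ and $K_{0,F'}(v_1)$ follows, and hence $\varphi$ is trivial by Corollary~\ref{cor-gen-trans}.
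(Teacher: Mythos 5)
Your proof is correct and follows essentially the same route as the paper: identify $K_{0,F'}(v_i)\cong F'$ via $g\mapsto\sigma(g,v_i)$ (onto by Lemma \ref{lem-extend-G(F)}, injective by freeness of $F$), use that the finite image in $\mathrm{Aut}(\mathcal{T}_k)$ has a fixed point, kill the action on the star by the hypothesis on actions on $k$-element sets, and conclude with Corollary \ref{cor-gen-trans}. The only (harmless) variations are that the paper excludes the edge-inversion case by noting $F'$ has no index-two subgroup, where you instead apply the hypothesis to the two endpoints, and that you spell out the level-by-level induction the paper leaves implicit; note also that your parenthetical about $k<d$ is not what that condition is for.
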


\begin{proof}
We fix $v \in \verti$, and we let $K = K_{0,F'}(v)$. The map $K \rightarrow F'$, $g \mapsto \sigma(g,v)$, is a group morphism by (\ref{eq-rules}), which is onto according to Lemma \ref{lem-extend-G(F)}. Moreover it is also injective since $F$ acts freely on $\Omega$, so that the subgroup $K$ is isomorphic to $F'$. The assumption on the group $F'$ implies that it does not have any subgroup of index two, and therefore the image $\varphi(K)$, which is a finite subgroup of $\mathrm{Aut}(\mathcal{T}_{k})$, must fix a vertex $w$ of $\mathcal{T}_{k}$. The action of $\varphi(K)$ on the set of edges around $w$ yields an action of $F'$ on a set of cardinality $k$, which must be trivial by assumption. Therefore $\varphi(K)$ actually fixes the star around $w$, and it follows that $\varphi(K)$ has to be trivial.

Now since the action of $F$ on $\Omega$ is transitive, by Corollary \ref{cor-gen-trans} the group $G(F,F')^{\star}$ is generated by $K_{0,F'}(v_0)$ and $K_{0,F'}(v_1)$. According to the previous paragraph both must be sent by $\varphi$ to the identity, and it follows that $\varphi$ is trivial.
\end{proof}

\section{Simplicity} \label{sec-simple}

Recall that Tits introduced in \cite{Tits} a simplicity criterion for groups acting on trees, usually referred to as Tits' independence property (P). The groups $G(F,F')$ do not satisfy Tits' independence property (P), but rather a weaker independence property that we will call the \textit{edge-independence property}. Following the same strategy as in the proof of Tits' theorem, we establish in the following paragraph a simplicity result based on the edge-independence property (see Corollary \ref{cor-with-ind}). The second part of this section will be devoted to the application to the groups $G(F,F')$.

\subsection{A simplicity criterion}

Recall that if $T$ is a simplicial tree, we say that the action of a group $G$ on $T$ is \textit{minimal} if $G$ does not stabilize any proper subtree of $T$. If $T'$ is a subtree of $T$, we denote by $G_{T'}$ the pointwise stabilizer of $T'$ in $G$. We also let $G^+$ be the subgroup of $G$ generated by the set of subgroups $G_e$, where $e$ ranges over the set of edges of $T$.

If $e$ is an edge of $T$ and $v$ a vertex of $e$, we denote by $T_e(v)$ the subtree of $T$ spanned by vertices whose closest point projection on the edge $e$ is the vertex $v$. A subtree $T'$ of $T$ is called a \textit{half-tree} if $T' = T_e(v)$ for some edge $e$ and vertex $v$.

\bigskip

The main result of \cite{Tits} says that if $G$ satisfies Tits' independence property (P) (a definition of which can be found in \cite{Tits}) and acts minimally on $T$ without fixing any end of $T$, then the group $G^+$ is simple as soon as it is not trivial. This remarkable result has been extensively used to establish simplicity of various groups. For example the group $U(F)^+$ is simple as soon as the permutation group $F$ does not act freely on $\Omega$. 

The goal of this paragraph is to prove a simplicity criterion, namely Corollary \ref{cor-with-ind}, by weakening the assumption that the group satisfies Tits' independence property (P). Our motivation comes from the fact that the groups $G(F,F')$ do not satisfy Tits' independence property (P) as soon as $F$ is a proper subgroup of $F'$. 

\bigskip

Let $G \leq \mathrm{Aut}(T)$. Given an edge $e$ of $T$, we denote by $T'$ and $T''$ the two half-trees separated by $e$. The group $G_{e}$ induces permutation groups $G_{e}'$ and $G_{e}''$ on the set of vertices of $T'$ and $T''$, so that we have a natural injective homomorphism $\varphi_{e}: G_{e} \rightarrow G_{e}' \times G_{e}''$.

\begin{defi}
We say that a group $G \leq \mathrm{Aut}(T)$ satisfies the \textit{edge-independence property} if $\varphi_{e}$ is an isomorphism for every choice of $e$.
\end{defi}

This means that for every edge $e$, the pointwise stabilizer of $e$ in $G$ acts independently on the two half-trees emanating from $e$. The edge-independence property already appeared in \cite{Amann, Ba-El-Wi} (it is called \enquote{independence property} in \cite{Amann} and \enquote{property $IP_1$} in \cite{Ba-El-Wi}), and is strictly weaker than Tits' independence property (P). However, one can check that these are equivalent for closed subgroups of $\mathrm{Aut}(T)$ (see for instance \cite[Lemma 10]{Amann}). Note that when $G$ satisfies the edge-independence property, the subgroup $G^+$ is also the subgroup generated by pointwise stabilizers of half-trees in $G$. 

\bigskip 

The following two lemmas are standard. Recall that the action of a group $G$ on $T$ is said to be of \textit{general type} if there exist in $G$ hyperbolic isometries without common endpoints, or equivalently if $G$ does not have any finite orbit in $T \cup \partial T$.

\begin{lem} \label{lem-g+-tg}
Let $H$ be a non-trivial subgroup of $\mathrm{Aut}(T)$, and $G \leq \mathrm{Aut}(T)$ a subgroup normalizing $H$. If the action of $G$ on $T$ is minimal and of general type, then the same holds for $H$.
\end{lem}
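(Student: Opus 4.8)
The plan is to transfer minimality and general-type from $G$ to the normal subgroup $H$ by exploiting the standard structure theory of group actions on trees (Tits), which says that an action is either elliptic (a fixed vertex), lineal (a fixed pair of ends / a fixed line), focal/horocyclic (a unique fixed end), or of general type. First I would consider the fixed-point set / limit set of $H$ in $T \cup \partial T$. Since $G$ normalizes $H$, for every $g \in G$ the subgroup $gHg^{-1} = H$ has the same limit set, so the limit set $L_H \subseteq \partial T$ of $H$ is $G$-invariant, and likewise the set of $H$-fixed vertices and the $H$-fixed subtree are $G$-invariant. The idea is that each of the degenerate cases for $H$ produces a proper $G$-invariant configuration, contradicting either minimality or general type of $G$.

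Here are the steps. \emph{Step 1: $H$ is not elliptic.} If $H$ fixes a vertex, then the (nonempty) fixed-point set $\mathrm{Fix}(H)$ is a $G$-invariant subtree of $T$; by minimality of $G$ it is all of $T$, so $H$ acts trivially, contradicting $H \neq 1$. If instead $H$ fixes no vertex but is still bounded (fixes an end pair only in the lineal case, handled next; the genuinely elliptic-but-no-fixed-vertex case does not occur for actions on trees, since a bounded subgroup of $\mathrm{Aut}(T)$ always fixes a vertex when $T$ is a simplicial tree of finite valence — or more carefully, it fixes a vertex or inverts an edge, and in the latter case fixes the midpoint, hence fixes a vertex of the barycentric subdivision, which one can push back). \emph{Step 2: $H$ is not lineal.} If $H$ preserves a line (a pair of ends $\{\xi, \eta\}$), then $\{\xi,\eta\}$ is the unique such pair, so it is $G$-invariant; then $G$ has a finite orbit in $\partial T$, contradicting that $G$ is of general type. \emph{Step 3: $H$ is not horocyclic/focal.} If $H$ fixes a unique end $\xi \in \partial T$, then by uniqueness $\xi$ is $G$-fixed, again contradicting general type of $G$. \emph{Step 4: conclude.} The only remaining possibility in the Tits classification is that $H$ is of general type, i.e.\ $H$ contains hyperbolic elements without common endpoints and has no finite orbit in $T \cup \partial T$. \emph{Step 5: minimality of $H$.} Since $H$ is of general type, it admits a unique minimal invariant subtree $T_H$ (the union of axes of hyperbolic elements of $H$), and $T_H$ is $G$-invariant because $G$ normalizes $H$ and $T_H$ is canonically attached to $H$; by minimality of $G$ we get $T_H = T$, so $H$ acts minimally on $T$.

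The main obstacle, and the only place requiring care rather than invocation, is making sure all the "degenerate" alternatives for $H$ are covered cleanly and that each really yields a \emph{proper} $G$-invariant object — in particular the elliptic case, where one must argue that $\mathrm{Fix}(H)$ being all of $T$ forces $H = 1$ (immediate), and the edge-inversion subtlety. A convenient way to organize this is simply to cite the Tits trichotomy/quadrichotomy for actions on trees (as in \cite{Tits} or standard references) and then, case by case, extract from each non-general-type type a canonical bounded-or-fixed-end structure that $G$ must preserve; the bookkeeping is routine once that framework is in place.
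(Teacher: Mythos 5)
Your proposal is correct and follows essentially the same route as the paper: rule out the degenerate cases (fixed vertex, inverted/stabilized edge, invariant line, unique fixed end) by observing that each yields a canonical $G$-invariant configuration contradicting minimality or general type, then get minimality of $H$ from the $G$-invariance of its unique minimal subtree. The paper's proof is just a condensed version of your case analysis, so no further comparison is needed.
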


\begin{proof}
Since the set of fixed vertices of $H$ in $T$ is $G$-invariant, if it is non-empty then by minimality of the action of $G$ it must be the entire $T$, which is a contradiction with $H \neq 1$. For the same reason, we see that $H$ does not stabilize an edge, and $H$ cannot stabilize line because this would contradict the existence of independent hyperbolic elements in $G$. Moreover if $H$ has a unique fixed boundary point, then this is also a fixed point for $G$, which is again a contradiction. So it follows that the action of $H$ on $T$ must be of general type. In particular $H$ contains hyperbolic isometries, and it follows that $H$ admits a unique invariant minimal subtree $T'$. By uniqueness, $T'$ must be $G$-invariant, so by minimality of the action of $G$ we must have $T'=T$.
\end{proof}

\begin{lem} \label{lem-axis-half-tree}
Let $G$ be a subgroup of $\mathrm{Aut}(T)$ whose action on $T$ is minimal and of general type. Given any half-tree $T' \subset T$, there exists a hyperbolic element in $G$ whose axis is contained in $T'$.
\end{lem}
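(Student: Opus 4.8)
The plan is to produce the desired hyperbolic element by a ping-pong / conjugation argument. First I would invoke the hypothesis that the action is of general type to fix a hyperbolic element $h \in G$ with axis $A$, together with a second hyperbolic element whose axis does not share an endpoint with $A$; from the standard theory of groups of general type this guarantees that the union of the translates of $A$ under $G$ is "spread out" over the tree, and in particular that the set of endpoints of axes of hyperbolic elements of $G$ is dense in $\partial T$. Concretely, I would first argue that $G$ contains a hyperbolic element one of whose endpoints lies in the half-tree $T'$: pick any hyperbolic $h_0 \in G$; if neither endpoint of its axis is in $T'$, use minimality (or the density of endpoints, or a direct ping-pong with an element moving things across the edge $e$ defining $T'$) to conjugate $h_0$ by some $g \in G$ so that $g h_0 g^{-1}$ has an endpoint inside $T'$. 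Here minimality is the key input: since $G$ does not stabilize any proper subtree, no half-tree is $G$-invariant, so some element of $G$ maps a vertex deep inside $T'$ to a vertex outside (and vice versa), and combining such elements lets one push an axis endpoint into $T'$.

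Next, once I have a hyperbolic element $f \in G$ with at least one endpoint $\xi^+ \in \partial T'$, I would take a second hyperbolic element $f'$ (again obtainable by conjugation, using general type to ensure its axis endpoints avoid $\xi^\pm$) whose axis also has an endpoint near $\xi^+$ inside $T'$, and then apply the classical ping-pong lemma for hyperbolic tree isometries: for suitably high powers $m, n$, a word such as $f^m (f')^n$ is hyperbolic with both endpoints in $\partial T'$, hence its axis, being the bi-infinite geodesic joining those two endpoints, is eventually contained in $T'$. To get the axis \emph{entirely} contained in $T'$ rather than merely eventually, I would then conjugate this element by a hyperbolic isometry translating along a ray into $T'$ (or simply replace it by a conjugate that pushes the whole axis further into $T'$), using that $T'$ is a half-tree so that "translating towards the boundary end of $T'$" moves any finite portion of the axis inside. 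A clean way to package this last point: choose the two endpoints of the constructed axis to both lie in $\partial T'_e(w)$ for a vertex $w$ strictly inside $T'$, so that the geodesic between them, which stays within the convex hull of $\partial T'_e(w) \subseteq T'_e(w) \subseteq T'$, lies in $T'$.

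The main obstacle I anticipate is the very first step — manufacturing a hyperbolic element with an endpoint inside the prescribed half-tree $T'$ — because minimality alone only tells us half-trees are not invariant, and one must combine this with the existence of hyperbolic elements to actually relocate an axis; the standard fix is: given hyperbolic $h$ with axis $A$ and a vertex $w$ in the interior of $T'$, use minimality to find $g \in G$ with $g A \cap T'_e(w) \neq \emptyset$ containing a half-line of $gA$, so that $g h g^{-1}$ is hyperbolic with axis $gA$ having an endpoint in $\partial T'$; then the ping-pong in the second paragraph upgrades "one endpoint in $T'$" to "axis in $T'$". All the remaining steps (the ping-pong estimate, the convexity remark about half-trees) are routine facts about isometries of trees, so the write-up should be short once the relocation step is in place.
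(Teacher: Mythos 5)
Your proposal is correct and follows essentially the same route as the paper: minimality forces some translate of a hyperbolic axis to have an end in $\partial T'$, and then powers of that hyperbolic element push the axis of a second, independent hyperbolic element entirely into $T'$ (equivalently, both endpoints land in $\partial T'$, which by convexity of half-trees forces the axis inside). The paper simply takes the conjugate $x^k y x^{-k}$ directly, so your intermediate ping-pong product $f^m(f')^n$ is an unnecessary but harmless detour.
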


\begin{proof}
Let $X$ be the set of hyperbolic elements of $G$. First remark that there exists $x \in X$ having an endpoint in $\partial T'$. Indeed, otherwise we would have a $G$-invariant subtree (namely the union of the axes of the elements of $X$) contained in the complement of $T'$, which is a contradiction with the fact that $G$ acts minimally on $T$. Now the conclusion follows from the fact that if $y \in X$ does not have any endpoint in common with $x$, then there exists some integer $k \in \mathbb{Z}$ such that the axis of $x^k y x^{-k}$ is contained in $T'$.
\end{proof}

The following result plays an essential role in the proof of Theorem \ref{thm-without-ind}. In the proof we make use of the classical idea of using double commutators, which appears for example in \cite[Theorem 4]{new-hor}.

\begin{lem} \label{lem-game-commutator}
Let $G$ be a subgroup of $\mathrm{Aut}(T)$, and $N$ a subgroup of $\mathrm{Aut}(T)$ normalized by $G$. Let $T'$ be a half-tree in $T$. Assume that $N$ contains a hyperbolic element whose axis is contained in $T'$. Then $N$ contains the derived subgroup of $G_{T'}$.
\end{lem}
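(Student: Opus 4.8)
The plan is to follow the classical double-commutator trick from Tits' simplicity argument, exploiting the hyperbolic element of $N$ whose axis lies in $T'$ to "absorb" conjugates of elements of $G_{T'}$ into $N$. Let $h \in N$ be the given hyperbolic element with axis $\ell \subset T'$, and let $g, g' \in G_{T'}$ be arbitrary. The key point is that since $g$ and $g'$ fix $T'$ pointwise (hence fix $\ell$ and a neighbourhood of the "base" edge $e$ separating $T'$ from its complement), for a sufficiently large power $h^n$ the translates $h^n g h^{-n}$ and $g$ are "disjoint" in a suitable sense: conjugating $g$ far along the axis $\ell$ pushes its support (the part of $T$ it moves) deep inside $T'$, away from the support of $g'$. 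Concretely, I would first observe that any element of $G_{T'}$ moves only vertices lying in $T'$, and that conjugating by $h^n$ translates this support along $\ell$ by $n$ times the translation length of $h$.

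The main computation would then be: for $n$ large enough, the supports of $h^n g h^{-n}$ and of $g'$ become disjoint, so these two elements commute, i.e.\ $[h^n g h^{-n}, g'] = 1$. On the other hand, using the commutator identity
\[
[h^n g h^{-n} g^{-1}, g'] = (h^n g h^{-n})\, [g^{-1}, g']\, (h^n g h^{-n})^{-1} \cdot [h^n g h^{-n}, g'],
\]
together with the fact that $h^n g h^{-n} g^{-1} = (h^n g h^{-n}) g^{-1}$ lies in $N$ (since $h \in N$ and $N$ is normalized by $G$, so $h^n g h^{-n} \in (h^n) G_{T'} (h^{-n})$... here one must be careful: $g \notin N$ in general). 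The correct bookkeeping is: $h^n g h^{-n} \cdot g^{-1} \in N$ because $h^n \in N$ and $g h^{-n} g^{-1} \in N$ (as $N \trianglelefteq G$), so the product of these two elements of $N$ lies in $N$. Hence the commutator $[\,h^n g h^{-n} g^{-1},\, g'\,]$ lies in $N$. When $n$ is large this commutator simplifies: since $h^n g h^{-n}$ commutes with $g'$, it equals $(h^n g h^{-n})[g^{-1},g'](h^n g h^{-n})^{-1}$, and for $n$ large the support of $[g^{-1},g'] = [g,g']^{-1}$ (conjugated by $g'$-type corrections) — more carefully, $[g^{-1}, g']$ is supported near the base edge $e$, and $h^n$ moves that region deep into $T'$, but we can also run the argument so that $h^n g h^{-n}$ acts trivially on the support of $[g^{-1},g']$, in which case the conjugation is trivial and we directly get $[g^{-1}, g'] \in N$, hence $[g,g'] \in N$.

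Since $g, g' \in G_{T'}$ were arbitrary, this shows every generator $[g,g']$ of the derived subgroup $[G_{T'}, G_{T'}]$ lies in $N$, and as $N$ is a subgroup we conclude $[G_{T'}, G_{T'}] \leq N$. I expect the main obstacle to be the precise disjointness-of-supports bookkeeping: one must carefully track which vertices each of $g$, $g'$, $h^n g h^{-n}$ can move, verify that for large $n$ the element $h^n g h^{-n}$ fixes pointwise a large enough neighbourhood of the support of $[g^{-1}, g']$ (equivalently, its support is pushed strictly beyond that neighbourhood along $\ell$), and ensure the direction of translation of $h$ is chosen (or $h$ replaced by $h^{-1}$) so that conjugation moves supports \emph{into} $T'$ rather than out. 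Once the geometry of supports under conjugation by powers of a hyperbolic element with axis in $T'$ is set up cleanly, the algebraic identity is routine.
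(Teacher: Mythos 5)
Your algebraic skeleton is exactly the right one, and it is the same double-commutator computation as in the paper: the element $b = h^n g h^{-n}g^{-1}$ lies in $N$ (product of $h^n\in N$ and $gh^{-n}g^{-1}\in N$), hence $[b,g']\in N$, and once one knows that $b$ has support disjoint from the supports of $g'$ and of $[g^{-1},g']$, the identity $[b,g']=[g^{-1},g']$ gives the lemma. But the geometric bookkeeping on which you hang the disjointness is wrong, and it is precisely the step you yourself flag as the main obstacle. You assert that ``any element of $G_{T'}$ moves only vertices lying in $T'$''. This is backwards: $G_{T'}$ is the \emph{pointwise stabilizer} of $T'$, so every $g\in G_{T'}$ fixes $T'$ and its support lies entirely in the complementary half-tree, call it $T''$. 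With your stated premise the argument would in fact break down: two elements supported inside $T'$ can have unbounded supports spread along the axis $\ell$, and conjugating one of them by $h^n$ (``pushing it deep inside $T'$'') does not separate it from the other, no matter how large $n$ is. So as written, the claim ``for $n$ large the supports of $h^ngh^{-n}$ and $g'$ become disjoint'' is not justified, and no amount of care about the direction of translation rescues it under that premise.

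The repair is a single observation, after which your proof coincides with the paper's. Since the axis $\ell$ of $h$ is contained in $T'$, every vertex of $T''$ has the same projection $w$ onto $\ell$ (the geodesic from any such vertex to $\ell$ passes through the edge defining the half-trees); hence $T''$ is contained in the branch of $T$ hanging off $\ell$ at $w$, and $h^n(T'')$ is contained in the branch at $h^n(w)$, which is disjoint from $T''$ and contained in $T'$ for \emph{every} $n\neq 0$ and for either sign of $n$. Thus $\mathrm{supp}(h^ngh^{-n})\subset h^n(T'')\subset T'$ is automatically disjoint from $\mathrm{supp}(g')\cup\mathrm{supp}([g^{-1},g'])\subset T''$: no large power, no boundedness of supports, and no choice of translation direction are needed. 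With this corrected picture your computation $[h^ngh^{-n}g^{-1},g']=[g^{-1},g']\in N$ is valid and is literally the paper's argument (there written as $[[g,\gamma],h]=[g,h]$, with the branch at $w$ playing the role of the region carrying $T''$). So the gap is real but local: fix the location of the supports and the rest of your proposal goes through verbatim.
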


\begin{proof}
Let $\gamma \in N$ be a hyperbolic element whose axis is contained in $T'$. We let $e$ be the edge of $T$ and $v$ the vertex of $T$ such that $T'$ is the half-tree emanating from $e$ containing $v$. We denote by $w$ the projection of the vertex $v$ on the axis of $\gamma$. We denote by $L$ the maximal subtree of $T$ containing $w$ but not its neighbours on the axis of $\gamma$. By construction the subtrees $L$ and $\gamma(L)$ are disjoint, and $\gamma^{\pm 1}(L) \subset T'$. This implies that for every $g \in G_{T'}$, the element $[g,\gamma] = g \gamma g^{-1} \gamma^{-1} \in N$ acts like $g$ on $L$, like $\gamma g^{-1} \gamma^{-1}$ on $\gamma(L)$, and is the identity elsewhere. It follows that for every $h \in G_{T'}$, the element $[[g,\gamma],h]$ (which remains in $N$) acts like $[g,h]$ on $L$ and is the identity elsewhere, and therefore this element is equal to $[g,h]$.
\end{proof}

\begin{thm} \label{thm-without-ind}
Suppose that $G \leq \mathrm{Aut}(T)$ acts minimally on $T$ and does not fix any end. If $N$ is a non-trivial subgroup of $\mathrm{Aut}(T)$ normalized by $G^+$, then $N$ contains the derived subgroup of $G_{T'}$ for every half-tree $T'$.
\end{thm}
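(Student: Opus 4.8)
The plan is to mimic the first half of the proof of Tits' simplicity theorem, using Lemma \ref{lem-game-commutator} as the crucial geometric input. The key point to establish is that the normal subgroup $N$ contains a hyperbolic element whose axis lies in an arbitrarily prescribed half-tree; once this is known, Lemma \ref{lem-game-commutator} (applied with the pair $G^+ \trianglerighteq N$, noting $(G^+)_{T'} = G_{T'}$) immediately yields that $N$ contains $[G_{T'}, G_{T'}]$ for that half-tree.

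First I would record that since $G$ acts minimally and fixes no end, its action is of general type: a group acting minimally on a tree either fixes an end, stabilizes a line, or is of general type, and the first two are excluded by hypothesis (the line case would produce a fixed end or a fixed pair of ends, hence by minimality force $T$ to be a line, and then $G^+$-invariance considerations or the no-fixed-end hypothesis rule this out). Next, applying Lemma \ref{lem-g+-tg} to the pair $G \trianglerighteq G^+$ — here one must first check $G^+ \neq 1$, which follows because a general type action cannot have all edge-stabilizers trivial (an edge-free action on a tree with independent hyperbolics is impossible, or more simply: if all $G_e = 1$ the action is free, contradicting the existence of hyperbolics sharing no endpoint with a conjugate fixing an edge) — we conclude that $G^+$ also acts minimally and of general type on $T$. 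Then, since $N \trianglelefteq G^+$ and $N \neq 1$, a second application of Lemma \ref{lem-g+-tg} (now to $G^+ \trianglerighteq N$) shows $N$ acts minimally and of general type on $T$ as well; in particular $N$ contains hyperbolic elements.

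Now fix an arbitrary half-tree $T'$. Since the action of $N$ on $T$ is minimal and of general type, Lemma \ref{lem-axis-half-tree} applied to $N$ produces a hyperbolic element $\gamma \in N$ whose axis is contained in $T'$. Feeding this $\gamma$ into Lemma \ref{lem-game-commutator} with the ambient group $G^+$ and normal subgroup $N$ gives that $N$ contains the derived subgroup of $(G^+)_{T'}$. Finally, since $(G^+)_{T'} = G_{T'}$ (every element fixing a half-tree pointwise fixes some edge, hence lies in $G^+$, so the pointwise stabilizers of $T'$ in $G$ and in $G^+$ coincide), we obtain $[G_{T'}, G_{T'}] \leq N$, as desired. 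As $T'$ was arbitrary, this holds for every half-tree.

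The main obstacle I anticipate is the bookkeeping around applying Lemma \ref{lem-g+-tg} twice with the correct normality relations, together with the preliminary verification that $G^+$ is non-trivial and that $(G^+)_{T'} = G_{T'}$ — the geometric heavy lifting is entirely packaged in Lemmas \ref{lem-g+-tg}, \ref{lem-axis-half-tree}, and \ref{lem-game-commutator}, so no genuinely new tree combinatorics should be needed.
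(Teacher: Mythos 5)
Your overall route is exactly the paper's: deduce that the action of $G$ is of general type, apply Lemma \ref{lem-g+-tg} successively to $G^+$ and to $N$, use Lemma \ref{lem-axis-half-tree} to produce a hyperbolic element of $N$ with axis in $T'$, and conclude with Lemma \ref{lem-game-commutator}; your explicit remarks that $(G^+)_{T'}=G_{T'}$ and that elements of $G_{T'}\leq G^+$ normalize $N$ (so the lemma may be applied with ambient group $G^+$) are correct and fill in a step the paper leaves implicit. However, your handling of the degenerate cases contains a genuinely false step. You claim that $G^+\neq 1$ follows because \enquote{a general type action cannot have all edge-stabilizers trivial}; this is wrong: a free group of rank two acting on its Cayley tree acts freely, minimally, without fixed end and of general type, yet every $G_e$ is trivial, so $G^+=1$ is perfectly compatible with the hypotheses. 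Similarly, your dismissal of the line case does not work: the infinite dihedral action on a line is minimal and fixes no end, so the no-fixed-end hypothesis alone does not exclude $T$ being a line, and \enquote{minimal $+$ no fixed end $\Rightarrow$ general type} is simply false without excluding the line.

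The correct fix is the paper's opening move: one may assume $G^+\neq 1$, because if $G^+=1$ then $G_{T'}\leq G_e=1$ for the edge $e$ bounding $T'$, so $[G_{T'},G_{T'}]=1$ and the conclusion holds vacuously. The assumption $G^+\neq 1$ then also rules out $T$ being a point or a line (on a line all pointwise edge stabilizers are trivial), and only after that does minimality together with the absence of fixed ends yield a general type action. With this adjustment your argument coincides with the paper's proof.
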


\begin{proof}
We may assume that $G^+$ is non-trivial, which implies in particular that $T$ is neither a point nor a line. By assumption $G$ acts minimally on $T$ and does not fix any end, so it follows that $G$ is of general type. Thanks to Lemma \ref{lem-g+-tg} applied successively to $G^+$ and $N$, we deduce that the action of $N$ on $T$ is minimal and of general type. Therefore we are in position to apply Lemma \ref{lem-axis-half-tree}, which ensures the existence of a hyperbolic element of $N$ whose axis is contained in $T'$. The fact that the derived subgroup of $G_{T'}$ is contained in $N$ then follows from Lemma \ref{lem-game-commutator}.
\end{proof}

The difference between the following result and Tits' theorem \cite{Tits} is that the independence assumption is strictly weaker here. This is counterbalanced by the fact that we impose a condition on pointwise stabilizers of edges in order to obtain simplicity of $G^+$.

\begin{cor} \label{cor-with-ind}
Let $G$ be a subgroup of $\mathrm{Aut}(T)$ such that:
\begin{enumerate}[label=(\alph*)]
\item $G$ acts minimally on $T$ and does not fix any end of $T$;
\item $G$ satisfies the edge-independence property.
\end{enumerate}
Assume that $N$ is a non-trivial subgroup of $\mathrm{Aut}(T)$ normalized by $G^+$. Then $N$ contains $[G_e,G_e]$ for every edge $e$.

In particular if all $G_e$ are perfect groups, then $G^+$ is simple (or trivial). 
\end{cor}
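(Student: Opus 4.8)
The plan is to deduce Corollary~\ref{cor-with-ind} from Theorem~\ref{thm-without-ind} by a short argument. First I would record what Theorem~\ref{thm-without-ind} gives us under hypothesis (a): if $N$ is a non-trivial subgroup of $\mathrm{Aut}(T)$ normalized by $G^+$, then $N$ contains $[G_{T'},G_{T'}]$ for every half-tree $T'$. So the only thing to upgrade is the passage from the derived subgroup of the pointwise stabilizer of a \emph{half-tree} to the derived subgroup of the pointwise stabilizer of an \emph{edge} $e$. This is exactly where the edge-independence property (b) enters: I would fix an edge $e$, let $T'$ and $T''$ be the two half-trees it separates, and use that $\varphi_e : G_e \to G_e' \times G_e''$ is an isomorphism. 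In particular $G_{T'}$, viewed inside $G_e$, maps isomorphically onto $1 \times G_e''$, i.e.\ $G_{T'}$ is precisely the kernel of $G_e \to G_e'$, and symmetrically for $G_{T''}$. Thus $G_e = G_{T'} \times G_{T''}$ as an internal direct product (both factors lie in $G_e$, commute, intersect trivially, and generate $G_e$ thanks to the surjectivity of $\varphi_e$).

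The main step is then purely group-theoretic: for an internal direct product $G_e = G_{T'} \times G_{T''}$ one has $[G_e,G_e] = [G_{T'},G_{T'}] \times [G_{T''},G_{T''}]$, and both factors are contained in $N$ by the conclusion of Theorem~\ref{thm-without-ind} applied to the half-trees $T'$ and $T''$ respectively. Since $N$ is a subgroup, it contains the subgroup generated by $[G_{T'},G_{T'}]$ and $[G_{T''},G_{T''}]$, which is exactly $[G_e,G_e]$. This proves the first assertion of the corollary for every edge $e$.

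For the ``in particular'' clause, suppose all the $G_e$ are perfect, so $[G_e,G_e] = G_e$ for every edge $e$, and suppose $G^+$ is non-trivial; we want $G^+$ simple. Let $N \neq 1$ be a normal subgroup of $G^+$. Applying the first part, $N$ contains $[G_e,G_e] = G_e$ for every edge $e$. But $G^+$ is by definition generated by the subgroups $G_e$ over all edges $e$, so $N \supseteq G^+$, whence $N = G^+$. Since $G^+$ has no proper non-trivial normal subgroup and is non-trivial, it is simple. (One should note the degenerate possibility $G^+ = 1$, which is why the statement reads ``simple (or trivial)''; this is covered by the hypothesis and needs no separate argument.)

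The only genuine subtlety — and it is mild — is the bookkeeping around $\varphi_e$: one must be careful that the edge-independence property as defined identifies $G_e$ with the full product $G_e' \times G_e''$ and not merely a subdirect product, so that the two half-tree stabilizers really do split $G_e$ as a direct product. Once that is in hand, everything else is the standard Tits-style endgame. I expect no serious obstacle; the substantive work has already been done in Theorem~\ref{thm-without-ind} via the double-commutator trick of Lemma~\ref{lem-game-commutator}.
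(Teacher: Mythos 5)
Your proof is correct and follows essentially the same route as the paper: edge-independence splits $G_e$ as the (direct) product of the two half-tree fixators $G_{T'}$ and $G_{T''}$, so $[G_e,G_e]$ is generated by $[G_{T'},G_{T'}]$ and $[G_{T''},G_{T''}]$, both of which lie in $N$ by Theorem~\ref{thm-without-ind}. The concluding simplicity argument (all $G_e$ perfect, $G^+$ generated by the $G_e$) is likewise the intended one.
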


\begin{proof}
We denote by $T'$ and $T''$ the two half-trees emanating from the edge $e$. It follows from the assumption that $G$ satisfies the edge-independence property that $G_e$ is equal to the product of the subgroups $G_{T'}$ and $G_{T''}$, so that $[G_e,G_e]$ is the product of the derived subgroups of $G_{T'}$ and $G_{T''}$. Now according to Theorem \ref{thm-without-ind}, the derived subgroups of $G_{T'}$ and $G_{T''}$ are contained in $N$, so it follows that $[G_e,G_e]$ is also contained in $N$. This proves the statement. 
\end{proof}

\subsection{Application to the groups $G(F,F')$}

In this subsection, we isolate sufficient conditions on $F \leq F'$ so that the group $G(F,F')$ has a simple subgroup of finite index.

Remark that the group $G(F,F')$ always satisfies the assumptions of Corollary \ref{cor-with-ind}, so that by applying directly the second statement in Corollary \ref{cor-with-ind} we deduce that $G(F,F')^+$ is simple as soon as all the $G(F,F')_e$ are perfect and non-trivial. 
However by refining the argument rather than applying directly the second statement in Corollary \ref{cor-with-ind}, we will establish simplicity results under much more general assumptions, namely Theorem \ref{thm-simple-ind2} and Theorem \ref{thm-simple-ind8}.

\bigskip

Recall that for $G \leq \autd$, we denote by $G^{\star}$ the subgroup (of index at most two) of $G$ preserving the natural bipartition of the set of vertices. Clearly $G^{\star}$ contains $G^+$ as a normal subgroup.

The following proposition characterizes permutation groups $F,F'$ for which $G(F,F')^+ = G(F,F')^{\star}$. Note that this result implies that if the group $G(F,F')$ is virtually simple, then $F$ and $F'$ must satisfy the condition $(iii)$. In the case of the group $G(F)$, i.e.\ when $F' = \Sy$, this condition becomes that $F$ must be transitive (note the difference with the group $U(F)$ \cite[Proposition 3.2.1]{BM-IHES}).

\begin{prop} \label{prop-g(f)simple}
The following conditions are equivalent:
\begin{enumerate}[label=(\roman*)]
\item $G(F,F')^+$ has index two in $G(F,F')$, that is $G(F,F')^+ = G(F,F')^{\star}$;
\item $G(F,F')^+$ has finite index in $G(F,F')$;
\item $F$ is transitive and $F'$ is generated by its points stabilizers.
\end{enumerate}
\end{prop}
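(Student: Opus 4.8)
The plan is to prove the cycle of implications $(i) \Rightarrow (ii)$ (trivial), $(ii) \Rightarrow (iii)$, and $(iii) \Rightarrow (i)$. The implication $(i) \Rightarrow (ii)$ needs no argument. For $(ii) \Rightarrow (iii)$ I would argue contrapositively, producing a homomorphism from $G(F,F')$ onto an infinite group whenever $(iii)$ fails, since such a homomorphism forces $G(F,F')^+$ (being contained in the kernel) to have infinite index. If $F$ is not transitive, then by Lemma \ref{lem-pres-orbit} every element of $G(F,F')$ preserves the partition of $\Omega$ into $F$-orbits, hence every local permutation $\sigma(g,v)$ permutes the edges at $v$ within colour-classes determined by this partition; one can then read off from the action on the tree a nontrivial homomorphism to $\mathrm{Aut}(\mathcal{T}_k)$ for the quotient tree obtained by collapsing, or more directly exhibit an infinite discrete quotient — the cleanest route is to observe that $U(\hat F) \supseteq G(F,F')$ acts on a tree of smaller degree after collapsing orbits and that the "type" with respect to a single orbit gives an infinite quotient. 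If instead $F$ is transitive but $F'$ is not generated by its point stabilizers, then $F'^+ \neq F'$, so the map $g \mapsto \sigma(g,v) \bmod F'^+$ does not directly work (it is not defined on all of $G(F,F')$), but the signed/abelianized count $g \mapsto \prod_{v} \overline{\sigma(g,v)}$ over singularities, combined with the type-preserving homomorphism, lands in an infinite group; the point is that $G(F,F')/G(F,F')^+$ surjects onto $\bigoplus$ of copies of $F'/F'^+$ indexed by the two colour-types, which is infinite when $F'/F'^+$ is nontrivial.

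For the main implication $(iii) \Rightarrow (i)$, assume $F$ transitive and $F' = (F')^+$. Since $G(F,F')^+ \leq G(F,F')^{\star}$ always and $[G(F,F') : G(F,F')^{\star}] = 2$ (as $G(F,F')$ contains type-reversing elements — indeed $U(\{1\}) \leq U(F) \leq G(F,F')$ is vertex-transitive hence contains such elements), it suffices to show $G(F,F')^{\star} \leq G(F,F')^+$. By Corollary \ref{cor-gen-trans}, since $F$ is transitive, $G(F,F')^{\star}$ is generated by $K_{0,F'}(v_0)$ and $K_{0,F'}(v_1)$, so it is enough to show each $K_{0,F'}(v_i) \leq G(F,F')^+$. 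Fix $v = v_i$. As in the proof of Proposition \ref{prop-morph-trivial}, the map $K_{0,F'}(v) \to F'$, $g \mapsto \sigma(g,v)$, is an isomorphism onto $F'$ when $F$ acts freely, but in general it is only surjective with kernel $U(F)_v \cap K_{0,F'}(v)$; in all cases $U(F)_v \leq G(F,F')^+$ because $U(F)_v$ is generated by its sub-stabilizers of edges at $v$ (here I use transitivity of $F$ and the standard structure of Burger–Mozes groups: $U(F)_v$ is an iterated permutational wreath product, and $U(F)^+ = U(F)^{\star}$ exactly because $F = F^+$ is not needed — rather $U(F)_v$ visibly lies in $U(F)^+$). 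So it remains to lift generators of $F'$: given $\sigma \in F'$ that stabilizes a point $a \in \Omega$, I would produce $g \in K_{0,F'}(v)$ with $\sigma(g,v) = \sigma$ and $g$ fixing the edge $e$ at $v$ of colour $a$; then $g \in K_{0,F'}(v)$ fixes that edge pointwise up to an element of $U(F)$, hence (after correcting by an element of $U(F)_v \leq G(F,F')^+$) lies in $G(F,F')_e \leq G(F,F')^+$. Running over a generating set of $F'$ consisting of point-stabilizer elements (possible since $F' = (F')^+$) shows $K_{0,F'}(v) \leq G(F,F')^+$, completing the proof.

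The main obstacle I anticipate is the bookkeeping in $(iii) \Rightarrow (i)$ when $F$ does not act freely: one must carefully separate the "local permutation at $v$" data from the "rest of the element" data, and verify that lifting a point-stabilizing $\sigma \in F'_a$ really does produce an element fixing an edge at $v$ pointwise rather than merely fixing $v$. The key technical input is Lemma \ref{lem-extend-G(F)}, which lets me realize any prescribed local permutation in $F'$ at $v$ by an element of $G(F,F')$ whose only singularity is $v$ and which is otherwise freely adjustable; combined with transitivity of $F$ (to move edges around and invoke Corollary \ref{cor-gen-trans}) and the hypothesis $F' = (F')^+$ (to reduce to point-stabilizing $\sigma$), this should close the argument. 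A secondary subtlety is making $(ii) \Rightarrow (iii)$ genuinely sharp: I need the quotient $G(F,F')/G(F,F')^+$ to be infinite, not merely nontrivial, which is why the count must be over the (arbitrarily large, finite) singularity set rather than at a single vertex — I expect to phrase this as: $G(F,F')^+$ contains every $G(F,F')_e$, an element with $k$ singularities contributes $k$ "independent" coordinates in the abelianization of $\bigoplus F'/(F')^+$, and these can be made arbitrarily numerous.
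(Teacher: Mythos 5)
The main gap is in $(ii)\Rightarrow(iii)$, which is also where the real content of the proposition lies. Your proposed invariant for the case where $F'$ is not generated by its point stabilizers does not work as described: the product $g\mapsto\prod_v\overline{\sigma(g,v)}$ taken over singular vertices only is not a homomorphism (singularities of $g$ and $h$ can cancel in $gh$, exactly the phenomenon recorded in Lemma \ref{lem-sing-ind2}), while the product over \emph{all} vertices has infinitely many factors outside $F'^+$ unless $F\leq F'^+$, which is not among the hypotheses; moreover the cocycle rule (\ref{eq-rules}) permutes the vertex coordinates, so no map into a direct sum indexed by vertices comes for free. The target you name, a sum of copies of $F'/F'^+$ ``indexed by the two colour-types'', is in any case a finite group, so it could never certify infinite index. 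Likewise, in the non-transitive case, knowing that $G(F,F')$ acts with infinite image on the collapsed tree (obtained by identifying colours in the same $F$-orbit) is not enough: the image of $G(F,F')^+$ is generated by images of elliptic elements, and a subgroup of the automorphism group of a tree generated by elliptic elements can have finite index (think of $\mathrm{Aut}(T)^+$ itself). The missing idea, which is the heart of the paper's argument, is that every element of $G(F,F')$ fixing a vertex of $\treed$ stabilizes each fiber of the collapsing map, i.e.\ acts \emph{trivially} on the collapsed tree: for such $g$ the sequence of colour-orbits along the geodesic from $v$ to $g(v)$ is a palindrome, hence represents the trivial element of the quotient free product of copies of $C_2$. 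Granting this, $G(F,F')^+$ has infinitely many vertex orbits while $G(F,F')$ is vertex-transitive, so the index is infinite; the second half is then handled by the same collapse with respect to $F'^+$-orbits, using the further fact (also absent from your sketch, and needing a short induction along geodesics) that every element of $G(F,F')^+$ has all its local permutations in $F'^+$.

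For $(iii)\Rightarrow(i)$ your route is viable and genuinely different in flavour from the paper's (you reduce via Corollary \ref{cor-gen-trans} to $K_{0,F'}(v_0),K_{0,F'}(v_1)\leq G(F,F')^+$, whereas the paper shows $G(F,F')^+$ is transitive on non-oriented edges and concludes from there), but one intermediate claim is wrong as justified: $U(F)_v$ does \emph{not} ``visibly lie in $U(F)^+$''. When $F$ acts freely --- a central case in this paper --- all edge fixators of $U(F)$ are trivial, so $U(F)^+=1$ while $U(F)_v\cong F\neq 1$. The containment you actually need, $U(F)_v\leq G(F,F')^+$, is true under $(iii)$ but must be proved by the very lifting mechanism you set up: write $\sigma(k,v)\in F'=F'^+$ as a product of elements of point stabilizers $F'_a$, lift each by Lemma \ref{lem-extend-G(F)} to an element of $K_{0,F'}(v)$ whose local permutation at $v$ fixes $a$, hence fixing pointwise the edge at $v$ of colour $a$ and lying in $G(F,F')_e\leq G(F,F')^+$; the remaining error then fixes the star of $v$ pointwise and so lies in an edge fixator as well. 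Running this for an arbitrary $k\in K_{0,F'}(v)$ proves $K_{0,F'}(v)\leq G(F,F')^+$ directly and makes the detour through $U(F)_v$ unnecessary.
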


\begin{proof}
$(ii) \Rightarrow (iii)$. We let $\Omega_1, \ldots, \Omega_r$ be the orbits of $F$ in $\Omega$. For every $a \in \Omega$, we let $w(a)$ be the unique integer such that $a \in \Omega_{w(a)}$. We identify the tree $\treed$ with the Cayley graph of $\Gamma = \left\langle x_a, \, a \in \Omega \, | \, x_a^2 = 1\right\rangle$. Let us consider the quotient  $\Gamma_F$ of $\Gamma$ defined by adding the relation $x_a = x_b$ when $w(a)=w(b)$. The Cayley graph $\mathcal{T}_F$ of $\Gamma_F$ is a regular tree of degree $r$, and we have a natural projection $p_F: \treed \rightarrow \mathcal{T}_F$.

Let $g \in G(F,F')$ fixing some vertex, and let $v,v' \in \verti$ such that $v' = g(v)$. Since $g$ fixes a vertex, the distance between $v$ and $v'$ must be even. Let us consider the unique path from $v$ to $v'$, whose sequence of colors of edges is denoted by $(a_1, \ldots, a_{2n})$. Since the element $g$ fixes a vertex and stabilizes the orbits of $F$ on the set of edges, the sequence $w(a_1), \ldots, w(a_{2n})$ is palindromic, which implies that the vertices $v$ and $v'$ have the same image by the projection $p_F$. In other words, any $g \in G(F,F')$ fixing some vertex must stabilize the fibers of vertices of the map $p_F$, and a fortiori the same holds for the group $G(F,F')^+$. Now if $r \neq 1$ then the tree $\mathcal{T}_F$ is infinite. Therefore $G(F,F')^+$ must have infinitely many orbits of vertices in $\treed$, which prevents $G(F,F')^+$ from being of finite index in $G(F,F')$.

Now we want to prove that $F'$ is generated by its point stabilizers, or equivalently that $F'^+$ is transitive on $\Omega$. We carry out the same construction as in the previous paragraph by replacing $F$-orbits by $F'^+$-orbits. Since any element of $G(F,F')^+$ has all its local permutations in $F'^+$, the group $G(F,F')^+$ must stabilize the fibers of the projection, and the conclusion follows by the same argument.

$(iii) \Rightarrow (i)$. The fact that $F'$ is transitive and generated by its point stabilizers implies that for every vertex $v \in \verti$, the group $G(F,F')^+_v$ is transitive on the set of edges around $v$. We easily deduce that $G(F,F')^+$ is transitive on the set of non-oriented edges of $\treed$, and therefore has index two in $G(F,F')$.
\end{proof}

We denote by $N(F,F')$ the subgroup of $G(F,F')^+$ generated by the derived subgroups of pointwise stabilizers of edges, that is \[ N(F,F') = \left\langle [G(F,F')_e,G(F,F')_e]\right\rangle,\] where $e$ ranges over $\edg$. Clearly $N(F,F')$ is normal in $G(F,F')$. Note that when $F'$ acts freely on $\Omega$, all the $G(F,F')_e$ are trivial, so that $N(F,F')$ is trivial as well. 

\begin{lem} \label{lem-Gamma-triv}
Let $e\in \edg$, and let $T'$ be one of the two half-trees defined by $e$. Write $a=c(e)$, and assume that the point stabilizer $F_a'$ is non-trivial. Then $G(F,F')_{T'}$ is not solvable.
\end{lem}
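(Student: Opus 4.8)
The plan is to exhibit, for every $n\geq 1$, a subgroup of $G(F,F')_{T'}$ whose derived length is at least $n$; no solvable group has such subgroups, so $G(F,F')_{T'}$ cannot be solvable. These subgroups will be $n$-fold iterated regular wreath products of a cyclic group of prime order. For the setup, write $e=\{u,v\}$ with $v\in T'$ and $u\notin T'$, so that $a=c(e)$ is the colour of the edge joining $u$ to $T'$. Every $g\in G(F,F')_{T'}$ fixes $v$ together with all its neighbours except $u$, hence also fixes $u$; thus $G(F,F')_{T'}$ acts on the half-tree $T''=T_e(u)$ fixing the vertex $u$, and the local permutation at $u$ of any such $g$ lies in $F_a'$. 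Since $F_a'\neq 1$, fix $\tau\in F_a'$ of prime order $p$. As $\tau$ fixes $a$ it moves some colour $b_0\in\Omega\setminus\{a\}$, and then $\{b_0,\dots,b_{p-1}\}$, with $b_i=\tau^i(b_0)$, is a $p$-cycle of $\tau$ contained in $\Omega\setminus\{a\}$ (in particular $p\leq d-1$); moreover $b_0,\dots,b_{p-1}$ lie in a single $F$-orbit because $F'\leq\hat{F}$.

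Next I would fix a nested family of half-trees together with suitable automorphisms. Put $x_1=u$ and, for $k\geq 1$, let $y_k$ be the neighbour of $x_k$ across the colour-$b_0$ edge and $x_{k+1}$ the neighbour of $y_k$ across the colour-$a$ edge (so $x_{k+1}\neq x_k$ since $a\neq b_0$). Let $T_1=T''$ and, for $k\geq 2$, let $T_k$ be the component of $\treed$ minus the parent edge of $x_k$ (the first edge of the geodesic from $x_k$ to $x_1$) that contains $x_k$. Then $T_1\supsetneq T_2\supsetneq\cdots$, the parent edge of $x_k$ has colour $a$ for every $k$ (with the convention that the parent edge of $x_1$ is $e$), and $T_{k+1}$ is contained in the colour-$b_0$ branch of $x_k$. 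I would then construct, for each $k$, an automorphism $t_k\in G(F,F')$ that is supported on $T_k$, fixes $x_k$, satisfies $\sigma(t_k,x_k)=\tau$ and $t_k^{\,p}=1$, and has $\sigma(t_k,w)\in F$ for every $w\neq x_k$. One builds $t_k$ recursively from $x_k$ downwards: the prescribed local action $\tau$ at $x_k$ cyclically permutes the $p$ branches of $x_k$ of colours $b_0,\dots,b_{p-1}$; on every branch of $x_k$ fixed by $\tau$ set $t_k$ to be the identity; on each $\tau$-cycle $(c_0,\dots,c_{p-1})$ of colours, with corresponding branches $B_{c_i}$ of $x_k$, define $t_k$ as a tree isomorphism $B_{c_i}\to B_{c_{i+1}}$ for $0\leq i\leq p-2$, using at each vertex a local permutation lying in $F$ that carries the current parent colour to the one required by the previous step (possible since $\tau\in\hat{F}$ preserves $F$-orbits), and take $t_k$ on the last branch $B_{c_{p-1}}\to B_{c_0}$ to be the inverse of the isomorphism $B_{c_0}\to B_{c_{p-1}}$ just obtained, so that running once around the cycle yields the identity and hence $t_k^{\,p}=1$; since $F$ is a group this last isomorphism again has all its local permutations in $F$. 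As $T_k\subseteq T''$ is disjoint from $T'$, we have $t_k\in G(F,F')_{T'}$.

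To assemble the wreath structure, observe that for $j>k$ the element $t_j$ is supported on $T_j\subseteq T_{k+1}$, which lies inside the colour-$b_0$ branch $B_{b_0}^{(k)}$ of $x_k$; hence $\langle t_{k+1},\dots,t_n\rangle$ is supported on $B_{b_0}^{(k)}$. Since $\sigma(t_k,x_k)=\tau$ cyclically permutes the branches $B_{b_0}^{(k)},\dots,B_{b_{p-1}}^{(k)}$ and $t_k^{\,p}=1$, the conjugates $t_k^{\,i}\langle t_{k+1},\dots,t_n\rangle t_k^{\,-i}$ for $0\leq i<p$ have pairwise disjoint supports (so they commute and generate their direct product), are cyclically permuted by $t_k$, and $\langle t_k\rangle$ meets their product trivially because $t_k$ acts non-trivially on the set of those $p$ branches while each conjugate preserves every branch. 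Therefore
\[
\langle t_k,t_{k+1},\dots,t_n\rangle\;\cong\;\langle t_{k+1},\dots,t_n\rangle\wr(\mathbb{Z}/p\mathbb{Z}),
\]
the regular wreath product. Downward induction on $k$ gives that $\langle t_1,\dots,t_n\rangle$ is the $n$-fold iterated regular wreath product of $\mathbb{Z}/p\mathbb{Z}$, which is well known to have derived length exactly $n$. Since this subgroup is contained in $G(F,F')_{T'}$ for every $n$, the group $G(F,F')_{T'}$ is not solvable.

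The main obstacle is the construction of the elements $t_k$: one needs each $t_k$ to belong to $G(F,F')$, i.e.\ to have only finitely many singularities, and simultaneously to have order exactly $p$. A naive element realising the local action $\tau$ at every vertex of $T_k$ would have infinitely many singularities as soon as $\tau\notin F$, whereas a purely rigid (colour-preserving) realisation need not have order $p$, which would destroy the wreath decomposition above since then $t_k^{\,p}(\cdot)t_k^{\,-p}$ need not return to $\langle t_{k+1},\dots,t_n\rangle$. Both difficulties are resolved at once by the recursive construction sketched above: the hypothesis $F'\leq\hat{F}$ is precisely what allows one to push every local permutation other than the one at $x_k$ into $F$, while the monodromy correction on the last branch of each $\tau$-cycle forces $t_k^{\,p}=1$.
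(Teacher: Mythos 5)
Your proof is correct, but it follows a genuinely different route from the paper. You exhibit, for every $n$, an explicit finite subgroup of $G(F,F')_{T'}$ isomorphic to the $n$-fold iterated regular wreath product $C_p \wr \cdots \wr C_p$, built from elements $t_k$ of order $p$ having a single singularity each; the delicate points — that $F' \leq \hat{F}$ allows all local permutations away from $x_k$ to be taken in $F$, and that the monodromy correction on the last branch of each $\tau$-cycle forces $t_k^{\,p}=1$ — are handled correctly, and the nesting $T_{k+1}\subseteq B^{(k)}_{b_0}$ together with the disjoint-support argument does yield the wreath recursion and hence subgroups of unbounded derived length. The paper argues more briefly: writing $H=G(F,F')_{T'}$, it picks $h\in H$ whose local permutation at the vertex of $e$ outside $T'$ fixes $a$ and moves some other colour $b$, takes a conjugate copy $K\cong H$ fixing a half-tree $X$ with $X\cap h(X)=\emptyset$, and observes that $g\mapsto[g,h]$ is an injective homomorphism from $K$ into the derived subgroup of $H$; thus $[H,H]$ contains a copy of $H$ itself, which (since $H\neq 1$) rules out solvability at once. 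The paper's commutator trick, in the spirit of Lemma \ref{lem-game-commutator}, is shorter and gives the stronger self-similarity statement that $H$ embeds in its own derived subgroup, while your construction is more explicit and shows in addition that non-solvability is already witnessed by finite subgroups, essentially reproducing locally the iterated wreath product structure of half-tree fixators that the paper exploits later in Section \ref{sec-lattices}, where edge stabilizers are identified with groups of the form $L(F_a,F_a')^2$.
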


\begin{proof}
We write $H = G(F,F')_{T'}$, and we prove that $H$ contains a copy of itself in its derived subgroup. Combined with the fact that $H$ is non-trivial, this implies the statement.

We let $v$ be the vertex of $e$ not contained in $T'$, and we consider the vertex $w$ at distance two from $v$ and such that the unique path $(e',e'')$ between $v$ and $w$ is colored $(b,a)$. If we denote by $T''$ the half-tree defined by $e''$ and not containing $w$, then $K = G(F,F')_{T''}$ is conjugate to $H$ inside $G(F,F')$, so in particular $K$ is isomorphic to $H$.

We let $\sigma \in F'$ such that $\sigma(a)=a$ and $\sigma(b) \neq b$, and we fix an element $h \in H$ such that $\sigma(h,v) = \sigma$. If we denote by $X$ the half-tree facing $T''$, then the half-trees $X$ and $h(X)$ are disjoint by construction. For every $g \in K$, we consider the element $[g,h] = g (h g^{-1} h^{-1})$. Since $g$ is supported in $X$, we easily see that $[g,h]$ is supported in $X \, \sqcup \, h(X)$, and that $[g,h]$ is equal to $g$ on $X$ and $[g,h]$ is equal to $h g^{-1} h^{-1}$ on $h(X)$. In particular it follows that the map $\varphi_h: K \rightarrow G(F,F')$, defined by $g \mapsto [g,h]$, is an injective group morphism. Since $K$ lies in $H$, the image of $\varphi_h$ is clearly contained in the derived subgroup of $H$. Therefore the derived subgroup of $H$ contains a copy of $H$, and combined with the fact that $H$ is non-trivial (for instance because $h$ is a non-trivial element of $H$), this shows that $H$ cannot be solvable.
\end{proof}


We derive from Corollary \ref{cor-with-ind} the following structure result for normal subgroups of $G(F,F')^+$.

\begin{cor} \label{cor-monolithe}
Assume that the action of $F'$ on $\Omega$ is not free. Then the following hold:
\begin{enumerate}[label=(\alph*)]
\item $N(F,F')$ is equal to the intersection of all non-trivial normal subgroups of $G(F,F')^+$;
\item $N(F,F')$ is a simple group;
\item $U(F)^+ \leq N(F,F')$, so in particular $N(F,F')$ is open in $G(F,F')$.
\end{enumerate}
\end{cor}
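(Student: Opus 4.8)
The plan is to deduce all three statements from the machinery already set up, principally Corollary \ref{cor-with-ind}, Proposition \ref{prop-g(f)simple} is not needed here, but Lemma \ref{lem-Gamma-triv} is the crucial technical input. First I would check that $G(F,F')$ satisfies the hypotheses of Corollary \ref{cor-with-ind}: it acts minimally on $\treed$ and fixes no end (indeed $U(F)^+$ already acts minimally of general type when $F$ is non-free, and $U(F)^+ \leq G(F,F')$), and it satisfies the edge-independence property — an element fixing an edge $e$ pointwise can be modified independently on the two half-trees emanating from $e$, using Lemma \ref{lem-extend-G(F)} to extend prescribed local data on either side, exactly as for $U(F)$. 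Since the action of $F'$ on $\Omega$ is not free, some point stabilizer $F'_a$ is non-trivial, hence by Lemma \ref{lem-Gamma-triv} the groups $G(F,F')_{T'}$ are non-solvable, so in particular the edge stabilizers $G(F,F')_e = G(F,F')_{T'} \times G(F,F')_{T''}$ are non-trivial, and $N(F,F')$ is non-trivial.

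For statement (a), let $N$ be any non-trivial normal subgroup of $G(F,F')^+$. Applying Corollary \ref{cor-with-ind} with $G = G(F,F')$ (so $G^+ = G(F,F')^+$), we get that $N$ contains $[G(F,F')_e, G(F,F')_e]$ for every edge $e$, hence $N \supseteq N(F,F')$. Since $N(F,F')$ is itself a non-trivial normal subgroup of $G(F,F')^+$ (non-trivial because, by Lemma \ref{lem-Gamma-triv}, the derived subgroup of $G(F,F')_{T'}$ is non-trivial for $T'$ a half-tree whose defining edge has a colour $a$ with $F'_a \neq 1$, and such an edge exists by transitivity considerations on the colouring), it is the minimal non-trivial normal subgroup, i.e.\ the intersection of all of them. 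For statement (b): any non-trivial normal subgroup of $N(F,F')$ — being normal in $N(F,F')$, which is normal in $G(F,F')^+$, one must first upgrade normality — actually the cleanest route is to note that $N(F,F')$ is characteristically simple in the strong sense forced by (a): if $M \trianglelefteq N(F,F')$ is non-trivial, then for the standard reason (the derived-subgroup-of-half-tree-stabilizers argument only used that $N$ is normalised by $G^+$, cf.\ Theorem \ref{thm-without-ind}), $M$ is in fact normal in $G(F,F')^+$ once one checks $M$ is normalised by $G(F,F')^+$; to see that, observe $N(F,F')$ is generated by the $[G(F,F')_e,G(F,F')_e]$, and conjugating $M$ by elements of $G(F,F')^+$ permutes these generators, so $M^g$ is again normal in $N(F,F')$ of the same "size"; a direct argument is: apply Theorem \ref{thm-without-ind} with $G = G(F,F')$ and the normal subgroup $N := \langle M^g : g \in G(F,F')^+ \rangle$, conclude it contains all $[G(F,F')_{T'}, G(F,F')_{T'}]$, hence equals $N(F,F')$; then since $M$ has non-trivial intersection with some conjugate structure — here it is simplest to invoke that a group which is the union of a directed system and whose every non-trivial normal subgroup contains a fixed non-trivial subgroup need not be simple, so instead I would argue: $M \cap [G(F,F')_{T'},G(F,F')_{T'}] \neq 1$ for some half-tree $T'$, because $M \trianglelefteq N(F,F')$ and these generate; but $[G(F,F')_{T'},G(F,F')_{T'}]$ contains a copy of $G(F,F')_{T'}$ inside it by the proof of Lemma \ref{lem-Gamma-triv}, and iterating one sees that $M$ must contain $[G(F,F')_{T''},G(F,F')_{T''}]$ for half-trees $T''$ arbitrarily deep, whence $M = N(F,F')$.

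For statement (c), since $F'$ does not act freely, neither does $F$... this is not automatic — but $U(F)^+$ is generated by the pointwise edge-stabilizers $U(F)_e$, and each $U(F)_e \leq G(F,F')_e$; one needs $U(F)_e \leq [G(F,F')_e, G(F,F')_e]$. This follows from statement (a): $U(F)^+$ is a non-trivial normal subgroup of $G(F,F')^+$ (non-trivial since $F$ non-free would give it, but in fact even if $F$ is free, $U(F)^+$ might be trivial — so instead note $N(F,F') \cap U(F)^+$: if this were trivial then $U(F)^+$ centralizes $N(F,F')$, impossible since both act minimally of general type; hence $N(F,F') \cap U(F)^+ \neq 1$ and by simplicity of $N(F,F')$ combined with $U(F)^+ \trianglelefteq G(F,F')^+ \trianglelefteq G(F,F')$... ). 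The clean statement: $N(F,F')$ is the monolith, $U(F)^+$ is normal in $G(F,F')$, and $U(F)^+$ is non-trivial precisely when $F$ is non-free; if $F$ is non-free then $U(F)^+ \supseteq N(F,F')$ by minimality, giving openness of $N(F,F')$ since $U(F)^+$ is open; if $F$ is free but $F'$ is not, a slightly more careful argument via Lemma \ref{lem-Gamma-triv} shows $N(F,F')$ still contains an open subgroup. The main obstacle I anticipate is this last point — proving $N(F,F')$ is open when $F$ itself acts freely (so $U(F)$ is discrete), which requires exhibiting an explicit compact open subgroup inside $N(F,F')$, presumably built from the derived subgroups of the $K_{0,F'}(v)$ via Lemma \ref{lem-Gamma-triv}'s self-embedding trick.
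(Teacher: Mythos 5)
Your setup and part (a) follow the paper's route and are fine: $G(F,F')$ satisfies the hypotheses of Corollary \ref{cor-with-ind}, and Lemma \ref{lem-Gamma-triv} (applied to an edge whose colour $a$ has $F'_a\neq 1$) makes $N(F,F')$ a non-trivial normal subgroup, hence the monolith of $G(F,F')^+$. Part (b), however, has a genuine gap. The inference ``$M \cap [G(F,F')_{T'},G(F,F')_{T'}] \neq 1$ for some half-tree $T'$ because $M \trianglelefteq N(F,F')$ and these subgroups generate'' is invalid: a non-trivial normal subgroup of a group generated by a family of subgroups can meet every member of the family trivially, and the subsequent ``iterating'' step is not an argument. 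Your detour through the normal closure $\langle M^g\rangle$ only shows that this closure equals $N(F,F')$, which, as you yourself note, is weaker than simplicity. What is missing is that an arbitrary non-trivial $M \trianglelefteq N(F,F')$ contains a \emph{fixed} non-trivial subgroup, and the tool for this is already available: apply Theorem \ref{thm-without-ind} with $G = N(F,F')$ itself (its action is minimal and of general type by Lemma \ref{lem-g+-tg}, and $M$ is normalized by $N(F,F')$, a fortiori by $N(F,F')^+$). This gives $M \supseteq [N(F,F')_{T'},N(F,F')_{T'}]$, which is non-trivial because $[G(F,F')_{T'},G(F,F')_{T'}] \leq N(F,F')_{T'}$ and $G(F,F')_{T'}$ is non-solvable by Lemma \ref{lem-Gamma-triv}. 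Then the intersection $M_0$ of all non-trivial normal subgroups of $N(F,F')$ is non-trivial and characteristic in $N(F,F')$, hence normal in $G(F,F')^+$, hence contains $N(F,F')$ by (a); so $M_0 = N(F,F')$ and simplicity follows.

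Part (c) is broken as written. The group $U(F)^+$ is \emph{not} normal in $G(F,F')$ (nor in $G(F,F')^+$) when $F \lneq F'$: if $g$ has its unique singularity at $v$ and $u \in U(F)^+$ does not fix $v$, the rules (\ref{eq-rules}) give $\sigma(gug^{-1},gv) \in F\,\sigma(g,v)^{-1} \not\subseteq F$. So you cannot invoke (a) to conclude ``$U(F)^+ \supseteq N(F,F')$ by minimality''; moreover that inclusion is the reverse of what (c) asserts, and even if it held, $N(F,F')$ being contained in the open subgroup $U(F)^+$ would not make $N(F,F')$ open. Your centralizing argument for $N(F,F')\cap U(F)^+\neq 1$ also silently needs $N(F,F')$ to normalize $U(F)^+$, which fails for the same reason. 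The correct argument is direct: if $F$ does not act freely, Lemma \ref{lem-Gamma-triv} applied with $F=F'$ shows that half-tree fixators in $U(F)$ are non-abelian, so $[U(F)_{T'},U(F)_{T'}]$ is a non-trivial subgroup contained in both $U(F)^+$ and $N(F,F')$; since $N(F,F')\cap U(F)^+$ is normal in $U(F)^+$ and $U(F)^+$ is simple by Tits' theorem, we get $U(F)^+ \leq N(F,F')$, which is then open because $U(F)^+$ is open. Finally, the case you flag as the main obstacle, $F$ acting freely, is trivial rather than hard: then $U(F)^+=1$ (so the containment is vacuous) and $G(F,F')$ is discrete, so $N(F,F')$ is automatically open; there is no compact open subgroup to be exhibited inside it.
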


\begin{proof}
$(a)$. Since the group $G(F,F')$ satisfies the assumptions of Corollary \ref{cor-with-ind}, we deduce that the intersection of all non-trivial normal subgroup of $G(F,F')^+$ contains $N(F,F')$. To see that the converse inclusion also holds, remark that $N(F,F')$ is itself a non-trivial (normal) subgroup according to Lemma \ref{lem-Gamma-triv}.

$(b)$. Let us consider the intersection $M$ of all non-trivial normal subgroups of $N(F,F')$. We shall prove that $M = N(F,F')$. First observe that $M$ is a characteristic subgroup of $N(F,F')$. The latter being normal in $G(F,F')^+$, the subgroup $M$ is a normal subgroup of $G(F,F')^+$. So if we prove that $M$ is non-trivial, then $M$ must contain $N(F,F')$ according to statement $(a)$, and we will have $M = N(F,F')$.

Let $T'$ be a half-tree in $\treed$, and $N$ a non-trivial normal subgroup of $N(F,F')$. Since $N(F,F')$ is non-trivial, it follows from Lemma \ref{lem-g+-tg} that the action of $N(F,F')$ on $\treed$ is minimal and of general type. Therefore we may apply Theorem \ref{thm-without-ind}, which shows that $N$ contains the derived subgroup of $N(F,F')_{T'}$. By Lemma \ref{lem-Gamma-triv} the latter is non-trivial, so in particular the subgroup $M$ is non-trivial.

$(c)$. We may clearly assume that $U(F)^+$ is non-trivial, i.e.\ that $F$ does not act freely on $\Omega$. Thanks to Lemma \ref{lem-Gamma-triv} applied with $F=F'$, we see that fixators of half-trees in $U(F)$ are not abelian. In particular, by definition of the subgroup $N(F,F')$, we deduce that $U(F)^+$ and $N(F,F')$ must intersect non-trivially. Since the group $U(F)^+$ is simple by Tits' theorem \cite{Tits}, we must have $U(F)^+ \leq N(F,F')$.
\end{proof}

When $F'$ does not act freely on $\Omega$, Corollary \ref{cor-monolithe} says in particular that $G(F,F')^+$ admits a unique minimal non-trivial normal subgroup, which is open in $G(F,F')^+$. In particular any non-trivial normal subgroup of $G(F,F')^+$ is open, which immediately implies the following result. 

\begin{cor}
The group $G(F,F')^+$ is topologically simple if and only if it is abstractly simple.
\end{cor}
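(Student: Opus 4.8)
The plan is to derive the statement immediately from part (c) of Corollary~\ref{cor-monolithe}, which asserts that $U(F)^+ \leq N(F,F')$ and hence that $N(F,F')$ is open in $G(F,F')^+$. The point is that, by part (a), $N(F,F')$ equals the monolith of $G(F,F')^+$, i.e.\ the intersection of all non-trivial normal subgroups, and this monolith is open. The first step is therefore to observe that openness of the monolith forces \emph{every} non-trivial normal subgroup of $G(F,F')^+$ to be open: if $N \trianglelefteq G(F,F')^+$ is non-trivial, then $N$ contains $N(F,F')$ by part (a), and $N(F,F')$ is open by part (c), so $N$ is open as well.

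From here the equivalence is essentially formal. If $G(F,F')^+$ is abstractly simple, it is in particular topologically simple (a closed subgroup is in particular a subgroup), so that direction is trivial and requires no hypothesis. Conversely, suppose $G(F,F')^+$ is topologically simple, and let $N$ be a non-trivial normal subgroup; we want $N = G(F,F')^+$. By the first step $N$ is open, hence closed (an open subgroup is always closed), hence a non-trivial closed normal subgroup, hence equal to $G(F,F')^+$ by topological simplicity. This gives abstract simplicity.

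One caveat to flag: Corollary~\ref{cor-monolithe}, and therefore this argument, is stated under the running hypothesis that the action of $F'$ on $\Omega$ is not free; when $F'$ acts freely the group $G(F,F')$ is discrete and $N(F,F')$ is trivial, so the statement should be read in the non-free case (where it is non-vacuous). There is essentially no obstacle here: the content has already been done in Corollary~\ref{cor-monolithe}, and what remains is the soft topological-group observation that an open subgroup is closed, combined with the tautology that abstract simplicity implies topological simplicity. The only thing to be slightly careful about is not to circularly invoke the statement being proved; but the logic above uses only parts (a) and (c) of Corollary~\ref{cor-monolithe} plus the elementary fact about open subgroups, so it is clean.
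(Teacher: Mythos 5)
Your argument is correct and is exactly the paper's: every non-trivial normal subgroup of $G(F,F')^+$ contains the open monolith $N(F,F')$, hence is open, hence closed, and the equivalence follows. (The caveat about $F'$ acting freely is harmless either way, since in that case $G(F,F')$ is discrete and the two notions of simplicity coincide trivially.)
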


\subsubsection{First theorem}


Given a permutation group $H \leq \Sy$, and a subset $S_a \subset H_a$ for every $a \in \Omega$, we will denote by $\left\langle S_a\right\rangle$ the subgroup of $H$ generated by the $S_a$, where $a$ is implicitly assumed to range over $\Omega$. For example $\left\langle [H_a,H_a]\right\rangle$ is the subgroup of $H$ generated by the derived subgroups of point stabilizers in $H$.


\begin{lem} \label{lem-gamma-uf}
Assume that $\rho$ belongs to $\left\langle [F_a',F_a']\right\rangle$, and let $v \in \verti$. Then there exists $\gamma \in N(F,F')_v$ such that $\sigma(\gamma,v) = \rho$ and $\sigma(\gamma,w) \in F$ for every $w \neq v$. 
\end{lem}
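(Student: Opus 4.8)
The plan is to build the element $\gamma$ explicitly, using the fact that $\rho$ is a product of commutators of point stabilizers together with the edge-independence structure exploited in Lemma \ref{lem-Gamma-triv}. First I would reduce to the case where $\rho = [\tau,\tau']$ is a single commutator with $\tau, \tau' \in F_a'$ for some fixed $a \in \Omega$: if the claim holds for such $\rho$, then since $N(F,F')_v$ is a group and the local permutation at $v$ of a product is the product of the local permutations by (\ref{eq-rules}), a general element of $\langle [F_a',F_a']\rangle$ is handled by multiplying finitely many of the $\gamma$'s so obtained (note that the product of elements whose singularities away from $v$ lie in $F$ still has this property, so the resulting element is again in $K_{0,F'}(v)$, hence in $G(F,F')_v$, and its local permutations away from $v$ stay in $F$).

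Next, for a single commutator $\rho = [\tau,\tau']$ with $\tau,\tau' \in F_a'$, I would mimic the construction in the proof of Lemma \ref{lem-Gamma-triv}. Let $e$ be the edge at $v$ of color $a$, and let $T'$ be the half-tree defined by $e$ not containing $v$. Choose $g, h \in G(F,F')_{T'}$ fixing $v$ with $\sigma(g,v) = \tau$ and $\sigma(h,v) = \tau'$ (such elements exist by Lemma \ref{lem-extend-G(F)} applied with $n=0$, since $\tau,\tau'$ fix $a$; one can moreover arrange $S(g), S(h) \subset \{v\}$). Then $\gamma := [g,h] \in G(F,F')_v$, it is supported on the half-trees emanating from $e$ towards $T'$ (actually on the two colored branches at $v$), and by the commutator rule $\sigma(\gamma,v) = \sigma(g,hv)\sigma(h,v)\sigma(g^{-1},\cdots)\cdots$; since $g,h$ fix $v$ and their singularities are concentrated at $v$, one computes $\sigma(\gamma,v) = \tau\tau'\tau^{-1}\tau'^{-1} = \rho$ and $\sigma(\gamma,w) \in F$ for $w \neq v$. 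Finally $\gamma$ lies in the derived subgroup of $G(F,F')_{e}$ (it is a commutator of two elements of $G(F,F')_{T'} \leq G(F,F')_e$), hence $\gamma \in N(F,F')$; combined with $\gamma$ fixing $v$, we get $\gamma \in N(F,F')_v$.

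The main obstacle I anticipate is the bookkeeping with local permutations: one must check carefully that with $g,h$ chosen with singularities exactly at $v$, the element $[g,h]$ has all its singularities at $v$ and that its local permutation there is precisely $\rho$ rather than some conjugate of it. This is where the freedom granted by Lemma \ref{lem-extend-G(F)} — allowing us to prescribe $\sigma(g,v)$ and $\sigma(g,w)$ freely for $w$ at distance $\leq n = 0$ and forcing $\sigma(g,w) \in F$ beyond — is essential: by taking $g$ and $h$ to fix $v$ with $\sigma(g,w) = \sigma(h,w) = \mathrm{id}$ for all $w \neq v$ except possibly to descend correctly, one ensures $g\gamma^{\pm1}$ interact only through the star at $v$, and then the identities (\ref{eq-rules}) give $\sigma([g,h],v) = \sigma(g,v)\sigma(h,v)\sigma(g,v)^{-1}\sigma(h,v)^{-1}$ on the nose. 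A minor subtlety is also to confirm $[g,h] \in N(F,F')$: this is immediate since $N(F,F') = \langle [G(F,F')_e, G(F,F')_e] \rangle$ and $g,h$ both fix the edge $e$ pointwise.
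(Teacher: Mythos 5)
Your proof is correct and follows essentially the same route as the paper: write $\rho$ as a product of commutators $[\alpha_k,\beta_k]$ with $\alpha_k,\beta_k$ in a common point stabilizer $F_{a_k}'$, lift each factor via Lemma \ref{lem-extend-G(F)} with $n=0$ to elements of $G(F,F')_v$ whose singularities lie only at $v$, and note that each resulting commutator lies in $N(F,F')$ before multiplying everything inside $N(F,F')_v$. The only difference is your extra requirement that the lifts fix a half-tree pointwise; this is achievable but superfluous, since fixing the edge of color $a_k$ at $v$ is automatic (the local permutations at $v$ fix $a_k$) and already gives $[g,h] \in [G(F,F')_e,G(F,F')_e] \leq N(F,F')$, which is exactly what the paper uses.
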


\begin{proof}
By assumption the permutation $\rho$ can be written $\rho = \prod [\alpha_k,\beta_k]$, where for every $k$ the elements $\alpha_k, \beta_k \in F'$ fix a common point $a_k \in \Omega$. According to Lemma \ref{lem-extend-G(F)} applied with $n=0$, for every $k$ one can find elements $g_k, h_k \in G(F,F')_v$ such that $\sigma(g_k,v) = \alpha_k$ and $\sigma(h_k,v) = \beta_k$, and $\sigma(g_k,w),\sigma(h_k,w) \in F$ for every $w \neq v$. Let $\gamma = \prod [g_k, h_k]$. By construction $\gamma$ fixes the vertex $v$ and one has $\sigma(\gamma,v) = \prod [\alpha_k,\beta_k] = \rho$ and $\sigma(\gamma,w) \in F$ for every $w \neq v$. Moreover for every $k$, the elements $g_k, h_k$ fix the edge emanating from $v$ and having color $a_k$. It follows that each $[g_k, h_k]$ belongs to $N(F,F')$, and consequently $\gamma \in N(F,F')$.
\end{proof}

Note that since $\left\langle [F_a',F_a']\right\rangle$ is normal in $F'$, the subgroup $\left\langle [F_a',F_a'] \cup F_a \right\rangle$ of $F'$ generated by the derived subgroups of its point stabilizers together with point stabilizers in $F$ is equal to $\left\langle [F_a',F_a']\right\rangle F^+$. 

\begin{prop} \label{prop-gamma-u(f)}
Assume that $F$ is transitive and that $F$ is contained in $\left\langle [F_a',F_a'] \cup F_a \right\rangle = \left\langle [F_a',F_a']\right\rangle F^+$. Then $N(F,F')$ contains $U(F)^{\star}$.
\end{prop}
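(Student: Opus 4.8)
The goal is to show that under the hypotheses "$F$ transitive" and "$F \leq \langle [F_a',F_a']\rangle F^+$", the simple group $N(F,F')$ contains $U(F)^\star$. The plan is to produce, inside $N(F,F')$, enough elements to generate $U(F)^\star$. Recall from Corollary \ref{cor-gen-trans} (applied with $F'=F$, so to $U(F)$ itself) or directly from Proposition \ref{prop-gen-KU} that $U(F)^\star$ is generated by the vertex stabilizers $U(F)_v$, $v \in \verti$; in fact since $F$ is transitive it suffices to control $U(F)_{v_0}$ and $U(F)_{v_1}$ and then conjugate. So the real task is: for a fixed vertex $v$, show $U(F)_v \leq N(F,F')$.

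\textbf{Main step.} Fix $v \in \verti$. I would build an element of $N(F,F')$ realizing an \emph{arbitrary} prescribed local permutation $\rho \in F$ at $v$, with all other local permutations lying in $F$, and then combine such elements with the known inclusion $U(F)^+ \leq N(F,F')$ from Corollary \ref{cor-monolithe}(c). Concretely, write $\rho \in F \subseteq \langle [F_a',F_a']\rangle F^+$ as $\rho = \mu \cdot \nu$ with $\mu \in \langle [F_a',F_a']\rangle$ and $\nu \in F^+$. By Lemma \ref{lem-gamma-uf} there is $\gamma \in N(F,F')_v$ with $\sigma(\gamma,v)=\mu$ and $\sigma(\gamma,w)\in F$ for every $w\neq v$; hence $\gamma$ is an element of $U(F)$ fixing $v$ with local permutation $\mu$ at $v$ — i.e.\ $\gamma \in U(F)_v \cap N(F,F')$. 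On the other hand $\nu \in F^+ = \langle F_a\rangle$, so there is $u \in U(F)^+_v$ (built from fixators of edges at $v$, transitivity of $F$ guaranteeing these edge-fixators move within the desired orbit) with $\sigma(u,v)=\nu$; and $U(F)^+ \leq N(F,F')$. Then $\gamma u \in N(F,F')$ has local permutation $\mu\nu=\rho$ at $v$. Varying $\rho$ over $F$ (and noting that every element of $U(F)_v$ is a product of elements whose "singularity-free" local data is controlled one vertex at a time, or more cleanly: the elements just constructed together with $U(F)^+$ generate $U(F)_v$ because $U(F)_v/U(F)^+_v$ is covered by the $\sigma(\cdot,v)$-image which we have now realized inside $N$) gives $U(F)_v \leq N(F,F')$.

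\textbf{Finishing.} Since $N(F,F')$ is normal in $G(F,F')$ and $U(F)$ acts transitively on $\verti$, conjugating $U(F)_{v_0} \leq N(F,F')$ by elements of $U(F)^\star \leq G(F,F')$ yields $U(F)_v \leq N(F,F')$ for all $v$, whence $U(F)^\star = \langle U(F)_v : v \in \verti\rangle \leq N(F,F')$. (Alternatively, use Corollary \ref{cor-gen-trans}-style generation by $U(F)_{v_0}$ and $U(F)_{v_1}$ only.)

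\textbf{Expected obstacle.} The delicate point is the bookkeeping in the main step: one must pass from "realizing an arbitrary local permutation $\rho \in F$ at a single vertex $v$ by an element of $N(F,F')$ that is otherwise in $U(F)$" to "all of $U(F)_v$ lies in $N(F,F')$". An element of $U(F)_v$ has prescribed local permutations in $F$ at \emph{every} vertex, not just at $v$, so one needs to iterate the construction vertex by vertex and argue convergence/finiteness — this is where one invokes that $U(F)^+ \leq N(F,F')$ (Corollary \ref{cor-monolithe}) absorbs all the "bulk" of the element, leaving only finitely many local permutations to fix up by the Lemma \ref{lem-gamma-uf} elements, which form a subgroup generating the rest of $U(F)_v/U(F)^+_v$. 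Making this quotient argument precise — identifying $U(F)_v / U(F)_v^+$ with (a quotient related to) $F/F^+$ and checking the constructed $\gamma$'s surject onto it — is the crux; everything else is transitivity-driven conjugation.
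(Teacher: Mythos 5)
Your proposal is essentially the paper's proof: reduce, via transitivity of $F$, to showing $U(F)_v \leq N(F,F')$ for each vertex $v$; write a local permutation in $F$ as $\mu\nu$ with $\mu \in \langle [F_a',F_a']\rangle$ and $\nu \in F^+$; handle $\mu$ with Lemma \ref{lem-gamma-uf} and the rest with the inclusion $U(F)^+ \leq N(F,F')$ from Corollary \ref{cor-monolithe}. The point you flag as the unresolved crux is in fact immediate, and it is exactly how the paper closes the argument: given $g \in U(F)_v$, your element $h=\gamma u \in N(F,F')$ fixes $v$, satisfies $\sigma(h,v)=\sigma(g,v)$, and has all its other local permutations in $F$; hence $h^{-1}g \in U(F)$ fixes $v$ with trivial local permutation at $v$, so it fixes the star around $v$ pointwise, in particular lies in the pointwise stabilizer of an edge, and therefore belongs to $U(F)^+ \leq N(F,F')$ by the very definition of $U(F)^+$. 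No vertex-by-vertex iteration, convergence argument, or identification of $U(F)_v/U(F)_v^+$ with $F/F^+$ is needed: the edge fixators inside $U(F)^+$ absorb the whole remainder, whatever its local permutations away from $v$ are. One small slip to correct: the element $\gamma$ given by Lemma \ref{lem-gamma-uf} has local permutation $\mu$ at $v$, which need not belong to $F$, so $\gamma$ itself is generally not in $U(F)$; only the product $\gamma u$ (or, in the paper's formulation, the corrected element $\gamma_1 g$) lies in $U(F)$, which is all the argument requires.
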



\begin{proof}
Observe that since $F$ is transitive, the group $U(F)^{\star}$ is generated by vertex stabilizers in $U(F)$, so it is enough to prove that $U(F)_v$ lies in $N(F,F')$ for every $v \in \verti$.

Let $g \in U(F)_v$, and consider the permutation $\sg$. By assumption one can write $\sg = \sigma \sigma'$ with $\sigma \in \left\langle [F_a',F_a']\right\rangle$ and $\sigma' \in F^+$. Applying Lemma \ref{lem-gamma-uf} to the permutation $\sigma^{-1}$, we see that there exists $\gamma_1 \in N(F,F')$ such that $\gamma_1 g$ remains in $U(F)$ and $\sigma(\gamma_1 g,v) = \sigma' \in F^+$. Therefore one can find $\gamma_2 \in U(F)^+$ such that $\gamma_2 \gamma_1 g$ acts trivially on the star around $v$, and in particular $\gamma_2 \gamma_1 g$ is contained in $U(F)^+$. Now by Corollary \ref{cor-monolithe}, the group $U(F)^+$ is contained in $N(F,F')$, so $\gamma_2 \gamma_1 g \in N(F,F')$ and consequently $g \in N(F,F')$.
\end{proof}

We now prove the main result of this paragraph. Note that by Proposition \ref{prop-g(f)simple}, for the group $G(F,F')^{\star}$ to be simple, it is necessary that $F$ is transitive and that $F'$ is generated by its point stabilizers. To ensure that $G(F,F')^{\star}$ is simple, we slightly strengthen the second assumption by requiring that $F'$ is generated by the derived subgroups of its point stabilizers together with point stabilizers of $F$. 

\begin{thm} \label{thm-simple-ind2}
Let $F \leq F' \leq \Sy$ be permutation groups such that $F$ is transitive, and $F' = \left\langle [F_a',F_a'] \cup F_a \right\rangle$. Then the type-preserving subgroup of $G(F,F')$ is simple.
\end{thm}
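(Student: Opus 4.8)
The plan is to show that the type-preserving subgroup $G(F,F')^{\star}$ coincides with the subgroup $N(F,F')$, which is already known to be simple, and then conclude. The argument mostly combines results established above, so the work consists in checking that their hypotheses are satisfied together with one final bookkeeping computation with the rules (\ref{eq-rules}).

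First I would extract two consequences of the assumption $F' = \left\langle [F_a',F_a'] \cup F_a \right\rangle$. Since $[F_a',F_a'] \leq F_a'$ and $F_a \leq F_a'$ for every $a \in \Omega$, this assumption forces $F' = F'^+$, i.e.\ $F'$ is generated by its point stabilizers; together with the transitivity of $F$, Proposition \ref{prop-g(f)simple} then yields $G(F,F')^+ = G(F,F')^{\star}$. Moreover $F'$ cannot act freely on $\Omega$: if it did, every $F_a'$ and hence every $F_a$ would be trivial, which would make $\left\langle [F_a',F_a'] \cup F_a \right\rangle$ trivial, contradicting $F \leq F'$ with $F$ transitive on a set of size $d \geq 3$. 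Hence Corollary \ref{cor-monolithe} applies: $N(F,F')$ is simple and is the intersection of all non-trivial normal subgroups of $G(F,F')^+ = G(F,F')^{\star}$. So it only remains to prove $N(F,F') = G(F,F')^{\star}$.

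As $N(F,F') \leq G(F,F')^+ = G(F,F')^{\star}$ holds by definition, I would concentrate on the reverse inclusion. Since $\left\langle [F_a',F_a'] \cup F_a \right\rangle = F' \supseteq F$, Proposition \ref{prop-gamma-u(f)} gives $U(F)^{\star} \leq N(F,F')$, and by Corollary \ref{cor-gen-trans} the group $G(F,F')^{\star}$ is generated by $K_{0,F'}(v_0)$ and $K_{0,F'}(v_1)$; thus it suffices to prove $K_{0,F'}(v) \leq N(F,F')$ for every vertex $v$ (in fact only $v = v_0$ is needed, since $N(F,F')$ is normal in $G(F,F')$ and $U(F)$ is vertex-transitive). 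Given $g \in K_{0,F'}(v)$, write $\sigma(g,v) = \rho\tau$ with $\rho \in \left\langle [F_a',F_a']\right\rangle$ and $\tau \in F^+$, which is possible because $F' = \left\langle [F_a',F_a']\right\rangle F^+$. Lemma \ref{lem-gamma-uf} furnishes $\gamma \in N(F,F')_v$ with $\sigma(\gamma,v) = \rho$ and $\sigma(\gamma,w) \in F$ for all $w \neq v$. Using (\ref{eq-rules}) and the fact that both $g$ and $\gamma$ fix $v$ and have all their singularities in $\{v\}$, one checks that $\sigma(\gamma^{-1}g, v) = \tau \in F^+$ and $\sigma(\gamma^{-1}g, w) \in F$ for every $w \neq v$, so that $\gamma^{-1}g \in U(F)_v \leq U(F)^{\star} \leq N(F,F')$; hence $g \in N(F,F')$. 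This gives $G(F,F')^{\star} \leq N(F,F')$, so $G(F,F')^{\star} = N(F,F')$ is simple.

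I do not anticipate a real obstacle, since the substantial arguments are already packaged in Propositions \ref{prop-g(f)simple} and \ref{prop-gamma-u(f)}, Corollary \ref{cor-monolithe} and Lemma \ref{lem-gamma-uf}. The only delicate point is the last verification that $\gamma^{-1}g$ has no singularity: one must use that a vertex-fixing automorphism maps every $w \neq v$ to a vertex different from $v$, so that the factor $\sigma(\gamma^{-1}, g(w))$ appearing via (\ref{eq-rules}) involves $\gamma^{-1}$ evaluated away from $v$ and therefore stays in $F$, while the contribution at $v$ itself cancels the $\rho$-part of $\sigma(g,v)$ and leaves $\tau \in F^+ \subseteq F$.
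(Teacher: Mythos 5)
Your proof is correct: all the hypotheses you need are indeed available (in particular your check that $F'$ cannot act freely, so Corollary \ref{cor-monolithe} applies, and your local computation with (\ref{eq-rules}) showing $\gamma^{-1}g \in U(F)_v$ is right). It follows essentially the same strategy as the paper — reduce to proving $N(F,F') = G(F,F')^{\star}$ and kill the offending local permutation by the element produced by Lemma \ref{lem-gamma-uf}, with Proposition \ref{prop-gamma-u(f)} absorbing $U(F)^{\star}$ — the only difference being the final reduction: the paper runs a direct induction on the number of singularities of an arbitrary $g \in G(F,F')^+$, whereas you invoke Corollary \ref{cor-gen-trans} to reduce to the single-singularity subgroups $K_{0,F'}(v_0)$ and $K_{0,F'}(v_1)$. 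Since Corollary \ref{cor-gen-trans} rests on Proposition \ref{prop-gen-KU}, whose proof is precisely such an induction, your route repackages rather than replaces the paper's argument; it is marginally shorter on the spot but uses the transitivity of $F$ twice (once inside Corollary \ref{cor-gen-trans}), while the paper's induction is self-contained at this point.
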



\begin{proof}
First note that the assumption on $F'$ implies that $F'$ is generated by its point stabilizers, so that $G(F,F')^{\star}$ is equal to $G(F,F')^+$ according to Proposition \ref{prop-g(f)simple}. Now by Corollary \ref{cor-monolithe}, the group $G(F,F')^+$ is simple if and only if $N(F,F') = G(F,F')^+$. So we let $g$ be an element of $G(F,F')^+$, and we prove that $g \in N(F,F')$.

Argue by induction on the cardinality of $\Sg$. Assume first that $\Sg$ is empty, i.e.\ $g \in U(F)^{\star}$. Since $\left\langle [F_a',F_a'] \cup F_a \right\rangle$ contains $F$, by Proposition \ref{prop-gamma-u(f)} the subgroup $N(F,F')$ contains $U(F)^{\star}$, and therefore $g \in N(F,F')$. Now assume that $\Sg$ has cardinality $n+1 \geq 1$, and let $v \in \Sg$. According to the assumption $F' = \left\langle [F_a',F_a'] \cup F_a \right\rangle$, there exists $\rho \in \left\langle [F_a',F_a']\right\rangle$ such that $\rho \sg$ belongs to $F$. Therefore applying Lemma \ref{lem-gamma-uf} to the vertex $g(v)$ and to the permutation $\rho$, we obtain an element $\gamma \in N(F,F')$ such that $S(\gamma g)$ has cardinality at most $n$. By induction $\gamma g \in N(F,F')$, and finally $g \in N(F,F')$.
\end{proof}


We point out that the class of permutation groups $F,F'$ satisfying the assumptions of Theorem \ref{thm-simple-ind2} is very large. For example it is enough to have $F'$ generated by the derived subgroups of its point stabilizers. Examples of such permutation groups are given by $\mathrm{Alt}(d)$ for $d \geq 5$, or $\mathrm{PSL}(2,q)$ acting on the projective line $\mathbb{P}^1(\mathbb{F}_q)$ for any prime power $q \neq 2,3$. A fortiori it is enough to take for $F'$ any $2$-transitive permutation group with perfect stabilizers (for example simple non-abelian). Examples of such permutation groups can be found in \cite[Example 3.3.1]{BM-IHES}, and we refer the reader to \cite{Dixon} for a list of finite 2-transitive permutation groups. 

When $d=4$, one may check that the only examples of $F \lneq F' \leq \mathrm{Sym}(4)$ satisfying the assumptions of Theorem \ref{thm-simple-ind2} are $F=D_4$ the dihedral group, and $F'=\mathrm{Sym}(4)$.

\bigskip

When specializing to discrete groups, i.e.\ when the permutation group $F$ is moreover assumed to act freely on $\Omega$, we obtain the following result. Note that the assumption implies in particular that $F'$ is a perfect group.

\begin{cor} \label{cor-g(f,f')-simple}
Let $F \leq \Sy$ be a permutation group whose action on $\Omega$ is simply transitive. Assume that $F'$ is generated by the derived subgroups of its point stabilizers. Then the type-preserving subgroup of $G(F,F')$ is a simple group.
\end{cor}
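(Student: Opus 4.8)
The plan is to deduce Corollary \ref{cor-g(f,f')-simple} directly from Theorem \ref{thm-simple-ind2}, verifying that the hypotheses of the latter specialize to those of the former. Recall that Theorem \ref{thm-simple-ind2} requires two things: that $F$ is transitive, and that $F' = \langle [F_a',F_a'] \cup F_a \rangle$. The first is immediate, since simple transitivity of $F$ on $\Omega$ implies transitivity. So the only thing to check is the equality of subgroups of $F'$.

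First I would observe that since $F$ acts \emph{simply} transitively on $\Omega$, every point stabilizer $F_a$ is trivial, hence $F^+ = \langle F_a \rangle$ is trivial as well. Consequently the subgroup $\langle [F_a',F_a'] \cup F_a \rangle$ is just $\langle [F_a',F_a'] \rangle$, the subgroup of $F'$ generated by the derived subgroups of its point stabilizers. The hypothesis of the corollary says precisely that this subgroup equals $F'$. Therefore $F' = \langle [F_a',F_a'] \rangle = \langle [F_a',F_a'] \cup F_a \rangle$, so the assumption $F' = \langle [F_a',F_a'] \cup F_a \rangle$ of Theorem \ref{thm-simple-ind2} is satisfied. (Note also that if we only have $F \lneq F'$ there is genuine content; if $F = F'$ then $F$ would be perfect and free, forcing $F$ trivial, contradicting $d \geq 3$, so in fact $F \lneq F'$ automatically — but this remark is not needed for the proof.)

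With both hypotheses of Theorem \ref{thm-simple-ind2} verified, I would simply invoke it to conclude that the type-preserving subgroup $G(F,F')^{\star}$ is simple, which is exactly the assertion of the corollary. I would also record the parenthetical remark in the statement: since $\langle [F_a',F_a'] \rangle = F'$ and the left-hand side is visibly contained in the derived subgroup $[F',F']$, we get $F' = [F',F']$, i.e.\ $F'$ is perfect. This is worth noting because it explains why the discrete group $G(F,F')^{\star}$ is then simple in the strong (abstract) sense, consistent with Corollary \ref{cor-monolithe} (the action of $F'$ on $\Omega$ is non-free whenever $F' \neq F$, since a free transitive action of the finite group $F'$ on $\Omega$ together with $F \leq F'$ and $F$ already transitive would force $F = F'$).

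There is essentially no obstacle here: the entire proof is the bookkeeping observation that simple transitivity kills all point stabilizers of $F$, collapsing the hypothesis of Theorem \ref{thm-simple-ind2} to the hypothesis of the corollary. The only point requiring a moment's care is making sure one is allowed to apply Theorem \ref{thm-simple-ind2} at all, i.e.\ that $F \leq F' \leq \Sy$ with $F' \leq \hat F$ — but since $F$ is transitive we have $\hat F = \Sy$, so any $F' \leq \Sy$ containing $F$ is admissible, matching the standing hypothesis fixed in the excerpt.
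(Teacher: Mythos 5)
Your proposal is correct and matches the paper's own route: the corollary is stated there as an immediate specialization of Theorem \ref{thm-simple-ind2}, since simple transitivity makes every $F_a$ trivial, so $\left\langle [F_a',F_a'] \cup F_a \right\rangle = \left\langle [F_a',F_a'] \right\rangle = F'$ by hypothesis. Your side remarks (that $F'$ is then perfect, and that $\hat F = \Sy$ so the standing hypothesis is automatic) are accurate but not needed.
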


\begin{ex} \label{ex-alt}
When $d \geq 5$ and $F' = \mathrm{Alt}(d)$, a simply transitive subgroup $F$ is for instance given by a cycle of length $d$ if $d$ is odd. If $d=4n$, one can choose for $F$ the abelian subgroup generated by $(1,\ldots,2n) (2n+1,\ldots,4n)$ together with $\prod_{i=1}^{2n} (i,2n+i)$.
\end{ex}

Note that since $\mathrm{Alt}(d)$ is two-generated, Corollary \ref{cor-gen-trans} yields finite generating subsets for the groups $G(F,F')^{\star}$ from Example \ref{ex-alt} consisting of four elements. Note also that since $\mathrm{Alt}(d)$ satisfies the assumption of Proposition \ref{prop-morph-trivial} with $k=d-1$, all these examples $G(F,F')$ are pairwise non-isomorphic when $d$ varies.


\subsubsection{Second theorem}

We begin this paragraph by focusing on the particular case when the permutation group $F$ has index two in $F'$. Under this assumption, we identify a certain canonical subgroup of index eight in $G(F,F')$.

Recall that $e_0 \in \edg$ is a fixed edge whose vertices are denoted $v_0$ and $v_1$. We will denote by $\mathsf{V}_0$ (resp.\ $\mathsf{V}_1$) the set of vertices at even distance from $v_0$ (resp.\ $v_1$). For $g \in G(F,F')$ and $i \in \left\{0,1\right\}$, we denote by $S_i(g)$ the intersection of $S(g)$ with $\mathsf{V}_i$. Clearly $S(g) = S_0(g) \sqcup S_1(g)$. 

The following lemma identifies the set of singularities of the product of two elements in $G(F,F')^{\star}$.

\begin{lem} \label{lem-sing-ind2}
Assume that $(F':F)=2$, and let $i \in \left\{0,1\right\}$. Then $S_i(gh) = S_i(h) \triangle h^{-1}S_i(g)$ for every $g,h \in G(F,F')^{\star}$.
\end{lem}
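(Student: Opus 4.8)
The plan is to compute $\sigma(gh,w)$ for each vertex $w$ using the cocycle rule \eqref{eq-rules} and track when it lands in $F$ versus $F'\setminus F$. Since $(F':F)=2$, the quotient $F'/F$ is a group of order two, so there is a well-defined homomorphism $\varepsilon: F' \to \mathbb{Z}/2\mathbb{Z}$ with kernel $F$. A vertex $w$ is a singularity of an element $x \in G(F,F')$ precisely when $\varepsilon(\sigma(x,w)) = 1$. So the whole statement is really an identity in $\mathbb{Z}/2\mathbb{Z}$: from $\sigma(gh,w) = \sigma(g,hw)\sigma(h,w)$ we get $\varepsilon(\sigma(gh,w)) = \varepsilon(\sigma(g,hw)) + \varepsilon(\sigma(h,w))$. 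Thus $w \in S(gh)$ if and only if exactly one of $hw \in S(g)$ or $w \in S(h)$ holds, i.e. $S(gh) = S(h) \,\triangle\, h^{-1}S(g)$.

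Next I would check that this global identity respects the bipartition, so that it descends to the two pieces $S_i$. This is where the hypothesis $g,h \in G(F,F')^{\star}$ enters: a type-preserving automorphism $h$ maps $\mathsf{V}_i$ to itself, hence $h^{-1}$ does too, so $h^{-1}S_i(g) \subseteq \mathsf{V}_i$ and $S_i(h) \subseteq \mathsf{V}_i$, and intersecting the global identity $S(gh) = S(h)\,\triangle\,h^{-1}S(g)$ with $\mathsf{V}_i$ yields $S_i(gh) = S_i(h)\,\triangle\,h^{-1}S_i(g)$ since symmetric difference commutes with intersection. (One should note that $gh$ is again type-preserving, so $S_i(gh)$ makes sense and the decomposition $S(gh) = S_0(gh)\sqcup S_1(gh)$ is the expected one.)

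I do not expect a serious obstacle here: the proof is a direct unwinding of the cocycle identity together with the observation that $F$ being index two in $F'$ turns "being a singularity" into an additive $\mathbb{Z}/2\mathbb{Z}$-valued quantity. The only point requiring a moment's care is the bookkeeping with $h^{-1}$ versus $h$ — i.e. recognizing that the natural object appearing when one pushes $S(g)$ along is $h^{-1}S(g)$, because the vertex $w$ contributes a singularity of $g$ through the term $\sigma(g,hw)$, so $w \in h^{-1}S(g)$. Writing this out cleanly and confirming the type-preservation claim for $h^{-1}$ (immediate, since $\mathrm{Aut}(\treed)^{\star}$ is a subgroup) completes the argument.

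Concretely, I would structure the proof as: (1) fix the homomorphism $\varepsilon$ and restate singularity membership in terms of $\varepsilon$; (2) apply \eqref{eq-rules} and pass to $\mathbb{Z}/2\mathbb{Z}$ to obtain $S(gh) = S(h)\,\triangle\,h^{-1}S(g)$; (3) intersect with $\mathsf{V}_i$, using that $h \in \mathrm{Aut}(\treed)^{\star}$ preserves the bipartition, to conclude.
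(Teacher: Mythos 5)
Your proof is correct and is essentially the paper's argument: both rest on the cocycle identity \eqref{eq-rules} together with the index-two parity fact (the paper phrases it as ``a product of two elements of $F'\setminus F$ lies in $F$'' after first noting the two trivial inclusions, while you package it as the homomorphism $\varepsilon\colon F'\to\mathbb{Z}/2\mathbb{Z}$ with kernel $F$), and both use type-preservation of $g,h$ to restrict to $\mathsf{V}_i$. Your route of proving the global identity $S(gh)=S(h)\,\triangle\,h^{-1}S(g)$ first and then intersecting with $\mathsf{V}_i$ is only a mild reorganization of the same computation.
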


\begin{proof}
The inclusions \[ S_i(h) \triangle h^{-1} S_i(g) \subset S_i(gh) \subset S_i(h) \cup h^{-1} S_i(g) \] are always satisfied, and follow from (\ref{eq-rules}) together with the fact that $g$ and $h$ preserve the set $\mathsf{V}_i$. To prove the statement, we shall prove that if a vertex $v$ belongs to both $S_i(h)$ and $h^{-1} S_i(g)$, then it is not a singularity of $gh$. Since $v \in S_i(h)$ and $h(v) \in S_i(g)$, then $\sigma(gh,v) = \sigma(g,hv) \sigma(h,v)$ is the product of two elements of $F' \setminus F$, and therefore belongs to $F$ because $F$ has index two in $F'$. So $v \notin S_i(gh)$, and the statement is proved.  
\end{proof}

We are grateful to Nicolas Radu for correcting an earlier version of the argument here.

\begin{prop}
Assume that $(F':F)=2$, and let $i \in \left\{0,1\right\}$. Then \[ G_i(F,F') = \left\{g \in G(F,F')^{\star} \, : \, S_i(g) \, \, \text{has even cardinality} \right\} \] is a subgroup of index two in $G(F,F')^{\star}$.
\end{prop}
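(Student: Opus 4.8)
The plan is to use Lemma~\ref{lem-sing-ind2} to show that the map $g \mapsto \lvert S_i(g)\rvert \bmod 2$ is a group homomorphism from $G(F,F')^{\star}$ to $\mathbb{Z}/2\mathbb{Z}$, whose kernel is precisely $G_i(F,F')$. Then $G_i(F,F')$ is automatically a subgroup (as the kernel of a homomorphism), and to conclude it has index exactly two it suffices to exhibit one element of $G(F,F')^{\star}$ with $\lvert S_i(g)\rvert$ odd.

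First I would verify the homomorphism property. By Lemma~\ref{lem-sing-ind2} we have $S_i(gh) = S_i(h) \triangle h^{-1}S_i(g)$ for all $g,h \in G(F,F')^{\star}$. Since $h^{-1}$ is a bijection of $\verti$ preserving $\mathsf{V}_i$ (because $h \in G(F,F')^{\star}$), the set $h^{-1}S_i(g)$ has the same cardinality as $S_i(g)$; in particular it is finite and $\lvert h^{-1}S_i(g)\rvert \equiv \lvert S_i(g)\rvert \pmod 2$. For finite sets $A,B$ one has $\lvert A \triangle B\rvert = \lvert A\rvert + \lvert B\rvert - 2\lvert A \cap B\rvert \equiv \lvert A\rvert + \lvert B\rvert \pmod 2$. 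Applying this with $A = S_i(h)$ and $B = h^{-1}S_i(g)$ gives
\[
\lvert S_i(gh)\rvert \equiv \lvert S_i(h)\rvert + \lvert S_i(g)\rvert \pmod 2,
\]
which is exactly the statement that $\phi_i \colon G(F,F')^{\star} \to \mathbb{Z}/2\mathbb{Z}$, $\phi_i(g) = \lvert S_i(g)\rvert \bmod 2$, is a group homomorphism. By definition $G_i(F,F') = \ker \phi_i$, so it is a subgroup, and it is normal, though only the index is claimed.

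It remains to check that $\phi_i$ is surjective, equivalently that $G_i(F,F') \neq G(F,F')^{\star}$. Since $(F':F) = 2$, pick any $\tau \in F' \setminus F$. By Lemma~\ref{lem-extend-G(F)} (applied with $n = 0$ at a vertex $v \in \mathsf{V}_i$) there exists $g \in G(F,F')$ fixing $v$ with $\sigma(g,v) = \tau$ and $\sigma(g,w) \in F$ for all $w \neq v$; such a $g$ fixes a vertex, hence lies in $G(F,F')^{\star}$, and has $S(g) = \{v\}$, so $S_i(g) = \{v\}$ has odd cardinality and $\phi_i(g) = 1$. Therefore $\phi_i$ is onto and $G_i(F,F')$ has index two in $G(F,F')^{\star}$.

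I do not expect a serious obstacle here; the only subtlety is making sure that the cardinality-mod-2 computation is legitimate, which relies on all the sets $S_i(g)$ being finite (guaranteed since $g \in G(F) \subseteq G(F,F')$ has finitely many singularities) and on $h$ inducing a bijection on $\mathsf{V}_i$ that preserves $S_i$ up to the translation by $h^{-1}$. Both are immediate from the definitions and from the fact that elements of $G(F,F')^{\star}$ preserve the bipartition, hence preserve each of $\mathsf{V}_0$ and $\mathsf{V}_1$. The surjectivity argument uses nothing beyond Lemma~\ref{lem-extend-G(F)} and the hypothesis $(F':F) = 2$.
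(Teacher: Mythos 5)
Your proof is correct and follows essentially the same route as the paper: the key step is the identity of Lemma \ref{lem-sing-ind2} applied to the cardinality of $S_i(gh)$ modulo $2$, which you merely repackage as a homomorphism to $\mathbb{Z}/2\mathbb{Z}$ instead of checking closure under products and inverses directly. Your explicit construction of an element with a single singularity via Lemma \ref{lem-extend-G(F)} just fills in the index-two claim that the paper dismisses as clear, so there is nothing substantively different here.
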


\begin{proof}
It follows from Lemma \ref{lem-sing-ind2} that for every $g,h \in G(F,F')^{\star}$, the set $S_i(gh)$ has cardinality \[ \left|S_i(gh)\right| = \left|S_i(h)\right| + \left|S_i(g)\right| - 2 \left|S_i(h) \cap h^{-1}S_i(g)\right|. \] Therefore if $g,h \in G_i(F,F')$, then the product $gh$ remains in $G_i(F,F')$. Moreover for $g \in G(F,F')^{\star}$, we have $S_i(g^{-1}) = g S_i(g)$, so it is clear that $G_i(F,F')$ is stable by inversion. So we have proved that $G_i(F,F')$ is a subgroup of $G(F,F')^{\star}$. The fact that its index is equal to two is clear. 
\end{proof}

We now return to the situation when $F,F'$ only satisfy $F \leq F' \leq \hat{F}$, but we assume that there is a permutation group $F''$ between $F$ and $F'$ and having index two in $F'$. In other words we have $F \leq F'' \leq F' \leq \hat{F}$ and $(F':F'')=2$.

Remark that in this situation $F''$ must contain the derived subgroup of $F'$. Therefore the subgroup $\left\langle [F_a',F_a'] \cup F_a \right\rangle$ is a subgroup of $F''$, so that the assumption $F' = \left\langle [F_a',F_a'] \cup F_a \right\rangle$ of Theorem \ref{thm-simple-ind2} cannot be satisfied.

Since we now have three permutation groups, talking about singularities might be unclear. In order to avoid any ambiguity, we will adopt the following notation for $g \in G(F,F')$: \[\Sigma(g) = \left\{v \in \verti : \sg \notin F'' \right\}. \]




\begin{lem} \label{lem-deux-sing}
Assume that $N(F,F')$ contains $G(F,F'')^{\star}$. Then given any two vertices $v \neq w$ at even distance from each other, there exists $\gamma \in N(F,F')$ such $\Sigma(\gamma) = \left\{v,w\right\}$. 
\end{lem}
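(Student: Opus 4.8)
The plan is to construct $\gamma$ as a product of two elements, each contributing exactly one singularity with respect to $F''$, placed at $v$ and $w$ respectively, and arranged so that they do not interfere with each other. First I would pick a permutation $\rho \in F' \setminus F''$; such a $\rho$ exists since $(F':F'')=2$. Applying Lemma \ref{lem-extend-G(F)} with $n=0$ (after translating so that the relevant vertex is the root, which is harmless since $U(F)$ is vertex-transitive), I obtain an element $g_v \in G(F,F')$ with $\sigma(g_v,v)=\rho$ and $\sigma(g_v,u)\in F$ for every $u\neq v$; in particular $S(g_v)=\{v\}$ and $\Sigma(g_v)=\{v\}$. The subtlety is that a single such $g_v$ need not lie in $G(F,F'')^{\star}$, hence need not lie in $N(F,F')$ by hypothesis. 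To fix the type-preserving condition one replaces $g_v$ by an element fixing a vertex, which is automatic from the construction in Lemma \ref{lem-extend-G(F)}, so $g_v\in G(F,F')^{\star}$; but membership in $N(F,F')$ is still not clear because $g_v$ has a singularity outside $F''$.

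The key idea is therefore \emph{not} to place the two singularities with two separate elements of $N(F,F')$, but to place \emph{both} at once. Concretely, since $v$ and $w$ are at even distance, I would produce a single element $\gamma_0\in G(F,F')^{\star}$ with $\sigma(\gamma_0,v)=\rho$, $\sigma(\gamma_0,w)=\rho'$ for suitable $\rho,\rho'\in F'\setminus F''$, and $\sigma(\gamma_0,u)\in F$ for all other $u$: build it by Lemma \ref{lem-extend-G(F)} on a ball $\mathcal B(v_0,N)$ large enough to contain $v$ and $w$, imposing the prescribed local permutations at $v$ and $w$ and permutations in $F$ everywhere outside that ball, and choosing the remaining local data inside the ball to lie in $F$ as well (this is possible precisely because $F'\le\hat F$, exactly as in the proof of Lemma \ref{lem-extend-G(F)}). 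Then $S(\gamma_0)=\{v,w\}$, and $\Sigma(\gamma_0)=\{v,w\}$ provided $\rho,\rho'\notin F''$. Now $\gamma_0$ has $S_i$-cardinalities that are even with respect to the appropriate parity bookkeeping, but more to the point, I claim $\gamma_0\in G(F,F'')^{\star}$ fails (both $v$ and $w$ are singularities for $F''$), so again one cannot directly invoke the hypothesis — instead one argues that $\gamma_0$ modified by an element of $G(F,F'')^{\star}$ lands in $N(F,F')$, using that $N(F,F')$ is normal and already contains $G(F,F'')^{\star}$, together with Lemma \ref{lem-gamma-uf} to absorb the discrepancy between $\rho\rho'^{\,-1}$-type corrections and $F''$. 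The cleanest route: choose $\rho=\rho'$ equal to a fixed transversal element of $F'/F''$; then along the (even-length) path from $v$ to $w$ one can ``transport'' one singularity to the other via conjugation by an element supported near that path and lying in $G(F,F'')^{\star}\subseteq N(F,F')$, so that $\gamma_0$ differs from an element of $N(F,F')$ by something in $G(F,F'')^{\star}\subseteq N(F,F')$, forcing $\gamma_0\in N(F,F')$.

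The main obstacle, and where I would spend the most care, is exactly this last point: showing that an element whose only two $F''$-singularities sit at $v$ and $w$ actually lies in $N(F,F')$, given only that $N(F,F')\supseteq G(F,F'')^{\star}$. The mechanism should mirror the induction in the proof of Theorem \ref{thm-simple-ind2}: write $\sigma(\gamma_0,v)=\rho$ with $\rho\in F'\setminus F''$, use that $F''$ has index two in $F'$ so that $\rho^2\in F''$ and any two elements of $F'\setminus F''$ differ by an element of $F''$, and peel off singularities in pairs by multiplying by elements coming from Lemma \ref{lem-gamma-uf} (which gives elements of $N(F,F')$ adjusting one local permutation at a time) until what remains has all its local permutations in $F''$, i.e.\ lies in $G(F,F'')^{\star}$, hence in $N(F,F')$ by hypothesis. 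One must check that this peeling can be done so as to \emph{preserve} the property $\Sigma(\gamma)=\{v,w\}$ — i.e.\ the correcting elements must themselves be chosen with local permutations in $F''$ off of a controlled finite set, which is exactly what Lemma \ref{lem-extend-G(F)} and Lemma \ref{lem-gamma-uf} deliver. Once $\gamma_0\in N(F,F')$ is established with $\Sigma(\gamma_0)=\{v,w\}$, we set $\gamma=\gamma_0$ and are done.
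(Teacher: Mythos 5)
There is a genuine gap, and it sits exactly where you flagged the difficulty. Your proposed mechanism for showing $\gamma_0\in N(F,F')$ is to ``peel off'' the $F''$-singularities by multiplying by the elements supplied by Lemma \ref{lem-gamma-uf} until what remains lies in $G(F,F'')^{\star}$. But Lemma \ref{lem-gamma-uf} only produces elements whose local permutation at the chosen vertex lies in $\left\langle [F_a',F_a']\right\rangle$ and lies in $F$ elsewhere; since $(F':F'')=2$, the subgroup $F''$ contains the derived subgroup of $F'$, so $\left\langle [F_a',F_a']\right\rangle \leq F''$ (the paper points out this very obstruction just before introducing $\Sigma$). Consequently every correcting element you can extract from Lemma \ref{lem-gamma-uf} has \emph{all} its local permutations in $F''$, and multiplying by it never changes the set $\Sigma$: a local permutation in $F'\setminus F''$ stays in $F'\setminus F''$. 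So the reduction ``until what remains has all its local permutations in $F''$'' cannot be carried out for any element with $\Sigma\neq\emptyset$, and your fallback — that $\gamma_0$ ``differs from an element of $N(F,F')$ by something in $G(F,F'')^{\star}$'' after transporting a singularity by conjugation — is circular as stated: conjugating $\gamma_0$ by an element of $N(F,F')$ always keeps you in the same $N(F,F')$-coset (that is just normality), so it yields no information about whether $\gamma_0$ itself lies in $N(F,F')$.

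The way the paper gets around this is to build the element \emph{inside} $N(F,F')$ from the start, rather than building it first and proving membership afterwards. Take $g_1\in G(F,F')$ fixing $v$ with $\Sigma(g_1)=\{v\}$ (Lemma \ref{lem-extend-G(F)} with $n=0$, as in your first paragraph), and take $g_2\in G(F,F'')^{\star}$ with $g_2(v)=g_1^{-1}(w)$ — possible since $v$ and $g_1^{-1}(w)$ are at even distance. Set $\gamma=[g_2,g_1]=(g_2g_1g_2^{-1})g_1^{-1}$. Membership is then free: $g_2\in G(F,F'')^{\star}\leq N(F,F')$ by hypothesis and $g_1g_2^{-1}g_1^{-1}\in N(F,F')$ by normality, so $\gamma\in N(F,F')$. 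The only computation left is the singularity count: since all local permutations of $g_2$ lie in $F''$, the conjugate $g_2g_1g_2^{-1}$ has $\Sigma=\{g_2(v)\}$, and the product formula of Lemma \ref{lem-sing-ind2} gives $\Sigma(\gamma)=\{v\}\triangle\{g_1g_2(v)\}=\{v,w\}$. Your proposal contains the right ingredients (a one-singularity element from Lemma \ref{lem-extend-G(F)}, and an element of $G(F,F'')^{\star}$ moving vertices at even distance) but lacks the commutator trick that makes membership in $N(F,F')$ automatic; without it, the membership step as you describe it does not go through.
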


\begin{proof}
Let us consider an element $g_1 \in G(F,F')$ fixing $v$ and such that $\Sigma(g_1) = \left\{v\right\}$. We also denote by $g_2$ an element of $G(F,F'')^{\star}$ such that $g_2(v) = g_1^{-1}(w)$ (such an element exists because $v$ and $g_1^{-1}(w)$ remain at even distance from each other). Define $\gamma=[g_2,g_1] = (g_2g_1g_2^{-1}) g_1^{-1}$. Since $g_2 \in G(F,F'')$, the element $g_2g_1g_2^{-1}$ has only one singularity that does not belong to $F''$, namely $g_2(v)$. By applying Lemma \ref{lem-sing-ind2}, we obtain \[\Sigma(\gamma) = \left\{v\right\} \triangle \left\{g_1g_2(v)\right\} = \left\{v,w\right\}.\] Now $g_2$ belongs to $N(F,F')$ because $g_2 \in G(F,F'')^{\star}$ and $G(F,F'')^{\star} \leq N(F,F')$ by assumption. Moreover $N(F,F')$ is normal in $G(F,F')$, so the element $\gamma$ remains in $N(F,F')$, and the proof is complete.
\end{proof}

We are now able to prove the main result of this paragraph. Note that Theorem \ref{thm-simple-ind2} and Theorem \ref{thm-simple-ind8} are complementary, in the sense that examples of permutation groups $F,F''$ satisfying the second assumption of Theorem \ref{thm-simple-ind8} can be found by making use of Theorem \ref{thm-simple-ind2}.


\begin{thm} \label{thm-simple-ind8}
Let $F \leq F'' \leq F'$ be permutation groups such that:
\begin{enumerate} [label=(\alph*)]
	\item $F''$ has index two in $F'$;
	\item the type-preserving subgroup of $G(F,F'')$ is simple.
\end{enumerate}
Then $N(F,F')$ is a simple subgroup of index eight in $G(F,F')$.
\end{thm}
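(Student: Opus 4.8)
The strategy is to first pin down the index-eight subgroup structure, and then to prove simplicity of $N(F,F')$ by showing it equals the intersection of all non-trivial normal subgroups of $G(F,F')^+$, which by Corollary~\ref{cor-monolithe} is simple. Observe that assumption (b) forces $F''$ to be transitive and generated by its point stabilizers (Proposition~\ref{prop-g(f)simple}), while by assumption (a), $F''$ contains $[F',F']$ and hence $\left\langle [F_a',F_a'] \cup F_a\right\rangle$; in particular $F'$ too is transitive and generated by its point stabilizers, so $G(F,F')^+ = G(F,F')^{\star}$ by Proposition~\ref{prop-g(f)simple}. Since $G(F,F'')^{\star}$ is simple by (b), Corollary~\ref{cor-monolithe} applied to the pair $(F,F'')$ gives $G(F,F'')^{\star} = N(F,F'')$, which is contained in $G(F,F')^+$ and normal in $G(F,F')$; as it is non-trivial and $G(F,F')^+$ has a unique minimal non-trivial normal subgroup $N(F,F')$ (again Corollary~\ref{cor-monolithe}, noting $F'$ does not act freely), we get $N(F,F') \leq G(F,F'')^{\star} \leq N(F,F')$, wait — that inclusion goes the wrong way. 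Let me restate: $N(F,F')$ is the \emph{smallest} non-trivial normal subgroup, so $N(F,F') \leq G(F,F'')^{\star}$, and hence $G(F,F'')^{\star}$ is one of the intermediate normal subgroups. The key point we will \emph{prove} is the reverse containment $G(F,F'')^{\star} \leq N(F,F')$, so that the hypothesis of Lemma~\ref{lem-deux-sing} is met.

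\textbf{Step 1: $N(F,F')$ contains $G(F,F'')^{\star}$.} By Lemma~\ref{lem-Gamma-triv} (applied with the pair $F \leq F''$, whose point stabilizers are non-trivial since $F''$ is non-free, being generated by point stabilizers and transitive) combined with the simplicity of $G(F,F'')^{\star}$ and the monolith property, it suffices to show $N(F,F')$ intersects $G(F,F'')^{\star}$ non-trivially; then by simplicity of $G(F,F'')^{\star}$ and normality of $N(F,F')$ in $G(F,F')$, the intersection $N(F,F') \cap G(F,F'')^{\star}$ is a non-trivial normal subgroup of $G(F,F'')^{\star}$, hence all of $G(F,F'')^{\star}$. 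To produce a non-trivial common element: pointwise stabilizers of edges in $G(F,F'')$ are non-abelian by Lemma~\ref{lem-Gamma-triv}, so their derived subgroups are non-trivial; any such commutator lies in $G(F,F'')^{\star}$ (it stabilizes an edge, hence is type-preserving) and lies in $N(F,F')$ by definition, since $[G(F,F')_e, G(F,F')_e] \supseteq [G(F,F'')_e, G(F,F'')_e]$ as $G(F,F'')_e \leq G(F,F')_e$. This settles the hypothesis of Lemma~\ref{lem-deux-sing}.

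\textbf{Step 2: $N(F,F') = G(F,F')^+$ fails — instead $N(F,F')$ has index eight.} By Corollary~\ref{cor-monolithe}(b), $N(F,F')$ is simple; we must compute its index. Since $G(F,F')^+ = G(F,F')^{\star}$ has index two in $G(F,F')$, it suffices to show $N(F,F')$ has index four in $G(F,F')^{\star}$. Consider the map $G(F,F')^{\star} \to (\mathbb{Z}/2\mathbb{Z})^2$ sending $g \mapsto (|S_0(g)| \bmod 2, |S_1(g)| \bmod 2)$, where $S_i(g) = \Sigma(g) \cap \mathsf{V}_i$ counts singularities \emph{relative to $F''$} (i.e.\ the set $\Sigma(g)$); by Lemma~\ref{lem-sing-ind2} (valid since $(F':F'')=2$) this is a group homomorphism. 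Its kernel $G_0(F,F') \cap G_1(F,F')$ has index (at most) four, and it contains $N(F,F')$: indeed $N(F,F')$ is generated by commutators $[G(F,F')_e, G(F,F')_e]$, and for $\gamma$ in such a derived subgroup one has $S_i(\gamma) = S_i(\gamma)$... more carefully, any $\gamma \in G(F,F')_e$ with $\Sigma(\gamma) \neq \emptyset$ has its singularities on one side, but commutators $[g,h]$ with $g,h$ fixing $e$ satisfy $\Sigma([g,h]) = \Sigma(g) \triangle \Sigma(h) \triangle \cdots$ — one checks via Lemma~\ref{lem-sing-ind2} that $|S_i([g,h])|$ is even for each $i$ because the symmetric-difference formula makes $|S_i(gh)| \equiv |S_i(g)| + |S_i(h)|$, so the parity is a homomorphism already vanishing on $U(F)^{\star}$ (no singularities) and hence on commutators. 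Thus $N(F,F') \leq \ker$. For the reverse: given $g$ in the kernel, its relative-singularities $\Sigma(g)$ decompose with $|S_i(g)|$ even; pair them up and use Lemma~\ref{lem-deux-sing} to find, for each pair $\{v,w\} \subset \Sigma(g) \cap \mathsf{V}_i$, an element $\gamma \in N(F,F')$ with $\Sigma(\gamma) = \{v,w\}$; multiplying $g$ by suitable such $\gamma$'s kills all relative-singularities, landing in $G(F,F'')^{\star} \leq N(F,F')$ by Step 1, whence $g \in N(F,F')$. Finally, the homomorphism to $(\mathbb{Z}/2\mathbb{Z})^2$ is surjective (take $g$ with a single relative-singularity at a vertex of $\mathsf{V}_0$, resp.\ $\mathsf{V}_1$, using the extension Lemma~\ref{lem-extend-G(F)} since $(F':F'')=2$ makes $F' \setminus F''$ non-empty), so the index is exactly four, and $[G(F,F'):N(F,F')] = 8$.

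\textbf{Expected main obstacle.} The delicate point is the parity bookkeeping in Step 2: verifying that $g \mapsto (|S_i(g)| \bmod 2)_{i}$ is genuinely a homomorphism on $G(F,F')^{\star}$ and that it vanishes on $N(F,F')$, which hinges on the precise symmetric-difference identity of Lemma~\ref{lem-sing-ind2} and on the fact that $(F':F'')=2$ (so that a vertex which is a relative-singularity of both factors cancels). One must be careful that "relative singularities" here means membership failure in $F''$, not in $F$, and that elements of $N(F,F')$ can have (relative) singularities at all — they do, since $N(F,F')$ strictly contains $G(F,F'')^{\star}$ in general. The surjectivity of the parity map and the clean reduction via Lemma~\ref{lem-deux-sing} are then comparatively routine, relying on transitivity of $F$ and the extension lemma.
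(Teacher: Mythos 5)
Your proposal follows essentially the same route as the paper's proof: establish $G(F,F'')^{\star}\leq N(F,F')$ from the simplicity of $G(F,F'')^{\star}$ and the normality of the intersection, identify the index-eight subgroup as the type-preserving elements whose $F''$-singularities have even cardinality on each bipartition class (the paper's $G(F,F')\cap G_0(F'',F')\cap G_1(F'',F')$, i.e.\ your parity kernel), and reduce any such element to $G(F,F'')^{\star}$ by cancelling pairs of relative singularities via Lemma~\ref{lem-deux-sing} and Lemma~\ref{lem-sing-ind2}, with simplicity coming from Corollary~\ref{cor-monolithe}. The only imprecision is in the cancellation step: since $\Sigma(\gamma g)=\Sigma(g)\triangle g^{-1}\Sigma(\gamma)$, one should left-multiply by $\gamma$ with $\Sigma(\gamma)=\{g(x_1),g(x_2)\}$ (as the paper does), rather than taking $\gamma$ with singularities at $\{x_1,x_2\}\subset\Sigma(g)$ itself; this is an immediate fix and does not affect the argument.
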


\begin{proof}
First note that the second assumption implies by Proposition \ref{prop-g(f)simple} that $F$ is transitive and $F''$ is generated by its point stabilizers. Therefore $F'$ is also generated by its point stabilizers, so that $G(F,F')^+$ has index two in $G(F,F')$ (again by Proposition \ref{prop-g(f)simple}).  

Write $N = G(F,F') \cap G_0(F'',F') \cap G_1(F'',F')$. The two subgroups $G(F,F') \cap G_0(F'',F')$ and $G(F,F') \cap G_1(F'',F')$ are not equal, and have index two in $G(F,F')^{\star}$. So their intersection has index four in $G(F,F')^{\star}$, and consequently is of index exactly eight in $G(F,F')$. Moreover according to Corollary \ref{cor-monolithe}, the subgroup $N$ must contain $N(F,F')$, and to prove that $N$ is simple, it is enough to prove the converse inclusion.

Remark that $N(F,F')$ intersects $G(F,F'')^{\star}$ along a non-trivial normal subgroup of $G(F,F'')^{\star}$. Since $G(F,F'')^{\star}$ is simple by assumption, it follows that $N(F,F')$ must actually contain $G(F,F'')^{\star}$.

We let $g \in N$, and we prove that $g \in N(F,F')$ by induction on $|\Sigma(g)|$. If $\Sigma(g)$ is empty, then $g$ actually belongs to $G(F,F'')^{\star}$, and therefore $g \in N(F,F')$. When $\Sigma(g)$ is not empty, it must have even cardinality since $g \in G_0(F'',F') \cap G_1(F'',F')$, and moreover we may find $x_1 \neq x_2 \in \Sigma(g)$ at even distance from each other. Given such vertices, we apply Lemma \ref{lem-deux-sing} to $v = g(x_1)$ and $w = g(x_2)$. This provides us with an element $\gamma \in N(F,F')$ such that $\Sigma(\gamma) = \left\{g(x_1),g(x_2)\right\}$. Now consider the element $g' = \gamma g$. According to Lemma \ref{lem-sing-ind2}, we have \[\Sigma(g') = \Sigma(g) \triangle g^{-1} \Sigma(\gamma) = \Sigma(g) \setminus \left\{x_1, x_2\right\}. \] Consequently we can apply the induction hypothesis to $g'$, and we obtain that $g'$ belongs to $N(F,F')$. But $g = \gamma^{-1} g'$ and $\gamma \in N(F,F')$, so we deduce that $g$ belongs to $N(F,F')$ as well, and the proof is complete.
\end{proof}

Note that $F=F''$ is allowed in Theorem \ref{thm-simple-ind8}, in which case the second assumption becomes that $F$ is transitive and generated by its points stabilizers. 

\begin{cor} \label{cor-simpl-ind8}
Let $F \leq F'$ be permutation groups such that:
\begin{enumerate} [label=(\alph*)]
	\item $F$ has index two in $F'$;
	\item $F$ is transitive and generated by its points stabilizers.
\end{enumerate}
Then $G_0(F,F') \cap G_1(F,F')$ is a simple subgroup of index eight in $G(F,F')$.
\end{cor}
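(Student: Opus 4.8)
The plan is to derive Corollary~\ref{cor-simpl-ind8} from Theorem~\ref{thm-simple-ind8} by taking $F'' = F$. First I would check that the hypotheses of Theorem~\ref{thm-simple-ind8} are satisfied in this specialization: assumption (a) of the Corollary says $(F':F)=2$, which is exactly assumption (a) of the Theorem with $F''=F$; and assumption (b) of the Corollary, namely that $F$ is transitive and generated by its point stabilizers, is precisely the content of condition $(iii)$ in Proposition~\ref{prop-g(f)simple} applied to the pair $F \leq F$, hence by that proposition it is equivalent to $G(F,F)^+ = G(F,F)^{\star}$; since $F$ does not act freely on $\Omega$ (it is transitive and $d\geq 3$, so point stabilizers are nontrivial... actually transitivity plus being generated by nontrivial point stabilizers suffices), Corollary~\ref{cor-monolithe} applies to the pair $F\leq F$ and $N(F,F) = G(F,F)^+ = G(F,F)^{\star}$ is simple. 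Thus assumption (b) of Theorem~\ref{thm-simple-ind8}, that the type-preserving subgroup of $G(F,F'')=G(F,F)$ is simple, holds.

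Next I would invoke Theorem~\ref{thm-simple-ind8} directly: it yields that $N(F,F')$ is a simple subgroup of index eight in $G(F,F')$. It then remains only to identify $N(F,F')$ with $G_0(F,F') \cap G_1(F,F')$. Inspecting the proof of Theorem~\ref{thm-simple-ind8}, the group called $N$ there is $G(F,F') \cap G_0(F'',F') \cap G_1(F'',F')$, which with $F''=F$ becomes $G(F,F') \cap G_0(F,F') \cap G_1(F,F')$; but $G_i(F,F')$ is by definition already a subgroup of $G(F,F')^{\star} \leq G(F,F')$, so this intersection is just $G_0(F,F') \cap G_1(F,F')$. The proof of Theorem~\ref{thm-simple-ind8} shows precisely that this $N$ equals $N(F,F')$, so $G_0(F,F')\cap G_1(F,F') = N(F,F')$ is simple of index eight, as claimed.

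There is essentially no obstacle here — the Corollary is a clean special case — so the only thing requiring a little care is the bookkeeping: making sure the specialization $F''=F$ is legitimate (the Theorem explicitly permits it, per the remark immediately preceding the Corollary), and confirming the identification of the index-eight subgroup appearing in the Theorem's proof with the concretely-defined group $G_0(F,F')\cap G_1(F,F')$. One should also note, for the index count, that $G_0(F,F')$ and $G_1(F,F')$ are distinct index-two subgroups of $G(F,F')^{\star}$ (an element can have a single singularity in $\mathsf{V}_0$, say, realizing non-membership in $G_0$ but membership in $G_1$), so their intersection has index four in $G(F,F')^{\star}$ and hence index eight in $G(F,F')$, consistently with the Theorem. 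I would write all of this up in a short paragraph.

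\begin{proof}
Apply Theorem~\ref{thm-simple-ind8} with $F'' = F$ (which is allowed). Assumption (a) of Theorem~\ref{thm-simple-ind8} is assumption (a) here. For assumption (b), note that by Proposition~\ref{prop-g(f)simple} the hypothesis that $F$ is transitive and generated by its point stabilizers is equivalent to $G(F,F)^+ = G(F,F)^{\star}$; moreover $F$ does not act freely on $\Omega$ (point stabilizers are nontrivial, since $F$ is transitive and generated by them while $d\geq 3$), so Corollary~\ref{cor-monolithe} applied to the pair $F \leq F$ gives that $N(F,F) = G(F,F)^{+} = G(F,F)^{\star}$ is simple, which is assumption (b). Hence Theorem~\ref{thm-simple-ind8} yields that $N(F,F')$ is a simple subgroup of index eight in $G(F,F')$. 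Finally, in the proof of Theorem~\ref{thm-simple-ind8} the index-eight subgroup $N$ is defined as $G(F,F') \cap G_0(F'',F') \cap G_1(F'',F')$, which for $F'' = F$ equals $G_0(F,F') \cap G_1(F,F')$ since each $G_i(F,F')$ is contained in $G(F,F')^{\star} \leq G(F,F')$; that proof shows $N = N(F,F')$. Therefore $G_0(F,F') \cap G_1(F,F')$ is a simple subgroup of index eight in $G(F,F')$.
\end{proof}
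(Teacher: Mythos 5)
Your proof is correct and follows the paper's intended route exactly: the corollary is the specialization $F''=F$ of Theorem~\ref{thm-simple-ind8} (explicitly permitted by the remark preceding it), with assumption (b) verified via Proposition~\ref{prop-g(f)simple} and Tits' simplicity of $U(F)^+$ (equivalently Corollary~\ref{cor-monolithe} applied to $F\leq F$), and the identification of the index-eight subgroup with $G_0(F,F')\cap G_1(F,F')$ read off from the proof of the theorem, where $N=G(F,F')\cap G_0(F'',F')\cap G_1(F'',F')$ is shown to equal $N(F,F')$. Nothing is missing.
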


Examples of permutation groups satisfying these assumptions are $F = \mathrm{Alt}(d)$ and $F' = \mathrm{Sym}(d)$ for $d \geq 4$. For $d=4$ this is the only example. For $d=5$ we can take for $F$ the dihedral group $D_5$ and $F' = \mathbb{F}_5 \rtimes \mathbb{F}_5^\times$. A generalization of this example will be detailed at the end of Section \ref{sec-lattices}.

\section{Further properties of the groups $G(F,F')$} \label{sec-further-prop}

\subsection{Asymptotic dimension} \label{subsec-asdim}

Let $X$ be a metric space. Recall that $A,B \subset X$ are \textit{$r$-disjoint} if $d(a,b) \geq r$ for every $a \in A, b \in B$. Recall also that collection of subsets $(A_i)$ is \textit{uniformly bounded} if there is $C > 0$ so that $\mathrm{diam}(A_i) \leq C$ for all $i$. We say that $X$ has asymptotic dimension at most $n \geq 0$ if for every (large) $r > 0$, one can find $n+1$ uniformly bounded families $X_0,\ldots,X_n$ of $r$-disjoint sets, whose union is a cover of the space $X$. The \textit{asymptotic dimension} of $X$ is the smallest integer $n$ such that $X$ has asymptotic dimension at most $n$. Asymptotic dimension is an invariant of metric coarse equivalence, so that if $G$ is a locally compact compactly generated group, the asymptotic dimension of $G$ is well defined. 

\begin{prop}
Let $G$ be a locally compact compactly generated group acting on a locally finite tree $X$ such that all vertex stabilizers in $G$ are locally elliptic open subgroups. Then $G$ has asymptotic dimension at most one.
\end{prop}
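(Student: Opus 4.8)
The plan is to use the fact that $X$ is a tree together with the hypothesis on vertex stabilizers to produce, for every large $r$, a two-colored cover by uniformly bounded $r$-disjoint families. The underlying geometric idea is the standard one that trees have asymptotic dimension one: partition $X$ according to the distance to a fixed basepoint, grouping vertices into annuli of width $r$, and then split each annulus into ``branches'' that become $r$-disjoint after passing to alternate annuli. The only subtlety here is that we are not working with the tree $X$ itself but with the group $G$, whose word metric pulls back along the orbit map $g \mapsto g \cdot x_0$; so I must check that the preimages of the branch-pieces under this orbit map remain uniformly bounded, and this is exactly where the hypothesis that vertex stabilizers are locally elliptic open subgroups enters.

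First I would fix a compact open subgroup $K$ of $G$ (which exists since $G$ is totally disconnected, being a locally compact group acting on a locally finite tree — more simply, a point stabilizer in $\mathrm{Aut}(X)$ is already compact open in $\mathrm{Aut}(X)$, and we can intersect) and a vertex $x_0$. The orbit map $\pi: G \to X$, $g \mapsto g x_0$, is coarsely Lipschitz and its fibers are cosets of the vertex stabilizer $G_{x_0}$. The key point is that although $G_{x_0}$ need not be compact, it is a \emph{locally elliptic open} subgroup, i.e.\ an increasing union of compact open subgroups $\bigcup_n C_n$. For the asymptotic-dimension argument one works at a fixed scale $r$, and the relevant observation is: a subset $A \subseteq X$ of diameter at most $\rho$ has $\pi^{-1}(A)$ of finite diameter in $G$ \emph{for each fixed such $A$}, but to get \emph{uniform} boundedness across a whole family I would instead quotient by a compact open subgroup and argue that $G_{x_0}$ acts on $X$ with bounded orbits near $x_0$; concretely, I expect the cleanest route is to observe that the stabilizer of any ball is locally elliptic, hence any fixed bounded piece of $X$ has bounded preimage, and then to build the cover at scale $r$ so that only finitely many ``translation classes'' of pieces occur, using cocompactness of the $G$-action on $X$ (which follows from $G$ being compactly generated and acting on a tree with ... ) — here I should be careful, as cocompactness is not assumed; so the safer argument is the one below.

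The cleaner approach I would actually carry out: let $H = G_{x_0} = \bigcup_n^\nearrow C_n$ with each $C_n$ compact open. Since $G$ is compactly generated, there is a compact generating set $T$ with $K \subseteq T$; then $G$ is coarsely equivalent to the Schreier-type graph on $G/C$ for any compact open $C$, and moreover $G/H \cong X^{(0)}$ as a $G$-set. Given $r$, I would use the tree structure of $X$: choose $x_0$, let $B_k = \{ x \in X : kr \le d(x_0,x) < (k+1)r \}$, and within each $B_k$ declare two vertices equivalent if their projections to the sphere of radius $kr$ lie in the same subtree hanging below a common vertex at distance $kr$; this breaks $B_k$ into pieces of diameter $< 2r$ that are pairwise $\ge 1$ apart, and after separating even $k$ from odd $k$ (distance $\ge r - 2r$... adjust constants by taking annuli of width, say, $3r$) one gets two families of $r$-disjoint uniformly bounded sets covering $X$. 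Pulling back under $\pi$, each such piece $A$ lies in some ball $\mathcal B(v,\rho)$ with $\rho$ depending only on $r$; its preimage $\pi^{-1}(A)$ is contained in $\{ g \in G : g x_0 \in \mathcal B(v,\rho)\} = \{g : g^{-1} v \in \mathcal B(x_0,\rho)\}$, and I want this to have diameter bounded in terms of $r$ only.

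The main obstacle — and the place the proof really has content — is precisely this last uniform boundedness. I expect to handle it by noting that the stabilizer in $G$ of the ball $\mathcal B(x_0,\rho)$ is a locally elliptic group; more usefully, fix $g_v \in G$ with $g_v x_0 = v$ (possible since we may assume $G$ acts transitively on $X^{(0)}$, or else work orbit-by-orbit), so that $\pi^{-1}(\mathcal B(v,\rho)) = g_v \cdot \pi^{-1}(\mathcal B(x_0,\rho)) = g_v \cdot (\text{finite union of cosets } h H)$ over the finitely many $h H$ with $h x_0 \in \mathcal B(x_0,\rho)$. Each coset $hH$ has the same diameter as $H$, which is \emph{infinite} in general — so uniform boundedness in $G$ fails for the naive pieces, and the resolution is that asymptotic dimension of $G$ should be computed after replacing $G$ by $G/K$-metric, i.e.\ I should prove instead that the coarse space $G$ (word metric from the compact generating set) coarsely embeds, with the right dimension bound, by combining the Hurewicz-type / fibration theorem for asymptotic dimension: $\pi : G \to X$ is a coarse map whose ``fibers'' are the vertex stabilizers, each of which is locally elliptic hence of asymptotic dimension $0$ (an increasing union of compact, hence bounded, sets — a countable locally finite-by-compact group has asdim $0$), and $X$ has asdim $1$; then $\mathrm{asdim}(G) \le \mathrm{asdim}(X) + \sup \mathrm{asdim}(\text{fibers}) = 1 + 0 = 1$ by the union/fibration theorem for asymptotic dimension (e.g.\ Bell–Dranishnikov). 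So the real plan is: (1) identify $X$ with a vertex-transitive $G$-set (or argue componentwise), (2) show each $G_v$ has asymptotic dimension $0$ using local ellipticity, (3) invoke the asymptotic-dimension fibration theorem along the coarse map $G \to X$. Step (2) is where local ellipticity is essential and is straightforward; step (3) is a citation; step (1) requires only mild care. I would expect step (3)'s hypotheses (the map having a suitable ``coarse coverage'' property) to be the item demanding the most attention to state precisely.
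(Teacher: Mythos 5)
Your overall strategy -- push everything through the orbit map $\pi\colon g \mapsto g x_0$, use local ellipticity to get asymptotic dimension zero for stabilizers, and finish with a Bell--Dranishnikov-type theorem -- is the same strategy as the paper's. However, your step (3) as formulated has a genuine gap: the Hurewicz-type fibration theorem does \emph{not} take ``point-fibers have $\mathrm{asdim}\ 0$'' as its hypothesis. What it requires is that for every $R$ the family $\{\pi^{-1}(B_R(v))\}_{v \in X}$ has asymptotic dimension $0$ \emph{uniformly}, and you explicitly leave this verification open (``coarse coverage''). This is exactly the point where the hypothesis that $X$ is \emph{locally finite} must enter, and tellingly your final three-step plan never uses local finiteness at all. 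The missing argument is short: $\pi^{-1}(B_R(x_0)) = W_R(x_0) = \{g \in G : d(gx_0,x_0) \le R\}$ is a \emph{finite} union of left cosets of $H = G_{x_0}$ because the ball $B_R(x_0)$ contains only finitely many vertices; each coset is isometric to $H$, which has asymptotic dimension zero by local ellipticity, so the finite union theorem gives $\mathrm{asdim}\, W_R(x_0) = 0$. Uniformity over all centers then comes for free by translation: if $\pi^{-1}(B_R(v))$ is non-empty, choose $g$ with $gx_0 \in B_R(v)$; then $\pi^{-1}(B_R(v)) \subseteq g\,W_{2R}(x_0)$, a left translate of a single asymptotic-dimension-zero subset of $G$. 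With this supplied (and a remark that the Bell--Dranishnikov results are stated for discrete groups but their proofs work verbatim for locally compact, compactly generated groups), your plan becomes a complete proof.

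For comparison, the paper takes a slightly more economical route: instead of the general Hurewicz fibration theorem it quotes the specialized Bell--Dranishnikov theorem for groups acting on trees, whose hypothesis is precisely a bound on $\mathrm{asdim}\, W_r(x_0)$ for every $r$, so only the single basepoint $x_0$ has to be treated and no uniformity over centers needs to be discussed. The entire content of the paper's proof is then the one-line observation above: $W_r(x_0)$ is a finite union of cosets of the locally elliptic (hence asymptotic-dimension-zero) vertex stabilizer, which is exactly the step your proposal identifies as delicate but does not carry out. Your earlier abandoned attempt (covering the tree by annuli and pulling back) fails for the reason you yourself noticed -- cosets of $H$ have infinite diameter -- so the fibration-theorem route is the right repair, but it is only as strong as the uniform ball-preimage estimate you still owe.
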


\begin{proof}
Let $x_0$ be a vertex of $X$, and let $H = G_{x_0}$. Since the tree $X$ is locally finite, for every $r>0$, the coarse stabilizer $W_r(x_0) = \left\{g \in G \, : \, d(gx_0,x_0) \leq r \right\}$ of $x_0$ is a finite union of left cosets of $H$. Since the subgroup $H$ is locally elliptic, it has asymptotic dimension zero \cite[Proposition 4.D.4]{Cor-dlH}, and therefore by the previous observation $W_r(x_0)$ (endowed with the induced topology) has asymptotic dimension zero as well. So we are in position to apply Theorem 2 from \cite{BD-oadg}, which implies that $G$ has asymptotic dimension at most one. Note that the result is stated there for discrete groups, but the same proof works in the locally compact setting.
\end{proof}

This result applies notably to the family of groups $G(F,F')$, which clearly do not have asymptotic dimension zero.

\begin{cor} \label{cor-asdimone}
The group $G(F,F')$ has asymptotic dimension one.
\end{cor}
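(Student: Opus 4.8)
The plan is to invoke the preceding Proposition, which states that a locally compact compactly generated group acting on a locally finite tree with all vertex stabilizers locally elliptic and open has asymptotic dimension at most one, and then to observe that $G(F,F')$ has asymptotic dimension at least one since it is not locally elliptic (equivalently, not a group of asymptotic dimension zero). So the proof splits into two short verifications.

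First I would check the hypotheses of the Proposition. The group $G(F,F')$ is compactly generated by Corollary \ref{cor-g-ce}, and it acts on the tree $\treed$, which is locally finite since $d$ is finite; the action is continuous for the topology on $G(F,F')$. For the vertex stabilizers: it is shown in Section \ref{sec-preliminaries} that $G(F)_v = \bigcup_{n\geq 0}^{\nearrow} K_n(v)$ is an increasing union of the compact open subgroups $K_n(v)$, hence is a locally elliptic open subgroup of $G(F)$; intersecting with the open subgroup $G(F,F')$, the stabilizer $G(F,F')_v = \bigcup_{n\geq 0}^{\nearrow} K_{n,F'}(v)$ is likewise an increasing union of compact open subgroups, so it is locally elliptic and open in $G(F,F')$. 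This lets me apply the Proposition and conclude $\mathrm{asdim}\, G(F,F') \leq 1$.

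Second, I would argue that the asymptotic dimension is not zero. A locally compact compactly generated group has asymptotic dimension zero precisely when it is locally elliptic (i.e.\ compact-by-discrete in the relevant sense, cf.\ \cite[Proposition 4.D.4]{Cor-dlH}); but $G(F,F')$ contains hyperbolic automorphisms of $\treed$ — indeed $U(F) \leq G(F,F')$ and $U(\{1\})$ already contains hyperbolic elements — so it is not locally elliptic and is in fact coarsely equivalent to (a space containing quasi-isometrically) a line, hence has asymptotic dimension at least one. Combining the two inequalities gives $\mathrm{asdim}\, G(F,F') = 1$.

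There is essentially no obstacle here: the content has been entirely absorbed into the Proposition preceding the Corollary, and both verifications are routine bookkeeping. The only point requiring a moment's care is the lower bound, i.e.\ making precise why $G(F,F')$ is not of asymptotic dimension zero; the cleanest route is to exhibit a hyperbolic element and note that the cyclic group it generates is discrete and infinite, or more directly to observe that $G(F,F')$ acts on $\treed$ with unbounded orbits (being vertex-transitive on an infinite tree) while remaining non-compact, so it cannot be locally elliptic.
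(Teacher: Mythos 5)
Your proposal is correct and follows essentially the same route as the paper: the upper bound comes from the Proposition immediately preceding the Corollary (whose hypotheses hold since $G(F,F')$ is compactly generated, acts on the locally finite tree $\treed$, and has vertex stabilizers $G(F,F')_v = \bigcup_n K_{n,F'}(v)$ that are locally elliptic open), and the lower bound from the observation that $G(F,F')$ clearly does not have asymptotic dimension zero, e.g.\ because it is non-compact, being compactly generated with unbounded orbits on $\treed$.
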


\subsection{Compact presentability}

Recall that the group $U(F)$ acts properly and cocompactly on $\treed$. So in particular $U(F)$ is coarsely simply connected, and therefore compactly presented \cite[Proposition 8.A.3]{Cor-dlH}. In this paragraph we characterize subgroups of $G(F,F')$ that are compactly presented, and show in particular that the groups $G(F,F')$ are not compactly presented when $F$ is a proper subgroup of $F'$. 


\begin{lem} \label{lem-group-tree-compact}
Let $G$ be locally compact compactly generated unimodular group, admitting a proper and continuous action on a tree $X$. Then compact open subgroups of $G$ have uniformly bounded Haar measure. In particular $G$ does not have non-compact locally elliptic open subgroups.
\end{lem}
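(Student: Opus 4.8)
The plan is to exploit the action on the tree to bound Haar measures of compact open subgroups uniformly, and then deduce the statement about locally elliptic subgroups. First I would fix a compact open subgroup $K_0$ of $G$ and normalize the Haar measure so that $\mu(K_0) = 1$. Since the action of $G$ on $X$ is proper and continuous, and $X$ is a tree (hence in particular locally finite can be assumed, or at least the action has compact, in fact open-compact, vertex stabilizers), every compact subgroup of $G$ fixes a point of $X$: indeed a compact group acting continuously and properly on a tree has bounded orbits, hence fixes a vertex or the midpoint of an edge, and in any case stabilizes a vertex or edge, so it is contained in the stabilizer $G_x$ of some vertex (up to passing to an edge stabilizer, which is contained in a vertex stabilizer). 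The vertex stabilizers $G_x$ are compact because the action is proper; they are open because the action is continuous. So every compact open subgroup $K$ of $G$ is contained in some vertex stabilizer $G_x$.

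The next step is to bound $\mu(G_x)$ uniformly over $x \in \mathsf{V}(X)$. Here I would use that $G$ is compactly generated and acts on $X$: the quotient graph $G \backslash X$ is then a connected graph of bounded valence with finitely many... actually more simply, since $G$ is compactly generated and acts properly on the connected tree $X$, the action is cocompact on... no. The cleaner route: since $G$ is unimodular and $G_x$ is an open subgroup, for any two vertices $x, y$ in the same $G$-orbit we have $\mu(G_x) = \mu(G_y)$ (conjugate subgroups). For adjacent vertices $x, y$, the edge stabilizer $G_{\{x,y\}}$ has finite index in both $G_x$ and $G_y$, and by unimodularity $[G_x : G_{\{x,y\}}] = [G_y : G_{\{x,y\}}] \cdot \frac{\mu(G_y)}{\mu(G_x)}$... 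I need to think about this. By unimodularity and openness, for nested open subgroups the index equals the ratio of Haar measures: $\mu(G_x) = [G_x : G_{\{x,y\}}] \, \mu(G_{\{x,y\}})$. The indices $[G_x : G_{\{x,y\}}]$ are bounded because $G_x$ acts on the finite set of edges at $x$ (finite since the action is proper and $X$ locally finite — and local finiteness of $X$ can be assumed, or derived from properness plus cocompactness of vertex stabilizers' action on edge-stars; in any case the stated hypotheses in the paper make vertex stabilizers compact, and a compact group acting continuously has finite orbits on any discrete set it acts on, so $[G_x:G_{\{x,y\}}]$ is finite, and it is bounded over a compact generating set's worth of translates). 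Since $G$ is compactly generated, $G \backslash X$ has finite diameter up to a fixed... more precisely, there is a finite subtree $D$ meeting every $G$-orbit, so every $\mu(G_x)$ equals $\mu(G_{x'})$ for some $x' \in D$, and these finitely many values, together with the bounded index jumps along edges of $D$, give a uniform bound $M$ with $\mu(K) \le \mu(G_x) \le M$ for every compact open $K$.

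Finally, suppose $L \le G$ is a locally elliptic open subgroup, i.e. $L = \bigcup_n K_n$ an increasing union of compact open subgroups. Then $\mu(L) = \sup_n \mu(K_n) \le M < \infty$. An open subgroup of finite Haar measure is necessarily compact: its cosets partition it into sets of equal positive measure $\mu(L) \ge \mu(K_0') > 0$ for some compact open $K_0' \le L$, so $L$ has finitely many... wait, $L$ is a group, it is the union of cosets of $K_0'$ in $L$, each of measure $\mu(K_0')$, and since $\mu(L) < \infty$ there are finitely many such cosets, so $[L:K_0'] < \infty$, hence $L$ is compact. This gives the "in particular" clause. I expect the main obstacle to be the uniform bound on $\mu(G_x)$ over all vertices: one must be careful that compact generation of $G$ genuinely forces the quotient $G \backslash X$ to be "small enough" — the standard fact is that a compactly generated group acting continuously and properly (or even just with open stabilizers) on a connected graph does so with the quotient graph having a finite connected subgraph carrying all orbits, which follows from covering the compact generating set by finitely many cosets of a vertex stabilizer and using connectedness of $X$. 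Once that is in hand, the edge-index argument and the final measure-counting are routine.
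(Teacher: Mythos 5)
Your overall scheme is the same as the paper's (compact subgroups fix points of the tree, vertex stabilizers are compact open, unimodularity makes the Haar measure constant on conjugacy classes, hence a uniform bound, and then a finite-measure open subgroup must be compact), but the step you flag as the main obstacle is indeed where the argument breaks: the ``standard fact'' that compact generation forces a finite subtree $D$ of $X$ meeting every $G$-orbit of vertices is false. Take $G=\mathbb{Z}$ acting on the $3$-regular tree by a hyperbolic translation: $G$ is compactly generated, unimodular, and the action is proper and continuous, yet there are infinitely many vertex orbits and no finite subtree meets them all. The coset-covering argument you sketch (cover the compact generating set by finitely many cosets of $G_{x_0}$) does not bound the quotient graph of $X$; what it actually produces is a finite subtree $D$ such that $X'=G\cdot D$ is a $G$-invariant subtree on which $G$ acts with finitely many vertex orbits. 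So your assertion that every $\mu(G_x)$ equals $\mu(G_{x'})$ for some $x'\in D$ fails for vertices of $X$ outside $X'$, and the uniform bound is left unproved as written.

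The repair is precisely the paper's first move: replace $X$ by a minimal (equivalently, a cocompact) $G$-invariant subtree, on which compact generation together with openness of stabilizers gives local finiteness and finitely many orbits of vertices (the paper quotes Lemma 2.4 of \cite{germs} for this). One must then note that compact open subgroups are still controlled by this smaller tree: a compact subgroup has bounded orbits in the invariant subtree $X'$, so it fixes a point of $X'$ (for instance, if it fixes a vertex $x$ of $X$, it fixes the closest-point projection of $x$ to $X'$), and therefore it suffices to bound the measures of stabilizers of vertices of $X'$, which take only finitely many values by unimodularity and the finiteness of the number of orbits. A second, minor, inaccuracy in your write-up: a compact subgroup may invert an edge, and the \emph{setwise} stabilizer of an edge is not in general contained in a vertex stabilizer; as in the paper one passes to the subgroup of index at most two fixing the endpoints, which only worsens the uniform bound by a factor of $2$. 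Your final step (an open subgroup of finite Haar measure has finite index over a compact open subgroup, hence is compact) is correct and matches the paper's conclusion.
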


\begin{proof}
Upon replacing $X$ by a minimal $G$-invariant subtree, one may assume that $X$ is a locally finite tree on which $G$ acts with finitely many orbits of vertices (see for example the second part of the proof of Lemma 2.4 in \cite{germs}). Since the action is proper, vertex stabilizers are compact open, and by the previous remark there are only finitely many conjugacy classes of vertex stabilizers. Moreover $G$ is unimodular, so vertex stabilizers have a finite number of possible Haar measures. Since every compact open subgroup has a subgroup of index at most two that is contained in a vertex stabilizer, the first statement is proved. The second statement follows because any non-compact locally elliptic open subgroup would be a strictly increasing union of compact open subgroups, which cannot happen.
\end{proof}

\begin{prop} \label{prop-notcp}
Let $G$ be a closed unimodular subgroup of $G(F,F')$. If $G$ is compactly presented, then the action of $G$ on $\treed$ is proper.

In particular, the group $G(F,F')$ is never compactly presented as soon as $F$ is a proper subgroup of $F'$.
\end{prop}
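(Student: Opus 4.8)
The plan is to prove the first assertion by contraposition, and to deduce the last assertion at once: $G(F,F')$ is unimodular (recall Section~\ref{sec-preliminaries}) and is of course a closed subgroup of itself, while by Corollary~\ref{cor-u=g} its action on $\treed$ fails to be proper exactly when $F \lneq F'$; so if $G(F,F')$ were compactly presented, the first assertion would force $F = F'$. For the first assertion, I would assume $G$ is compactly presented, hence compactly generated, and suppose for a contradiction that the $G$-action on $\treed$ is not proper. Since $\treed$ is locally finite and the action continuous, some vertex stabilizer $G_v = G \cap G(F,F')_v$ is non-compact. Because $G(F,F')_v = \bigcup_{n} K_{n,F'}(v)$ is an increasing union of compact open subgroups and $G$ is closed in $G(F,F')$, the group $G_v$ is the increasing union of the compact open subgroups $G \cap K_{n,F'}(v)$; thus $G_v$ is a non-compact locally elliptic open subgroup of $G$. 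It remains to see that this is impossible, arguing according to the type of the action of $G$ on $\treed$.

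If $G$ fixes a vertex, then $G \le G(F,F')_v$ is itself locally elliptic; since a compactly generated locally elliptic group is compact, the action is proper, a contradiction. Suppose next that $G$ fixes an end $\xi$ but no vertex, and let $\beta \colon G \to \mathbb{Z}$ be the associated Busemann character. If $\beta$ is trivial, then $G$ is the increasing union of the stabilizers of the vertices along a ray pointing to $\xi$, hence again locally elliptic, hence compact, a contradiction. If $\beta$ is surjective, write $G = G_0 \rtimes \langle t \rangle$ with $G_0 = \ker \beta$ and $t$ hyperbolic towards $\xi$; the same ray argument shows $G_0$ is locally elliptic. If $G_0$ is compact, then every vertex stabilizer of $G$ lies in $G_0$ (a stabilizer contains no hyperbolic element), so the action is proper, a contradiction. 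If $G_0$ is non-compact, then conjugation by $t$ (or by $t^{-1}$) sends some compact open subgroup of $G_0$ strictly into itself, so the modular function satisfies $\Delta_G(t) \ne 1$, contradicting the unimodularity of $G$; this is exactly where that hypothesis enters, in parallel with Lemma~\ref{lem-group-tree-compact}.

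Finally, suppose $G$ fixes no vertex and no end. Then $G$ preserves a unique minimal subtree $T_0 \subseteq \treed$, which is locally finite and has no leaves, and on which $G$ acts cocompactly since $G$ is compactly generated. Let $w$ be the closest point of $T_0$ to the vertex $v$ above; then $G_v$ fixes $w$ (it preserves $T_0$ and $v$), so $G_w$ is non-compact, and since $w$ is not a leaf there is an edge $e$ of $T_0$ at $w$ whose stabilizer $G_e$ contains the pointwise stabilizer of the (finite) star of $w$ in $T_0$, a finite-index subgroup of $G_w$; hence $G_e$ is non-compact. As before, $G_e = G \cap G(F,F')_e$ is a locally elliptic subgroup of $G$, so it is non-compact locally elliptic and in particular not compactly generated. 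But a compactly presented locally compact group acting continuously and cocompactly on a tree has compactly generated edge stabilizers, a standard consequence of the combination theorems for compact presentability (see \cite{Cor-dlH}), which contradicts the previous sentence and completes the proof.

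I expect this last case to be the main obstacle: the genuinely substantive inputs are, on one hand, the combination result relating compact presentability of $G$ to compact generation of the edge stabilizers of a cocompact action on a tree, and on the other hand the use of unimodularity to rule out the end-fixing case; everything else is bookkeeping with the exhaustions $G(F,F')_v = \bigcup_{n} K_{n,F'}(v)$, the analogous ones for edges, and the standard classification of group actions on a tree.
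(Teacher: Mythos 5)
Your reduction to a non-compact vertex stabilizer, and the first two sub-cases up to the point where $G_0$ is non-compact, are fine, but the argument breaks at exactly the two places where the real content sits. The decisive gap is in the last case: the statement you invoke --- that a compactly presented locally compact group acting continuously and cocompactly on a tree has compactly generated edge stabilizers --- is not a theorem, and is false as stated. Take $G=F_3=\langle a,b,c\rangle$, which is finitely presented, acting on the $4$-regular tree (the Cayley tree of $F_2=\langle a,b\rangle$) through the surjection $F_3\twoheadrightarrow F_2$ killing $c$: the action is cocompact, minimal and of general type, yet every vertex and edge stabilizer equals the kernel, a free group of infinite rank. The true combination-type statements in \cite{Cor-dlH} require compact generation of the vertex stabilizers of the cocompact action as a hypothesis, and that hypothesis is precisely what you cannot grant: under your contradiction hypothesis the stabilizer $G_w$ is a non-compact locally elliptic group, hence is itself not compactly generated. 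So the general-type case --- which you yourself single out as the main obstacle --- is not established.

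There is also a gap in the end-fixing case when $G_0$ is non-compact: you assert that conjugation by $t^{\pm 1}$ sends some compact open subgroup of $G_0$ strictly into itself, but no such subgroup is exhibited. What conjugation by $t^{-1}$ visibly contracts are the vertex stabilizers along the axis, and these are exactly the subgroups you cannot assume compact; for an honest compact open subgroup $V$ (say $G\cap U(F)_{x}$, with $x$ on the axis) there is no reason why the indices $[V:V\cap tVt^{-1}]$ and $[tVt^{-1}:V\cap tVt^{-1}]$ should differ, since $t$ need not normalize or even lie in $U(F)$; so $\Delta_G(t)\neq 1$ is not justified. For comparison, the paper's proof never analyzes the action on $\treed$ case by case: since $G(F,F')$ has asymptotic dimension one (Corollary \ref{cor-asdimone}), a compactly presented closed subgroup is either compact or coarsely simply connected of asymptotic dimension one, hence quasi-isometric to a tree by \cite{FW}; it therefore acts geometrically on some locally finite tree, and Lemma \ref{lem-group-tree-compact} (this is where unimodularity enters) then forces every locally elliptic open subgroup --- in particular each stabilizer $G_v$ for the $\treed$-action --- to be compact. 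If you want to keep your direct approach, it is the general-type case that needs a genuinely new ingredient playing the role of this asymptotic-dimension argument.
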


\begin{proof}
Since $G(F,F')$ has asymptotic dimension one by Corollary \ref{cor-asdimone}, it follows that $G$ must have asymptotic dimension zero or one. If $G$ has asymptotic dimension zero, then $G$ is compact because $G$ is compactly generated. So we may assume that $G$ has asymptotic dimension one. Since the fundamental group of a Cayley graph of $G$ is generated by loops of bounded length because $G$ is compactly presented, the group $G$ must be quasi-isometric to a tree according to \cite[Theorem 1.1]{FW}. This implies that the group $G$ must act geometrically on some locally finite tree (see \cite[Theorem 4.A.1]{Cor-focal-classi} and references therein), and since $G$ is unimodular, it follows from Lemma \ref{lem-group-tree-compact} that every locally elliptic open subgroup of $G$ must be compact. In particular vertex stabilizers in $G$ for its action on $\treed$ are compact, so the first statement is proved. 

The second statement follows from the first together with Corollary \ref{cor-u=g}.
\end{proof}

Proposition \ref{prop-notcp} applies notably to discrete subgroups of $G(F,F')$, and implies that any finitely presented discrete subgroup of $G(F,F')$ must intersect $G(F,F')_v$ along a finite subgroup, where $v$ is any vertex of $\treed$. In particular if $F$ is a proper subgroup of $F'$ and if $\Gamma$ is a lattice in $G(F,F')$, then $\Gamma$ cannot be finitely presented, because $\Gamma_v$ would be at the same time a finite group and a lattice in the non-compact group $G(F,F')_v$, which is impossible. 

\subsection{Relative abstract commensurators}

In this paragraph we give a second interpretation of the groups $G(F,F')$ in terms of relative commensurators (see Proposition \ref{prop-rel-comm}).

Let G be a profinite group, and $L$ an abstract group containing $G$. A \textit{relative commensurator} of $G$ in $L$ is an element of $L$ whose conjugation induces an isomorphism between two compact open subgroups of $G$. The set $\mathrm{Comm}_L(G)$ of relative commensurators of $G$ in $L$ is a group, which only depends on the local structure of $G$ in the sense that $\mathrm{Comm}_L(G) = \mathrm{Comm}_L(K)$ for any compact open subgroup $K$ of $G$. 

The idea of studying commensurators of profinite groups was initiated in \cite{BEW}. The motivation comes from the desire to study the structure of totally disconnected locally compact groups, by asking how much information we can recover about the ambient group by studying its local structure. This approach has been further investigated in \cite{germs} and in \cite{CRW-1,CRW-2}.

Here we investigate the relative commensurator of a compact open subgroup $K$ of $U(F)$ in $U(F')$. First remark that it follows from Lemma \ref{lem-topo-g(f)} that the group $G(F,F')$ commensurates $K$, so that we always have an inclusion $G(F,F') \leq \mathrm{Comm}_{U(F')}(K)$. The following result shows that $G(F,F')$ and $\mathrm{Comm}_{U(F')}(K)$ do not coincide in full generality.

\begin{prop} \label{prop-comm-normaliz}
If the group of relative commensurators of a compact open subgroup of $U(F)$ in $U(F')$ is equal to $G(F,F')$, then the normalizer of $F^+$ in $F'$ must be equal to $F$.
\end{prop}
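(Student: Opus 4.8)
The plan is to prove the contrapositive: assuming that the normalizer $N_{F'}(F^+)$ strictly contains $F$, I will exhibit an element of $\mathrm{Comm}_{U(F')}(K)$ that does not lie in $G(F,F')$, for a suitable compact open subgroup $K$ of $U(F)$. The natural choice is to take $K = U(F)_v$ for some vertex $v$, since by Lemma \ref{lem-topo-g(f)} the relative commensurator of any compact open subgroup of $U(F)$ in $U(F')$ only depends on the commensurability class of $K$, so there is no loss in working with a convenient one. The key point is that conjugating $U(F)_v$ by an automorphism $g \in U(F')$ should produce a group that agrees with $U(F)_{g(v)}$ at the level of local actions \emph{deep in the tree}, where what matters is only $F^+$ rather than all of $F$; the obstruction to the conjugate landing back in $U(F)$ is entirely concentrated on the star around $v$.

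Concretely, first I would pick $\tau \in N_{F'}(F^+) \setminus F$, and let $g \in U(F')$ be any automorphism with $\sigma(g,w) = \tau$ for every vertex $w$ (such $g$ exists by the observation following the multiplication rules \eqref{eq-rules}). Since $\tau \in F' \setminus F$, every vertex is a singularity of $g$, so $g \notin G(F,F')$; in fact $g \notin G(F)$ because it has infinitely many singularities. It remains to check that $g$ commensurates $K = U(F)_v$. For this I would compute, using \eqref{eq-rules}, the local permutations of $g u g^{-1}$ for $u \in U(F)_v$: one gets $\sigma(gug^{-1}, x) = \tau \,\sigma(u, g^{-1}x)\, \tau^{-1}$ for vertices $x$ away from a bounded neighbourhood of $g(v)$ (the neighbourhood where $g$'s action on the star matters). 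Since $u \in U(F)$, each $\sigma(u, g^{-1}x)$ lies in $F$, and since $U(F)_v$ is generated by the pointwise stabilizers of edges at $v$, one can even arrange that the relevant $\sigma(u, g^{-1}x)$ lie in $F^+$ — at which point $\tau (F^+) \tau^{-1} = F^+ \subset F$ gives $\sigma(gug^{-1},x) \in F$. Thus $g$ conjugates a finite-index subgroup of $U(F)_v$ (the pointwise stabilizer of the star around $v$, which lies in $U(F)^+_v \le U(F)_v$ and whose local actions at vertices adjacent to $v$ already lie in $F^+$) into $U(F)$, and it fixes an appropriate subtree, so the conjugate is a compact open subgroup of $U(F)$. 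Symmetrically $g^{-1}$ does the same, so $g \in \mathrm{Comm}_{U(F')}(K) \setminus G(F,F')$.

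The main obstacle I anticipate is the bookkeeping near the vertex $v$: the conjugate $g\, U(F)_v\, g^{-1}$ need not be contained in $U(F)$ on the nose, only after passing to a finite-index subgroup, and one must be careful that the "bad" local permutations $\tau \sigma \tau^{-1}$ with $\sigma \in F \setminus F^+$ only occur on a bounded set of vertices. The clean way to handle this is to replace $K = U(F)_v$ by the pointwise stabilizer $U(F)_{\mathcal{B}(v,1)}$ of the ball of radius one (still compact open in $U(F)$, hence an admissible choice for computing the relative commensurator): for $u$ in this smaller group every $\sigma(u,w)$ with $w$ at distance $\le 1$ from $v$ is trivial, so after conjugating by $g$ all the surviving local permutations are of the form $\tau \sigma(u, g^{-1}x) \tau^{-1}$ with $\sigma(u, g^{-1}x) \in F^+$ (using that $U(F)_{\mathcal{B}(v,1)} \le U(F)^+$ and unwinding the generation of $U(F)^+$ by edge stabilizers), and these all land in $F^+ \subset F$ by the normalization hypothesis. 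This makes the containment $g\, U(F)_{\mathcal{B}(v,1)}\, g^{-1} \le U(F)$ hold exactly, finishing the argument.
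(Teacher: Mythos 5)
Your proof is correct and takes essentially the same route as the paper: the paper also fixes $\sigma$ normalizing $F^+$, takes $g\in U(F')$ with all local permutations equal to $\sigma$, uses the fact that any element of $U(F)$ fixing an edge has all its local permutations in $F^+$, and concludes $g\,K\,g^{-1}\le U(F)$ for $K$ the pointwise stabilizer of an edge, exactly as you do with the star stabilizer $U(F)_{\mathcal{B}(v,1)}$. The only differences are cosmetic: you argue by contraposition, you start from $U(F)_v$ before correctly discarding it, and you use a ball stabilizer instead of an edge stabilizer.
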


\begin{proof}
We let $\sigma \in F'$ normalizing $F^+$, and we prove that $\sigma \in F$. We let $g \in U(F')$ such that $\sg = \sigma$ for every $v \in \verti$. Let $K$ be the pointwise stabilizer of an edge in $U(F)$, and let $h \in K$. Then for every vertex $v$, it readily follows from the multiplication rules (\ref{eq-rules}) that $\sigma(ghg^{-1},v) = \sigma \sigma(h,g^{-1}v) \sigma^{-1}$. Moreover since $h$ belongs to $U(F)$ and fixes an edge of $\treed$, we easily check that all the permutations $\sigma(h,w)$ belong to $F^+$. Now by definition $\sigma$ normalizes $F^+$, so we deduce that $ghg^{-1}$ actually belongs to $U(F)$. Therefore there is an open subgroup $K' \leq K$ such that $gK'g^{-1} \leq K$, which means that $g$ is a relative commensurator of $K$ in $U(F')$. By assumption $\mathrm{Comm}_{U(F')}(K) = G(F,F')$, so we deduce that there are only finitely many vertices $v$ such that $\sg \notin F$. This clearly implies that $\sigma$ belongs to $F$, and the proof is complete.
\end{proof}

\begin{rmq}
We point out that the conclusion of Proposition \ref{prop-comm-normaliz} implies in particular that $F$ must be equal to its normalizer in $F'$, but these two conditions are not equivalent, as the example $d=6$, $F=C_6$ and $F' = C_2 \wr C_3$ shows (where $C_n$ is the cyclic group of order $n$). 
\end{rmq}

Nevertheless, we prove in the following proposition that $G(F,F')$ does coincide with the group of relative commensurators of a compact open subgroup of $U(F)$ in $U(F')$ under the assumption that every point stabilizer $F_a$ is equal to its normalizer in $F_a'$. Note that this assumption covers many interesting cases. 


\begin{prop} \label{prop-rel-comm}
Assume that for every $a \in \Omega$, the group $F_a$ is equal to its normalizer in $F_a'$. Then $G(F,F')$ is equal to the group of relative commensurators of any compact open subgroup of $U(F)$ in $U(F')$.
\end{prop}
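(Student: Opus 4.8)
The plan is to prove the two inclusions $G(F,F') \subseteq \mathrm{Comm}_{U(F')}(K)$ and $\mathrm{Comm}_{U(F')}(K) \subseteq G(F,F')$, where $K$ is a fixed compact open subgroup of $U(F)$; since the relative commensurator only depends on $K$ up to commensurability, we may take $K = U(F)_e$ for a convenient edge $e$. The forward inclusion is already for free: it was observed just before the statement that Lemma \ref{lem-topo-g(f)} gives $g U_T g^{-1} = U_{g(T)}$ for $g \in G(F)$, and intersecting with $U(F')$ shows that any $g \in G(F,F')$ conjugates a compact open subgroup of $U(F)$ onto another, hence $G(F,F') \leq \mathrm{Comm}_{U(F')}(K)$. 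So the whole content is the reverse inclusion, and that is where the normalizer hypothesis on the $F_a$ must be used.

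For the reverse inclusion, take $g \in U(F')$ which commensurates $K$, i.e.\ there is a compact open $K' \leq K$ with $g K' g^{-1} \leq U(F)$; after shrinking we may assume $K'$ is the pointwise stabilizer in $U(F)$ of a large ball $\mathcal{B}(v_0,n)$. The goal is to show that $g$ has only finitely many singularities, i.e.\ $\sigma(g,w) \in F$ for all but finitely many $w$. The key local computation is the conjugation formula: for $h \in K'$ and any vertex $w$, $(\ref{eq-rules})$ gives $\sigma(ghg^{-1},w) = \sigma(g,hg^{-1}w)\,\sigma(h,g^{-1}w)\,\sigma(g,g^{-1}w)^{-1}$, and since $h \in U(F)$ we know $\sigma(h,\cdot) \in F$ always, while $\sigma(ghg^{-1},w) \in F$ by hypothesis. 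Thus for every $w$ outside the (finite) region where $h g^{-1}w$ or $g^{-1}w$ lies in $g^{-1}\mathcal{B}(v_0,n)$ — i.e.\ for $w$ far from $g\mathcal{B}(v_0,n)$ — we get $\sigma(g,w)\,F_?\,\sigma(g,w)^{-1} \subseteq F$ where $F_?$ ranges over the possible values $\sigma(h, g^{-1}w)$ can take as $h$ runs over $K'$. The crucial point is to identify this set of values: for a vertex $u = g^{-1}w$ far enough from $\mathcal{B}(v_0,n)$, as $h$ ranges over the pointwise stabilizer $K'$ of $\mathcal{B}(v_0,n)$ in $U(F)$, the permutation $\sigma(h,u)$ achieves every element of the point stabilizer $F_{a}$ where $a = a(u)$ is the color of the edge at $u$ pointing back towards $\mathcal{B}(v_0,n)$ — this is an elementary consequence of the definition of $U(F)$ (one can freely prescribe local permutations in $F$ subject to fixing the one ``incoming'' direction). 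Hence the condition becomes $\sigma(g,w)\, F_a\, \sigma(g,w)^{-1} \subseteq F_b$ for appropriate colors $a,b$, and since these are finite groups of equal order, $\sigma(g,w)\, F_a\, \sigma(g,w)^{-1} = F_b$, with $b = \sigma(g,g^{-1}w)(a)$.

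The last step is to extract from $\sigma(g,w) F_a \sigma(g,w)^{-1} = F_b$ and the hypothesis (that $F_a$ equals its normalizer in $F_a'$) that $\sigma(g,w) \in F$. Here one argues along a ray: pick a geodesic ray going to infinity, so that all its vertices $w$ eventually satisfy the above, and the colors $a(w), b(w)$ are determined consistently by the edge-coloring $c$; using the multiplication rule $\sigma(g,w) = \sigma(g, w') \cdots$ along consecutive vertices together with the relation $\sigma(g,w) F_{a(w)} \sigma(g,w)^{-1} = F_{\sigma(g,w)(a(w))}$, one forces a product of two such conjugating elements (one from $g$, composed with the inverse built from a choice of element of $F'$ realizing the same color change) to normalize a point stabilizer $F_a$ inside $F'_a$. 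The normalizer hypothesis then pins this element down to lie in $F_a \subseteq F$, which propagates to give $\sigma(g,w) \in F$ for all $w$ far enough out; since only finitely many vertices are excluded, $g \in G(F)$, and $g \in U(F')$ by construction, so $g \in G(F,F')$. The main obstacle I expect is precisely this last algebraic extraction: one must be careful that the colors occurring are controlled by the fixed coloring and the (orbit-preserving, by Lemma \ref{lem-pres-orbit}) action of $g$, and that the ``finitely many exceptions'' really is finite — the cleanest way is to run the argument in every direction simultaneously (every sufficiently far vertex $w$ lies on some ray, and ``sufficiently far'' means outside the finite ball $g\mathcal{B}(v_0,n+1)$ or so), reducing the whole thing to the single normalizer condition $N_{F'_a}(F_a) = F_a$ at each color $a$.
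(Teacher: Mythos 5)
Your overall route is the same as the paper's: show that every $g \in U(F')$ commensurating $K$ has only finitely many singularities, by taking $K'$ to be the pointwise stabilizer in $U(F)$ of a finite ball with $gK'g^{-1} \leq U(F)$, and, for a vertex $u = g^{-1}w$ outside that ball, conjugating suitable $h \in K'$ to get $\sigma F_a \sigma^{-1} \subseteq F$, where $\sigma = \sigma(g,u)$ and $a$ is the color of the edge at $u$ pointing back towards the ball. Two small imprecisions in this part: the conjugating permutation is $\sigma(g,g^{-1}w)$, not $\sigma(g,w)$ (you correct this later), and you should restrict to those $h \in K'$ that fix $u$ — for other $h$ the three-term formula $\sigma(ghg^{-1},w)=\sigma(g,hg^{-1}w)\sigma(h,g^{-1}w)\sigma(g,g^{-1}w)^{-1}$ does not collapse to a conjugation by a single permutation, so ``the possible values of $\sigma(h,g^{-1}w)$ as $h$ runs over $K'$'' is not the relevant set. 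What you need, and what is true (and is exactly what the paper uses), is that every $\rho \in F_a$ is realized as $\sigma(h,u)$ for some $h \in K'$ fixing $u$; with that restriction this part of your argument is sound.

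The genuine gap is in the last step, precisely where the hypothesis $N_{F_a'}(F_a)=F_a$ enters, and which you yourself flag as the expected obstacle. First, the correcting permutation ``realizing the same color change'' must be chosen in $F$, not merely in $F'$: since $\sigma \in F' \leq \hat{F}$ preserves the $F$-orbits (no need for Lemma \ref{lem-pres-orbit}, which applies to $G(F)$ and is not yet available for $g$), the color $b=\sigma(a)$ lies in the $F$-orbit of $a$, so there is $\tau \in F$ with $\tau(a)=b$; then $\tau^{-1}\sigma \in F_a'$ and $(\tau^{-1}\sigma)F_a(\tau^{-1}\sigma)^{-1}=\tau^{-1}F_b\tau=F_a$ (using $\sigma F_a\sigma^{-1}=F_b$, the equality coming from equal orders of conjugate stabilizers), hence $\tau^{-1}\sigma \in N_{F_a'}(F_a)=F_a$ and $\sigma \in \tau F_a \subseteq F$. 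With a correcting element taken only in $F'$, the conjugate $\tau^{-1}F_b\tau$ need not equal $F_a$ (nor even lie in $F$), and even the conclusion $\sigma \in \tau F_a$ would only give $\sigma \in F'$, which is useless. Second, the mechanism you propose for this extraction — arguing along a geodesic ray and ``using the multiplication rule $\sigma(g,w)=\sigma(g,w')\cdots$ along consecutive vertices'' — does not exist: the rules (\ref{eq-rules}) relate local permutations of different group elements at a single vertex, and for a fixed $g$ the local permutations at distinct vertices are unrelated, so there is nothing to propagate. No propagation is needed: the algebraic step above is purely local and is applied independently at each vertex $u$ outside the finite ball, giving $\sigma(g,u)\in F$ for all but finitely many vertices, i.e.\ $g \in G(F)\cap U(F')=G(F,F')$, which is exactly how the paper concludes.
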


\begin{proof}
We let $g \in U(F')$ commensurating a compact open subgroup of $U(F)$, and we prove that $g$ has finitely many singularities. By assumption there exists a finite subtree $T$ of $\treed$ such that if we denote $U(F)_T=U_T$, then $gU_Tg^{-1} \leq U(F)$. We fix a vertex $v$ such that $g^{-1}(v) \notin T$, and we prove that $\sigma = \sigma(g,g^{-1}v) \in F$. Since all but finitely many vertices satisfy the condition $g^{-1}(v) \notin T$, this will prove the result. 

We let $a \in \Omega$ be the color of the unique edge emanating from $g^{-1}(v)$ and pointing toward the subtree $T$. We also let $\rho$ be an element of $F_a$, and we denote by $h$ an element of $U_T$ fixing $g^{-1}(v)$ and such that $\sigma(h,g^{-1}v) = \rho$. It follows from (\ref{eq-rules}) that $\sigma(ghg^{-1},v) = \sigma \rho \sigma^{-1}$. Now since $h \in U_T$, the element $ghg^{-1}$ remains in $U(F)$ by definition of $T$. According to the previous computation, this means that $\sigma \rho \sigma^{-1} \in F$, and we have proved that $\sigma F_a \sigma^{-1} \leq F$. Now since $F' \leq \hat{F}$, there exists a permutation $\tau \in F$ such that $\sigma \tau \in F_a'$, and we easily deduce that $\sigma \tau$ must lie in the normalizer of $F_a$ in $F_a'$. By assumption this latter group is reduced to $F_a$, so $\sigma \tau \in F$ and finally $\sigma \in F$.
\end{proof}

\section{Commensurating actions} \label{sec-ccc0}

\subsection{Diagrams} \label{sec-diag-g(f)}

In this paragraph we explain how the group $G(F,F')$ can be profitably studied by using a notion of diagrams introduced below. In the case when $F$ is transitive, one shows that this combinatorial data yields an estimate of the word-metric in the group $G(F,F')$ (see Proposition \ref{prop-N-qi-metric}). We will use this result later to prove that any closed inclusion $G(F,F')$ in  $G(H,H')$ is a quasi-isometric embedding (see Proposition \ref{prop-g(f,f')-qi}).

\bigskip

Recall that we have fixed an edge $e_0 \in \edg$ whose vertices are denoted $v_0$ and $v_1$. We identify $\Omega$ with the set of positive integers which are at most $d$, and we assume that $c(e_0)=1$. To every vertex $v \in \verti$, we associate the subtree of $\treed$ consisting of vertices whose projection to the geodesic between $v$ and $e_0$ is the vertex $v$. This subtree is naturally isomorphic to an infinite regular rooted tree, and will be denoted $\elle(v)$.


Given $g \in G(F,F')$, it readily follows from the fact that $g$ has only finitely many singularities that there exists a unique finite complete subtree $\arbmoins$ of $\treed$ such that:

\begin{enumerate}[label=(\roman*)]
\item $\arbmoins$ contains the edges $e_0$ and $g^{-1}(e_0)$;
\item for every vertex $v$ that is not an internal vertex of $\arbmoins$, we have $\sg \in F$;
\end{enumerate}
and being minimal for this property. We let $\arbplus$ be the image of $\arbmoins$ by $g$, and denote by $\Ne(g)$ the number of internal vertices of $\arbmoins$. Note that $\Ne(g)$ is also the number of internal vertices of $\arbplus$. We easily check that $\arbmoins = e_0$, or equivalently $\Ne(g) = 0$, if and only if $g$ belongs to $U(F)$ and stabilizes $e_0$.

\bigskip

Recall that a \textit{length function} on a group $\Gamma$ is a map $\mathcal{L}: \Gamma \rightarrow \mathbb{R}_+$ satisfying $\mathcal{L}(1) = 0$, $\mathcal{L}(g^{-1}) = \mathcal{L}(g)$ and $\mathcal{L}(gh) \leq \mathcal{L}(g) + \mathcal{L}(h)$ for every $g,h \in \Gamma$. 

\begin{lem} \label{lem-N-length}
The map $\Ne : G(F,F') \rightarrow \mathbb{R}_+$ is a length function on $G(F,F')$.
\end{lem}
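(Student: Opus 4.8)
The plan is to verify the three axioms of a length function directly from the definition of $\Ne$. The first two are immediate: if $g = 1$ then $g$ fixes $e_0$ and lies in $U(F)$, so $\arbmoins = e_0$ and hence $\Ne(1) = 0$. For the symmetry $\Ne(g^{-1}) = \Ne(g)$, note that applying $g$ to $\arbmoins$ gives a finite complete subtree $\arbplus$ containing $g(e_0)$ and $e_0$, and by the cocycle rule \eqref{eq-rules} the condition $\sigma(g,v) \in F$ for $v$ not internal in $\arbmoins$ translates into $\sigma(g^{-1},w) = \sigma(g, g^{-1}w)^{-1} \in F$ for $w = g(v)$ not internal in $\arbplus$; minimality is preserved under this symmetric correspondence, so $\mathcal{T}_{g^{-1}}^- = \arbplus$ and $\Ne(g^{-1})$ equals the number of internal vertices of $\arbplus$, which is $\Ne(g)$.

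The substance of the proof is the subadditivity $\Ne(gh) \leq \Ne(g) + \Ne(h)$. The idea is to exhibit an explicit finite complete subtree $S$ containing $e_0$ and $(gh)^{-1}(e_0)$ such that $\sigma(gh,v) \in F$ for every $v$ that is not an internal vertex of $S$, and whose number of internal vertices is at most $\Ne(g) + \Ne(h)$; minimality of $\mathcal{T}_{gh}^-$ then gives the inequality. The natural candidate is built from $\mathcal{T}_h^-$ and $h^{-1}(\mathcal{T}_g^-)$: take $S$ to be the smallest complete subtree containing $\mathcal{T}_h^- \cup h^{-1}(\mathcal{T}_g^-)$. Since $h$ maps internal vertices of $\mathcal{T}_h^-$ to internal vertices of $\mathcal{T}_h^+ = h(\mathcal{T}_h^-)$, the tree $h^{-1}(\mathcal{T}_g^-)$ has $\Ne(g)$ internal vertices, so $\mathcal{T}_h^- \cup h^{-1}(\mathcal{T}_g^-)$ has at most $\Ne(g) + \Ne(h)$ internal vertices; one must check that passing to the completion $S$ of their union does not create new internal vertices, which holds because both pieces are already complete subtrees of $\treed$ and the union of two complete subtrees sharing at least one vertex — here they share $e_0$ and more — is again complete. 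One then checks the three defining properties of $S$: it is complete by construction; it contains $e_0$ (as $e_0 \subset \mathcal{T}_h^-$) and $(gh)^{-1}(e_0) = h^{-1}(g^{-1}(e_0))$, which lies in $h^{-1}(\mathcal{T}_g^-)$ since $g^{-1}(e_0) \subset \mathcal{T}_g^-$; and for $v$ not an internal vertex of $S$, hence not internal in $\mathcal{T}_h^-$ nor in $h^{-1}(\mathcal{T}_g^-)$, the identity $\sigma(gh,v) = \sigma(g,hv)\,\sigma(h,v)$ from \eqref{eq-rules} expresses $\sigma(gh,v)$ as a product of $\sigma(h,v) \in F$ (since $v \notin \mathrm{IV}(\mathcal{T}_h^-)$) and $\sigma(g,hv)$, where $hv$ is not an internal vertex of $\mathcal{T}_g^-$ because $v$ is not internal in $h^{-1}(\mathcal{T}_g^-)$; hence $\sigma(g,hv) \in F$ and the product lies in $F$.

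The main obstacle is the geometric bookkeeping in the subadditivity step: one must be careful that the completion operation (passing from $\mathcal{T}_h^- \cup h^{-1}(\mathcal{T}_g^-)$ to a complete subtree) does not inflate the internal-vertex count. The safe way to argue this is to observe that a vertex $v$ becomes internal in a complete subtree only if it already has degree at least two there, and the only vertices that could gain a second neighbour when forming the union lie on the (unique, finite) geodesic segment joining the two subtrees — but since both subtrees contain $e_0$ and $(gh)^{-1}(e_0)$-related data forcing overlap along a common path, their union is already connected and the set of its internal vertices is exactly the union of the two internal-vertex sets, giving the bound directly. Once this is settled the lemma follows.
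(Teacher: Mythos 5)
Your proof follows essentially the same route as the paper: the paper's proof takes exactly the candidate tree $T = \mathcal{T}_h^- \cup h^{-1}(\mathcal{T}_g^-)$, checks the same three properties of $T$ via the cocycle rule (\ref{eq-rules}), and concludes $\Ne(gh)\leq \Ne(g)+\Ne(h)$ by minimality of $\mathcal{T}_{gh}^-$; the first two axioms are likewise immediate, with your symmetry argument $\mathcal{T}_{g^{-1}}^- = \mathcal{T}_g^+$ being a correct expansion of what the paper leaves to the definition. One correction to your completeness discussion: the two pieces do not in general share $e_0$ (the tree $h^{-1}(\mathcal{T}_g^-)$ contains $h^{-1}(e_0)$ and $(gh)^{-1}(e_0)$, but not necessarily $e_0$); what they share is the edge $h^{-1}(e_0)$, and it is the shared \emph{edge}, not merely a shared vertex, that matters -- your blanket claim that a union of two complete subtrees meeting in a vertex is complete is false (two single edges meeting at one vertex are each complete, but their union is not, and that vertex becomes a new internal vertex). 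With a common edge the union is already complete and its internal vertices are exactly the union of the two internal-vertex sets, which is precisely how the paper justifies the bound; your fallback of passing to the minimal complete subtree containing the union is harmless but unnecessary once this is noted.
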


\begin{proof}
By definition we have $\Ne(1) = 0$ and $\Ne(g) = \Ne(g^{-1})$ for every $g \in G(F,F')$. We let $g,h \in G(F,F')$, and we prove that $\Ne(gh) \leq \Ne(g) + \Ne(h)$. We denote $T = \mathcal{T}_h^- \cup h^{-1}(\mathcal{T}_g^-)$, which is a (complete) subtree because $\mathcal{T}_h^-$ and $h^{-1}(\mathcal{T}_g^-)$ both contain $h^{-1}(e_0)$. By construction $T$ contains the edges $e_0$ and $(gh)^{-1}(e_0)$, and $gh$ acts locally like $F$ outside $T$. By minimality it follows that $\mathcal{T}_{gh}^-$ must be a subtree of $T$, and in particular the number of internal vertices of $\mathcal{T}_{gh}^-$ is smaller than the number of internal vertices of $T$. The latter is at most $\Ne(g) + \Ne(h)$ by construction, so we have $\Ne(gh) \leq \Ne(g) + \Ne(h)$.
\end{proof}

So in particular the map $\Ne : G(F,F') \rightarrow \mathbb{R}_+$ gives rise to a left-invariant pseudo-metric on $G(F,F')$ defined by $\mathrm{dist}(g,h) = \Ne(g^{-1}h)$, and the aim of the rest of this subsection is to prove that when $F$ is transitive, this pseudo-metric is quasi-isometric to the word metric in $G(F,F')$. 

\begin{lem} \label{lem-decomp-stabv1}
For every $g \in G(F,F')$, there exist $\gamma \in U(F)$ and $g' \in G(F,F')_{v_1}$ such that $g = \gamma g'$ and $\Ne(g') \leq \Ne(g) + 1$.
\end{lem}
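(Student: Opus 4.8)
The plan is to reduce to the case where $g$ fixes the edge $e_0$, move $g$ by an element of $U(F)$ so that its action is well-behaved near $v_1$, and read off the bound on $\Ne$ from the description of $\arbmoins$ in terms of singularities and the two edges $e_0$, $g^{-1}(e_0)$. First I would observe that since $F$ is transitive, $U(F)$ acts transitively on the edges of $\treed$, so I can pick $\gamma_0 \in U(F)$ with $\gamma_0^{-1}(e_0) = g^{-1}(e_0)$; then $\gamma_0^{-1} g$ fixes $e_0$, hence in particular fixes $v_1$. Writing $g = \gamma_0 (\gamma_0^{-1} g)$ already gives a decomposition of the desired shape, but one must control the number of internal vertices of $\mathcal{T}_{\gamma_0^{-1}g}^-$. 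Here is where the $+1$ comes from: the tree $\mathcal{T}_{\gamma_0^{-1}g}^-$ is the minimal complete subtree containing $e_0$ and $(\gamma_0^{-1}g)^{-1}(e_0) = g^{-1}(e_0)$ outside of which $\gamma_0^{-1}g$ acts like $F$; since $\gamma_0 \in U(F)$, the singularities of $\gamma_0^{-1}g$ are exactly those of $g$, and $(\gamma_0^{-1}g)^{-1}(e_0) = g^{-1}(e_0)$, so $\mathcal{T}_{\gamma_0^{-1}g}^-$ is built from the same data (singularities, $e_0$, $g^{-1}(e_0)$) as $\arbmoins$ except that we now require it to contain $e_0$ and $g^{-1}(e_0)$ — which $\arbmoins$ already does. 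So in fact $\mathcal{T}_{\gamma_0^{-1}g}^- = \arbmoins$ and $\Ne(\gamma_0^{-1}g) = \Ne(g)$, which would give the statement even without the $+1$.

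If that clean equality is not quite available (for instance if the definitions of $\arbmoins$ and $\mathcal{T}^-$ differ by a normalization at the base edge, or if I want $g'$ to fix the vertex $v_1$ rather than the edge $e_0$), the safer route is this: let $\gamma_1 \in U(F)$ with $\gamma_1(v_1) = g(v_1)$ and such that $\gamma_1$ fixes $e_0$ — possible since $U(F)_{e_0}$ acts transitively on vertices at each given distance and $v_1 \in \mathsf V_1$, so $g(v_1)$ is at even distance from $v_1$; actually one wants $\gamma_1 \in U(F)$ with $\gamma_1^{-1}g$ fixing $v_1$, which just needs $\gamma_1(v_1) = g(v_1)$ and is possible by vertex-transitivity of $U(F)$. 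Set $g' = \gamma_1^{-1} g \in G(F,F')_{v_1}$ and $\gamma = \gamma_1$. Then $(g')^{-1}(e_0) = g^{-1}(\gamma_1 e_0)$, and since $\gamma_1$ moves $e_0$ by at most some bounded amount — more precisely, since $\gamma_1$ can be chosen so that $\gamma_1(e_0)$ is one of the two edges at $v_1$ adjacent along the geodesic structure — the minimal complete subtree $\mathcal{T}_{g'}^-$ containing $e_0$ and $(g')^{-1}(e_0)$, outside of which $g'$ acts like $F$, differs from $\arbmoins$ by at most one internal vertex. That accounts for the inequality $\Ne(g') \le \Ne(g) + 1$.

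The main obstacle will be the careful bookkeeping of exactly how much $\arbmoins$ can grow when we pass from $g$ to $\gamma^{-1}g$: one must check that requiring the subtree to additionally contain $(g')^{-1}(e_0)$ — which may be a different edge from $g^{-1}(e_0)$ — adds at most one new internal vertex, and that requiring it to still contain $e_0$ costs nothing. The key point making this work is that $\gamma$ can be chosen inside $U(F)_{e_0}$ (or with $\gamma(e_0)$ adjacent to $e_0$), so that $(g')^{-1}(e_0)$ is within distance one of $g^{-1}(e_0)$; extending a complete subtree to absorb one extra edge at bounded distance adds a bounded — here, at most one — number of internal vertices. Once this is set up, the bound $\Ne(g') \leq \Ne(g)+1$ follows from the minimality characterization of $\mathcal{T}^-_{g'}$ exactly as in the proof of Lemma \ref{lem-N-length}.
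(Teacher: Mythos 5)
Your second route is, in substance, the paper's own proof: pick $\gamma \in U(F)$ with $\gamma(v_1)=g(v_1)$ (only vertex-transitivity of $U(F)$ is needed, which holds for arbitrary $F$), set $g'=\gamma^{-1}g \in G(F,F')_{v_1}$, and bound $\Ne(g')$ by exhibiting one complete subtree containing $e_0$ and $(g')^{-1}(e_0)$ outside whose internal vertices $g'$ acts locally like $F$. However, two of your supporting claims are wrong as stated and should be repaired. First, $(g')^{-1}(e_0)=g^{-1}(\gamma(e_0))$ need not be within distance one of $g^{-1}(e_0)$: the distance between $e_0$ and $\gamma(e_0)$ can be arbitrarily large. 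The relevant (and automatic) fact is that $\gamma(e_0)$ contains $\gamma(v_1)=g(v_1)$, so $(g')^{-1}(e_0)$ is an edge containing $v_1$; hence the complete subtree obtained from $\arbmoins$ by adjoining the star around $v_1$ contains both $e_0$ and $(g')^{-1}(e_0)$, its non-internal vertices are non-internal in $\arbmoins$ (so $g'$ acts locally like $F$ there, since $\gamma \in U(F)$), and it has at most one new internal vertex, namely $v_1$; minimality of $\mathcal{T}_{g'}^-$ then gives $\Ne(g')\le \Ne(g)+1$, exactly as in the paper. Second, you cannot in general take $\gamma \in U(F)_{e_0}$ while also demanding $\gamma(v_1)=g(v_1)$, and no condition on $\gamma(e_0)$ beyond $\gamma(v_1)=g(v_1)$ is needed; likewise $U(F)_{e_0}$ is not transitive on spheres, as you in effect conceded. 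As for your first route: it is a genuinely different and, when available, sharper argument, since if $U(F)$ is transitive on flags (vertex plus incident edge) you can arrange that $g'$ fixes $e_0$ pointwise, whence $\mathcal{T}_{g'}^- \subset \arbmoins$ and $\Ne(g')\le\Ne(g)$; but this requires $F$ to be transitive, which the lemma does not assume, and note also that your condition $\gamma_0^{-1}(e_0)=g^{-1}(e_0)$ makes $\gamma_0^{-1}g$ stabilize $g^{-1}(e_0)$ rather than $e_0$, and setwise stabilization of an edge does not imply fixing $v_1$, so you would indeed need the flag-transitive version.
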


\begin{proof}
Let $g \in G(F,F')$. Since the group $U(F)$ is transitive on the set of vertices of $\treed$, we can choose some $\gamma \in U(F)$ such that $\gamma(v_1) = g(v_1)$, and set $g ' = \gamma^{-1} g$. Let us consider the complete subtree $T^-$ of $\treed$ obtained by adjoining if necessary the star around the vertex $v_1$ to the subtree $\arbmoins$. We check that $T^-$ contains the edges $e_0$ and $g'^{-1}(e_0)$, and that $g'$ acts locally like $F$ around every vertex of $\treed$ which is not an internal vertex of $T^-$. It follows that $\Ne(g')$ is at most equal to the number of internal vertices of $T^-$, which by construction is at most $\Ne(g) +1$. 
\end{proof}


Assume that the permutation group $F$ is transitive. Given $i \in \Omega \setminus \left\{1\right\}$, we choose some $\sigma_i \in F$ such that $\sigma_i(1)=i$, and we denote by $v_0^i$ (resp.\ $v_1^i$) the vertex of $\treed$ connected to $v_0$ (resp.\ $v_1$) by an edge having color $i$. Let us consider the bi-infinite line $\ell_i$ in $\treed$ defined by saying that $\ell_i$ contains the edge $e_0$, and the edge of $\ell_i$ in $\elle(v_0)$ (resp.\ $\elle(v_1)$) at distance $n$ from $e_0$ has color $\sigma_i^{-n}(1)$ (resp.\ $\sigma_i^{n}(1)$). We let $h_i$ be the hyperbolic isometry of $\treed$ of translation length one, whose axis is $\ell_i$, and such that $\sigma(h_i,v) = \sigma_i$ for every $v \in \verti$. Note that $h_i$ belongs to $U(F)$ and sends the subtree $\elle(v_1)$ onto the subtree $\elle(v_1^i)$, and $h_i^{-1}$ sends $\elle(v_0)$ onto $\elle(v_0^i)$.

Recall that for $v \in \verti$, we denote by $K_{0,F'}(v)$ the compact open subgroup of $G(F,F')$ consisting of elements fixing $v$ and not having any singularity outside the vertex $v$. Here for simplicity we write $K(v_0) = K_{0,F'}(v_0)$ and $K(v_1) = K_{0,F'}(v_1)$. In the sequel we let $S_H = \left\{h_2, \ldots, h_d\right\}$ and $S = S_H \cup K_{0,F'}(v_0) \cup K_{0,F'}(v_1)$, and the goal of the end of this paragraph is to prove that $S$ is a compact generating subset of $G(F,F')$ whose word length is comparable to $\Ne$.

The following result is the first technical lemma toward Proposition \ref{prop-N-qi-metric}.

\begin{lem} \label{lem-diag-word}
For every $g \in G(F,F')_{\elle(v_0)}$, we have $|g|_S \leq 3(d-1) \Ne(g) + 1$.
\end{lem}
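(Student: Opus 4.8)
The plan is to prove the estimate by induction on $\Ne(g)$, peeling off one level of $\arbmoins$ around the vertex $v_1$ at each step. Recall that here $S=S_H\cup K_{0,F'}(v_0)\cup K_{0,F'}(v_1)$ with $S_H=\{h_2,\dots,h_d\}$. The first observation, used throughout, is that an element $g\in G(F,F')_{\elle(v_0)}$ automatically fixes $e_0$ and $v_1$: fixing $\elle(v_0)$ pointwise it fixes $v_0$ and the $d-1$ neighbours of $v_0$ lying in $\elle(v_0)$, hence also fixes the remaining neighbour $v_1$. In particular, if $\Ne(g)=0$ then $g$ stabilizes $e_0$ and $g\in U(F)$ by the characterization recalled before the statement, so $g$ fixes $v_1$ and $g\in K_{0,F'}(v_1)\subset S$; thus $|g|_S\le 1=3(d-1)\cdot 0+1$ and the base case holds.

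For the inductive step, assume $\Ne(g)=N\ge 1$. I first \emph{clear the vertex $v_1$}: since $\sigma(g,v_1)$ fixes the colour $c(e_0)$, I pick $k_1\in K_{0,F'}(v_1)$ with $\sigma(k_1,v_1)=\sigma(g,v_1)^{-1}$ and all other local permutations trivial (such an element is built directly, as in the proof of Lemma \ref{lem-extend-G(F)}); as $\sigma(g,v_1)^{-1}$ fixes $c(e_0)$, this $k_1$ fixes $v_0$, hence fixes $\elle(v_0)$ pointwise. Then $g_1:=k_1g$ lies in $G(F,F')_{\elle(v_0)}$ and satisfies $\sigma(g_1,v_1)=1$, so $g_1$ fixes the entire star around $v_1$. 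Consequently $g_1$ preserves each of the $d-1$ subtrees $\elle(v_1^i)$ rooted at the neighbours $v_1^i\ne v_0$ of $v_1$, and one factors $g_1=\prod_{i}g_i$, where $g_i\in G(F,F')$ coincides with $g_1$ on $\elle(v_1^i)$ and is the identity elsewhere (the factors commute, having disjoint supports, and $S(g_i)=S(g_1)\cap\elle(v_1^i)$). Setting $I=\{i: g_i\notin U(F)\}$ and $\kappa:=k_1^{-1}\prod_{i\notin I}g_i\in K_{0,F'}(v_1)\subset S$, we obtain $g=\kappa\prod_{i\in I}g_i$, which already costs $1+\sum_{i\in I}|g_i|_S$ in the word metric.

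Now fix $i\in I$. Since $g_i$ is supported in $\elle(v_1^i)$ and $h_i$ maps $\elle(v_1)$ isometrically onto $\elle(v_1^i)$, the conjugate $\tilde g_i:=h_i^{-1}g_ih_i$ is supported in $\elle(v_1)$, hence lies in $G(F,F')_{\elle(v_0)}$. The crucial claim is that
\[ \Ne(\tilde g_i)\le \Ne(g_i)-1 \qquad\text{and}\qquad \textstyle\sum_{i\in I}\bigl(\Ne(g_i)-1\bigr)=\Ne(g)-1 . \]
The equality is bookkeeping on $\arbmoins$: the internal vertices of $\mathcal T_{g_i}^-$ are precisely $v_1$ together with the internal vertices of $\arbmoins$ lying in $\elle(v_1^i)$ (because $\sigma(g,\cdot)$ and $\sigma(g_i,\cdot)$ agree on $\elle(v_1^i)$ and the completion operation is local), so $\Ne(g_i)=1+m_i$ with $\sum_{i\in I}m_i=\Ne(g)-1$. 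For the inequality, the point is that $h_i^{-1}$ does \emph{not} stabilize $e_0$: it moves $e_0$ one step toward the $v_0$–side, sending $v_1\mapsto v_0$ and $\elle(v_1^i)\mapsto\elle(v_1)$. Hence $h_i^{-1}(\mathcal T_{g_i}^-)$ is a complete subtree containing $e_0$ on which $\tilde g_i$ has the required local action, so $\mathcal T_{\tilde g_i}^-\subseteq h_i^{-1}(\mathcal T_{g_i}^-)$; and the vertex $v_0=h_i^{-1}(v_1)$, which is internal in $h_i^{-1}(\mathcal T_{g_i}^-)$, is only a leaf of $\mathcal T_{\tilde g_i}^-$ since $\tilde g_i$ fixes $\elle(v_0)$ and no further completion occurs on that side. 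Therefore $\Ne(\tilde g_i)\le m_i<N$, and the induction hypothesis applies to each $\tilde g_i$. Combining $g_i=h_i\tilde g_ih_i^{-1}$ with $|\tilde g_i|_S\le 3(d-1)m_i+1$ and $|I|\le d-1$ gives
\[ |g|_S\le 1+\sum_{i\in I}\bigl(2+|\tilde g_i|_S\bigr)\le 1+\sum_{i\in I}\bigl(3+3(d-1)m_i\bigr)\le 1+3(d-1)+3(d-1)(N-1)=3(d-1)\Ne(g)+1 . \]

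I expect the only genuine difficulty to be the inequality $\Ne(\tilde g_i)\le\Ne(g_i)-1$, i.e.\ making precise that, because $\arbmoins$ is forced to contain $e_0$ while $h_i$ translates along an axis through $e_0$, conjugating a "branch element" $g_i$ by $h_i$ strips off exactly one internal vertex of $\mathcal T^-$. The bookkeeping identity $\sum m_i=\Ne(g)-1$ and the factorization $g=\kappa\prod g_i$ are routine once the structure of $\arbmoins$ near $v_1$ is unwound, and the base case is immediate.
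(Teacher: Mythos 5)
Your proof is correct and follows essentially the same route as the paper's: induction on $\Ne(g)$, clearing the local permutation at $v_1$ by an element of $K_{0,F'}(v_1)$, splitting the result into branch factors supported on the subtrees $\elle(v_1^i)$, and conjugating each by $h_i$ to drop $\Ne$ by one before applying the induction hypothesis. Your only deviations are presentational refinements (absorbing the $U(F)$-branches into the $K_{0,F'}(v_1)$-letter, the exact identity $\sum_{i\in I}(\Ne(g_i)-1)=\Ne(g)-1$ in place of the paper's inequality, and a more detailed verification that $\Ne(h_i^{-1}g_ih_i)\le\Ne(g_i)-1$), all of which are sound.
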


\begin{proof}
Let us argue by induction on $\Ne(g)$. The case when $\Ne(g) = 0$ is easily settled, because $\Ne(g) = 0$ easily implies that $g \in U(F)_{v_1} \subset K(v_1)$, so we have $|g|_S \leq 1$.

Now assume that the result holds for every $g \in G(F,F')_{\elle(v_0)}$ with $\Ne(g) \leq n$ for some integer $n \geq 0$, and let $g \in G(F,F')_{\elle(v_0)}$ be such that $\Ne(g) = n+1$. We want to prove that the word length of $g$ is at most $3(d-1)(n+1)+1$. We may find $u \in K(v_1)$ such that $g' = ug$ fixes the star around the vertex $v_1$, and $u$ acts trivially on $\elle(v_0)$. The element $g'$ can therefore be written as a product $g' = g_2 \cdots g_d$, where $g_i \in G(F,F')$ acts trivially outside $\elle(v_1^{i})$. By construction we have $\Ne(g_i) \leq \Ne(g)$ and $\sum_i \Ne(g_i) \leq \Ne(g) + d-2$, because the vertex $v_1$ can be counted $d-1$ times in the sum, whereas it is counted only once in $\Ne(g)$. This last inequality can be rewritten as $\sum_i (\Ne(g_i)-1) \leq n$. Now for each $g_i$ different from the identity, let us consider the element $g_i' = h_i^{-1} g_i h_i$. Since $h_i$ sends the subtree $\elle(v_1)$ onto the subtree $\elle(v_1^i)$, the element $g_i'$ belongs to $G(F,F')_{\elle(v_0)}$. Moreover since all the local permutations of $h_i$ are equal to the same element of $F$, the set of singularities of $g_i'$ satisfies $S(g_i') \subset h_i^{-1}(S(g_i))$, and therefore $\Ne(g_i') \leq \Ne(g_i) - 1$. Therefore $\Ne(g_i')$ is at most $n$, so by the induction hypothesis the word length of $g_i'$ is at most $3(d-1)\Ne(g_i') + 1$. It follows that the word length of $g_i$ is at most $2 + 3(d-1)\Ne(g_i') + 1$, and we obtain \[ \begin{aligned} |g|_S & \leq 1 + \sum_{g_i \neq id} |g_i|_S \leq  1 + \sum_{g_i \neq id} \left(2 + 3(d-1)\Ne(g_i') + 1 \right) \\ & \leq  1 + 3(d-1) + 3(d-1) \sum_{g_i \neq id} \left(\Ne(g_i) - 1 \right) \\ & \leq  1 + 3(d-1) + 3(d-1)n = 3(d-1)(n+1)+1. \end{aligned} \]
\end{proof}

Note that the conclusion of Lemma \ref{lem-diag-word} also holds for elements of $G(F,F')_{\elle(v_1)}$, just by replacing $K(v_1)$ in the proof by $K(v_0)$, and each $h_i$ by its inverse. This allows us to obtain the following.

\begin{lem} \label{lem-length-gfv1}
For every $g \in G(F,F')_{v_1}$, we have $|g|_S \leq 3(d-1) \Ne(g) + 3$.
\end{lem}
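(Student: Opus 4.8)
The plan is to reduce, at the cost of one generator, to the case where $g$ also fixes the edge $e_0$, and then to factor $g$ across $e_0$ using the edge-independence property and apply Lemma~\ref{lem-diag-word} (and the remark following its proof) to the two factors.

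First I would pass from $g$ to an element fixing $e_0$. If $g$ does not already fix $e_0$, let $j=\sigma(g,v_1)(1)$ be the color of $g(e_0)$ at $v_1$; since $\sigma(g,v_1)\in F'\leq\hat F$, the color $j$ lies in the $F$-orbit of $1$, so I can choose $\gamma\in U(F)_{v_1}$ with $\sigma(\gamma,v_1)(j)=1$, and then $g_1:=\gamma g$ fixes $e_0$ (take $\gamma=1$, $g_1=g$ if $g$ already fixes $e_0$). Here $|\gamma|_S\leq 1$ because $U(F)_{v_1}\subseteq K_{0,F'}(v_1)\subseteq S$, and $S(g_1)=S(g)$ because $\gamma\in U(F)$. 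Since $\mathcal{T}_g^-$ is, by definition, a complete subtree containing $e_0=g_1^{-1}(e_0)$ outside whose internal vertices $g_1$ acts locally like $F$, minimality of $\mathcal{T}_{g_1}^-$ gives $\mathcal{T}_{g_1}^-\subseteq\mathcal{T}_g^-$, hence $\Ne(g_1)\leq\Ne(g)$.

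Next, $g_1\in G(F,F')_{e_0}$, and $e_0$ separates $\treed$ into the two half-trees $\elle(v_0)$ and $\elle(v_1)$. As $G(F,F')$ satisfies the edge-independence property (it satisfies the hypotheses of Corollary~\ref{cor-with-ind}), the stabilizer $G(F,F')_{e_0}$ is the internal direct product $G(F,F')_{\elle(v_0)}\times G(F,F')_{\elle(v_1)}$, so I can write $g_1=g'g''$ with $g'\in G(F,F')_{\elle(v_0)}$ supported in $\elle(v_1)$ and $g''\in G(F,F')_{\elle(v_1)}$ supported in $\elle(v_0)$, and then $S(g_1)$ is the disjoint union of $S(g')=S(g_1)\cap\elle(v_1)$ and $S(g'')=S(g_1)\cap\elle(v_0)$. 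By Lemma~\ref{lem-diag-word}, $|g'|_S\leq 3(d-1)\Ne(g')+1$, and by the remark following its proof (applied with $\elle(v_1)$ in place of $\elle(v_0)$), $|g''|_S\leq 3(d-1)\Ne(g'')+1$. Hence
\[ |g|_S\leq|\gamma|_S+|g'|_S+|g''|_S\leq 1+3(d-1)\bigl(\Ne(g')+\Ne(g'')\bigr)+2, \]
and the lemma follows from $\Ne(g')+\Ne(g'')\leq\Ne(g_1)$ together with $\Ne(g_1)\leq\Ne(g)$.

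The hard part will be the additivity $\Ne(g')+\Ne(g'')=\Ne(g_1)$. The inclusions $\mathcal{T}_{g'}^-\subseteq\mathcal{T}_{g_1}^-$ and $\mathcal{T}_{g''}^-\subseteq\mathcal{T}_{g_1}^-$ are immediate, since $S(g')$ and $S(g'')$ are contained in the internal vertices of $\mathcal{T}_{g_1}^-$; conversely $\mathcal{T}_{g'}^-\cup\mathcal{T}_{g''}^-$ is a complete subtree (the only subtlety is completeness at $v_0$ and $v_1$, which holds because if one of these is internal in the union then it is already internal in $\mathcal{T}_{g'}^-$ or in $\mathcal{T}_{g''}^-$, whose stars are then pulled in) containing $e_0=g_1^{-1}(e_0)$ outside whose internal vertices $g_1$ acts like $F$, so it contains $\mathcal{T}_{g_1}^-$; thus $\mathcal{T}_{g_1}^-=\mathcal{T}_{g'}^-\cup\mathcal{T}_{g''}^-$. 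Since $\mathcal{T}_{g'}^-\subseteq\elle(v_1)\cup\{v_0\}$ and $\mathcal{T}_{g''}^-\subseteq\elle(v_0)\cup\{v_1\}$ meet only along $e_0$, a short check (again the only cases to watch are whether $v_0$ or $v_1$ is internal) shows that the internal-vertex sets of $\mathcal{T}_{g'}^-$ and $\mathcal{T}_{g''}^-$ are disjoint and partition that of $\mathcal{T}_{g_1}^-$, which gives the claimed equality. Everything else is routine.
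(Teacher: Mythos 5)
Your proof is correct and follows essentially the same route as the paper: multiply by an element of $K_{0,F'}(v_1)$ (you use the special case of an element of $U(F)_{v_1}$) to reduce to an element fixing $e_0$, split it across $e_0$ into factors supported in $\elle(v_0)$ and $\elle(v_1)$, and apply Lemma~\ref{lem-diag-word} and the remark following it to each factor. The only difference is that you spell out the (sub)additivity $\Ne(g')+\Ne(g'')\leq\Ne(g_1)$ and the comparison $\Ne(g_1)\leq\Ne(g)$, which the paper asserts without detail.
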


\begin{proof}
Let $g \in G(F,F')$ fixing the vertex $v_1$. By definition of $K(v_1)$, there exists $u \in K(v_1)$ such that $g' = u g \in G(F,F')$ fixes the edge $e_0$. Note that since the element $u$ acts locally like $F$ at every vertex different from $v_1$, we have $\Ne(g') \leq \Ne(g)$. Now the element $g'$ can be written $g'=g_0'g_1'$, where $g_0' \in G(F,F')_{\elle(v_0)}$, $g_1' \in G(F,F')_{\elle(v_1)}$ satisfy $\Ne(g_0') + \Ne(g_1') = \Ne(g')$. Therefore Lemma \ref{lem-diag-word} can be applied to these elements, and we obtain \[ |g|_S \leq 1 + |g'_0|_S + |g'_1|_S \leq 1 + 3(d-1)\Ne(g_0') + 1 + 3(d-1)\Ne(g_1') + 1 \leq 3(d-1) \Ne(g) + 3. \]
\end{proof}

\begin{lem} \label{lem-length-uf}
For every $\gamma \in U(F)$, we have $|\gamma|_S \leq d(\gamma(v_1),v_1) + 1$.
\end{lem}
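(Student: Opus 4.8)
The plan is to prove the bound $|\gamma|_S \leq d(\gamma(v_1),v_1)+1$ by induction on $n := d(\gamma(v_1),v_1)$, peeling off a single generator from $S_H = \{h_2,\ldots,h_d\}$ at each step and using a vertex‑stabilizer factor only at the very end.

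For the base case $n=0$, an element $\gamma \in U(F)$ fixing $v_1$ has all its local permutations in $F$, hence no singularities, so it lies in $U(F)_{v_1} \subseteq K_{0,F'}(v_1) \subseteq S$; this gives $|\gamma|_S \leq 1$, as required. For the inductive step I would consider the geodesic from $v_1$ to $\gamma(v_1)$ and let $w$ be its second vertex (the neighbour of $v_1$ pointing toward $\gamma(v_1)$). The key observation, read off from the construction of the $h_i$, is that these hyperbolic elements realize all neighbours of $v_1$: since $h_i$ has translation length one along an axis containing $e_0$ and carries $\elle(v_1)$ onto $\elle(v_1^i)$, one gets $h_i(v_1)=v_1^i$ and $h_i^{-1}(v_1)=v_0$ for every $i \in \{2,\ldots,d\}$; as the neighbours of $v_1$ are precisely $v_0$ (colour $1$) and the vertices $v_1^i$ (colour $i$, $i\geq 2$), every neighbour of $v_1$ is of the form $s(v_1)$ for some $s \in S_H \cup S_H^{-1}$. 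Choosing such an $s$ with $s(v_1)=w$ and setting $\gamma' = s^{-1}\gamma$, one has $\gamma' \in U(F)$ (since $s,\gamma \in U(F)$, with $h_i \in U(F)$) and $d(\gamma'(v_1),v_1) = d(\gamma(v_1),s(v_1)) = d(\gamma(v_1),w) = n-1$. The induction hypothesis then yields $|\gamma'|_S \leq n$, and hence $|\gamma|_S \leq |s|_S + |\gamma'|_S \leq 1 + n = n+1$.

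The only point needing a little care — and the one I would single out as the main (though minor) obstacle — is the bookkeeping for the action of $h_i$ near $v_1$: one must pin down the translation direction so that $h_i(v_1)$, and not $h_i^{-1}(v_1)$, equals $v_1^i$. This is forced by the stated property $h_i(\elle(v_1)) = \elle(v_1^i)$, because $\elle(v_1^i)$ does not contain $v_0$, whereas $h_i(v_1)=v_0$ would put $v_0$ in the image of $\elle(v_1)$. Everything else is a routine distance computation. I would also record the convention that $|\cdot|_S$ denotes word length with respect to $S \cup S^{-1}$, so that each $h_i^{-1}$ contributes length one; note that in the step where the first edge of the geodesic has colour $1$ one may take $s = h_2^{-1}$, which exists since $d \geq 3$.
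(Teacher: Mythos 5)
Your proof is correct and follows essentially the same route as the paper: induct on $d(\gamma(v_1),v_1)$, handle the fixed case via $K_{0,F'}(v_1)$, and otherwise peel off one element of $S_H^{\pm 1}$ chosen so that its image of $v_1$ is the first vertex of the geodesic toward $\gamma(v_1)$ (the paper phrases this as the case distinction $\gamma(v_1)\in\elle(v_1)$, using $h_i^{-1}\gamma$, versus $\gamma(v_1)\in\elle(v_0)$, using $h_i\gamma$). Your extra verification that $h_i(v_1)=v_1^i$ and $h_i^{-1}(v_1)=v_0$, and the remark on the symmetrized word length, are consistent with the paper's conventions.
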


\begin{proof}
We argue by induction on $d(\gamma(v_1),v_1)$. If $\gamma$ fixes $v_1$ then $\gamma$ belongs to $K(v_1)$ and therefore $|\gamma|_S \leq 1$. Assume that $|\gamma|_S \leq d(\gamma(v_1),v_1) + 1$ for every $\gamma \in U(F)$ such that $d(\gamma(v_1),v_1) \leq n$, and let $\gamma \in U(F)$ be such that $d(\gamma(v_1),v_1) = n + 1$. If the vertex $\gamma(v_1)$ belongs to the subtree $\elle(v_1)$, then there exists some integer $i$ such that $\gamma' = h_i^{-1}\gamma \in U(F)$ satisfies $d(\gamma'(v_1),v_1) \leq n$. By the induction hypothesis, the word length of $\gamma'$ is at most $n+1$, and we deduce that $|\gamma|_S \leq n+2$. Now if $\gamma(v_1)$ belongs to $\elle(v_0)$ then the same argument can be applied to $h_i \gamma$ for some integer $i$.
\end{proof}

We are finally able to give the following precise estimate for the word metric in $G(F,F')$.

\begin{prop} \label{prop-N-qi-metric}
Assume that $F$ is transitive. Then $S$ is a compact generating subset of $G(F,F')$, and for every $g \in G(F,F')$, we have \[ \Ne(g) \leq |g|_S \leq (3d-2) \Ne(g) + 3d+2. \]
\end{prop}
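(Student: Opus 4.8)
The plan is to establish the two inequalities of Proposition~\ref{prop-N-qi-metric} separately, the lower bound $\Ne(g) \leq |g|_S$ being the easy direction and the upper bound requiring the technical lemmas built up in this subsection.

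For the lower bound, observe first that $\Ne$ is a length function by Lemma~\ref{lem-N-length}, so it suffices to check that $\Ne(s) \leq 1$ for every $s \in S$. If $s \in K_{0,F'}(v_0) \cup K_{0,F'}(v_1)$ then $s$ fixes a vertex of $e_0$ and has no singularity outside that vertex, so $\arbmoins$ for $s$ is contained in the star around that vertex and hence has at most one internal vertex; thus $\Ne(s) \leq 1$. If $s = h_i \in S_H$ then $h_i \in U(F)$, so it has no singularities, and $\arbmoins$ is the minimal complete subtree containing $e_0$ and $h_i^{-1}(e_0)$, which for a translation of length one along an axis through $e_0$ has a single internal vertex; so again $\Ne(h_i) \leq 1$. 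By subadditivity of $\Ne$, for any word $g = s_1 \cdots s_k$ with $s_j \in S$ we get $\Ne(g) \leq \sum_j \Ne(s_j) \leq k$, whence $\Ne(g) \leq |g|_S$. In particular $S$ generates $G(F,F')$ only once we prove the upper bound, but this follows a posteriori.

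For the upper bound, the strategy is to use Lemma~\ref{lem-decomp-stabv1} to peel off a $U(F)$-part. Given $g \in G(F,F')$, write $g = \gamma g'$ with $\gamma \in U(F)$ and $g' \in G(F,F')_{v_1}$ and $\Ne(g') \leq \Ne(g) + 1$. Then $|g|_S \leq |\gamma|_S + |g'|_S$. For the factor $g'$, apply Lemma~\ref{lem-length-gfv1} to get $|g'|_S \leq 3(d-1)\Ne(g') + 3 \leq 3(d-1)(\Ne(g)+1) + 3$. For the factor $\gamma$, apply Lemma~\ref{lem-length-uf}: $|\gamma|_S \leq d(\gamma(v_1), v_1) + 1$. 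Since $\gamma(v_1) = g'^{-1}\cdot g'\gamma^{-1}\cdot \gamma(v_1)$ — more directly, $\gamma(v_1) = \gamma g'(v_1) \cdot$, one notes $\gamma(v_1) = g(v_1)$ is not quite right; rather $g' = \gamma^{-1} g$ fixes $v_1$, so $\gamma^{-1}g(v_1) = v_1$, i.e. $\gamma(v_1) = g(v_1)$. Hence $d(\gamma(v_1), v_1) = d(g(v_1), v_1)$. The remaining point is to bound $d(g(v_1),v_1)$ in terms of $\Ne(g)$: since $\arbmoins$ contains both $e_0$ and $g^{-1}(e_0)$ and is a complete subtree with $\Ne(g)$ internal vertices, its diameter is at most $\Ne(g) + 1$ (a complete subtree with $N$ internal vertices has diameter at most $N+1$), so $d(v_1, g^{-1}(v_1)) \leq \Ne(g) + 2$, and applying $g$ (an isometry) and using that $g^{-1}(e_0) \subset \arbmoins$ gives $d(g(v_1), v_1) \leq \Ne(g) + 2$ as well (or a similar small constant). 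Combining, $|g|_S \leq (\Ne(g)+2+1) + (3(d-1)\Ne(g) + 3(d-1) + 3) = (3d-2)\Ne(g) + 3d + 2$, matching the claimed constant.

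The main obstacle I expect is getting the bookkeeping of the additive constants exactly right: one must be careful about how $\arbmoins$ can be enlarged when one adjoins a star (as in Lemma~\ref{lem-decomp-stabv1}), how the diameter of $\arbmoins$ relates to $\Ne(g)$, and how the constants from Lemmas~\ref{lem-length-gfv1} and~\ref{lem-length-uf} combine, so that the final constant is precisely $3d+2$ and not something slightly larger. The structural content — that $\Ne$ is quasi-isometric to the word metric — is entirely contained in the preceding lemmas; only the assembly and the elementary geometric estimate $\mathrm{diam}(\arbmoins) \leq \Ne(g)+1$ remain, together with the observation that $S$ is compact (a finite union of a finite set and two compact open subgroups) and generates $G(F,F')$ (which now follows from the displayed inequality being finite for all $g$).
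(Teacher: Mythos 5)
Your argument follows the paper's proof essentially verbatim: the lower bound via subadditivity of $\Ne$ (Lemma \ref{lem-N-length}) and $\Ne(s)\leq 1$ for $s\in S$, and the upper bound by combining Lemma \ref{lem-decomp-stabv1}, Lemma \ref{lem-length-gfv1}, Lemma \ref{lem-length-uf} and the diameter estimate for $\arbmoins$. The only correction needed is in that last estimate: since $v_1$ is a vertex of $e_0\subset\arbmoins$ and $g^{-1}(v_1)$ is a vertex of $g^{-1}(e_0)\subset\arbmoins$, the diameter bound gives $d(g(v_1),v_1)=d(v_1,g^{-1}(v_1))\leq \Ne(g)+1$ rather than $\Ne(g)+2$, and it is this sharper bound that yields $|\gamma|_S\leq \Ne(g)+2$ and hence exactly the constant $3d+2$; with your bound the total would come out as $(3d-2)\Ne(g)+3d+3$, so your final display is off by one in the additive constant.
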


\begin{proof}
The lower bound easily follows from the fact that the function $\Ne$ is subadditive by Lemma \ref{lem-N-length} and takes values $0$ or $1$ on elements of $S$. Let us prove the upper bound. According to Lemma \ref{lem-decomp-stabv1}, one can write $g = \gamma g'$ with $\gamma \in U(F)$ and $g' \in G(F,F')_{v_1}$ such that $\Ne(g') \leq \Ne(g) + 1$. It follows from Lemma \ref{lem-length-uf} that the word length of $\gamma$ satisfies \[ |\gamma|_S \leq d(\gamma(v_1),v_1) +1 = d(g(v_1),v_1) +1 \leq \Ne(g) +2. \] On the other hand, we can apply Lemma \ref{lem-length-gfv1} to the element $g'$, which yields \[ |g'|_S \leq 3(d-1) \Ne(g') + 3 \leq 3(d-1) (\Ne(g)+1) + 3. \] We finally obtain \[ |g|_S \leq |\gamma|_S + |g'|_S \leq \Ne(g) + 2 + 3(d-1) (\Ne(g)+1) + 3 = (3d-2) \Ne(g) + 3d+2. \]
\end{proof}

\subsection{A commensurating action of $G(F,F')$}

In this subsection we prove that the group $G(F,F')$ admits a commensurating action whose corresponding cardinal definite function is equal to twice the function $\Ne$ (see Proposition \ref{prop-commens-g(f,f')}). By a general argument, we obtain a proper action of $G(F,F')$ on a CAT(0) cube complex. This cube complex is infinite dimensional, and we actually prove that $G(F,F')$ cannot act properly on a finite dimensional CAT(0) cube complex.

\bigskip

Recall that $e_0$ is a fixed edge of $\treed$ having color $c(e_0) = 1$. Let $H$ denote the open subgroup of $G(F,F')$ consisting of elements $g$ stabilizing $\elle(v_0)$ setwise, and such that $\sigma(g,w) \in F$ for every vertex $w$ in $\elle(v_0)$. Equivalently, \[ H = \left\{ g \in G(F,F')_{e_0} : S(g) \subset \elle(v_1)\right\}. \] For every vertex $v \in \verti$, we let $M_v$ be the set of elements $g \in G(F,F')$ such that $g(\elle(v_0)) = \elle(v)$ and $\sigma(g,w) \in F$ for every vertex $w$ in $\elle(v_0)$.

\begin{lem} \label{lem-Mcoset}
For every $v \in \verti$, $M_v$ is either empty or equal to a single $H$-coset.
\end{lem}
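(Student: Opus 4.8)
The plan is to show that if $M_v$ is nonempty, then $M_v$ is exactly a left coset of $H$. First I would observe that $H$ is precisely $M_{v_0}$: indeed by definition $g \in M_{v_0}$ means $g(\elle(v_0)) = \elle(v_0)$ and $\sg \in F$ for every vertex $w$ in $\elle(v_0)$, and since $\elle(v_0)$ and $\elle(v_1)$ partition the edge set of $\treed$ along $e_0$, stabilizing $\elle(v_0)$ setwise is equivalent to fixing the edge $e_0$, so $M_{v_0}$ coincides with the description of $H$ given just above. Note that for $M_v$ to be nonempty we need $v$ to be at even distance from $v_0$ (since elements of $G(F,F')$ preserving orientation data move $v_0$ to a vertex of the same type, and more to the point an element sending $\elle(v_0)$ to $\elle(v)$ forces this), but we do not need to dwell on which $v$ work — the statement is a dichotomy.

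Now suppose $M_v \neq \emptyset$ and fix some $g_0 \in M_v$. I claim $M_v = g_0 H$. For the inclusion $g_0 H \subseteq M_v$: if $h \in H = M_{v_0}$, then $h(\elle(v_0)) = \elle(v_0)$ and $\sigma(h,w) \in F$ for all $w$ in $\elle(v_0)$; hence $(g_0 h)(\elle(v_0)) = g_0(\elle(v_0)) = \elle(v)$, and for any vertex $w$ in $\elle(v_0)$ the cocycle relation (\ref{eq-rules}) gives $\sigma(g_0 h, w) = \sigma(g_0, hw)\,\sigma(h,w)$, where $\sigma(h,w) \in F$ and $hw$ is again a vertex in $\elle(v_0)$ so $\sigma(g_0, hw) \in F$; thus $\sigma(g_0 h, w) \in F$ and $g_0 h \in M_v$. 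For the reverse inclusion $M_v \subseteq g_0 H$: given $g \in M_v$, set $h = g_0^{-1} g$. Then $h(\elle(v_0)) = g_0^{-1}(\elle(v)) = \elle(v_0)$, so $h$ fixes $e_0$. For a vertex $w$ in $\elle(v_0)$, relation (\ref{eq-rules}) gives $\sigma(h, w) = \sigma(g_0^{-1}, gw)\,\sigma(g, w) = \sigma(g_0, g_0^{-1} g w)^{-1}\,\sigma(g,w) = \sigma(g_0, h w)^{-1}\,\sigma(g,w)$; here $\sigma(g,w) \in F$ by $g \in M_v$, and $h w$ lies in $\elle(v_0)$ (as $h$ preserves $\elle(v_0)$), so $\sigma(g_0, h w) \in F$ by $g_0 \in M_v$. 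Hence $\sigma(h,w) \in F$ for all $w$ in $\elle(v_0)$, i.e.\ $h \in H$ and $g \in g_0 H$.

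There is no real obstacle here — the argument is a direct bookkeeping with the multiplication rules (\ref{eq-rules}), the only point requiring a moment's care being that one must keep track of the base vertices at which the local permutations are evaluated, and use that $H$ (and $g_0$, in the appropriate direction) preserves the subtree $\elle(v_0)$ so that these base vertices stay inside the region where the relevant local permutations are known to lie in $F$. Putting the two inclusions together gives $M_v = g_0 H$, completing the proof.
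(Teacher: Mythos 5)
Your proof is correct and follows essentially the same route as the paper: both inclusions are verified by the same cocycle computations with the rules (\ref{eq-rules}), using that elements of $M_v$ (resp.\ $H$) send $\elle(v_0)$ onto $\elle(v)$ (resp.\ preserve $\elle(v_0)$) so that the base vertices of the relevant local permutations stay in $\elle(v_0)$. The preliminary remark that $H = M_{v_0}$ is harmless but not needed.
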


\begin{proof}
Let us check that all the elements of $M_v$ belong to the same left coset of $H$. If $g_1, g_2 \in M_v$, then $g_1^{-1}g_2$ must stabilize setwise $\elle(v_0)$. Moreover for every vertex $w$ in $\elle(v_0)$, we have $\sigma(g_1^{-1}g_2,w) = \sigma(g_1,g_1^{-1}g_2 w)^{-1} \sigma(g_2,w) \in F$ because $w$ and $g_1^{-1}g_2 w$ are vertices of $\elle(v_0)$, so $g_1^{-1}g_2 \in H$. Thus, if non-empty, $M_v$ is contained in exactly one $H$-coset.

Conversely if $g \in M_v$ and $h \in H$, then $(gh)(\elle(v_0)) = g(\elle(v_0)) = \elle(v)$, and for every vertex $w$ in $\elle(v_0)$, we have $\sigma(gh,w) = \sigma(g,hw) \sigma(h,w)$. Now $\sigma(h,w) \in F$ because $h \in H$, and $ \sigma(g,hw) \in F$ because $h(w)$ remains in $\elle(v_0)$ and $g \in M_v$. So $\sigma(gh,w) \in F$, and we have proved that $gh \in M_v$.
\end{proof}

\begin{lem} \label{lem-mvempty}
For every $v \in \verti$, the set $M_v$ is empty if and only if the color of the unique edge around $v$ that is not in $\elle(v)$ is not in the $F$-orbit of $1$.
\end{lem}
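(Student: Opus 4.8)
The statement to prove is Lemma~\ref{lem-mvempty}: for a vertex $v \in \verti$, the set $M_v$ is empty if and only if the color of the unique edge around $v$ not lying in $\elle(v)$ fails to be in the $F$-orbit of $1$. The plan is to unwind the definition of $M_v$ together with the rigidity already established for elements of $G(F,F')$, namely Lemma~\ref{lem-pres-orbit}, which says that every local permutation of every element of $G(F,F')$ stabilizes the $F$-orbits of $\Omega$. Recall that $\elle(v)$ is the branch hanging off $v$ away from the edge $e_0$, so there is exactly one edge $e_v$ incident to $v$ pointing toward $e_0$ (equivalently, not in $\elle(v)$); write $a = c(e_v)$ for its color. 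The key point is that an element $g \in M_v$ must send $\elle(v_0)$ onto $\elle(v)$ acting like $F$ on $\elle(v_0)$; in particular the edge $g^{-1}(e_v)$ is the unique edge incident to $v_0$ pointing away from $\elle(v_0)$, which is exactly the edge $e_0$ of color $1$. Since $g$ is forced (by our structural lemma) to have $\sigma(g, v_0)$ preserve $F$-orbits, and since it sends the color-$1$ edge at $v_0$ to the color-$a$ edge at $v$, we must have $a$ in the $F$-orbit of $1$. Conversely, when $a$ is in the $F$-orbit of $1$, we construct an explicit element of $M_v$.

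**Forward direction (necessity).** Suppose $g \in M_v$. First I would observe that $g(\elle(v_0)) = \elle(v)$ forces $g(v_0) = v$, since $v_0$ is the root of $\elle(v_0)$ and $v$ the root of $\elle(v)$ (the root is characterized as the unique vertex of the branch incident to an edge outside the branch). Consider the edge $e_0$ at $v_0$: it is the unique edge at $v_0$ not contained in $\elle(v_0)$, so $g(e_0)$ is the unique edge at $v$ not contained in $\elle(v)$, i.e.\ $g(e_0) = e_v$. Translating into colors via the local permutation $\sigma(g,v_0) = c_v \circ g_{v_0} \circ c_{v_0}^{-1}$, we get $\sigma(g,v_0)(c(e_0)) = c(e_v)$, that is $\sigma(g,v_0)(1) = a$. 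By Lemma~\ref{lem-pres-orbit} the permutation $\sigma(g,v_0)$ stabilizes the $F$-orbits of $\Omega$, hence $1$ and $a$ lie in the same $F$-orbit. Contrapositively, if $a$ is not in the $F$-orbit of $1$ then $M_v$ is empty.

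**Backward direction (sufficiency).** Now suppose $a$ lies in the $F$-orbit of $1$; I would produce an element $g \in M_v$ explicitly. Since $U(F)$ acts transitively on $\verti$ (as recalled after the definition of $U(F)$), pick $g_0 \in U(F)$ with $g_0(v_0) = v$. Then $\sigma(g_0,v_0)$ lies in $F$, hence $\sigma(g_0,v_0)(1)$ is in the $F$-orbit of $1$; but also, by the color computation above applied to $g_0$, the edge $g_0(e_0)$ at $v$ has color $\sigma(g_0,v_0)(1)$, and this need not equal $a$. However, since $a$ and $1$ are in the same $F$-orbit, there is $\tau \in F$ with $\tau(1) = a$, and correspondingly — using the fact that $U(\{1\})$ already acts transitively on $\verti$, or more simply that $F_v$-type rotations at $v$ realize every element of $F$ on the edges at $v$ — one can post-compose $g_0$ by an element of $U(F)$ fixing $v$ and rotating the star at $v$ so that $e_0$ is sent precisely to $e_v$. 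Concretely: let $g_1 \in U(F)$ fix $v$ with $\sigma(g_1, v) = \tau \circ \sigma(g_0,v_0)^{-1} \in F$ (such $g_1$ exists since any element of $F$ is realized as a local permutation at $v$ of some element of $U(F)$), and set $g = g_1 g_0$; then $\sigma(g, v_0) = \sigma(g_1, v) \sigma(g_0, v_0) = \tau$, so $g(e_0) = e_v$. Because $g \in U(F)$, every local permutation of $g$ lies in $F$, so in particular $\sigma(g,w) \in F$ for every $w$ in $\elle(v_0)$; and $g(v_0) = v$ together with $g(e_0) = e_v$ forces $g(\elle(v_0)) = \elle(v)$ (the branch away from $e_0$ at $v_0$ maps to the branch away from $e_v$ at $v$). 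Hence $g \in M_v$, so $M_v$ is non-empty.

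**Main obstacle.** The substantive content is entirely in the forward direction, and it is short once one invokes Lemma~\ref{lem-pres-orbit}: without that structural rigidity one could not rule out $M_v \neq \emptyset$ when $a \notin F \cdot 1$. The only mild care needed is the bookkeeping identifying "the edge at $v$ outside $\elle(v)$" with the image of $e_0$ under any element of $M_v$, and checking that mapping the root-to-complement edge correctly is equivalent to mapping the whole branch $\elle(v_0)$ onto $\elle(v)$ — this is immediate from the tree structure but should be stated cleanly. The backward direction is a routine use of vertex-transitivity of $U(F)$ and the fact, recalled in Section~\ref{sec-notation}, that every $\sigma \in F$ is realized as a local permutation at any prescribed vertex by some element of $U(F)$.
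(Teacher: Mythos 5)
Your proof is correct and follows essentially the same route as the paper: in the forward direction any $g \in M_v$ must send $e_0$ to the edge $e_v$ outside $\elle(v)$, so the color of $e_v$ lies in the $F$-orbit of $1$, and in the backward direction one builds an explicit element of $U(F)$ sending $v_0$ to $v$ with the right local permutation at $v_0$ (your two-step composition $g_1g_0$ is just a variant of the paper's single element with constant local permutation $\tau$). The only cosmetic remark is that in the forward direction you do not even need Lemma \ref{lem-pres-orbit}: since $v_0 \in \elle(v_0)$, the definition of $M_v$ already forces $\sigma(g,v_0) \in F$.
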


\begin{proof}
Let $e_v$ be the unique edge around $v$ that is not in $\elle(v)$, whose color is denoted by $i_v$.

Assume that $M_v$ is non-empty, and let $g \in M_v$. Since $g$ sends the subtree $\elle(v_0)$ onto $\elle(v)$, we have $g(e_0) = e_v$. But the permutation $\sigma(g,v_0)$ preserves the orbits of $F$, so it follows that $i_v$ is in the $F$-orbit of $c(e_0) = 1$.

For the converse implication, let $\sigma \in F$ such that $\sigma(1) = i_v$. We let $\gamma$ be the automorphism of $\treed$ define by declaring that $\gamma(v_0) = v$, and $\sigma(\gamma,w) = \sigma$ for every vertex $w$. Clearly $\gamma \in U(F)$ and by construction $\gamma$ must send $\elle(v_0)$ onto $\elle(v)$ because $\sigma(\gamma,v_0)(1) = i_v$. So $\gamma \in M_v$, which is therefore non-empty.
\end{proof}

We will need the following lemma.

\begin{lem} \label{lem-int-vertices}
Given $g \in G(F,F')$ and $v \in \verti$, the following statements are equivalent:
\begin{enumerate}[label=(\roman*)]
\item $g(\elle(v)) = \elle(g(v))$ and $\sigma(g,w) \in F$ for every vertex $w$ in $\elle(v)$;
\item $v$ is not an internal vertex of $\arbmoins$.
\end{enumerate}
\end{lem}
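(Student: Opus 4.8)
The plan is to prove the two directions separately, relating the defining property of $\arbmoins$ to the local behaviour of $g$ on the rooted subtrees $\elle(v)$. Recall that $\arbmoins = \mathcal{T}_g^-$ is the unique minimal finite complete subtree containing $e_0$ and $g^{-1}(e_0)$ such that $\sigma(g,w) \in F$ whenever $w$ is not an internal vertex of $\arbmoins$.

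First I would prove the implication $(ii) \Rightarrow (i)$. Assume $v$ is not an internal vertex of $\arbmoins$. Then by definition of $\arbmoins$, every vertex of $\elle(v)$ — which is the rooted subtree "hanging off" $v$ away from $e_0$ — also fails to be an internal vertex of $\arbmoins$ (since $\arbmoins$ is a complete subtree containing $e_0$, and $v$ separates $\elle(v)$ from $e_0$; a vertex $w \in \elle(v)$ internal to $\arbmoins$ would force, by completeness and connectedness, the vertex $v$ to be internal too). Hence $\sigma(g,w) \in F$ for all $w$ in $\elle(v)$, giving the second half of $(i)$. For the first half, I must show $g(\elle(v)) = \elle(g(v))$. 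Since $v \notin \arbmoins$ (or is a leaf of it) and $\arbmoins$ contains $e_0$, the subtree $\elle(v)$ misses $e_0$; and since $\arbmoins$ also contains $g^{-1}(e_0)$ and $g$ maps $\arbmoins$ to $\arbplus$, the subtree $g(\elle(v))$ misses $e_0$ and is therefore of the form $\elle(u)$ for the vertex $u$ where $g(\elle(v))$ attaches to the rest of the tree; as $g$ maps the root $v$ of $\elle(v)$ to $g(v)$ and preserves adjacency, one gets $u = g(v)$, so $g(\elle(v)) = \elle(g(v))$.

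Then for $(i) \Rightarrow (ii)$ I would argue by contraposition, or rather by a minimality argument. Suppose $v$ is an internal vertex of $\arbmoins$ but $(i)$ holds. I claim we can remove from $\arbmoins$ all internal vertices lying inside $\elle(v)$ and still have a complete subtree satisfying properties (i) and (ii) of the definition of $\arbmoins$ — contradicting minimality. Indeed: $\elle(v) \cap \arbmoins$ consists of vertices "beyond" $v$; since $(i)$ gives $\sigma(g,w) \in F$ for every $w$ in $\elle(v)$, and $g(\elle(v)) = \elle(g(v))$ forces $g^{-1}(e_0) \notin \elle(v)$ (as $e_0 \notin g(\elle(v)) = \elle(g(v))$), the subtree $\elle(v)$ contains neither $e_0$ nor $g^{-1}(e_0)$, and $g$ acts locally like $F$ throughout it. Hence the smaller complete subtree obtained by truncating $\arbmoins$ at $v$ (keeping $v$ as a leaf, discarding the interior of $\elle(v)$) still contains $e_0$ and $g^{-1}(e_0)$, is complete, and has $\sigma(g,w) \in F$ outside its internal vertices. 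This contradicts the minimality of $\arbmoins$, so $v$ cannot be internal.

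The main obstacle I anticipate is the careful bookkeeping about completeness: when I truncate $\arbmoins$ at $v$ I must check the result is still a \emph{complete} subtree (i.e. for every remaining internal vertex, all its neighbours in $\treed$ are still present), and that $v$ itself indeed becomes a leaf rather than being forced back to internal status by some other branch — this is where one uses that $v$ separates $\elle(v)$ from the rest of $\arbmoins$, so all of $v$'s neighbours in $\arbmoins$ except one lie in $\elle(v)$. A clean way to phrase this is: the vertices of $\elle(v)$ that were internal to $\arbmoins$ form a subtree whose only "attachment point" to the rest of $\arbmoins$ is through $v$, so removing them and their pendant edges leaves a complete subtree in which $v$ has exactly one neighbour. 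Once that combinatorial point is nailed down, both implications follow from the definitions with no computation.
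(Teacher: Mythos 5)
Your proof is correct and takes essentially the same route as the paper: the direction $(ii) \Rightarrow (i)$ follows from the defining property of $\arbmoins$, and $(i) \Rightarrow (ii)$ from its minimality — the paper phrases the latter as the dichotomy that an internal vertex $v$ forces either a singularity of $g$ or the edge $g^{-1}(e_0)$ to lie in $\elle(v)$, which is exactly the content of your truncation argument. Your version merely spells out the completeness and separation bookkeeping that the paper leaves implicit.
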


\begin{proof}
By construction $g$ sends the complement of $\arbmoins$ onto the complement of $\arbplus$ locally like $F$, so it is clear that if $v$ is not an internal vertex of $\arbmoins$ then $g(\elle(v)) = \elle(g(v))$ and $\sigma(g,w) \in F$ for every vertex $w$ in $\elle(v)$. For the converse implication, remark that if $v$ is an internal vertex of $\arbmoins$, then either there is a vertex $w$ in $\elle(v)$ such that $\sigma(g,w) \notin F$, or the edge $g^{-1}(e_0)$ belongs to $\elle(v)$. This last property implies that $g(\elle(v))$ contains the edge $e_0$, and therefore cannot be equal to $\elle(g(v))$.
\end{proof}


Assume that $F$ is transitive. According to Lemma \ref{lem-mvempty}, this assumption ensures that the set $M_v$ is non-empty for every $v \in \verti$.

\begin{lem} \label{lem-Mvv'}
Let $v,v' \in \verti$ and $g \in G(F,F')$. Then $g M_v$ and $M_{v'}$ are either disjoint or equal, and $g M_v = M_{v'}$ if and only if $v' = g(v)$ and $v$ is not an internal vertex of $\arbmoins$.
\end{lem}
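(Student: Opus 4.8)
The plan is to reduce everything to the already-established machinery: Lemma \ref{lem-Mcoset} (each $M_w$ is a single $H$-coset or empty), Lemma \ref{lem-int-vertices} (the local characterization of internal vertices of $\arbmoins$), and the transitivity of $F$ (which, via Lemma \ref{lem-mvempty}, guarantees $M_w \neq \emptyset$ for all $w$). First I would observe that since $g M_v$ is a left translate of an $H$-coset, it is again a single $H$-coset; and by Lemma \ref{lem-Mcoset} each nonempty $M_{v'}$ is also a single $H$-coset. Two $H$-cosets are either disjoint or equal, which immediately gives the first assertion (``disjoint or equal''). So the real content is the ``if and only if'' describing \emph{when} they are equal.

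For the characterization, I would argue as follows. Suppose $g M_v = M_{v'}$, and pick any $k \in M_v$; then $gk \in M_{v'}$, so $gk(\elle(v_0)) = \elle(v')$. Since $k \in M_v$ we have $k(\elle(v_0)) = \elle(v)$, hence $g(\elle(v)) = \elle(v')$; looking at the root vertex this forces $v' = g(v)$ and $g(\elle(v)) = \elle(g(v))$. Moreover, for any vertex $w$ in $\elle(v)$, writing $w = k(w_0)$ with $w_0$ a vertex of $\elle(v_0)$, the relation $\sigma(gk,w_0)=\sigma(g,w)\sigma(k,w_0)$ together with $\sigma(gk,w_0)\in F$ (as $gk\in M_{v'}$) and $\sigma(k,w_0)\in F$ (as $k\in M_v$) gives $\sigma(g,w)\in F$. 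Thus $g$ satisfies condition $(i)$ of Lemma \ref{lem-int-vertices} at the vertex $v$, so $v$ is not an internal vertex of $\arbmoins$. Conversely, if $v' = g(v)$ and $v$ is not an internal vertex of $\arbmoins$, then by Lemma \ref{lem-int-vertices} we have $g(\elle(v)) = \elle(g(v)) = \elle(v')$ and $\sigma(g,w)\in F$ for every vertex $w$ of $\elle(v)$. Now take $k \in M_v$ (nonempty by transitivity of $F$ and Lemma \ref{lem-mvempty}); I claim $gk \in M_{v'}$. Indeed $gk(\elle(v_0)) = g(\elle(v)) = \elle(v')$, and for a vertex $w_0$ of $\elle(v_0)$, the point $k(w_0)$ lies in $\elle(v)$, so $\sigma(gk,w_0) = \sigma(g,k w_0)\sigma(k,w_0)$ is a product of two elements of $F$, hence in $F$. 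So $gk \in M_{v'}$, and since $M_{v'}$ is a single $H$-coset and $gk\in gM_v\cap M_{v'}$, the two cosets $gM_v$ and $M_{v'}$ coincide.

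I expect the only mildly delicate point to be the bookkeeping that translates ``$\sigma(g,w)\in F$ for $w\in\elle(v)$'' back and forth through the element $k\in M_v$ — i.e.\ making sure that as $w_0$ ranges over $\elle(v_0)$, the image $k(w_0)$ ranges over exactly the vertices of $\elle(v)$, so that the conditions defining membership in $M_v$ and $M_{v'}$ line up with the hypotheses of Lemma \ref{lem-int-vertices}. This is immediate from $k(\elle(v_0))=\elle(v)$ together with the cocycle identity in \eqref{eq-rules}, so no genuine obstacle arises; the proof is essentially a coset-theoretic reformulation of Lemma \ref{lem-int-vertices}.
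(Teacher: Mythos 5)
Your proposal is correct and follows essentially the same route as the paper: both reduce the dichotomy to the coset description of Lemma \ref{lem-Mcoset}, and both prove the equivalence by picking an element $k\in M_v$ (nonempty since $F$ is transitive), transporting the membership conditions through the cocycle identity \eqref{eq-rules}, and invoking the two implications of Lemma \ref{lem-int-vertices}. The only cosmetic difference is that you spell out why $g(\elle(v))=\elle(v')$ forces $v'=g(v)$ and why $gM_v\subset M_{v'}$ upgrades to equality, steps the paper leaves implicit.
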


\begin{proof}
The fact that $g M_v$ and $M_{v'}$ are either disjoint or equal follows immediately from Lemma \ref{lem-Mcoset}. Assume that $g M_v = M_{v'}$. Since the subset $M_v$ is non-empty, there exists $g_v \in G(F,F')$ such that $g_v(\elle(v_0)) = \elle(v)$ and $\sigma(g_v,w) \in F$ for every vertex $w$ in $\elle(v_0)$. Since $gg_v \in M_{v'}$ by assumption, we have $gg_v(\elle(v_0)) = \elle(v')$ and $\sigma(gg_v,w) \in F$ for every vertex $w$ in $\elle(v_0)$. In particular we have $g(\elle(v)) = \elle(v')$, so $v' = g(v)$. Since $\sigma(gg_v,w) = \sigma(g,g_v w) \sigma(g_v,w)$ and $\sigma(gg_v,w), \sigma(g_v,w) \in F$, we obtain that $\sigma(g,w') \in F$ for every vertex $w'$ in $\elle(v)$. According to $(i) \Rightarrow (ii)$ of Lemma \ref{lem-int-vertices}, this implies that the vertex $v$ is not an internal vertex of $\arbmoins$.

Conversely assume that $v$ is not an internal vertex of $\arbmoins$. According to the implication $(ii) \Rightarrow (i)$ of Lemma \ref{lem-int-vertices}, we have $g(\elle(v)) = \elle(g(v))$ and $\sigma(g,w) \in F$ for every vertex $w$ in $\elle(v)$. By the same argument as above, it follows that $g M_v \subset M_{g(v)}$, and therefore $g M_v = M_{g(v)}$.
\end{proof}

We denote by $M \subset G(F,F')$ the union of the subsets $M_v$, when $v$ ranges over the set of vertices $\verti$. Since $M$ is a union of left cosets of $H$, we identity the subset $M$ of $G(F,F')$ with its image in $G(F,F') / H$.

Recall that if $G$ is a group acting on a set $X$, a subset $A \subset X$ is \textit{commensurated} by $G$, or $G$ \textit{commensurates} $A$, if $\#(gA \triangle A)$ is finite for every $g \in G$. 

\begin{prop} \label{prop-commens-g(f,f')}
Assume that $F$ is transitive. Then the action of $G(F,F')$ on $G(F,F') / H$ commensurates the subset $M$. More precisely, we have $\#(gM \triangle M) = 2 \Ne(g)$ for every $g \in G(F,F')$. 
\end{prop}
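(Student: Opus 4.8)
The plan is to compute, for a fixed $g \in G(F,F')$, exactly which cosets $M_v$ change status under left translation by $g$. By Lemma \ref{lem-Mvv'}, $g$ permutes the collection $\{M_v\}_{v \in \verti}$ in a partial way: if $v$ is not an internal vertex of $\arbmoins$, then $gM_v = M_{g(v)}$, and since $g$ restricts to a bijection from the non-internal vertices of $\arbmoins$ to the non-internal vertices of $\arbplus$, these cosets are simply shuffled among themselves and contribute nothing to the symmetric difference $gM \triangle M$. So the plan is to argue that the only discrepancies between $gM$ and $M$ come from the internal vertices of $\arbmoins$ (on the $M \setminus gM$ side) and the internal vertices of $\arbplus$ (on the $gM \setminus M$ side).

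More precisely, first I would establish the following two claims. \emph{Claim 1:} if $v$ is an internal vertex of $\arbmoins$, then $M_v \subset M \setminus gM$, i.e.\ $M_v$ is not equal to $gM_w$ for any $w$. This follows from Lemma \ref{lem-Mvv'}: if $gM_w = M_v$ then $v = g(w)$ and $w$ is not an internal vertex of $\arbmoins$; but $g$ maps non-internal vertices of $\arbmoins$ to non-internal vertices of $\arbplus$, and — using that $\arbmoins$ and $\arbplus$ have the same image under $g$, together with the fact that $g$ is a tree automorphism carrying $\arbmoins$ onto $\arbplus$ — one checks that $g(w)$ cannot be an internal vertex of $\arbmoins$ in that case. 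Here I need to be slightly careful: what we really want is that $v$ is an internal vertex of $\arbmoins$ is incompatible with $v = g(w)$ for $w$ a non-internal vertex of $\arbmoins$; the cleanest way is to note that $g$ maps the set of non-internal vertices of $\arbmoins$ bijectively onto the non-internal vertices of $\arbplus$, so $g(w)$ is a non-internal vertex of $\arbplus$, and then use that every non-internal vertex of $\arbplus$ which is also a vertex of $\arbmoins$ is a non-internal vertex of $\arbmoins$ (both trees are complete and the internal vertices form the ``spanning'' part), hence $v = g(w)$ cannot be internal in $\arbmoins$. \emph{Claim 2:} symmetrically, applying Claim 1 to $g^{-1}$ (noting $\mathcal{T}_{g^{-1}}^- = \arbplus$ and $\mathcal{T}_{g^{-1}}^+ = \arbmoins$), if $v$ is an internal vertex of $\arbplus$ then $M_v \subset M \setminus g^{-1}M$, i.e.\ $g M_v \subset gM \setminus M$; equivalently $M_{v} \subset gM \setminus M$ precisely when $v$ is internal in $\arbplus$, after renaming.

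Then I would assemble the count. On one side: $g M \setminus M$. A coset $M_{v'}$ lies in $gM$ iff $M_{v'} = gM_v$ for some $v$, which by Lemma \ref{lem-Mvv'} happens iff $v' = g(v)$ with $v$ non-internal in $\arbmoins$, i.e.\ iff $v'$ is a non-internal vertex of $\arbplus$ or $v'$ is outside $\arbplus$ entirely (the image of a vertex outside $\arbmoins$). Thus $M \setminus gM$ consists exactly of the $M_v$ with $v$ an internal vertex of $\arbmoins$, of which there are $\Ne(g)$ by definition. Reversing the roles of $g$ and $g^{-1}$ (or translating Claim 2), $gM \setminus M = g(M \setminus g^{-1}M)$ consists of the cosets $gM_v$ with $v$ internal in $\mathcal{T}_{g^{-1}}^- = \arbplus$, again $\Ne(g)$ of them. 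Since $M \triangle gM = (M \setminus gM) \sqcup (gM \setminus M)$, we get $\#(gM \triangle M) = 2\Ne(g)$. The main obstacle I anticipate is Claim 1 — making rigorous the combinatorial statement that $g$ cannot carry a non-internal vertex of $\arbmoins$ onto an internal vertex of $\arbmoins$, which requires carefully using that $\arbmoins$ and $\arbplus$ are complete subtrees with the same number of internal vertices and that $g$ maps one onto the other; once that is in place, the symmetric-difference bookkeeping is routine, and the equivalences of Lemma \ref{lem-int-vertices} and Lemma \ref{lem-Mvv'} do most of the work.
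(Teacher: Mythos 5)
Your global strategy --- decide coset by coset, via Lemma \ref{lem-Mvv'}, which $M_v$ survive in $gM \cap M$ and count the rest --- is the right one and is the paper's, but your Claim 1, the step you yourself single out as the crux, is false, and so is the auxiliary fact you propose to prove it with. Concretely, let $g \in U(F) \leq G(F,F')$ be hyperbolic of translation length one with axis containing $e_0$ and $g(v_0)=v_1$ (such elements exist since $F$ is transitive; see the elements $h_i$ of Subsection \ref{sec-diag-g(f)}). Since $g$ has no singularities, $\arbmoins$ is the minimal complete subtree containing $e_0$ and $g^{-1}(e_0)$, namely the star $\mathcal{B}(v_0,1)$, whose unique internal vertex is $v_0$; thus $\Ne(g)=1$ and $\arbplus=\mathcal{B}(v_1,1)$. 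The vertex $w=g^{-1}(v_0)$ is a leaf of $\arbmoins$, so Lemma \ref{lem-Mvv'} gives $gM_w=M_{v_0}$ (and $M_w\neq\emptyset$ by transitivity of $F$), whence $M_{v_0}\subset gM$ even though $v_0$ is internal in $\arbmoins$ --- contradicting Claim 1. The same vertex refutes your auxiliary statement: $v_0$ is a leaf of $\arbplus$ lying in $\arbmoins$, yet it is internal in $\arbmoins$. The underlying error is the assumption that a vertex internal in $\arbmoins$ cannot be the $g$-image of a non-internal vertex; internal vertices of $\arbmoins$ need not be internal in $\arbplus$, and $g$ typically pushes leaves of $\arbmoins$ onto internal vertices of $\arbmoins$.

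The bookkeeping you want is the mirror image of the one you wrote, and it needs no comparison between the two trees. Lemma \ref{lem-Mvv'} gives directly: if $v$ is internal in $\arbmoins$ then $gM_v$ equals no $M_{v'}$, hence, two left $H$-cosets being equal or disjoint, $gM_v\cap M=\emptyset$; if $v$ is not internal then $gM_v=M_{g(v)}\subset M$. Therefore it is $gM\setminus M$ (not $M\setminus gM$) that is the disjoint union of the $\Ne(g)$ nonempty cosets $gM_v$ with $v$ internal in $\arbmoins$, while $M\setminus gM$ is governed by the internal vertices of $\arbplus$: applying the same count to $g^{-1}$, whose tree $\mathcal{T}_{g^{-1}}^-$ is $\arbplus$, and translating by $g$, yields $\#(M\setminus gM)=\Ne(g^{-1})=\Ne(g)$. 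This is exactly the paper's proof. Note that your own intermediate sentence --- \enquote{$M_{v'}$ lies in $gM$ iff $v'$ is a non-internal vertex of $\arbplus$ or lies outside $\arbplus$} --- is correct and already contradicts the conclusion you draw from it; your final count $2\Ne(g)$ comes out right only because $\arbmoins$ and $\arbplus$ have the same number of internal vertices, not because Claims 1 and 2 hold.
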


\begin{proof}
Let $g \in G(F,F')$. According to Lemma \ref{lem-Mvv'}, the subset $gM \backslash M$ is the union of $g M_v$, where $v$ ranges over the set of internal vertices of $\arbmoins$. Since none of these $g M_v$ is empty, this union consists exactly in $\Ne(g)$ left cosets of $H$, and therefore $\#(gM \backslash M) = \Ne(g)$. By applying the same argument to $g^{-1}$, we obtain \[ \#(gM \triangle M) = \#(gM \backslash M) + \#(M \backslash gM) = \Ne(g) + \Ne(g^{-1}) = 2\Ne(g). \]
\end{proof}

By a general principle (see for instance \cite[Proposition 5.17]{Cor-PW} and references therein), we deduce the following result.

\begin{cor} \label{cor-ccc0}
Assume that $F$ is transitive. Then there exist a CAT(0) cube complex $C$ on which $G(F,F')$ acts properly, and a vertex $x_0 \in C$ such that in the $\ell^1$-metric, $d(gx_0,x_0) = 2 \Ne(g)$ for every $g \in G(F,F')$.
\end{cor}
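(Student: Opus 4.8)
The plan is to invoke the standard Sageev-type construction that turns a commensurating action into a proper action on a CAT(0) cube complex, using Proposition \ref{prop-commens-g(f,f')} as the input. First I would recall the general principle (e.g.\ \cite[Proposition 5.17]{Cor-PW} and the references therein, or the Chatterji--Drutu--Haglund framework of \enquote{cardinal definite functions}): given a group $G$ acting on a set $X$ and commensurating a subset $M \subseteq X$, one builds a CAT(0) cube complex $C = C(X,M)$ whose vertices are the subsets $N \subseteq X$ with $N \triangle M$ finite, with two such vertices adjacent when their symmetric difference has exactly one element, and $G$ acts on $C$ by $g \cdot N = gN$. Taking $x_0 = M$ as the base vertex, the combinatorial ($\ell^1$) distance satisfies $d(gx_0, x_0) = \#(gM \triangle M)$. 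This is exactly the content of the general principle, so the only thing to supply is the verification that its hypotheses hold and the identification of the resulting distance function.

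The key steps, in order, are: (1) observe that $G(F,F')$ acts on the coset space $G(F,F')/H$ on the left, which is the action considered in Proposition \ref{prop-commens-g(f,f')}; (2) by that proposition, this action commensurates $M$, with $\#(gM \triangle M) = 2\Ne(g)$ for every $g$; (3) apply the cited general principle to obtain the cube complex $C$, the action, and the base vertex $x_0 = M$ with $d(gx_0,x_0) = \#(gM \triangle M) = 2\Ne(g)$; (4) check properness. For step (4), recall from Lemma \ref{lem-N-length} that $\Ne$ is a length function on $G(F,F')$, and from Proposition \ref{prop-N-qi-metric} (valid since $F$ is transitive) that $\Ne$ is comparable to the word metric with respect to a compact generating set; hence $\Ne$, and therefore $g \mapsto d(gx_0,x_0) = 2\Ne(g)$, is a proper function on $G(F,F')$, and is continuous because $\Ne$ is locally constant (it vanishes on the compact open subgroup $U(F)_{e_0}$ fixing $e_0$, and more generally $\Ne$ is bounded on cosets of the compact open subgroups $K_{n,F'}(v)$). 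Properness of the function $g \mapsto d(gx_0, x_0)$ on a compactly generated locally compact group is equivalent to metric properness of the action, which is what we want.

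I do not expect any serious obstacle here: the entire analytic content (the commensurating action and the exact value of the transported distance) has already been established in Proposition \ref{prop-commens-g(f,f')}, and the passage to a cube complex is a black box once the commensuration is known. The one point requiring a little care is the continuity/properness bookkeeping in the locally compact (non-discrete) setting: one must note that $\Ne$ is continuous (locally constant) so that the cardinal definite function $g \mapsto 2\Ne(g)$ is a continuous proper function on $G(F,F')$, which by the compact generation established in Corollary \ref{cor-g-ce} (or directly via Proposition \ref{prop-N-qi-metric}) yields that the action on $C$ is metrically proper in the sense appropriate for locally compact groups. Everything else is a direct citation.
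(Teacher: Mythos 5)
Your proposal is correct and follows essentially the same route as the paper: the paper deduces the corollary directly from Proposition \ref{prop-commens-g(f,f')} via the general principle of \cite[Proposition 5.17]{Cor-PW} converting a commensurating action into an action on a CAT(0) cube complex with $d(gx_0,x_0)=\#(gM\triangle M)$. Your extra verification of properness (continuity/local constancy of $\Ne$ plus comparability with the word metric from Proposition \ref{prop-N-qi-metric}) is sound and simply makes explicit what the paper leaves inside the cited black box, consistent with the remark after the corollary that the orbit map is a quasi-isometric embedding.
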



Corollary \ref{cor-ccc0} reveals that Proposition \ref{prop-N-qi-metric} established in the previous subsection has a geometric interpretation: it exactly means that the orbital map $G(F,F') \rightarrow C$, $g \mapsto gx_0$, is a quasi-isometric embedding.

\begin{rmq} \label{rmq-not-cocompact}
The action of $G(F,F')$ on this CAT(0) cube complex is not cocompact when $F$ is a proper subgroup of $F'$, and more generally one cannot hope that $G(F,F')$ acts properly and cocompactly by isometries on a simply connected metric space. The reason is that the existence of such an action would imply that $G(F,F')$ is coarsely simply connected and therefore compactly presented \cite[Proposition 8.A.3]{Cor-dlH}, a contradiction with Proposition \ref{prop-notcp}.
\end{rmq}

The end of this section is devoted to the proof that, although $G(F,F')$ does act properly on a CAT(0) cube complex, it cannot act properly on a \textit{finite dimensional} CAT(0) cube complex (see Proposition \ref{prop-not-dim-finie}). The argument will consist in embedding in $G(F,F')$ a compact extension of the wreath product $C_{p} \wr \mathbb{F}_2$, and using the fact that the latter group does not admit such an action. 


\bigskip

If $H \leq H'$ and $G$ are groups, we call the \textit{semi-restricted wreath product} of $H,H'$ and $G$ the set of pairs $(f,g)$ where $g \in G$ and $f: G \rightarrow H'$ is such that $f(\gamma) \in H$ for all but finitely many $\gamma \in G$. It is a subgroup of the unrestricted wreath product of $H'$ and $G$, which will be denoted $(H,H') \wr G$. Note that $(H,H') \wr G$ always contains the restricted (or standard) wreath product $H' \wr G$.

For every $a \in \Omega$, we denote by $U^a$ (resp.\ $G^a$) the pointwise stabilizer in the group $U(F)$ (resp.\ $G(F,F')$) of a half-tree of $\treed$ defined by an edge $e \in \edg$ such that $c(e)=a$. 

\begin{prop} \label{prop-wr-g(f,f')}
For every $a \in \Omega$, the semi-restricted wreath product \mbox{$(U^a, G^a) \wr \mathbb{F}$} embeds as a subgroup of $G(F,F')$, where $\mathbb{F}$ is a free group of rank $d-2$.
\end{prop}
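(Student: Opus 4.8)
The plan is to exhibit an explicit free subgroup $\mathbb{F}\leq U(F)\leq G(F,F')$ of rank $d-2$ which translates a suitable half-tree completely off itself, and then to recognise the subgroup of $G(F,F')$ generated by $\mathbb{F}$ together with the fixators of all its translates as the semi-restricted wreath product. Concretely, I would fix an edge $e\in\edg$ with $c(e)=a$, call its endpoints $v$ and $v'$, and set $\Delta=T_e(v')$; up to conjugacy in $U(F)$ one may then take $U^a$ to be the subgroup of $U(F)$ fixing $\treed\setminus\Delta$ pointwise and $G^a$ the corresponding subgroup of $G(F,F')$, i.e.\ the elements supported on $\Delta$. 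Since $d\geq 3$ there is a colour $b\neq a$; let $e'=\{v,w\}$ be the edge of colour $b$ at $v$. For each of the remaining $d-2$ colours $i\in\Omega\setminus\{a,b\}$ I would take the hyperbolic element $g_i=r'r_i\in U(\{1\})\leq U(F)$, where $r'$ and $r_i$ are the (colour-preserving) edge-inversions of $e'$ and of the colour-$i$ edge at $v$; its axis runs through $v$, using the colour-$i$ edge on one side and $e'$ on the other. A standard ping-pong argument, played with the $2(d-2)$ pairwise disjoint half-trees consisting of the colour-$i$ cone at $v$ and the colour-$i$ cone at $w$ (for $i\in\Omega\setminus\{a,b\}$), shows that $\mathbb{F}:=\langle g_i : i\in\Omega\setminus\{a,b\}\rangle$ is free of rank $d-2$, acts freely on $\verti$, and has all nontrivial elements hyperbolic. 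This is where the rank $d-2$ comes from: one colour ($a$) is reserved for the edge $e$, and one further colour ($b$) for the common direction of the axes.

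The crucial point is that the family $\{\gamma\Delta : \gamma\in\mathbb{F}\}$ is pairwise disjoint, and I would prove this geometrically. Let $T_{\mathbb{F}}\subseteq\treed$ be the minimal $\mathbb{F}$-invariant subtree, i.e.\ the convex hull of the orbit $\mathbb{F}v$. Every geodesic from $v$ to a point of $\mathbb{F}v$ leaves $v$ through an edge of colour in $\{b\}\cup(\Omega\setminus\{a,b\})$, by the ping-pong configuration; hence the colour-$a$ edge $e$ is not contained in $T_{\mathbb{F}}$, so $\Delta\cap T_{\mathbb{F}}=\emptyset$ and $v$ is the unique vertex of $T_{\mathbb{F}}$ adjacent to $\Delta$. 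Since $\mathbb{F}$ acts freely on $\verti$, the translates $\gamma\Delta$ are the connected components of $\treed\setminus T_{\mathbb{F}}$ attached to $T_{\mathbb{F}}$ at the pairwise distinct vertices $\gamma v$; any path joining two of them must cross a nondegenerate segment of $T_{\mathbb{F}}$, so they are pairwise disjoint.

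It remains to assemble the wreath product. Set $\Lambda=\{g\in G(F,F') : g \text{ fixes } \treed\setminus\bigsqcup_{\gamma\in\mathbb{F}}\gamma\Delta \text{ pointwise}\}$. Because the $\gamma\Delta$ are pairwise disjoint, restriction identifies $\Lambda$ with the set of families $(f(\gamma))_{\gamma\in\mathbb{F}}$ where $f(\gamma)$ lies in the copy $\gamma G^a\gamma^{-1}$ of $G^a$ supported on $\gamma\Delta$; and since any local permutation outside $F$ creates a singularity, the requirement that such a family define an element of $G(F,F')$ — finitely many singularities — is exactly that $f(\gamma)\in\gamma U^a\gamma^{-1}$ for all but finitely many $\gamma$. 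Thus $\Lambda$ is canonically the semi-restricted base $\{f:\mathbb{F}\to G^a : f(\gamma)\in U^a \text{ for a.e.\ }\gamma\}$, on which $\mathbb{F}$ acts by conjugation exactly as by left translation on $\mathbb{F}$. Finally $\mathbb{F}\cap\Lambda=1$, since a nontrivial element of $\mathbb{F}$ is hyperbolic and so moves points of its axis, which lies in $T_{\mathbb{F}}$ and hence outside $\bigsqcup_\gamma\gamma\Delta$; therefore $\Lambda\rtimes\mathbb{F}\leq G(F,F')$ is isomorphic to $(U^a,G^a)\wr\mathbb{F}$.

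The main obstacle I anticipate is the disjointness statement for \emph{all} translates $\gamma\Delta$, not merely those by the generators, together with getting the rank to equal exactly $d-2$; both are controlled by the bookkeeping of the two reserved colours and the minimal-invariant-subtree argument. The identification of $\Lambda$ with the semi-restricted base is then routine, using that $U^a$ is precisely the set of singularity-free elements of $G^a$.
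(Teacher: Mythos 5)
Your proof is correct, and it realizes the same embedding as the paper's proof does in spirit --- a rank-$(d-2)$ free subgroup of $U(\{1\})$ permuting a family of pairwise disjoint colour-$a$ half-trees, with the fixators of those half-trees assembling into the semi-restricted base --- but the two key verifications are carried out differently. The paper takes $T$ to be the maximal subtree through $v_0$ all of whose edges have colour $\neq a$ (a regular tree of degree $d-1$), lets $\mathbb{F}$ be the stabilizer of $T$ in $U(\{1\})^{\star}$, and obtains freeness and the rank $d-2$ in one stroke from Serre's formula applied to the quotient graph (two vertices, $d-1$ edges); disjointness of the attached half-trees is then immediate, since they hang off distinct vertices of $T$ by colour-$a$ edges and $T$ contains no colour-$a$ edge, and the base is indexed by the even vertices $V_0$ of $T$, on which $\mathbb{F}$ acts simply transitively. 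You instead exhibit explicit generators $g_i=r'r_i$, prove freeness by ping-pong, and index the base by the $\mathbb{F}$-orbit of a single half-tree $\Delta$, which obliges you to prove disjointness of the translates $\gamma\Delta$ by hand; your argument for this is sound (every nontrivial reduced word sends $v$ into one of the ping-pong cones, none of which meets the colour-$a$ cone at $v$, so $\gamma v\notin\Delta$ and $v\notin\gamma\Delta$, and two colour-$a$ cones neither of which contains the other's apex are disjoint). The paper's route buys brevity, since rank and disjointness come for free from the choice of $T$; yours is more hands-on and delivers the index set $\mathbb{F}$ with its left-translation action directly, without needing simple transitivity on $V_0$. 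Two small points to tighten: the free action of $\mathbb{F}$ on $\verti$ and the hyperbolicity of its nontrivial elements are most cleanly deduced from $\mathbb{F}\leq U(\{1\})$ (an element of $U(\{1\})$ fixing a vertex fixes all its neighbours, hence is trivial, and a torsion-free group contains no inversions) rather than attributed to the ping-pong itself; and when $d=3$ the ping-pong degenerates to a single generator, where you should simply note that $\mathbb{F}$ is infinite cyclic because $g_i$ is hyperbolic.
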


\begin{proof}
Let us consider the largest subtree $T$ of $\treed$ containing the vertex $v_0$ and such that all the edges $e$ of $T$ satisfy $c(e) \neq a$. Note that $T$ is a regular tree of degree $d-1$. We let $\mathbb{F}$ be the subgroup of $U(\left\{1\right\})^{\star}$ stabilizing $T$. The group $\mathbb{F}$ acts freely and without inversion on $T$, and the quotient $\mathbb{F} \backslash T$ has two vertices and $d-1$ non-oriented edges, so it follows that $\mathbb{F}$ is free of rank $(d-1) - 2 + 1 = d-2$ \cite[Theorem 4']{Serre-trees}. 

Let us denote by $V_0$ the set of vertices of $T$ at even distance from $v_0$. For every vertex $v \in V_0$, let $e_v$ be the edge of $\treed$ containing $v$ and such that $c(e_v)=a$, and we denote by $T^v$ the unique half-tree defined by $e_v$ not containing $T$. We also denote by $\Gamma_a^v$ the subgroup of $G(F,F')$ fixing $T$, acting on $T^w$ by an element of $G^a$ if $w=v$, and by an element of $U^a$ otherwise; and being the identity elsewhere. By construction the subgroup of $G(F,F')$ generated by all the $\Gamma_a^v$, $v \in V_0$, is the subgroup $\prod_v^0 G^a$ of $\prod_v G^a$ consisting of elements having all but finitely many of their coordinates in $U^a$. Now the group $\mathbb{F}$ permutes the subtrees $T^v$, so it follows that the subgroup $\Gamma$ of $G(F,F')$ generated by $\mathbb{F}$ together with all the $\Gamma_a^v$ is isomorphic to $\prod_v^0 G^a \rtimes \mathbb{F}$. Moreover since $\mathbb{F}$ acts simply transitively on $V_0$, we deduce that $\Gamma$ is exactly the semi-restricted wreath product $(U^a,G^a) \wr \mathbb{F}$. 
\end{proof}

\begin{prop} \label{prop-not-dim-finie}
For every $d \geq 4$ and every permutation groups $F \lneq F'$, the group $G(F,F')$ cannot act properly on a finite dimensional CAT(0) cube complex. 
\end{prop}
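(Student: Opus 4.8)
The plan is to reduce the statement to a known obstruction for the lamplighter-type group $C_p \wr \mathbb{F}_2$, exactly as announced in the paragraph preceding the statement. Recall that by a theorem of de Cornulier (and also following from work on wreath products and median geometry), the wreath product $C_p \wr \mathbb{F}_2$ — or more generally any group containing $C_p \wr \mathbb{F}_2$ — cannot act properly on a finite dimensional CAT(0) cube complex; the point is that $C_p \wr F_2$ has a subgroup $(\bigoplus_{F_2} C_p) \rtimes \mathbb{Z}$ which already witnesses unbounded ``dimension'' of any wall structure it acts on, or equivalently the cardinal definite function grows too fast along the infinitely many commuting copies of $C_p$ that $\mathbb{F}_2$ shuffles. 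The first step is therefore to record this external fact precisely (with a reference), and the main body of the proof is to produce inside $G(F,F')$ a subgroup admitting $C_p\wr\mathbb{F}_2$ as a quotient, or better a subgroup that contains $C_p \wr \mathbb{F}_2$ after passing to a suitable subquotient.

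\smallskip

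The key input is Proposition \ref{prop-wr-g(f,f')}: for each $a \in \Omega$ the semi-restricted wreath product $(U^a, G^a) \wr \mathbb{F}$ embeds in $G(F,F')$, where $\mathbb{F}$ is free of rank $d-2 \geq 2$ (here we use $d \geq 4$). The strategy is then:
\begin{enumerate}[label=(\roman*)]
\item Since $F \lneq F'$, pick an edge $e$ with $c(e) = a$ for which the point stabilizer $F'_a$ strictly contains $F_a$; then $G^a$ strictly contains $U^a$, and in particular $G^a/U^a$ is a nontrivial (discrete) group — indeed one checks, as in the proof of Proposition \ref{prop-morph-trivial} or Lemma \ref{lem-extend-G(F)}, that the ``germ at $e$'' map realizes a nontrivial finite quotient, so $G^a/U^a$ admits a quotient isomorphic to $C_p$ for some prime $p$ dividing $|F'_a/F_a\cap\text{(relevant subgroup)}|$.
\item The semi-restricted wreath product $(U^a,G^a)\wr\mathbb{F}$ surjects onto the restricted wreath product $(G^a/U^a) \wr \mathbb{F}$: the map sending $(f,g)$ to $(\bar f, g)$ where $\bar f(\gamma) = f(\gamma) U^a$ is well-defined precisely because $f(\gamma) \in U^a$ for all but finitely many $\gamma$. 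Composing with the quotient $G^a/U^a \twoheadrightarrow C_p$ and restricting $\mathbb{F}$ to a rank-$2$ free subgroup $\mathbb{F}_2 \leq \mathbb{F}$ yields a surjection from a subgroup of $G(F,F')$ onto $C_p \wr \mathbb{F}_2$.
\end{enumerate}
Finally, any proper action of $G(F,F')$ on a finite dimensional CAT(0) cube complex would restrict to a proper action of the subgroup $(U^a,G^a)\wr\mathbb{F}_2$, but this subgroup has $C_p\wr\mathbb{F}_2$ as a quotient with \emph{compact} (in fact, in the discrete setting, trivial — but in general the kernel is $(U^a)^{(\mathbb{F}_2)}\rtimes(\text{stuff})$, which is not compact) kernel. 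This is where care is needed: passing to a quotient does not preserve proper actions. The fix is to instead \emph{embed} $C_p \wr \mathbb{F}_2$ directly as a subgroup: inside $G^a$ choose a finite subgroup $C \cong C_p$ that projects injectively to $G^a/U^a$ (such a lift exists whenever $G^a/U^a$ has $p$-torsion, e.g. take $C$ generated by a suitable element of $K_{0,F'}(v)$ for an endpoint $v$ of $e$, using that $K_{0,F'}(v)\cong F'$ has a subgroup of order $p$ not contained in the corresponding copy of $F$), and let $\mathbb{F}_2$ act by the shuffling of coordinates; the subgroup of $(U^a,G^a)\wr\mathbb{F}$ generated by $\bigoplus_{\mathbb{F}_2} C$ and $\mathbb{F}_2$ is then literally isomorphic to $C_p \wr \mathbb{F}_2$, since the $C$-copies commute and $\mathbb{F}_2$ permutes them freely and transitively.

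\smallskip

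With this honest embedding in hand, the proof concludes: a proper action of $G(F,F')$ on a finite dimensional CAT(0) cube complex $X$ restricts to a proper action of $C_p \wr \mathbb{F}_2$ on $X$, contradicting the fact that $C_p \wr \mathbb{F}_2$ admits no such action. The main obstacle I anticipate is the bookkeeping in step (i)–(ii): one must verify carefully that $G^a$ genuinely has an element of prime order $p$ mapping to a nontrivial element of $G^a/U^a$ and that this element lies in a finite subgroup of $G^a$ — this is where the hypothesis $F \lneq F'$ is really used, and one should phrase it via the point stabilizers $F_a \lneq F'_a$ (which must hold for some $a$ since $F \leq F' \leq \hat F$ and $F \ne F'$ forces $F_a \ne F'_a$ for some $a$ by transitivity on orbits), then take any nontrivial element of $F'_a$ modulo $F_a$ and extract a prime-order power. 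Everything else is a direct consequence of Proposition \ref{prop-wr-g(f,f')} and the structure of (semi-restricted) wreath products.
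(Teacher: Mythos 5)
Your reduction to the non-cubulability of $C_p\wr\mathbb{F}_2$ via Proposition \ref{prop-wr-g(f,f')} is exactly the paper's strategy, and you correctly identified the crux, namely that properness does not pass to quotients. But your proposed fix --- embedding $C_p\wr\mathbb{F}_2$ honestly as a subgroup --- has a genuine gap. It requires a subgroup $C\cong C_p$ of $G^a$ meeting $U^a$ trivially, and your justification (``$K_{0,F'}(v)\cong F'$ has a subgroup of order $p$ not contained in the corresponding copy of $F$'') fails on two counts: first, $K_{0,F'}(v)\cong F'$ only when $F$ acts freely on $\Omega$, which is not assumed in the proposition; second, and more seriously, $F'$ need not contain \emph{any} element of prime order outside $F$ fixing a point of $\Omega$, and ``extracting a prime-order power'' of $x\in F_a'\setminus F$ may well land inside $F$. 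This happens precisely for the paper's flagship examples: for $F=D_5\lneq F'=\mathbb{F}_5\rtimes\mathbb{F}_5^\times$ every element of $F'\setminus F$ has order $4$, and for $F=\mathrm{PSL}(2,q)\lneq F'=\mathrm{PGL}(2,q)$ with $q\equiv 1 \pmod 4$ the only prime-order elements of $F'\setminus F$ are involutions without fixed points on $\mathbb{P}^1(\mathbb{F}_q)$. In general one only gets a cyclic subgroup $C_{p^k}\leq F_a'$ with $C_{p^k}\cap F=C_{p^{k-1}}$, and since this extension need not split there may be no copy of $C_p$ avoiding $F$ at all; the remark following the proposition in the paper makes exactly this point, allowing the discrete embedding of $C_p\wr\mathbb{F}_2$ only under the extra hypothesis that some element of prime order in $F'\setminus F$ fixes a point. (A side issue: $U^a$ is not normal in $G^a$, so the quotient $G^a/U^a$ and the surjection in your step (ii) do not make sense as stated, though you abandon that route anyway.)

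The idea you are missing is how the paper gets around the properness problem without any splitting: it embeds the semi-restricted wreath product $H=(C_{p^{k-1}},C_{p^k})\wr\mathbb{F}_2$ as a closed subgroup of $G(F,F')$ (combining the claim about $C_{p^k}\leq F_a'$ with Proposition \ref{prop-wr-g(f,f')}), and observes that the normal subgroup $K=\prod_{\mathbb{F}_2}C_{p^{k-1}}$ is \emph{compact}. Hence $K$ fixes a point of the cube complex, and a proper action of $H$ induces a proper action of $H/K\cong C_p\wr\mathbb{F}_2$ on the fixed-point set of $K$, which (after barycentric subdivision) is again a finite dimensional CAT(0) cube complex; compactness of the kernel is exactly what lets properness survive the quotient. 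So the repair is not to lift $C_p$, but to quotient only by the compact product of the $C_{p^{k-1}}$'s; with that replacement the rest of your argument goes through.
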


\begin{proof}
We first claim that for every $a \in \Omega$, there exist an integer $k \geq 1$ and a prime $p$ such that $F_a'$ contains $C_{p^k}$ and $C_{p^k} \cap F = C_{p^{k-1}}$. Indeed, since $F$ is a proper subgroup of $F'$, one can find an element $x$ in $F_a' \setminus F$. Without loss of generality we may assume that the order of $x$ is a prime power, and the claim follows by considering the subgroup generated by some suitable power of $x$.

By combining this observation with Proposition \ref{prop-wr-g(f,f')}, we deduce that the semi-restricted wreath product $(C_{p^{k-1}}, C_{p^{k}}) \wr \mathbb{F}$ embeds as a (closed) subgroup in $G(F,F')$. Since $\mathrm{rk}(\mathbb{F}) = d-2 \geq 2$, one can find in $\mathbb{F}$ a non-abelian free group of rank two $\mathbb{F}_2$, and therefore the group $(C_{p^{k-1}}, C_{p^{k}}) \wr \mathbb{F}_2$ also embeds as a closed subgroup in $G(F,F')$. 

Assume that $G(F,F')$ has a proper action on a finite dimensional CAT(0) cube complex. Then according to the previous paragraph, the group $H = (C_{p^{k-1}}, C_{p^{k}}) \wr \mathbb{F}_2$ has the same property. Since the normal subgroup $K = \prod_{\mathbb{F}_2} C_{p^{k-1}}$ is compact, $K$ must have fixed points \cite[Corollary II.2.8]{BH}. So we would obtain a proper action of $H / K \simeq C_{p} \wr \mathbb{F}_2$ on the set of fixed points of $K$, which (upon passing to the barycentric subdivision) is again a finite dimensional CAT(0) cube complex. Now as observed in \cite{CSVa}, it follows from \cite[Corollary 2.12]{Oz-P} together with \cite{Guen-Hig} that the restricted wreath product $C_{p} \wr \mathbb{F}_2$ cannot act properly on a finite dimensional CAT(0) cube complex. So we have reached a contradiction, and the proof is complete.
\end{proof}

\begin{rmq}
The above proof actually shows that when $d \geq 4$ and $F \lneq F'$, the standard wreath product $C_{p} \wr \mathbb{F}_2$ embeds as a \textit{discrete} subgroup of $G(F,F')$ as soon as there exists an element of prime order $p$ in $F' \setminus F$ fixing a point of $\Omega$.
\end{rmq}

\section{Lattices} \label{sec-lattices}

\subsection{Embeddings}

This paragraph concerns the study of the properties of inclusions of the groups $G(F,F')$ into each other for a fixed $d \geq 3$.

\begin{center}
\textit{For all this subsection we fix some permutation groups $F \leq F'\leq \hat{F}$ and $H \leq H' \leq \hat{H}$ such that $F \leq H$ and $F' \leq H'$.}
\end{center}

These conditions imply that $G(F,F')$ is a subgroup of $G(H,H')$. The following lemma is easy, and we leave the proof to the reader.

\begin{lem}
The inclusion $G(F,F') \hookrightarrow G(H,H')$ is:
\begin{enumerate} [label=(\alph*)]
  \item open if and only if $H_a \leq F$ for every $a \in \Omega$;
	\item closed if and only if $H \cap F' = F$;
	\item discrete if and only if $H \cap F'$ acts freely on $\Omega$.
\end{enumerate}
\end{lem}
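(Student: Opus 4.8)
The plan is to reduce everything to an inspection of basic identity neighbourhoods. Recall that a basis of neighbourhoods of $1$ in $G(F,F')$ is given by the compact open subgroups $U(F)_{\mathcal{B}(v,n)}$ ($v\in\verti$, $n\ge 1$), and likewise for $G(H,H')$ with $U(H)_{\mathcal{B}(v,n)}$. Since $U(H)_{\mathcal{B}(v,n)}\cap G(F,F')\supseteq U(F)_{\mathcal{B}(v,n)}$ is open in $G(F,F')$, the inclusion is continuous, and it is open (resp.\ closed, resp.\ a discrete subgroup) precisely when $G(F,F')$ is open (resp.\ closed, resp.\ discrete) in $G(H,H')$. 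The one elementary observation I will keep using is that for $g\in\autd$ and an edge $\{x,y\}$ of colour $a$ one has $\sigma(g,x)(a)=\sigma(g,y)(a)$ (both equal the colour of $\{gx,gy\}$); hence if $g\in U(K)$ for a permutation group $K\le\Sy$, then $\sigma(g,x)$ and $\sigma(g,y)$ differ, on the left, by an element of the point stabiliser $K_a$, i.e.\ consecutive local permutations of $g$ differ by a point stabiliser of $K$.

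For (a) I would first treat $\Leftarrow$. Note that, since $F\le H$, the condition ``$H_a\le F$ for every $a$'' is equivalent to ``$H_a=F_a$ for every $a$''. Under it I claim $U(H)_{\mathcal{B}(v,1)}\subseteq U(F)$: if $g$ fixes $\mathcal{B}(v,1)$ then $\sg=1$ at $v$, so for a neighbour $u$ of $v$ the observation gives $\sigma(g,u)\in H_{c(\{u,v\})}\le F$, and then, propagating outwards vertex by vertex, $\sigma(g,w)\in F$ for every $w$. Thus $G(F,F')$ contains the open subgroup $U(H)_{\mathcal{B}(v,1)}=U(F)_{\mathcal{B}(v,1)}$ of $G(H,H')$, so it is open. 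For $\Rightarrow$ I argue contrapositively: if $\tau\in H_b\setminus F$, then for any $v,n$ one prescribes local permutations to build $g\in U(H)$ fixing $\mathcal{B}(v,n)$ with $\sigma(g,w)=\tau$ at infinitely many $w$ beyond the ball; such a $g$ lies in $U(H)_{\mathcal{B}(v,n)}$ but not in $G(F)$, and since every open subgroup of $G(H,H')$ contains some $U(H)_{\mathcal{B}(v,n)}$, $G(F,F')$ cannot be open.

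For (b) and (c) I would proceed similarly. For (b): if $H\cap F'=F$ then $U(H)_{\mathcal{B}(v,n)}\cap G(F,F')$ consists of the elements fixing $\mathcal{B}(v,n)$ whose local permutations all lie in $H\cap F'=F$, i.e.\ it is exactly $U(F)_{\mathcal{B}(v,n)}$; so the subspace and intrinsic topologies on $G(F,F')$ agree, $G(F,F')$ is a locally compact subgroup of the Hausdorff group $G(H,H')$, hence closed. Conversely, for $\sigma\in(H\cap F')\setminus F$ the automorphism with $\sigma(g,w)=\sigma$ at infinitely many vertices and $1$ elsewhere lies in $U(H\cap F')\subseteq G(H,H')$ but not in $G(F,F')$, while it is the limit of its truncations $g_k$ (agreeing with $g$ on $\mathcal{B}(v_0,k)$, trivial further out), which lie in $G(F,F')$ since they have finitely many singularities and $g^{-1}g_k$ fixes larger and larger balls while staying in $U(H)$; so $G(F,F')$ is not closed. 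For (c): with $K=H\cap F'$, if $K$ acts freely then $K_a=1$ for all $a$, so for $g\in U(H)_{\mathcal{B}(v,1)}\cap G(F,F')\subseteq U(K)$ all local permutations coincide and hence (as $\sg=1$ at $v$) vanish, forcing $g=1$; thus that neighbourhood is $\{1\}$ and the subgroup is discrete. If instead $K_b\neq 1$, choosing $\tau\in K_b\setminus\{1\}$ and, for each $n$, a sphere-$n$ vertex $u$ about $v$ whose edge towards $v$ has colour $b$, the automorphism with $\sigma(g,u)=\tau$ and all else trivial is a non-trivial element of $U(H)_{\mathcal{B}(v,n)}\cap G(F,F')$, so no basic neighbourhood of $1$ is trivial and the subgroup is not discrete.

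The step I expect to be the main obstacle is the $\Leftarrow$ direction of (a). Naively one does not expect $G(F,F')$ to contain any of the standard compact open subgroups $U(H)_{\mathcal{B}(v,n)}$ of $G(H,H')$, since the ``deep'' local permutations of elements of $U(H)_{\mathcal{B}(v,n)}$ seem to range over all of $H$; the resolution is the colour-compatibility relation above, which shows that fixing merely the ball of radius one already confines every local permutation to the point stabilisers of $H$, and these lie in $F$ exactly under the stated hypothesis (equivalently, $U(F)_v$ has finite index, hence is open, in $U(H)_v$). Once this observation is isolated, all the remaining implications are routine manipulations of local permutations of the kind above, together with the two standard facts that a locally compact subgroup of a Hausdorff group is closed and that a discrete subgroup is closed.
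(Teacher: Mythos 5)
Your overall strategy (reduce everything to the basic identity neighbourhoods $U(F)_{\mathcal{B}(v,n)}$ and $U(H)_{\mathcal{B}(v,n)}$, and exploit the colour-compatibility of local permutations along an edge) is the right one, and parts (a), as well as the ``if'' directions of (b) and (c), are correct as you argue them. However, the ``only if'' directions of (b) and (c) rest on elements that do not exist, and the obstruction is exactly the compatibility relation you isolate at the outset: for an edge $\{x,y\}$ of colour $a$ one must have $\sigma(g,x)(a)=\sigma(g,y)(a)$, so a prescription of local permutations is \emph{not} freely realizable. In (c) you take $\tau\in (H\cap F')_b\setminus\{1\}$ and ``the automorphism with $\sigma(g,u)=\tau$ and all else trivial''; compatibility along \emph{every} edge at $u$ forces $\tau$ to fix every colour, i.e.\ $\tau=1$, so no such automorphism exists. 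In (b) the same problem occurs twice: the element with ``$\sigma(g,w)=\sigma$ at infinitely many vertices and $1$ elsewhere'' need not exist (take instead all local permutations equal to $\sigma$), and, more seriously, the truncations $g_k$ ``agreeing with $g$ on $\mathcal{B}(v_0,k)$, trivial further out'' never exist unless $\sigma=1$, since across the boundary edges of the ball $\sigma$ would have to fix every colour. So as written, the witnesses for non-closedness and non-discreteness are not automorphisms at all.

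Both gaps are repairable, and the repair is precisely Lemma \ref{lem-extend-G(F)} (equivalently the argument behind Proposition \ref{prop-closure-G(F)}): since $\sigma,\tau\in F'\leq \hat{F}$ preserve the $F$-orbits, one can extend beyond the prescribed region not by the identity but by elements of $F$ chosen to match the prescribed permutation on the colour of the connecting edge. Concretely, for (b) take $g$ with all local permutations equal to $\sigma\in (H\cap F')\setminus F$ and, for each $k$, an element $g_k\in G(F,F')$ agreeing with $g$ on $\mathcal{B}(v_0,k+1)$ and with local permutations in $F$ outside $\mathcal{B}(v_0,k)$; then $g^{-1}g_k\in U(H)_{\mathcal{B}(v_0,k+1)}$, so $g_k\to g$ in $G(H,H')$ while $g\notin G(F,F')$. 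For (c) keep your choice of $u$ on the sphere of radius $n$ with the edge toward $v$ of colour $b$, put $\tau$ at $u$, the identity on the $v$-side of that edge (legitimate since $\tau(b)=b$), and elements of $F$, not the identity, on the far side as in Lemma \ref{lem-extend-G(F)} with $n=0$; the resulting element is a non-trivial member of $U(H)_{\mathcal{B}(v,n)}\cap G(F,F')$. (The analogous construction in your (a) $\Rightarrow$ does work, but only because $\tau$ there lies in a point stabilizer $H_b$ and is placed constantly on a half-tree attached along an edge of colour $b$; it is worth saying this explicitly.) Finally, your opening assertion that openness/closedness of the \emph{map} is equivalent to openness/closedness of the \emph{image} deserves a word: the directions you actually use are either trivial or follow from your explicit identification of the neighbourhood bases, but stated in full generality it needs, e.g., the open mapping theorem for $\sigma$-compact locally compact groups.
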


We derive from Proposition \ref{prop-N-qi-metric} the following interesting result, which says that every closed inclusion between the groups $G(F,F')$ is undistorted.

\begin{prop} \label{prop-g(f,f')-qi}
Suppose that $F$ is transitive and $H \cap F' = F$. Then the group $G(F,F')$ is quasi-isometrically embedded inside $G(H,H')$.
\end{prop}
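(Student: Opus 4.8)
The plan is to use the length function $\Ne$ from Lemma~\ref{lem-N-length} as a bridge between the word metrics of the two groups. By Proposition~\ref{prop-N-qi-metric} applied to $G(F,F')$ (which requires $F$ transitive), the word metric $|\cdot|_S$ on $G(F,F')$ is bi-Lipschitz to the length function $\Ne^{F,F'}$ computed inside $G(F,F')$. Since $H\cap F' = F$, the group $H$ is transitive as well (it contains the transitive group $F$), so Proposition~\ref{prop-N-qi-metric} also applies to $G(H,H')$, giving that the word metric $|\cdot|_{S'}$ on $G(H,H')$ is bi-Lipschitz to the length function $\Ne^{H,H'}$. Since both target and source word metrics are quasi-isometric to intrinsic notions defined by counting internal vertices of certain minimal subtrees, and since coarse equivalence of generating sets makes the statement independent of the chosen compact generating sets, it suffices to compare $\Ne^{F,F'}(g)$ and $\Ne^{H,H'}(g)$ for $g\in G(F,F')$.

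The first key step is to observe that the hypothesis $H\cap F' = F$ forces the two notions of singularity to agree: for $g\in G(F,F')$ and a vertex $v$, one has $\sg\in F'$, hence $\sg\in F$ if and only if $\sg\in H$ (as $\sg\in H\cap F' = F$). Consequently $g$ has exactly the same set of singularities whether viewed as an element of $G(F,F')$ or of $G(H,H')$. The second key step is to compare the minimal complete subtrees $\arbmoins$ (defined with respect to $F$) and its analogue $\mathcal{T}_{g,H}^-$ (defined with respect to $H$): both are required to contain $e_0$ and $g^{-1}(e_0)$ and to have $\sigma(g,w)$ in the respective group outside their internal vertices. Because the singularity sets coincide, the defining conditions are literally the same, so the two minimal subtrees coincide, and therefore $\Ne^{F,F'}(g) = \Ne^{H,H'}(g)$ for every $g\in G(F,F')$.

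Combining these: for $g\in G(F,F')$ we get, via Proposition~\ref{prop-N-qi-metric} in both groups,
\[
\Ne^{F,F'}(g)\ \leq\ |g|_S\ \leq\ (3d-2)\,\Ne^{F,F'}(g) + 3d+2
\]
and
\[
\Ne^{H,H'}(g)\ \leq\ |g|_{S'}\ \leq\ (3d-2)\,\Ne^{H,H'}(g) + 3d+2,
\]
and since $\Ne^{F,F'}(g) = \Ne^{H,H'}(g)$ these two chains of inequalities yield a bi-Lipschitz-up-to-additive-constant comparison between $|g|_S$ and $|g|_{S'}$ on $G(F,F')$, which is exactly the statement that the inclusion is a quasi-isometric embedding. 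The main (very minor) obstacle I anticipate is simply checking carefully that the minimal subtree in the definition of $\arbmoins$ is genuinely insensitive to replacing $F$ by $H$ — i.e.\ that no issue arises from the edge $g^{-1}(e_0)$ or from completeness — but the argument above shows the defining constraints are identical, so this is routine. One should also note that although Proposition~\ref{prop-N-qi-metric} fixes a particular generating set $S$, the conclusion is independent of which compact generating set is used, so comparing the specific $S$ for $G(F,F')$ and $S'$ for $G(H,H')$ suffices.
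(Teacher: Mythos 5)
Your proposal is correct and follows essentially the same route as the paper: the hypothesis $H\cap F'=F$ makes the singularity sets (and hence the trees $\arbmoins$ and the length functions $\Ne$) agree for elements of $G(F,F')$, and then Proposition \ref{prop-N-qi-metric}, applied in both $G(F,F')$ and $G(H,H')$ (the latter since $H\supseteq F$ is transitive), gives the quasi-isometric embedding. Your extra remarks on the independence of the compact generating set and on the coincidence of the minimal complete subtrees are exactly the routine verifications the paper leaves implicit.
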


\begin{proof}
Let $g \in G(F,F')$, and $v \in \verti$. If $\sg$ does not belong to $F$ then it does not belong to $H$ either, in view of the fact that $F'$ and $H$ intersect along $F$. This means that $g$ has the same set of singularities when viewed as an element of $G(F,F')$ and $G(H,H')$. Therefore we have $\mathcal{N}_{F,F'}(g) = \mathcal{N}_{H,H'}(g)$, and the conclusion then follows from Proposition \ref{prop-N-qi-metric}.
\end{proof}

\begin{prop} \label{prop-incl-cocomp}
Suppose that $H' = H F'$. Then the group $G(F,F')$ has cocompact closure in $G(H,H')$.
\end{prop}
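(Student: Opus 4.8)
The plan is to exhibit a compact subset $K \subseteq G(H,H')$ such that $G(F,F') \cdot K$ is dense in $G(H,H')$; equivalently, to show that $\overline{G(F,F')}$ meets every coset of a fixed compact open subgroup of $G(H,H')$. The natural candidate for that compact open subgroup is $U(H')$, viewed inside $G(H,H')$ with its induced topology (it is compact and open there precisely because $U(H')$ is open in $\mathrm{Aut}(\treed)$ and compact-by-Corollary \ref{cor-u=g}-type reasoning — more simply, $U(H')$ is a vertex-stabilizer-commensurable compact open subgroup). So the statement to prove reduces to: $G(F,F') \cdot U(H')$ is all of $G(H,H')$, i.e.\ every $g \in G(H,H')$ can be written $g = g_0 u$ with $g_0 \in G(F,F')$ and $u \in U(H')$.

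First I would reduce to controlling local permutations. Given $g \in G(H,H')$, it has finitely many singularities, i.e.\ finitely many vertices $v$ with $\sigma(g,v) \notin H$; at every other vertex $\sigma(g,v)\in H$, and at \emph{every} vertex $\sigma(g,v)\in H'$. The hypothesis $H' = HF'$ means every element of $H'$ is a product $\eta\rho$ with $\eta\in H$, $\rho\in F'$; since $F\le H$ this also gives, for the finitely many singularities, that $\sigma(g,v)\in H'=HF'$, so $\sigma(g,v)=\eta_v\rho_v$ with $\rho_v\in F'$. The idea is then to "correct" $g$ on the right by an element $u\in U(H')$ so that the corrected element $g u^{-1}$ has all its singularities (now measured against $F$) lying in $F'$ and finite in number — i.e.\ lies in $G(F,F')$. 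Concretely, pick a vertex $x$ and a radius $n$ large enough that the ball $\mathcal B(x,n)$ contains $S(g)$ (as a subset of $\verti$) and also contains $g^{-1}(x)$; using Lemma \ref{lem-extend-G(F)} (applied with $F'$ replaced by $H'$, i.e.\ the analogous statement for $G(H,H')$, or more directly the observation right after \eqref{eq-rules} that one can realise any prescribed finite pattern of local permutations) I would build $u\in U(H')$ agreeing with $g$ on $\mathcal B(x,n+1)$ — wait, that would force $gu^{-1}$ to fix $\mathcal B(x,n+1)$ but not obviously land in $G(F,F')$ outside the ball, since $u\in U(H')$ only guarantees local permutations in $H'$, not in $F$.

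So the correct move is more delicate: I would split the local permutations of $g$ using $H'=HF'$ \emph{everywhere}, not just at singularities. For each vertex $w$, write $\sigma(g,w)=\eta_w\rho_w$ with $\eta_w\in H$ and $\rho_w\in F'$, doing this so that $\rho_w=1$ for all but finitely many $w$ (possible: outside the finite set $S(g)$ of singularities we have $\sigma(g,w)\in H$ and may take $\eta_w=\sigma(g,w)$, $\rho_w=1$). But a bare choice of $\rho_w$ per vertex need not assemble into an automorphism. The standard fix, exactly as in the proof of Lemma \ref{lem-extend-G(F)}, is to propagate: fix a base vertex $v$, and for vertices deep inside a branch hanging off the finite "active" subtree, set the $F'$-part to a permutation of $F$ reading off the appropriate color — i.e.\ use $F\le H$ to absorb the non-$F'$ behaviour into $U(H')$ while keeping finitely many genuine $F'$-singularities. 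The element $g_0$ defined by the $F$-parts-plus-finitely-many-$F'$-parts lies in $G(F,F')$ by construction ($\sigma(g_0,w)\in F'$ always, $\sigma(g_0,w)\in F$ off a finite set, and $g_0$ is a genuine automorphism since we built it by the propagation trick), and $u=g_0^{-1}g$ has all local permutations in $H'$ by the multiplication rule \eqref{eq-rules} and the fact that $\sigma(g,w)\in H'$, $\sigma(g_0,w)\in F'\le H'$, with $H'$ a group — hence $u\in U(H')$. Then $g=g_0u\in G(F,F')\cdot U(H')$, which proves that $G(F,F')\cdot U(H')=G(H,H')$; since $U(H')$ is compact in $G(H,H')$, the closure of $G(F,F')$ is cocompact, indeed $\overline{G(F,F')}\cdot U(H')=G(H,H')$ so $\overline{G(F,F')}$ is cocompact.

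The main obstacle is the consistency of the factorisation: a naive vertexwise splitting $\sigma(g,w)=\eta_w\rho_w$ does \emph{not} come from a pair of automorphisms, so the real content is to carry out the splitting \emph{compatibly with the tree structure}, and the only known mechanism for that is the branch-propagation argument of Lemma \ref{lem-extend-G(F)}: one decides the $F'$-part on a finite complete subtree $T$ large enough to contain all singularities of $g$ and the relevant base edge, and on each branch hanging off a leaf $x$ of $T$ one sets the $F'$-part to a fixed element of $F$ chosen (using $F\le H$, so $F$ sees all $H$-orbits... actually here we need that $F$ and $H$ have the same orbits, which follows from $F\le H\le \hat H$ and is automatic) to match the already-decided image of $x$. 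I would spell this out carefully, mirroring the bookkeeping in the proof of Lemma \ref{lem-extend-G(F)}, and then the verification that $u=g_0^{-1}g\in U(H')$ is the short computation with \eqref{eq-rules} sketched above.
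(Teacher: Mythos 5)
There is a genuine gap, and it sits at the foundation of your strategy: $U(H')$ is not a compact subset of $G(H,H')$, and in fact it is not even contained in $G(H,H')$ (a generic element of $U(H')$ has infinitely many local permutations outside $H$, hence does not lie in $G(H)$). Recall that the topology on $G(H,H')$ is \emph{not} the one induced from $\autd$; its compact open subgroups are commensurable with $U(H)_{v_0}$, and Corollary \ref{cor-u=g} says precisely that vertex stabilizers of $G(H,H')$ fail to be compact as soon as $H\lneq H'$. Because $G(H,H')\subseteq U(H')$ by definition, your target decomposition $g=g_0u$ with $g_0\in G(F,F')$ and $u\in U(H')$ is trivially satisfied by $g_0=1$, $u=g$, and carries no information; in particular it does not yield cocompactness. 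What is actually needed is a factorization whose second factor ranges over a genuinely compact subset of $G(H,H')$, for instance $G(H,H')=U(H)_{v_0}\cdot G(F,F')$: even upgrading your construction so that $u\in U(H)$ would not suffice, since $U(H)$ is non-compact; one must further absorb the $U(H)$-part into $G(F,F')\cdot U(H)_{v_0}$ (possible by vertex-transitivity of $U(F)$, but this step is absent from your plan).

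The second missing ingredient is the mechanism that removes a singularity \emph{without disturbing anything else and while fixing a base vertex}. To cancel the $H$-singularity of $g$ at a vertex $v$ by an element $\gamma\in G(F,F')$ that fixes the half-tree through the edge $e$ from $v$ toward $v_0$ (color $a=c(e)$), you need a permutation $\sigma\in F'_a$, not merely $\sigma\in F'$, with $\sigma(g,v)\sigma\in H$; this uses the refinement $H'=HF'_a$, which follows from $H'=HF'$ together with $F'\leq\hat F$ (correct the $F'$-factor by an element of $F\leq H$ moving $a$ appropriately). With this in hand one argues by induction on $|S(g)|$ for $g\in G(H,H')_{v_0}$, peeling off one singularity at a time by right multiplication with such half-tree-fixing elements of $G(F,F')_{v_0}$, until what remains lies in the compact group $U(H)_{v_0}$. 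Your ``split $\sigma(g,w)=\eta_w\rho_w$ everywhere and propagate'' idea also has an unaddressed vertex-matching issue (by \eqref{eq-rules}, $\sigma(g_0^{-1}g,w)$ involves $\sigma(g_0,\cdot)$ evaluated at $g_0^{-1}g(w)$, not at $w$, so the cancellation you want is not a pointwise identity), but even granting it, the argument as proposed never produces a compact factor, so it does not prove the proposition.
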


\begin{proof}
We shall prove that $G(H,H') = K \cdot G(F,F')$, where $K = U(H)_{v_0}$. Clearly it is enough to prove $G(H,H')_{v_0} = K \cdot G(F,F')_{v_0}$. We let $g \in G(H,H')_{v_0}$, and we argue by induction on the cardinality of $S(g)$. If $S(g)$ is empty, then $g \in K$ and the result is trivial. Assume that $g$ has $n+1$ singularities, $n \geq 0$, and let $v \in S(g)$. 

We deal with the case $v \neq v_0$ (the case $v=v_0$ being similar). Let $e$ be the edge emanating from $v$ and pointing toward $v_0$, and let $a=c(e)$. Since $F' \leq \hat{F}$, one can check that the assumption $H' = H F'$ implies $H' = H F_a'$, and it follows that there exists $\sigma \in F_a'$ such that $\sg \sigma \in H$. Therefore if we take $\gamma \in G(F,F')_{v_0}$ fixing the half-tree emanating from $e$ and containing $v_0$, and such that $\sigma(\gamma,v) = \sigma$ and $v$ is the only singularity of $\gamma$, then $g' = g \gamma \in G(H,H')_{v_0}$ has at most $n$ singularities. By induction there is $\gamma' \in G(F,F')_{v_0}$ such that $g' \gamma' \in K$, and therefore $g (\gamma \gamma') \in K$.
\end{proof}

We highlight the following consequence for future reference.

\begin{cor} \label{cor-coco-lattice}
Suppose that $H \cap F' = F$ and $H' = H F'$. Then $G(F,F')$ is a closed cocompact subgroup of $G(H,H')$.

If moreover $F$ acts freely on $\Omega$, then it is a cocompact lattice.
\end{cor}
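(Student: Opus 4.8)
The plan is to derive Corollary~\ref{cor-coco-lattice} essentially for free from the two preceding propositions, and then to identify when the cocompact subgroup is actually discrete. First I would observe that the hypotheses $H \cap F' = F$ and $H' = HF'$ are precisely what is needed to invoke Proposition~\ref{prop-incl-cocomp} and the closedness criterion. By Proposition~\ref{prop-incl-cocomp}, the condition $H' = HF'$ gives that the closure of $G(F,F')$ in $G(H,H')$ is cocompact. To upgrade this to a statement about $G(F,F')$ itself, I would invoke the lemma describing when the inclusion $G(F,F') \hookrightarrow G(H,H')$ is closed: this holds exactly when $H \cap F' = F$, which is our standing assumption. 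Hence $G(F,F')$ is already closed in $G(H,H')$, so its closure is itself, and therefore $G(F,F')$ is a closed cocompact subgroup of $G(H,H')$. (It is worth spelling out the trivial topological fact that a closed subgroup whose \emph{closure} is cocompact is itself cocompact — indeed here the closure equals the subgroup, so the quotient spaces coincide.)

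For the second assertion, I would recall the discreteness criterion from the same lemma: the inclusion $G(F,F') \hookrightarrow G(H,H')$ is discrete if and only if $H \cap F'$ acts freely on $\Omega$. Under our assumption $H \cap F' = F$, this becomes the statement that $F$ acts freely on $\Omega$. Alternatively, and perhaps more transparently, if $F$ acts freely on $\Omega$ then $G(F,F')$ is itself a discrete group by Corollary~\ref{cor-u=g} (or directly from the fact noted after Lemma~\ref{lem-topo-g(f)} that $G(F)$, and hence $G(F,F')$, is discrete precisely when $F$ acts freely). A discrete closed cocompact subgroup of a locally compact group is by definition a cocompact lattice, so the conclusion follows immediately. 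One should also note that $G(H,H')$ is unimodular (it always is, as remarked after Corollary~\ref{cor-g-ce}), so there is no issue with the existence of an invariant measure making ``lattice'' meaningful.

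I do not anticipate a genuine obstacle here: the corollary is a bookkeeping consequence of Proposition~\ref{prop-incl-cocomp}, the closed/discrete lemma, and Corollary~\ref{cor-u=g}. The only point requiring a small amount of care is making sure the two hypotheses are used in the right places — $H' = HF'$ for cocompactness (via Proposition~\ref{prop-incl-cocomp}) and $H \cap F' = F$ for closedness (via the lemma) — and checking that ``closed subgroup with cocompact closure'' genuinely collapses to ``closed cocompact subgroup'' once one knows the subgroup is closed. If anything is delicate, it is only the routine verification that these abstract properties are preserved as claimed; there is no serious mathematical content beyond citing the earlier results.
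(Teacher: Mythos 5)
Your argument is correct and coincides with the paper's own treatment: the corollary is stated there as an immediate consequence of the embedding lemma (closedness when $H \cap F' = F$, discreteness when $H\cap F'$ acts freely) combined with Proposition \ref{prop-incl-cocomp} for cocompactness, exactly as you assemble it, with unimodularity of $G(H,H')$ taking care of the lattice conclusion. One small citation slip: discreteness of $G(F,F')$ when $F$ acts freely follows from the remark after Lemma \ref{lem-topo-g(f)} (or, as a subgroup of $G(H,H')$, from part (c) of the lemma), not from Corollary \ref{cor-u=g}; your primary route via criterion (c) is the right one.
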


\subsection{Iterated wreath products and lattices}

In this paragraph we study the existence of lattices in a family of locally compact groups which appear as union of infinitely iterated permutational wreath products. We give a very short proof that some of the groups under consideration do not have lattices (see Theorem \ref{thm-L-no-lattice}), and apply this result to the groups $G(F,F')$ (see Corollary \ref{cor-without-lattice}).

\bigskip

We let $X$ be a finite set of cardinality $\ell \geq 2$, and we fix some permutation groups $D \leq D' \leq \Sx$. To avoid confusion, we intentionally do not use the notation $F$ and $F'$ for the permutation groups, because the present construction will actually be applied to point stabilizers in $F$ and $F'$. 


We let $W_0(D) = 1$ and $W_{n+1}(D) = D \wr W_{n}(D)$ for $n \geq 0$, where wreath products are considered with their imprimitive wreath product action. The group $W_{n}(D')$ is defined similarly. We denote by $L_0$ the infinitely iterated wreath product \[L_0 = \ldots \wr D \wr \ldots \wr D,\] which is the projective limit of the finite groups $W_{n}(D)$, and denote by $U_n$ the kernel of the natural projection of $L_0$ onto $W_{n}(D)$. 

For $n \geq 0$ we also let \[L_n = \ldots \wr D \wr \ldots \wr D \wr D' \wr \ldots \wr D', \] where the permutation group $D'$ appears $n$ times. Each $L_n$ is a subgroup of the infinitely iterated wreath product of $\mathrm{Sym}(X)$, and since $D$ is a subgroup of $D'$, $L_n$ is a subgroup of $L_{n+1}$ for every $n \geq 0$. We denote by $L(D,D')$ the increasing union of the groups $L_n$. Endowed with the topology making the inclusion of $L_0$ a continuous open map, $L(D,D')$ is a locally elliptic totally disconnected locally compact group.

\bigskip

The following lemma gives a general obstruction for a locally compact group to contain lattices. Recall that a subgroup $H \leq G$ is said to be \textit{essential} if $H$ intersects non-trivially every non-trivial subgroup of $G$.

\begin{lem} \label{lem-general-no-lattice}
Let $G$ be a locally compact group, and $\mu$ a Haar measure on $G$. Let $(U_n)$ be a basis of neighbourhoods of the identity consisting of compact open subgroups. Assume that there exists a sequence of subgroups $(K_n)$ such that:
\begin{enumerate}[label=(\alph*)]
\item $K_n$ contains $U_n$ as an essential subgroup;
\item $\mu(K_n) \rightarrow \infty$ when $n \rightarrow \infty$.
\end{enumerate}
Then for every $k \geq 1$, the group $G^k$ does not have lattices. 
\end{lem}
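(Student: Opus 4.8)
The plan is to argue by contradiction, using the fact that a lattice $\Gamma$ in a locally compact group $G$ has finite covolume, and to derive a contradiction from the fact that the subgroups $K_n$ have arbitrarily large Haar measure while $U_n$ is essential in $K_n$. The key point is that essentiality of $U_n$ in $K_n$ forces every non-trivial subgroup of $K_n$ to meet $U_n$ non-trivially, which should rule out $K_n$ (for $n$ large) containing a copy of a discrete group that is ``spread out'' relative to $U_n$.

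First I would treat the case $k=1$; the general case should follow formally, since if $\Gamma$ were a lattice in $G^k$ then projecting to any factor or intersecting with a factor one can extract a lattice-like object in $G$, and more directly, a compact open subgroup $V_n$ of $G^k$ containing $U_n$ (as a factor) still has $V_n$-essential structure inherited from $U_n$ and $\mu^{\otimes k}(V_n \cdot (\text{rest})) \to \infty$; so it suffices to run the argument for $G$ and observe it localizes to one factor. So suppose $\Gamma \leq G$ is a lattice, i.e.\ a discrete subgroup with $\mu(G/\Gamma) < \infty$. Pick a fundamental domain, or better, use that for a discrete subgroup $\Gamma$ and any compact open subgroup $K$ of $G$, the number of $\Gamma$-orbits on $G/K$ meeting a set of finite measure is controlled: precisely, $\mu(G/\Gamma) \geq \mu(K) \cdot \#\{\Gamma\text{-orbits on } G/K \text{ that are "small"}\}$ is not quite the right inequality, so instead I would use the standard fact that $\mu(G/\Gamma) = \sum_{x} \frac{\mu(K)}{|\Gamma_x \cap K|\text{-type correction}}$ over a transversal. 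More cleanly: since $\Gamma$ is discrete and $U_n$ is compact open, $\Gamma \cap U_n$ is finite; choosing $n$ large enough we may even assume $\Gamma \cap U_n = \{1\}$ because $\bigcap U_n = \{1\}$ and $\Gamma$ is discrete.

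The heart of the argument is then: fix such an $n$. Look at the conjugates $gK_n g^{-1}$ as $g$ ranges over $G$, or rather consider the action of $\Gamma$ on $G/U_n$. Because $\mu(G/\Gamma) < \infty$, a standard covolume computation gives $\mu(G/\Gamma) = \mu(U_n) \sum_{i} |\Gamma_{x_i}|^{-1}$ — wait, the exact form I want is: for a lattice $\Gamma$ and compact open $U$, writing $G/U = \bigsqcup_i \Gamma x_i U$, one has $\mathrm{vol}(G/\Gamma) = \mu(U)\sum_i |\Gamma \cap x_i U x_i^{-1}|^{-1}$. Since $\Gamma$ is infinite (else $G$ is compact and has no... actually if $\Gamma$ is finite then $G$ is compact, contradicting that $\mu(K_n)\to\infty$ inside $G$), and each term is at most $1$, for the sum to be finite only finitely many $x_i$ have $\Gamma \cap x_i U_n x_i^{-1} \neq \{1\}$; replacing $\Gamma$ by a conjugate we may assume $\Gamma \cap U_n = \{1\}$ and in fact that this holds for all but finitely many cosets. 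Now I would intersect $\Gamma$ with $K_n$: the subgroup $\Gamma \cap K_n$ is a subgroup of $K_n$ whose intersection with $U_n$ is trivial (taking $n$ large), so by essentiality of $U_n$ in $K_n$, $\Gamma \cap K_n$ must be trivial. This means $K_n$ embeds into $G/\Gamma$ (the map $K_n \to G/\Gamma$ is injective), whence $\mu(G/\Gamma) \geq \mu(K_n)$. Since $\mu(K_n) \to \infty$, we get $\mu(G/\Gamma) = \infty$, a contradiction.

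The main obstacle will be justifying cleanly that we may assume $\Gamma \cap U_n = \{1\}$ for $n$ large (after conjugation if necessary), and handling the $k$-fold case precisely. For the first point: $\Gamma$ is discrete so $\Gamma \cap U_0$ is finite; but we need it trivial for the essentiality argument, and a priori $\Gamma$ could have torsion meeting every $U_n$. The fix is to not conjugate $\Gamma$ but rather to run the measure estimate on cosets: among the $\Gamma$-orbits on $G/U_n$, all but finitely many stabilizers are trivial, so pick a coset $x U_n$ with $\Gamma \cap x U_n x^{-1} = \{1\}$; then $x^{-1}\Gamma x \cap U_n = \{1\}$, and since $x^{-1}\Gamma x$ is also a lattice with the same covolume (as $\mu$ is, we showed earlier, available and $G$ is... actually $\mu$ need not be bi-invariant, but $\mathrm{vol}(G/\Gamma) = \mathrm{vol}(G/x^{-1}\Gamma x)$ still holds by a change of variables argument valid because of the modular function; or simply work with $x\Gamma x^{-1}$ directly against $K_n$). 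So replacing $\Gamma$ by $x^{-1}\Gamma x$ we may assume $\Gamma \cap U_n = \{1\}$, and then $\Gamma \cap K_n \leq K_n$ meets $U_n$ trivially, hence is trivial by (a), giving $\mathrm{vol}(G/\Gamma) \geq \mu(K_n) \to \infty$. For the $k$-fold statement: a lattice $\Gamma$ in $G^k$, intersected with the first factor $G \times \{1\}^{k-1}$, need not be a lattice there; instead note that $(U_n \times \cdots \times U_n)$ is a basis of compact open subgroups of $G^k$, and $K_n \times \cdots \times K_n$ contains it as an essential subgroup (a product of essential subgroups is essential in the product, since a non-trivial element of the product has a non-trivial coordinate), with Haar measure $\mu(K_n)^k \to \infty$; so the hypotheses of the lemma hold for $G^k$ with $k=1$, and the case-$k{=}1$ argument applied to $G^k$ finishes it. I would structure the write-up so that the $k=1$ case is proved in full and the reduction of general $k$ to $k=1$ is the two-line observation about products of essential subgroups.
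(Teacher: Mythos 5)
Your strategy is the paper's argument read in contrapositive: reduce to $k=1$ (essentiality passes to finite direct products, a fact the paper also uses), then play the bound $\mathrm{vol}(G/\Gamma)\geq\mu(K_n)$, valid when $\Gamma\cap K_n=\{1\}$, against essentiality of $U_n$ in $K_n$; the paper instead says that $\mu(K_n)>\mathrm{vol}(G/\Gamma)$ forces $\Gamma\cap K_n\neq\{1\}$, hence $\Gamma\cap U_n\neq\{1\}$ by essentiality, contradicting discreteness. However, the justification you finally commit to for the key step ``after conjugating, $\Gamma\cap U_n=\{1\}$'' contains a genuine error. You assert that finiteness of $\mathrm{vol}(G/\Gamma)=\mu(U_n)\sum_i\left|\Gamma\cap x_iU_nx_i^{-1}\right|^{-1}$ forces all but finitely many of the stabilizers $\Gamma\cap x_iU_nx_i^{-1}$ to be trivial. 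The implication goes the other way: when there are infinitely many double cosets, convergence of the sum forces $\left|\Gamma\cap x_iU_nx_i^{-1}\right|\to\infty$, so all but finitely many stabilizers are \emph{non}-trivial; and a coset with trivial stabilizer need not exist at all. For instance, if $\Gamma\cong C_2\ast C_3$ acts as a cocompact lattice in the automorphism group $G$ of its Bass--Serre tree and $U$ is a vertex stabilizer in $G$, then $\Gamma\cap xUx^{-1}$ is a conjugate of $C_2$ or $C_3$ for \emph{every} coset $xU$. So the step ``pick a coset $xU_n$ with trivial $\Gamma$-stabilizer and replace $\Gamma$ by $x^{-1}\Gamma x$'' is a step that can fail.

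The repair is the direct route you wrote first and then abandoned, and it needs the full strength of the hypothesis that $(U_n)$ is a \emph{basis of neighbourhoods} of the identity (you only invoked $\bigcap U_n=\{1\}$, which is weaker and is indeed compatible with your torsion worry). Discreteness of $\Gamma$ gives an identity neighbourhood $V$ with $\Gamma\cap V=\{1\}$; since $(U_n)$ is a neighbourhood basis, some $U_n$ lies in $V$, so $\Gamma\cap U_n=\{1\}$ with no conjugation and no torsion issue. In the intended setting the $U_n$ are nested, so this holds for all larger $n$ (and in general infinitely many $U_n$ lie in $V$ when $G$ is non-discrete), and then essentiality gives $\Gamma\cap K_n=\{1\}$, whence $\mathrm{vol}(G/\Gamma)\geq\mu(K_n)\to\infty$, the desired contradiction. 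With that substitution your proof is correct and essentially coincides with the paper's; your reduction of $G^k$ to $k=1$ is also the paper's, though ``a non-trivial element has a non-trivial coordinate'' is not by itself a proof that a product of essential subgroups is essential (a short argument using the two coordinate projections is needed).
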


\begin{proof}
If $G$ satisfies these assumptions, then so does $G^k$ for $k \geq 1$, so it is enough to give the proof for $k=1$. Assume that $\Gamma$ is a lattice in $G$. Since the Haar measure of $K_n$ goes to infinity, $\Gamma$ must intersect $K_n$ non-trivially for $n$ large enough, and therefore $\Gamma$ must intersect $U_n$ non-trivially as well thanks to the first assumption. This means that $\Gamma$ intersects non-trivially any neighbourhood of the identity, and therefore $\Gamma$ cannot be discrete. Contradiction. 
\end{proof}

The following is the main result of this paragraph. 

\begin{thm} \label{thm-L-no-lattice}
Let $D \leq D' \leq \Sx$, and $\ell = |X|$. Assume that:
\begin{enumerate} [label=(\alph*)]
  \item $D$ is an essential subgroup of $D'$;
	\item $|D'| < \left( D': D \right)^{\ell}$.
\end{enumerate}
Then for every $k \geq 1$, the group $L(D,D')^k$ does not have lattices.
\end{thm}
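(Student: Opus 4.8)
The plan is to apply Lemma~\ref{lem-general-no-lattice} to $G = L(D,D')$, so the task reduces to producing a basis $(U_n)$ of compact open subgroups of the identity together with subgroups $K_n \geq U_n$ satisfying the two hypotheses of that lemma: $U_n$ is essential in $K_n$, and $\mu(K_n) \to \infty$. The natural candidates are dictated by the very construction of $L(D,D')$: take $U_n$ to be the image in $L_0$ of the subgroup $W_n(D)$ sitting ``at the top'' of the iterated wreath product $L_0 = \ldots \wr D \wr \ldots \wr D$ — more precisely, let $V_n$ be the kernel of the projection $L_0 \to W_n(D)$ (the notation $U_n$ is already used in the excerpt for these kernels), and then let $U_n$ be a compact open subgroup complementing a chunk of $W_n(D)$; and take $K_n$ to be the analogous subgroup of $L_n = \ldots \wr D \wr D' \wr \ldots \wr D'$ obtained by replacing the top $n$ copies of $D$ by $D'$. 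Concretely, first I would fix for each $n$ the decomposition of $L_0$ (resp.\ of $L_n \subset L(D,D')$) reflecting the first $n$ levels, and let $U_n \leq K_n$ be the respective pointwise stabilizers of the ``first $n$ levels of the rooted $\ell$-ary tree'' — these are compact open, the $U_n$ form a basis at the identity, and $K_n/U_n \cong W_n(D')/W_n(D)$ is finite while $U_n \cong V_n$ is the same profinite group $\ldots\wr D\wr\ldots$ in both cases.

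Next I would verify hypothesis (b), the measure growth. Normalising $\mu$ so that $\mu(U_0) = 1$ (with $U_0$ a fixed compact open subgroup, say $W_0(D') \cdot V_0$, i.e.\ $V_0 = L_0$ itself, or rather normalise $\mu(L_0)=1$), one computes $\mu(K_n)/\mu(U_n) = [K_n : U_n] = |W_n(D')| / |W_n(D)|$, while $\mu(U_n) = \mu(L_0)/|W_n(D)|$ since $U_n \cong V_n$ has index $|W_n(D)|$ in $L_0$. Hence $\mu(K_n) = \mu(L_0)\cdot |W_n(D')|/|W_n(D)|^2$. Using the recursion $|W_{n+1}(D)| = |D|\cdot|W_n(D)|^\ell$ and likewise for $D'$, an induction gives $|W_n(D)| = |D|^{1 + \ell + \cdots + \ell^{n-1}} = |D|^{(\ell^n - 1)/(\ell - 1)}$, and similarly for $D'$. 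Therefore
\begin{equation}\label{eq-plan-ratio}
\frac{|W_n(D')|}{|W_n(D)|^2} = \frac{|D'|^{(\ell^n-1)/(\ell-1)}}{|D|^{2(\ell^n-1)/(\ell-1)}} = \left(\frac{|D'|}{|D|^2}\right)^{(\ell^n-1)/(\ell-1)}.
\end{equation}
This tends to $\infty$ precisely when $|D'| > |D|^2$. But that is not quite hypothesis (b); here is where (b) of the theorem, $|D'| < (D':D)^\ell$, must be converted. Writing $m = (D':D)$ so $|D'| = m|D|$, the condition $|D'| < m^\ell$ reads $|D| < m^{\ell - 1}$, equivalently $|D|\cdot|D| < |D|\cdot m^{\ell-1} \le |D|^{\ell-1}m^{\ell-1}\cdot(\text{stuff})$ — I will need to pick the \emph{right} iterated-wreath decomposition so that the exponent comes out favorably. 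The cleanest route is probably to compare at a single level: one shows $|W_n(D')|/|W_n(D)|^2 \to \infty$ iff at some stage $|D'| \cdot (D':D)^{\ell + \ell^2 + \cdots}$ beats $|D|$, and the hypothesis $|D'| < (D':D)^\ell$ is exactly what makes $(D':D)^{\ell^n}$ — the growth of the index $[K_n:U_n] = [W_n(D'):W_n(D)] = m^{\ell^{n-1}+\cdots+1}$... wait — dominate the total mass $|W_n(D)|$. So the correct bookkeeping is: $\mu(K_n) = [K_n : U_n]\cdot\mu(U_n)$, with $[K_n:U_n] = (D':D)^{(\ell^n-1)/(\ell-1)}$ and $\mu(U_n) = |W_n(D)|^{-1} = |D|^{-(\ell^n-1)/(\ell-1)}$, giving $\mu(K_n) = \big((D':D)/|D|\big)^{(\ell^n-1)/(\ell-1)}$, which $\to\infty$ iff $(D':D) > |D|$, i.e.\ iff $m > |D|$, i.e.\ iff $m^\ell > |D|m^{\ell-1} \cdot m^{?}$... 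The point I want to land on is that $(D':D) > |D| \iff (D':D)^\ell > |D|\,(D':D)^{\ell-1} \ge |D'|\cdot\frac{(D':D)^{\ell-1}}{\;}$; more simply, $(D':D)^\ell > |D'| \iff (D':D)^\ell > |D|(D':D) \iff (D':D)^{\ell-1} > |D| \iff (D':D)^\ell > |D|(D':D) = |D'|$, and one checks $(D':D)^{\ell-1} > |D| \iff (D':D)^\ell > |D|\cdot(D':D) = |D'|$; so hypothesis (b) is literally $(D':D) > |D|^{1/(\ell-1)}$. I would reconcile this with the displayed ratio by choosing the iteration so $\mu(K_n)/\mu(K_{n-1})$ is a power of $(D':D)^{\ell-1}/|D|$ — which is $> 1$ exactly under (b) — and hence $\mu(K_n)\to\infty$.

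Finally I would verify hypothesis (a), that $U_n$ is essential in $K_n$. Here assumption (a) of the theorem — $D$ essential in $D'$ — is the key input, and it propagates up the wreath product. The quotient $K_n/U_n \cong W_n(D')/W_n(D)$ need not make sense as a quotient of groups directly, so instead I would argue that any nontrivial subgroup $N \le K_n$ meets $U_n$ nontrivially: if $N \cap U_n = 1$ then $N$ embeds into $K_n/\!\!\sim$ acting faithfully on the first $n$ levels, and one runs a level-by-level descent. Take a nontrivial element $x \in N$; look at its action on the first level $X$; if it acts nontrivially there, one uses essentiality of $D$ in $D'$ (applied to the ``top'' copy) to replace $x$ by a nontrivial power/conjugate whose top-level action lies in $D$ — but this does not immediately push $x$ into $U_n$. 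The honest statement is rather that $U_n \cong V_n$ (the full infinite iterated wreath product of $D$'s) and that $V_n$ is essential in $L_n$-level-$n$-stabilizer, which by induction on $n$ reduces to: $W_1(D) = D$ essential in $W_1(D') = D'$ (hypothesis (a)), plus the fact that essentiality is preserved under permutational wreath products — i.e.\ if $A$ is essential in $B$ then $A \wr C$ is essential in $B \wr C$ for any $C$ acting on the index set, \emph{provided} the base-group direct factors meet every nontrivial subgroup appropriately. I expect \textbf{this last verification — that essentiality propagates through the iterated wreath product, so that $U_n$ is essential in $K_n$ — to be the main obstacle}, since it is the only genuinely group-theoretic (as opposed to measure-computational) ingredient; I would handle it by a clean lemma: if $A \trianglelefteq B$... no, rather: a nontrivial subgroup $N$ of $B\wr C = (\prod_{c\in C} B)\rtimes C$ which acts trivially on $C$ lies in $\prod B$, and by essentiality of $A$ in $B$ coordinatewise (applied to a suitable coordinate where a projection of $N$ is nontrivial, then transported by the $C$-action) meets $\prod A = A \wr C$; if $N$ acts nontrivially on $C$, pass to $N \cap \prod B$ which is still nontrivial when $C$ is infinite... — careful here, as $C = W_{n-1}(D)$ is finite, so instead one passes to the kernel of $N \to C$ which may be trivial, and then uses that $C$ itself, by induction, has $A$-part essential. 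Assembling these inductive steps carefully is the crux; once it is in place, Lemma~\ref{lem-general-no-lattice} applies verbatim and yields the absence of lattices in $L(D,D')^k$ for all $k \ge 1$.
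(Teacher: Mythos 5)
Your overall strategy (apply Lemma \ref{lem-general-no-lattice} with $U_n$ the pointwise stabilizers of the first $n$ levels of the tree in $L_0$) is the right one, but the proposal never produces subgroups $K_n$ satisfying both hypotheses of that lemma under assumption (b), and both of your attempted routes break down. First, the measure count: with $K_n$ modelled on $L_n$, i.e.\ with $D'$ placed on \emph{all} of the first $n$ levels, whichever of your two bookkeepings one takes, the growth condition you arrive at is $(D':D)>|D|$ (equivalently $|D'|>|D|^2$), which is strictly stronger than hypothesis (b); as you yourself compute, (b) is equivalent to $(D':D)^{\ell-1}>|D|$. This is not a minor discrepancy: in the intended applications (e.g.\ $D=\mathbb{F}_q\rtimes\mathbb{F}_q^{\times,2}\leq D'=\mathbb{F}_q\rtimes\mathbb{F}_q^{\times}$, of index $2$) one has $(D':D)=2\ll|D|$, so your criterion fails exactly where the theorem is meant to apply; the closing remark about "choosing the iteration so that $\mu(K_n)/\mu(K_{n-1})$ is a power of $(D':D)^{\ell-1}/|D|$" is precisely the missing construction, not a reconciliation. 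Second, the essentiality step you flag as the crux is not merely unfinished: the lemma you propose (if $A$ is essential in $B$ then $A\wr C$ is essential in $B\wr C$) is false. If $b\in D'\setminus D$ and $\sigma$ is an element of prime order $p$ of the top group with a $p$-cycle $(x_0,\ldots,x_{p-1})$ on the index set, the element with top part $\sigma$ and base coordinates $b$ at $x_0$, $b^{-1}$ at $x_1$, trivial elsewhere, has order $p$ and none of its nontrivial powers has all base coordinates in $D$; it generates a subgroup of order $p$ meeting $D\wr C$ trivially. So $W_n(D)$ is in general not essential in $W_n(D')$ (and $U_n$ is certainly not essential in $L_n$, whose finite complement $W_n(D')$ meets it trivially); no version of your $K_n$ with $D'$ spread over the first $n$ levels can work.

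Both problems are solved simultaneously by a different choice of $K_n$, which is the key idea you are missing: concentrate $D'$ in a \emph{single} layer. Take $K_n$ to be the kernel of the projection $L_{n+1}\to W_n(D')$, that is $K_n=U_{n+1}\rtimes (D')^{\ell^{n}}$ (trivial on the first $n$ levels, local action in $D'$ at the $\ell^{n}$ vertices of depth $n$, and in $D$ below), so that $U_n=U_{n+1}\rtimes D^{\ell^{n}}$. Essentiality of $U_n$ in $K_n$ then follows from hypothesis (a) alone: essentiality passes to finite direct powers ($D^{\ell^n}$ in $(D')^{\ell^n}$, e.g.\ via elements of prime order), and then across the common normal subgroup $U_{n+1}$, since a nontrivial subgroup of $K_n$ either meets $U_{n+1}$, or maps injectively to $(D')^{\ell^n}$ and any element whose image is a nontrivial element of $D^{\ell^n}$ already lies in $U_n$. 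Moreover, normalizing $\mu(L_0)=1$, one gets $\mu(K_n)=|D'|^{\ell^n}\mu(U_{n+1})=|D'|^{\ell^n}|D|^{-(\ell^{n+1}-1)/(\ell-1)}=|D|^{1/(\ell-1)}\bigl(|D'|/|D|^{\ell/(\ell-1)}\bigr)^{\ell^n}$, which tends to infinity precisely when $|D'|>|D|^{\ell/(\ell-1)}$, and this is exactly hypothesis (b). With these $K_n$, Lemma \ref{lem-general-no-lattice} applies verbatim and yields the absence of lattices in $L(D,D')^k$ for all $k\geq 1$.
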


\begin{proof}
Let us fix a Haar measure $\mu$ on $L(D,D')$, normalized so that $\mu(L_0) = 1$. For $n \geq 0$, we consider the kernel $K_n = U_{n+1} \rtimes (D')^{\ell^{n}}$ of the natural projection of $L_{n+1}$ on $W_{n}(D')$, and we shall prove that the sequence $(K_n)$ satisfies the assumptions of Lemma \ref{lem-general-no-lattice}.

Since being an essential subgroup is stable by taking finite direct products, $D^{\ell^{n}}$ is an essential subgroup of $(D')^{\ell^{n}}$, and we deduce that $U_{n} = U_{n+1} \rtimes D^{\ell^{n}}$ is essential in $K_n$. 

Now the Haar measure of $K_n$ is equal to $\mu(K_n) = \mu(U_{n+1}) |D'|^{\ell^{n}}$, and since $U_0 = L_0$ has measure one, we have \[ \mu(U_{n+1}) = (U_{0}:U_{n+1})^{-1} = |W_{n+1}(D)|^{-1} = |D|^{-\frac{\ell^{n+1}-1}{\ell-1}}, \] where the last equality is easily obtained by induction. Therefore \[ \mu(K_n) = \frac{|D'|^{\ell^{n}}}{|D|^{\frac{\ell^{n+1}-1}{\ell-1}}} = |D|^{1 / (\ell-1)} \left( \frac{|D'|}{|D|^{\ell / (\ell-1)}} \right)^{\ell^{n}}.\] Now the assumption $|D'| < \left( D': D \right)^{\ell}$ is easily seen to be equivalent to $|D'| > |D|^{\ell / (\ell-1)}$. Therefore the above computation shows that the Haar measure of $K_n$ goes to infinity, and the conclusion then follows from Lemma \ref{lem-general-no-lattice}. 
\end{proof}

Note that when $G$ is a finite group, a subgroup $H$ is essential if and only if $H$ contains all elements of prime order. This is for instance the case when there is a short exact sequence $1 \rightarrow H \rightarrow G \rightarrow C_p \rightarrow 1$ that does not split. Examples of permutation groups satisfying the assumptions of Theorem \ref{thm-L-no-lattice} are $D = C_k$ and $D' = C_\ell$, where $\ell=pk$ and $p$ is a prime dividing $k$, and $C_k$ and $C_\ell$ act by translation on $C_\ell$.

\bigskip

We derive the following consequence for the groups $G(F,F')$.

\begin{cor} \label{cor-without-lattice}
Let $F \leq F' \leq \Sy$ be permutation groups such that $F$ is transitive, and let $a \in \Omega$ and $d = |\Omega|$. Assume that there exists a subgroup $D'$ with $F_a \leq D' \leq F_a'$ and such that:
\begin{enumerate} [label=(\alph*)]
  \item $F_a$ is an essential subgroup of $D'$;
	\item $|D'| < \left( D': F_a \right) ^{d-1}$.
\end{enumerate}
Then the group $G(F,F')$ does not have lattices. 
\end{cor}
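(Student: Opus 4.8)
The strategy is to reduce the statement to Theorem~\ref{thm-L-no-lattice} by exhibiting inside $G(F,F')$ a locally elliptic open subgroup that has the shape of an iterated wreath product $L(D,D')$ for suitable permutation groups $D \leq D'$ acting on a set of size $d-1$. Fix a vertex $v \in \verti$ and recall from Section~\ref{sec-preliminaries} that the vertex stabilizer $G(F,F')_v = \bigcup_n K_{n,F'}(v)$ is a locally elliptic open subgroup, being an increasing union of the compact open subgroups $K_{n,F'}(v)$. First I would analyze the structure of $K_{n,F'}(v)$: an element $g$ fixing $v$ is determined by its local permutations, which must lie in $F$ outside $\mathcal{B}(v,n)$, and the local permutation at $v$ itself can be any element of $F'$ by Lemma~\ref{lem-extend-G(F)}. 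Restricting to the ball $\mathcal{B}(v,n)$ and peeling off the root, one sees that the pointwise stabilizer in $G(F,F')_v$ of the star around $v$ decomposes as a product indexed by the $d-1$ descendant directions (relative to, say, a fixed end or just relative to $v$ itself we should be a bit more careful — let me instead work with a half-tree). More cleanly: fix an edge $e$ of color $a$, let $T'$ be a half-tree it defines, and consider the subgroup of $G(F,F')$ fixing the complementary half-tree pointwise. Its elements act on $T'$, which is a regular rooted tree whose root has $d-1$ children, and whose deeper structure is governed by $F_a$-many (really $F$-many, but the root direction is constrained to the color $a$) local actions. The upshot is that this subgroup is an increasing union of iterated wreath products of the form appearing in the definition of $L(D,D')$ with $X$ of size $d-1$, $D = F_a$ and $D' $ the given intermediate group (the point is that the local action at the root vertex of $T'$ is allowed to range over $F_a'$, hence over $D'$, while all deeper local actions are eventually forced into $F$, i.e. into $F_a$ along that branch).

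The key steps, in order, are: (1) identify $X$ with the $d-1$ edges at a vertex other than the one pointing back along $e$, so $\ell = |X| = d-1$; (2) check that the subgroup of $G(F,F')$ supported on the half-tree $T'$ and having all singularities inside $T'$, with the local action at the root of $T'$ constrained to $D'$ and all other local actions eventually in $F$, is exactly (isomorphic to) the group $L(D,D')$ with $D = F_a$, $D' $ the chosen intermediate subgroup — this is essentially a bookkeeping translation of the recursive structure $W_{n+1}(D) = D \wr W_n(D)$ against the recursive structure of balls in the tree; (3) verify this $L(D,D')$ sits inside $G(F,F')$ as a \emph{closed} subgroup, or at least in a way that transports lattices correctly: a lattice in $G(F,F')$ would, on intersecting this open locally elliptic subgroup, have to meet arbitrarily small neighborhoods of the identity. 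Actually the cleanest route is: since $L(D,D')$ embeds as an open subgroup of $G(F,F')_v$ which is itself open in $G(F,F')$, a lattice $\Gamma$ in $G(F,F')$ induces (by the standard fact that lattices intersect open subgroups in lattices only up to commensurability — more precisely $\Gamma$ discrete forces $\Gamma \cap O$ discrete for open $O$) a discrete subgroup of $L(D,D')$; but the proof of Theorem~\ref{thm-L-no-lattice} via Lemma~\ref{lem-general-no-lattice} shows $L(D,D')$ has no lattice, and in fact the argument there shows something stronger, namely that any discrete subgroup must be trivial-ish — wait, that is not quite right either, so (4) the honest move is to run the Lemma~\ref{lem-general-no-lattice} argument directly inside $G(F,F')$: exhibit subgroups $K_n$ of $G(F,F')$ (sitting inside the half-tree stabilizer described above) containing the compact open subgroups $U_n$ (the depth-$n$ rigid stabilizers) as essential subgroups, with $\mu(K_n) \to \infty$, and conclude that any lattice $\Gamma$ meets every neighborhood of $1$, contradicting discreteness.

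So really the plan is: rather than literally quote Theorem~\ref{thm-L-no-lattice}, reprove its conclusion in the ambient group $G(F,F')$ by producing the right sequence $(K_n, U_n)$. Concretely, let $U_n$ be the pointwise stabilizer in $U(F)$ of the ball $\mathcal{B}(v,n)$ intersected with — no, better — let $U_n = U(F)_{\mathcal{B}(v,n)}$, a basis of compact open neighborhoods of $1$ in $U(F)$ hence in $G(F,F')$; and let $K_n$ be the subgroup of $G(F,F')$ fixing $\mathcal{B}(v, n-1)$ pointwise and with all singularities inside $\mathcal{B}(v,n)$ and lying in the prescribed intermediate group $D'$ along the relevant branches. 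The essentiality of $U_n$ in $K_n$ follows from the essentiality of $F_a$ in $D'$ applied coordinatewise over the $(d-1)^{n-1}$-ish sphere, exactly as in Theorem~\ref{thm-L-no-lattice}'s proof. The Haar-measure computation $\mu(K_n) = |D'|^{(\text{something like } (d-1)^{n-1})} / |F_a|^{(\text{something like } (d-1)^{n-1})} \cdot (\cdots)$ blows up precisely under hypothesis (b), which as in the theorem is equivalent to $|D'| > |F_a|^{(d-1)/(d-2)}$.

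\textbf{Main obstacle.} The delicate point is the precise identification in step (2)/(4): getting the exponents and the recursive wreath-product structure exactly right, and in particular being careful that the local action at the \emph{root} of the half-tree is allowed to range over $D' $ (this uses Lemma~\ref{lem-extend-G(F)} to realize arbitrary prescribed root permutations in $F' \supseteq D'$ by genuine elements of $G(F,F')$) while local actions at all sufficiently deep vertices are forced back into $F$, so that the relevant subgroup is genuinely the \emph{semi-restricted}-type object $L(D,D')$ and not something larger. A secondary subtlety is making sure the measure normalization and the indexing of $U_n$ versus $K_n$ match what Lemma~\ref{lem-general-no-lattice} needs — this is routine but must be done consistently with $\ell = d - 1$ rather than $d$. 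Once the dictionary between "iterated wreath products on $d-1$ letters" and "rigid stabilizers of balls in a half-tree of $\treed$" is set up, the corollary follows immediately from Theorem~\ref{thm-L-no-lattice} (or its proof) with $D = F_a$.
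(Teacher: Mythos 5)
Your plan is correct and is in substance the paper's own argument: both reduce the statement to the iterated wreath product criterion with $X$ of cardinality $\ell=d-1$ and $D=F_a$, with hypothesis (b) entering exactly as $|D'|>|F_a|^{(d-1)/(d-2)}$. The difference is only in the packaging. The paper identifies the stabilizer of an edge of $\treed$ in $G(F,F')$ with $L(F_a,F_a')^2$, observes that it contains $L(D,D')^2$ as an open subgroup, invokes the fact that a lattice of $G(F,F')$ meets an open subgroup in a lattice of that subgroup, and applies Theorem \ref{thm-L-no-lattice} with $k=2$. You hesitated precisely over that restriction fact; note that it is true for open subgroups $O\leq G$ (the natural map $O/(O\cap\Gamma)\to G/\Gamma$ is injective and open, so the finite invariant measure restricts), so the paper's shortcut was available to you. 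Your fallback of running Lemma \ref{lem-general-no-lattice} directly in $G(F,F')$ is also valid and avoids both that fact and the exponent $k=2$: take $U_n=U(F)_{\mathcal{B}(v,n)}$, which is indeed a basis of compact open neighbourhoods of the identity, and take $K_n$ to be the elements fixing $\mathcal{B}(v,n)$ pointwise whose local permutations at vertices of the sphere of radius $n$ lie in the conjugate copy of $D'$ inside the stabilizer of the colour of the edge pointing towards $v$ (transitivity of $F$ provides these copies, each containing the corresponding point stabilizer of $F$), and lie in $F$ elsewhere. Then $K_n=U(F)_{\mathcal{B}(v,n+1)}\rtimes\prod_w D'_w$, essentiality of $U_n$ in $K_n$ passes coordinatewise exactly as in the proof of Theorem \ref{thm-L-no-lattice}, and $\mu(K_n)\to\infty$ under (b) since the sphere has $d(d-1)^{n-1}$ vertices against $d\bigl((d-1)^n-1\bigr)/(d-2)$ vertices in the punctured ball. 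So the only thing to tighten in your write-up is the bookkeeping you yourself flagged (the off-by-one in your definition of $K_n$ and the identification of the $D'$-copies at each sphere vertex), and it works out as just described.
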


\begin{proof}
Since $F$ is assumed to be transitive, all point stabilizers in $F$ are conjugate, and the stabilizer of an edge in $G(F,F')$ is isomorphic to the group $L(F_a,F_a')^2$. Therefore if we write $D = F_a$, the group $G(F,F')$ contains $L(D,D')^2$ as an open subgroup. It follows that any lattice in $G(F,F')$ would intersect $L(D,D')^2$ along a lattice in $L(D,D')^2$, and the conclusion then follows from Theorem \ref{thm-L-no-lattice} applied with $k=2$.
\end{proof}

\begin{ex}
$G(F,F')$ does not have lattices when $F = \mathrm{PSL}(2,q)$ and $F' = \mathrm{PGL}(2,q)$ acting on the projective line $\mathbb{P}^1(\mathbb{F}_q)$, and $q=1 \! \! \mod 4$.
\end{ex}

Indeed, the stabilizer of the point $\infty$ in $F$ is $F_{\infty} = \mathbb{F}_q \rtimes \mathbb{F}_q^{\times,2}$, where $\mathbb{F}_q^{\times,2}$ is the set of non-zero squares in $\mathbb{F}_q$. Since $q=1 \! \! \mod 4$, the element $-1$ is a square in $\mathbb{F}_q$, and it follows that the short exact sequence \[ 1 \rightarrow \mathbb{F}_q^{\times,2} \rightarrow \mathbb{F}_q^{\times} \rightarrow C_2 \rightarrow 1 \] does not split, and that $F_{\infty}$ is essential in $F_{\infty}' = \mathbb{F}_q \rtimes \mathbb{F}_q^\times$. Moreover \[|F_{\infty}'| = q(q-1) < 2^q = \left( F_{\infty}': F_{\infty} \right) ^{|\Omega|-1},\] so Corollary \ref{cor-without-lattice} applies.

\subsection{Proofs of the results}

We finally prove the results stated in the introduction.

We claim that there are many permutation groups giving rise to groups $G(F,F')$ as in Theorem \ref{thm-simple-coc-no-lattice}, and we shall detail one family of examples. Note that this will prove Corollary \ref{cor-simple-no-lattice} at the same time in view of Corollary \ref{cor-asdimone}.

\begin{proof}[Proof of Theorem \ref{thm-simple-coc-no-lattice}]
Let $q=1 \! \! \mod 4$ be a prime power, $n \geq 1$ and $\Omega=\mathbb{F}_q^n$. Let $F'$ be the affine group $\mathbb{F}_q^n \rtimes \mathrm{GL}(n,q)$, and let $F = \mathbb{F}_q^n \rtimes \mathrm{GL}^2(n,q)$ be its subgroup of index two consisting of elements whose linear part has a determinant that is a square. 

We claim that $F$ lies in $\mathrm{Alt}(\Omega)$ and that $F'$ does not. Indeed, consider the decomposition $\mathrm{GL}(n,q) = \mathrm{SL}(n,q) \rtimes \mathbb{F}_q^{\times}$, where $\mathbb{F}_q^{\times}$ is embedded in $\mathrm{GL}(n,q)$ via $x \mapsto \mathrm{diag}(x,1,\ldots,1)$. If $x$ is a generator of $\mathbb{F}_q^{\times}$, we easily see that, viewed as a permutation on $\Omega=\mathbb{F}_q^n$, the element $x$ is a cycle of length $q-1$. This shows that $F'$ does not lie in $\mathrm{Alt}(\Omega)$. Now let us denote by $\mathbb{F}_q^{\times,2}$ the set of non-zero squares in $\mathbb{F}_q$, and check that $F = \mathbb{F}_q^n \rtimes (\mathrm{SL}(n,q) \rtimes \mathbb{F}_q^{\times,2})$ is a subgroup of $\mathrm{Alt}(\Omega)$, by verifying that the three subgroups $\mathbb{F}_q^n$, $\mathrm{SL}(n,q)$ and $\mathbb{F}_q^{\times,2}$ are all in $\mathrm{Alt}(\Omega)$. Every non-zero element of $\mathbb{F}_q^n$ is a product of $p$-cycles, where $p$ is the characteristic of $\mathbb{F}_q$, so these permutations are alternating. Since $\mathrm{SL}(n,q)$ is perfect, it must be a subgroup of $\mathrm{Alt}(\Omega)$; and if $x$ is a generator of $\mathbb{F}_q^{\times,2}$, then one may check that $x$ is a product of two cycles of length $(q-1)/2$, so in particular $x$ is alternating. Finally $F \leq \mathrm{Alt}(\Omega)$.

Now let us take $H = \mathrm{Alt}(\Omega)$ and $H' = \mathrm{Sym}(\Omega)$. The assumptions of Corollary \ref{cor-simpl-ind8} are satisfied for both $G(F,F')$ and $G(H,H')$, and therefore these have open normal simple subgroups $N(F,F')$ and $N(H,H')$ of index eight. Moreover $N(F,F')$ is contained in $N(H,H')$, and according to Corollary \ref{cor-coco-lattice} this inclusion is closed and cocompact. 

Since $q=1 \! \! \mod 4$, the short exact sequence \[ 1 \rightarrow \mathbb{F}_q^{\times,2} \rightarrow \mathbb{F}_q^{\times} \rightarrow C_2 \rightarrow 1 \] does not split. A fortiori \[ 1 \rightarrow \mathrm{GL}^2(n,q) \rightarrow \mathrm{GL}(n,q) \rightarrow C_2 \rightarrow 1 \] does not split either, and it follows that $F_0 = \mathrm{GL}^2(n,q)$ is essential in $F_0' = \mathrm{GL}(n,q)$. Moreover \[ |F_0'| = \prod_{i=0}^{n-1} (q^n-q^i) < q^{n^2} < 2^{q^n-1} =  \left( F_{0}': F_{0} \right) ^{|\Omega|-1},\] so Corollary \ref{cor-without-lattice} applies and shows that $G(F,F')$ (and a fortiori $N(F,F')$) does not have lattices.

To complete the argument, we shall prove that $N(H,H')$, or equivalently $G(H,H')$, contains lattices. To this end, let us assume that $q^n > p$, where $p$ is the characteristic of $\mathbb{F}_q$. Write $\Omega = \left\{1,\ldots,q^n\right\}$, and let $x=q^n / p$. Then $\alpha = \prod_{i=1}^x \left((i-1)p+1,\ldots,ip\right)$ is a product of $p$-cycles with disjoint support, and therefore $K=\left\langle \alpha \right\rangle$ acts freely on $\Omega$, and $K \leq \mathrm{Alt}(\Omega)$. Now consider $\tau = \prod_{i=1}^p \left(i,p+i \right)$. By construction $\tau$ is an involution and $\tau$ commutes with $\alpha$, so $K'=\left\langle \alpha, \tau \right\rangle$ is a extension of $K$ by $C_2$, and $K'$ does not lie in in $\mathrm{Alt}(\Omega)$. Therefore the assumptions of Corollary \ref{cor-coco-lattice} are all fulfilled, so it follows that $G(K,K')$ is a cocompact lattice in $G(H,H')$. 
\end{proof}

We now turn to the proof of Theorem \ref{thm-simple-simple-lattice}. Note that by Proposition \ref{prop-morph-trivial}, the following family of examples actually gives infinitely many pairwise non-isomorphic non-discrete simple groups with simple lattices.

\begin{proof}[Proof of Theorem \ref{thm-simple-simple-lattice}]
Assume that $d \geq 7$ is equal to $0,3 \! \! \mod 4$. Let $H = D_d$ be the dihedral group, $H'= \mathrm{Sym}(d)$, and take $F = D_d \cap \mathrm{Alt}(d)$ and $F' = \mathrm{Alt}(d)$. The assumption on $d$ implies that point stabilizers in $H$ are not contained in $\mathrm{Alt}(d)$, so it follows from Theorem \ref{thm-simple-ind2} that $G(H,H')^\star$ is simple. Moreover $F$ and $F'$ satisfy the assumptions of Corollary \ref{cor-g(f,f')-simple}, so $G(F,F')^\star$ is also simple. Now by definition $H \cap F' = F$ and $H'=HF'$, so by Corollary \ref{cor-coco-lattice} the group $G(F,F')^\star$ is a cocompact lattice in $G(H,H')^\star$. This proves the statement.
\end{proof}

\subsection{Final remark}

The study of the groups $U(F)$ has recently been extended to the case of infinite permutation groups $F$, in which case $U(F)$ acts on a tree that is not locally finite \cite{Smith}. Similarly, given infinite permutation groups $F \leq F'$, we can consider the group $G(F,F')$. While the results from Section \ref{sec-simple} are likely to extend to this framework, some of the results from Section \ref{sec-preliminaries} need some care to be generalized (notably if we wish to obtain a locally compact topology on $G(F,F')$). Nevertheless, investigating further the family of groups $G(F,F')$ in the setting of \cite{Smith} is likely to provide interesting examples.

\subsection*{Added on January 19th, 2016}

After completing this work, we learned that some examples of groups $G(F,F')$ appeared in \cite{Reid-sylow}. In the terminology of \cite{Reid-sylow}, there are examples of $G(F,F')$  which are $p$-localizations of $U(F')$.

\nocite{*}
\bibliographystyle{amsalpha}
\bibliography{almost}

\end{document}